\renewcommand\thefigure{\thesection.\@arabic\c@figure}
\renewcommand\thetable{\thesection.\@arabic\c@table}
\newtheorem{theorem}{Theorem}[section]
\newtheorem{lemma}[theorem]{Lemma}
\newtheorem{proposition}[theorem]{Proposition}
\newtheorem{corollary}[theorem]{Corollary}
\newtheorem{definition}[theorem]{Definition}
\newtheorem{remark}[theorem]{Remark}
\newtheorem{example}[theorem]{Example}
\newcommand{\mc}[1]{{\mathcal #1}}
\newcommand{\ms}[1]{{\mathscr #1}}
\newcommand{\mf}[1]{{\mathfrak #1}}
\newcommand{\mb}[1]{{\mathbf #1}}
\newcommand{\bb}[1]{{\mathbb #1}}
\newcommand{\bs}[1]{{\boldsymbol #1}}
\newcommand{\prob}{{\boldsymbol {\rm P}}}
\newcommand{\E}{\boldsymbol{{\rm E}}}
\renewcommand{\sp}{\eta_{\bs\cdot}}
\newcommand{\<}{\langle}
\renewcommand{\>}{\rangle}
\renewcommand{\Cap}{{\rm cap}}
\begin{document}

\title[Tunneling and Metastability of continuous time Markov
  chains]{Tunneling and Metastability of continuous time Markov
  chains}

\author{J. Beltr\'an, C. Landim}

\address{\noindent IMCA, Calle los Bi\'ologos 245, Urb. San C\'esar
  Primera Etapa, Lima 12, Per\'u and PUCP, Av. Universitaria cdra. 18,
  San Miguel, Ap. 1761, Lima 100, Per\'u. 
\newline e-mail: \rm
  \texttt{johel@impa.br} }

\address{\noindent IMPA, Estrada Dona Castorina 110, CEP 22460 Rio de
  Janeiro, Brasil and CNRS UMR 6085, Universit\'e de Rouen, Avenue de
  l'Universit\'e, BP.12, Technop\^ole du Madril\-let, F76801
  Saint-\'Etienne-du-Rouvray, France.  \newline e-mail: \rm
  \texttt{landim@impa.br} }

\keywords{Meta-stability, Markov processes, condensation, zero-range
  processes} 

\begin{abstract}
  We propose a new definition of metastability of Markov processes on
  countable state spaces. We obtain sufficient conditions for a
  sequence of processes to be metastable. In the reversible case these
  conditions are expressed in terms of the capacity and of the
  stationary measure of the metastable states.
\end{abstract}

\maketitle

\section{Introduction}
\label{sec0}

In the framework of non-equilibrium statistical mechanics,
metastability is a relevant dynamical phenomenon taking place in the
vicinities of first order phase transitions. There has been along the
years several proposals of a rigorous mathematical description of the
phenomenon starting with Lebowitz and Penrose \cite{lp} who derived
the canonical free energy for Kac potentials in the Van der Waals
limit. The seminal paper of Cassandro, Galves, Olivieri and Vares
\cite{cgov} proposed a pathwise approach to metastability which
highlighted the underlying Markov structure behind metastability which
is exploited here. In the sequel, Scoppola \cite{s} examined the
metastable behavior of finite state space Markov chains with
transition probabilities exponentially small in a parameter.  More
recently, Bovier and co-authors (\cite{b2} and references therein)
presented a new approach based on the spectral properties of the
generator of the process. We refer to \cite{ov} for a recent monograph
on the subject.

We propose in this article an alternative formulation of metastability
for sequences of Markov processes on countable state spaces.
Informally, a process is said to exhibit a metastable behavior if it
remains for a very long time in a state before undergoing a rapid
transition to a stable state. After the transition, the process
remains in the stable state for a period of time much longer than the
time spent in the first state, called for this reason metastable. In
certain cases, there are two or more ``metastable wells'' with the
same depth, a situation called by physicists ``competing metastable
states''. In these cases, the process thermalizes in each well before
jumping abruptly to another well where the same qualitative behavior
is observed.

To describe our approach, denote by $E_N$, $N\ge 1$, a sequence of
countable spaces and by $(\theta_N : N\ge 1)$ a sequence of positive
real numbers. For each $N\ge 1$, consider a partition $\ms E^1_N,
\dots, \ms E^\kappa_N$, $\Delta_N$ of $E_N$ and a $E_N$-valued Markov
process $\{\eta^N_t : t\ge 0\}$.  Fix a state $\xi^N_x$ in $\ms
E^x_N$, $1\le x\le \kappa$. We say that the sequence of Markov
processes $\{\eta^N_t : t\ge 0\}$, $N\ge 1$, exhibits a tunneling
behavior in the time scale $(\theta_N : N\ge 1)$ with metastates $\ms
E^1_N, \dots, \ms E^\kappa_N$, attractors $\xi^N_1, \dots,
\xi^N_\kappa$, and asymptotic behavior described by the Markov process
on $S = \{1, \dots, \kappa\}$ with rates $\{r(x,y) : x,y \in S\}$ if
the following three conditions are fulfilled:

\begin{enumerate}
\item For every $1\le x\le \kappa$, starting from a state $\eta^N$ in
  $\ms E_N^x$, with overwhelming probability, the process $\{\eta^N_t
  : t\ge 0\}$ reaches $\xi^N_x$ before attaining $\bigcup_{y\not =
    x}\ms E^y_N$.

\item Let $\{X^N_t : t\ge 0\}$ be the process $X^N_t =
  \Psi_N(\eta^{\ms E_N}_t)$, where $\{\eta^{\ms E_N}_t : t\ge 0\}$ is
  the trace of the Markov process $\{\eta_t : t\ge 0\}$ on $\ms E_N =
  \bigcup_{1\le x\le \kappa} \ms E^x_N$ and where $\Psi_N(\eta) =
  \sum_{1\le x\le \kappa} x \mb 1\{\eta\in \ms E^x_N\}$. The speed up
  process $\{X^N_{t\theta_N}: t\ge 0\}$ converges to the Markov process
  on $S$ which jumps from $x$ to $y$ at rate $r(x,y)$.

\item Starting from any point of $\ms E_N$, the time spent by the
  speed up Markov process $\{\eta_{t\theta_N} : t\ge 0\}$ on the set
  $\Delta_N$ in any time interval $[0,s]$, $s>0$, vanishes in
  probability.
\end{enumerate}

All the terminology used in the previous definition is explained in
the next section. Condition (1) states that the process thermalizes in
each set $\ms E^x_N$ before reaching another metastate set $\ms
E^y_N$, $y\not = x$. The assumption of the existence of an attractor
can clearly be relaxed, but is satisfied in several interesting
examples, as in the condensed zero-range processes \cite{bl3, bl4}
which motivated the present work. Condition (2) describes the
intervalley dynamics and reveals the loss of memory of the jump times
from a well to another, put in evidence in \cite{cgov}.  In condition
(3) we assume that the starting point belongs to $\ms E_N$. It may
therefore happen that the discarded set $\Delta_N$ hides wells deeper
than the wells $\ms E^x_N$, $1\le x\le \kappa$, but which can not be
attained from $\ms E_N$. When we remove in this condition the
assumption that the starting point belongs to $\ms E_N$, we say that
the process exhibits a metastable behavior, instead of a tunneling
behavior. In this case, the wells $\ms E^x_N$, $1\le x\le \kappa$ are
the deepest ones.

In contrast with the pathwise approach to metastability \cite{cgov},
the present one does not give a precise description of the saddle
points between the wells nor of the typical path which drives the
system from one well to another. Its description of metastability is
in some sense rougher, but keeps the main ingredients, as
thermalization and asymptotic Markovianity.

The main results of this article, stated in the next section,
establish sufficient conditions for recurrent Markov processes on
countable state spaces to exhibit a tunneling behavior. In the
reversible case, these sufficient conditions can be expressed in terms
of the capacity and of the stationary probability measure of the
metastates.

A theory is meaningless if no interesting example is provided which
fits in the framework presented. Besides the mean field models
considered in \cite{cgov} and the Freidlin--Wentsell Markov chains
proposed in \cite{ov}, which naturally enter in the present framework,
we examine in \cite{bl3, bl4} a new class of processes which exhibit a
metastable behavior. This family, known as the condensed zero-range
processes, have been introduced in the physics literature
\cite{evans, gss, emz} to model the Bose-Einstein condensation
phenomena. It has been proved in several different contexts \cite{jmp,
  fls, al} that, above a critical density, all but a small number of
particles concentrate on one single site in the canonical stationary
states of these processes. In \cite{bl3, bl4} we prove that, in the
reversible case, the condensed zero range processes exhibit a
tunneling behavior by showing that in an appropriate time scale the
condensed site evolves according to a random walk on $S$. We also
prove that the jump rates of the asymptotic Markov dynamics can be
expressed in terms of the capacities of the underlying random walks
performed by the particles.

The article is organized as follows. In Section \ref{sec1}, we
introduce the notation, the definitions and state the main theorems.
In Section \ref{examples} we present some elementary examples which
justify the definitions proposed. In Sections \ref{proof},
\ref{proof2}, we prove the main results. Finally, in Section
\ref{sec03}, we prove some results on the trace of Markov processes
needed in the article and which we did not find in the literature.

\section{Notation and Results}
\label{sec1}

Fix a sequence $(E_N: N\ge 1)$ of countable state spaces. The elements
of $E_N$ are denoted by the Greek letters $\eta$, $\xi$. For each
$N\ge 1$ consider a matrix $R_N : E_N \times E_N \to \bb R$ such that
$R_N(\eta, \xi) \ge 0$ for $\eta \not = \xi$, $-\infty < R_N (\eta,
\eta)\le 0$ and $\sum_{\xi\in E_N} R_N(\eta,\xi)=0$ for all $\eta\in
E_N$. Denote by $L_N$ the generator which acts on bounded functions
$f:E_N\to \bb R$ as
\begin{equation}
\label{c01}
(L_Nf) (\eta) \,=\, \sum_{\xi\in E_N} R_N(\eta,\xi)
\, \big\{f(\xi)-f(\eta)\big\}\;.
\end{equation}

Let $\{\eta^N_t : t\ge 0\}$ be the {\sl minimal} right-continuous
Markov process associated to the generator $L_N$. We refer to \cite{c,
  f, n} for the terminology and the main facts on Markov processes
alluded to in this article. It is well known, for instance, that
$\{\eta^N_t : t\ge 0\}$ is a strong Markov process with respect to the
filtration $\{\mc F^N_t : t\ge 0\}$ given by $\mc F^N_t = \sigma
(\eta^N_s : s\le t)$. To avoid unnecessary technical considerations,
we assume throughout this article that there is no explosion.

Denote by $D(\bb R_+,E_N)$ the space of right-continuous trajectories
$e: \bb R_+ \to E_N$ with left limits endowed with the Skorohod
topology. Let $\prob^N_{\eta}$, $\eta\in E_N$, be the probability
measure on $D(\bb R_+,E_N)$ induced by the Markov process $\{\eta^N_t
: t\ge 0\}$ starting from $\eta$. Expectation with respect to
$\prob^N_{\eta}$ is denoted by $\E^N_{\eta}$ and we frequently omit
the index $N$ in $\prob^N_{\eta}$, $\mb E^N_\eta$.

For every $N\ge 1$ and any subset $A\subseteq E_N$, denote by
$\tau_A:D(\bb R_+,E_N)\to\bb R_+$ the hitting time of the set $A$:
$$
\tau_A \,:=\, \inf \big\{ s > 0 : e_s \in A \big\}\,,
$$
with the convention that $\tau_A = \infty$ if $e_s\not\in A$ for all
$s>0$. When the set $A$ is a singleton $\{\eta\}$, we denote
$\tau_{\{\eta\}}$ by $\tau_\eta$.  This convention is adopted
everywhere below for any variable depending on a set. In addition, for
each $t\ge 0$, define the additive functional $\mc T^{A}_t:D(\bb
R_+,E)\mapsto\bb R_+$ as the amount of time the process stayed in the
set $A$ in the interval $[0,t]$:
\begin{equation}
\label{timecut}
\mc T^{A}_t \;:=\;\int_{0}^t \mathbf{1}\{e_s\in
A\}\, ds\,,\quad t\ge 0\,,
\end{equation}
where $\mb 1\{B\}$ stands for the indicator of the set $B$.

A sequence of states $\bs \eta=(\eta^N\in E_N : N\ge 1)$ is said to be
a point in a sequence ${\ms A}$ of subsets of $E_N$, ${\ms A} =
(A_N\subseteq E_N : N\ge 1)$, if $\eta^N$ belongs to $A_N$ for every
$N\ge 1$. For a point $\bs \eta=(\eta^N\in E_N : N\ge 1)$ and a set
${\ms A} = (A_N\subseteq E_N : N\ge 1)$, denote by $T_{\bs \eta}$,
$T_{\ms A}$, the hitting times of the sets $\{\bs \eta\}$,
$\ms A$:
\begin{equation*}
T_{{\bs \eta}} \;=\; T^N_{{\bs \eta}} \;:=\; \tau_{\eta^N}\;,
\quad  T_{\ms A} \;=\; T^N_{\ms A} \;:=\; \tau_{A_N}\; .
\end{equation*}

For any sequence of subsets $\ms A = (A_N \subset E_N : N\ge 1)$, $\ms
F = (F_N \subset E_N : N\ge 1)$, denote by $T_{\ms A} (\ms F)$ the
time spent on the set $\ms F$ before hitting the set $\ms A$:
\begin{equation*}
T_{\ms A} (\ms F) \;=\; T^N_{\ms A} (\ms F) 
\;:=\; \int_0^{\tau_{A_N}} \mb 1\{ \eta^N_s \in F_N\} \, ds\; .
\end{equation*}

\subsection{Valley with attractor}
We introduce in this subsection the concept of valley. Intuitively, a
subset $W$ of the state space $E_N$ is a valley for the Markov process
$\{\eta^N_t : t\ge 0\}$ if the process starting from $W$ thermalizes
in $W$ before leaving $W$ at an exponential random time.

To define precisely a valley, consider two sequences $\ms W$, $\ms B$
of subsets of $E_N$, the second one containing the first and being
properly contained in $E_N$:
\begin{equation}
\label{val1}
\ms W \,=\, (W_N\subseteq E_N : N\ge 1)\,, \;\;
\ms B \,=\, (B_N \subseteq E_N : N\ge 1)\,, \;\;
W_N \subseteq B_N \subsetneqq E_N\,.
\end{equation}
Fix a point $\bs \xi = (\xi_N \in W_N : N\ge 1)$ in $\ms W$, a
sequence of positive numbers $\bs \theta = (\theta_N : N\ge 1)$ and
denote by $\ms B^c$ the complement of $\ms B$: $\ms B^c =
({B}^c_N : N\ge 1)$. 

\begin{definition}[Valley]
\label{well}
The triple $(\ms W, \ms B, \bs \xi)$ is a valley of depth $\bs \theta$
and attractor $\bs \xi$ for the Markov process $\{\eta^N_t : t\ge 0\}$
if for every point $\bs \eta = (\eta^N : {N\ge 1})$ in $\ms W$

\begin{enumerate}
\item[({\bf V1})] With overwhelming probability, the attractor $\bs
  \xi$ is attained before the process leaves $\ms B$: 
\begin{equation*}
\label{well1}
\lim_{N\to\infty} \mb P_{\eta^N} \big[\, T_{\bs \xi} < 
T_{{\ms B}^c} \,\big] \, = \,1\,;
\end{equation*}

\item[({\bf V2})] The law of $T_{{\ms B}^c}/\theta_N\,$ under
  $\prob_{\eta^N}$ converges to a mean $1$ exponential distribution,
  as $N\to\infty$;

\item[({\bf V3})] 
For every $\delta>0$,
\begin{equation*}
\lim_{N\to\infty} \prob_{\eta^N}\Big[\, \frac 1{\theta_N} 
\, T_{{\ms B}^c} (\bs \Delta) > \delta\,\Big] \;=\; 0 \;,
\end{equation*}
where $\bs \Delta = (\Delta_N : N\ge 1)$ and $\Delta_N$ is the annulus
$B_N \setminus W_N$. 
\end{enumerate}
\end{definition}

We refer to $\ms W$ as the well, and $\ms B$ as the basin of the
valley $(\ms W, \ms B, \bs \xi)$. We present in Section \ref{examples}
examples of Markov processes on finite state spaces and triples $(\ms
W, \ms B, \bs \xi)$ in which all conditions but one in the above
definition hold.

\subsubsection*{Condition {\rm ({\bf V1})}.}
The first condition guarantees that the process thermalizes in $\ms W$
before leaving the basin $\ms B$.  We prove in Lemma \ref{s05} that
conditions ({\bf V1}), ({\bf V2}) imply that the attractor $\bs \xi$
is reached from any point in the well $\ms W$ faster than $\theta_N$:
\begin{equation*}
\tag*{({\bf V1}')}
\lim_{N\to\infty}  \sup_{\eta\in W_N} \prob_{\eta}
\Big[\, \frac 1{\theta_N} 
\, T_{\bs \xi} > \delta\,\Big] \;=\; 0 \;.
\end{equation*}
Conversely, this condition and ({\bf V2}) warrant the validity of
({\bf V1}). We may therefore replace ({\bf V1}) by ({\bf V1}') in the
definition.

Example \ref{ex8} illustrates the fact that conditions ({\bf V2}),
({\bf V3}) may hold while ({\bf V1}) fails. In this example, with
overwhelming probability, the process, starting from one state in the
well $\ms W$, leaves the basin $\ms B$ at an exponential time before
hitting the attractor $\xi$.

Of course, the existence of an attractor is superfluous, as shown by
Example \ref{ex3}, where we present a valley without an
attractor. This requirement could be replaced by weaker requisites on
the spectrum of the generator in the reversible case or on the total
variation distance between the state of the process and the invariant
measure restricted to the well $\ms W$.  Nevertheless, in several non
trivial examples, as in the case of condensed zero-range processes
\cite{bl3, bl4} which motivated this paper, attractors do exist.

\subsubsection*{Condition {\rm ({\bf V2})}.}

The second condition asserts that the process leaves the basin at an
exponential time of order given by the depth of the valley.  Example
\ref{ex4} presents a situation in which conditions ({\bf V1}), ({\bf
  V3}) hold but not ({\bf V2}) nor ({\bf V1}'). There, the order of
magnitude of the time needed for the process to reach ${\ms B}^c$ from
$\ms W$ depends on the starting point of $\ms W$.

Clearly, the depth of a valley is defined up to an equivalence
relation: if $\bs \theta'= (\theta_N' : N\ge 1)$ is another sequence
of positive numbers such that $\lim_{N\to\infty}
(\theta_N/\theta_N')=1$, the valley has also depth $\bs
\theta'$. Moreover, the depth of a valley depends on the basin. As we
shall see in Example \ref{ex2}, two different valleys $(\ms W, \ms B,
\bs \xi)$, $(\ms W, \ms B', \bs \xi)$, with $\ms B \subset \ms B'$,
may have depths of different order. Finally, the depth has not an
intrinsic character, in contrast with valleys, in the sense that it
changes if we speed up or slow down the underlying Markov process.

\subsubsection*{Condition {\rm ({\bf V3})}.}

The last condition requires the process starting from the well to
spend a negligible amount of time in the part of the basin which does
not belong to the well.

We prove in Lemma \ref{s04b} that we may replace condition ({\bf V3})
by the assumption that for every point $\bs \eta = (\eta^N : {N\ge
  1})$ in $\ms W$ and every $t>0$,
\begin{equation}
\label{f02}
\lim_{N\to\infty} \mb E_{\eta^N}\Big[\, \int_0^{\min\{t,
    \theta_N^{-1} T_{\ms B^c}\}}
\mb 1\{ \eta_{s\theta_N} \in \Delta_N \} \, ds \,\Big] \;=\; 0 \;.
\end{equation}

Condition ({\bf V3}) is necessary, as we shall see in Example
\ref{ex1}, to ensure that $\ms W$ is the well of the valley and not an
evanescent set. The Markov process presented in this example fulfills
conditions ({\bf V1}), ({\bf V2}) but not condition ({\bf V3}).
\medskip

The definition of valley focus on paths of the Markov process starting
from the well $\ms W$. Nothing is imposed on the process starting from
the annulus ${\bs \Delta}$, which may hide other wells, even deeper
than the well $\ms W$, as illustrated by example \ref{ex5}. To rule
out this eventuality, we replace condition ({\bf V3}) by assumption
({\bf V3}') which reads:

For every $\delta >0$,
\begin{equation*}
\tag*{({\bf V3}')}
\lim_{N\to\infty} \sup_{\eta\in B_N} \prob_{\eta}\Big[\, \frac 1{\theta_N} 
\, T_{{\ms B}^c} (\bs \Delta) > \delta\,\Big] \;=\; 0 \;.
\end{equation*}

Fix $\eta$ in $\Delta_N$ and note that $T_{{\ms W}\cup {\ms B}^c} =
T_{{\ms W}\cup {\ms B}^c} (\bs \Delta) \le T_{{\ms B}^c} (\bs \Delta)$
$\mb P_\eta$ almost surely. Therefore, it follows from condition ({\bf
  V3}') that the process starting from $\bs \Delta$ immediately
reaches $\ms W \cup {\ms B}^c$: For every $\delta>0$,
\begin{equation}
\label{f03}
\lim_{N\to\infty}  \sup_{\eta\in \Delta_N}
\prob_{\eta}\Big[\, \frac 1{\theta_N} 
\, T_{{\ms W}\cup {\ms B}^c} > \delta\,\Big] \;=\; 0 \;.
\end{equation}
It also follows from conditions ({\bf V2}), ({\bf V3}') that
\begin{equation}
\label{f05}
\lim_{N\to\infty} \sup_{\eta\in B_N} \mb E_{\eta}\Big[\, \int_0^{\min\{t,
    \theta_N^{-1} T_{\ms B^c}\}}
\mb 1\{ \eta_{s\theta_N} \in \Delta_N \} \, ds \,\Big] \;=\; 0 \;.
\end{equation}

These remarks lead naturally to a more restrictive definition of
valley.

\begin{definition}[S-Valley]
\label{well2}
The triple $(\ms W, \ms B, \bs \xi)$ is a S-valley of depth $\bs
\theta$ and attractor $\bs \xi$ for the Markov process $\{\eta^N_t :
t\ge 0\}$ if, for every point $\bs \eta = (\eta^N : {N\ge 1})$ in $\ms
W$, assumptions {\rm ({\bf V1}), ({\bf V2})}, {\rm ({\bf V3}')} are
fulfilled.
\end{definition}

In Example \ref{ex5} we present a triple $(\ms W, \ms B, \bs \xi)$
satisfying assumptions ({\bf V1}), ({\bf V2}), ({\bf V3}) but not
({\bf V3}'), \eqref{f03} and \eqref{f05} because $\bs \Delta$ contains
a well deeper than $\ms W$.

In many cases, it is possible to transfer from $\bs \Delta$ to ${\ms
  B}^c$ all points in $\bs \Delta$ which do not reach immediately $\ms
W \cup {\ms B}^c$, in the sense of condition \eqref{f03}, to obtain
from a valley $(\ms W, \ms B, \bs \xi)$ satisfying conditions ({\bf
  V1}), ({\bf V2}), ({\bf V3}) a new valley $(\ms W, \ms B', \bs \xi)$
satisfying conditions ({\bf V1}), ({\bf V2}), ({\bf V3}'). We refer to
Example \ref{ex5}. 

We present in Example \ref{ex6} a triple which satisfies conditions
({\bf V1}), ({\bf V2}), \eqref{f03} but not ({\bf V3}). In particular,
the first three conditions do not imply ({\bf V3}). In this example,
there is a state in the annulus $\Delta_N$ which immediately jumps to
the well $W_N$, but which is visited several times before leaving the
basin $B_N$.  

\subsection{Tunneling and Metastability}
\label{meta}

Given a sequence of Markov processes $\{\eta^N_t : t\ge 0\}$ with
values in $E_N$, we might observe a complex landscape of valleys with
a wide variety of depths. We describe in this subsection the
inter-valley dynamics.

Fix a finite number of disjoint subsets $\ms E^1_N, \dots, \ms
E^\kappa_N$, $\kappa\ge 2$, of $E_N$: $\ms E^x_N\cap \ms
E^y_N=\varnothing$, $x\neq y$. Let $\ms E_N=\cup_{x\in S}\ms E^x_N$
and let $\Delta_N=E_N \setminus \ms E_N$ so that
$$
E_N \,=\, \underbrace{\ms E^1_N\cup\dots \cup \ms E^{\kappa}_N}_{\ms E_N}
\cup\, \Delta_N\;.
$$
Denote by $\Psi_N:\ms E_N\mapsto S = \{1,2,\dots,\kappa\}$,
the projection given by
$$
\Psi_N(\eta) \;=\; \sum_{x\in S} x\, \mathbf 1\{\eta \in \ms E^x_N\}
$$
and let 
\begin{equation*}
\breve{\ms E}^x_N \,:=\, \ms E_N\setminus \ms E^x_N\,,\quad 
{\ms E}^x=(\ms E^x_N : N\ge 1) \quad\textrm{and}\quad 
\breve{\ms E}^x=(\breve{\ms E}^x_N : N\ge 1) \,. 
\end{equation*}

For a subset $A$ of $E_N$, let $\mc S^A_t$ be the generalized inverse
of the additive functional $\mc T^{A}_t$ introduced in the beginning
of this section:
$$
\mc S^{A}_t(e_{\bs \cdot})\;:=\;\sup\{s\ge 0 : 
\mc T^{A}_s(e_{\bs \cdot})\le t\}\,.
$$
It is clear that $\mc S^{A}_t< +\infty$ for every $t\ge 0$ if, and
only if, $\mc T^{A}_t\to+\infty$ as $t\to+\infty$. To circumvent the
case $\mc S^{A}_t = \infty$, add an artificial point $\mf d$ to the
subset $A$. For any path $e_{\bs \cdot}\in D(\bb R_+,E_N)$ starting at
$e_0\in A$, denote by $e^{A}_{\bs \cdot}$ the \emph{trace} of the path
$e_{\bs \cdot}$ on the set $A$ defined by $e^{A}_{t} = e_{S^{A}_t}\,$
if $S^{A}_t <+\infty$, and $e^{A}_{t} = \mf d$ otherwise. Clearly, if
$e^{A}_{t} = \mf d$ for some $t$, then $e^{A}_{s} = \mf d$, for every
$s>t$.

Denote by $\{\eta^{\ms E_N}_t: t\ge 0\}$ the $\ms E_N\cup\{\mf
d\}$-valued Markov process obtained as the trace of $\{\eta_t: t\ge
0\} $ on $\ms E_N$, and by $\{X^N_t: t\ge 0\}$ the stochastic process
defined by $X^N_t=\Psi_N(\eta^{\ms E_N}_{t})$ whenever $\eta^{\ms
  E_N}_{t}\in \ms E_N$ and $X^N_t=\mf d$ otherwise. Clearly, besides
trivial cases, $X^N_{\bs \cdot}$ is not Markovian.

Let $\bs \theta=(\theta_N : N\ge 1)$ denote a sequence of positive
numbers and, for each $x\in S$, let ${\bs \xi}_x=(\xi^N_x : N\ge 1)$
be a point in $\ms E^x$. In order to describe the asymptotic
behaviour of the Markov process on the time-scale ${\bs \theta}$ we
use a Markov process $\{\bb P_{\,x} : x\in S\}$ defined on the
canonical path space $D(\bb R_+,S)$.

\renewcommand{\theenumi}{\Alph{enumi}}
\renewcommand{\labelenumi}{(\theenumi)}

\begin{definition}[Tunneling]
\label{locmetadef} 
A sequence of Markov processes $\{\eta^N_t : t\ge 0\}$, $N\ge 1$, on a
countable state space $E = (E_N : N\ge 1)$ exhibits a tunneling
behaviour on the time-scale $\bs \theta$, with metastates $\{\ms E^x :
x\in S\}$, metapoints $\{\bs \xi_x : x\in S\}$ and asymptotic Markov
dynamics $\{\bb P_{\,x} : x\in S\}$ if, for each $x\in S$,

\begin{enumerate} 
\item[({\bf M1})]
The point ${\bs \xi}_x$ is an attractor on $\ms E^x$ in the sense that
\begin{equation*}
\lim_{N\to\infty} \inf_{\eta\in \ms E^x_N} \prob_{\eta}
\big[\,T_{{\bs \xi}_x} < T_{\breve{\ms E}^x}\,\big]\, =\, 1\;;
\end{equation*}

\item[({\bf M2})]
For every
point $\bs \eta=(\eta^N : N\ge 1)$ in $\ms E^x$, the law of
the speeded up process $\{X^N_{t\theta_N} : t\ge 0\}$ under
$\prob_{\eta^N}$ converges to $\bb P_{\,x}$ as $N\uparrow\infty$;

\item[({\bf M3})]
For every $t> 0$, 
$$
\lim_{N\to+\infty} \sup_{\eta\in \ms E^x_N} \E_{\eta} \Big[\, 
\int_0^t {\bs 1}\{\eta^N_{s\theta_N}\in \Delta_N\}\, ds \,\Big]\,
=\,0\,.
$$
\end{enumerate}
\end{definition}

Let $\bs \Delta$ denote the sequence $(\Delta_N : N\ge 1)$ and
consider the triple $(\ms E^x, \ms E^x \cup \bs \Delta, \bs \xi_x)$
for a fixed $x$ in $S$. Clearly, if $x$ is not an absorbing state for
the asymptotic Markov dynamics, the triple $(\ms E^x, \ms E^x \cup \bs
\Delta, \bs \xi_x)$ is a valley of depth of the order of $\bs \theta$.
In this case, it may happen that the triple $(\ms E^{x}, \ms E^{x}
\cup \bs \Delta, \xi_{x})$ is an inaccessible valley in the sense that
once the process escapes from $\ms E^{x}$ it never returns to $\ms
E^{x}$.  This is illustrated in Example \ref{ex7}.  In contrast, if
$x$ is an absorbing state for the asymptotic Markov dynamics not much
information is available on the triple $(\ms E^x, \ms E^x \cup \bs
\Delta, \bs \xi_x)$. Example \ref{ex4} presents a Markov process which
exhibits a tunneling behavior in which a triple is not a valley. In
this example the triple contains a well of larger order depth than
$\bs \theta$. \medskip

Suppose that property $\bf (M2)$ is satisfied for a sequence of Markov
processes and denote by $S_*\subset S$ the subset of non-absorbing
states for $\{\mathbb P_x : x \in S\}$. For the states in $S_*$ we may
replace requirement $\bf (M3)$ by property $\bf (V3)$ of valley,
namely: For each $x\in S_*$,
\begin{equation*}
\tag*{\bf (C1)}
\lim_{N \to \infty} \sup_{\eta\in \ms E^{x}_N} {\bf P}_{\eta} 
\Big[\, \frac{1}{\theta_N} T_{\breve{\ms E}^{x}}({\bf \Delta}) 
> \delta \,\Big] \,=\, 0\,.
\end{equation*}

\begin{proposition}
\label{prom3}
Assume that $\bf (M2)$ is fulfilled for a sequence of Markov processes
$\{\eta^N_t : t\ge 0\}$, $N\ge 1$. If $\bf (M3)$ is satisfied for each
$x\in S\setminus S_*$ and if {\rm ({\bf C1})} holds for any $x\in S_*$,
then $\bf (M3)$ is in force for any $x\in S$.
\end{proposition}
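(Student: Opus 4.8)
The plan is to show that for $x \in S_*$ the two conditions $\textbf{(C1)}$ and $\textbf{(M3)}$ are in fact equivalent (given $\textbf{(M2)}$), so that the hypothesis $\textbf{(C1)}$ for $x \in S_*$ together with the assumed $\textbf{(M3)}$ for $x \in S\setminus S_*$ yields $\textbf{(M3)}$ everywhere. Fix $x\in S_*$. The key observation is that the expectation appearing in $\textbf{(M3)}$,
$$
\E_{\eta}\Big[\int_0^t \mathbf 1\{\eta^N_{s\theta_N}\in \Delta_N\}\, ds\Big],
$$
can be split at the first time the process leaves the basin $\ms E^x_N \cup \Delta_N$, i.e. at $\theta_N^{-1} T_{\breve{\ms E}^x}$. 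On the event $\{\theta_N^{-1}T_{\breve{\ms E}^x} \ge t\}$ the time integral is bounded by $\theta_N^{-1} T_{\breve{\ms E}^x}(\bs\Delta)$; on the complementary event the process has left $\ms E^x_N\cup\Delta_N$ before time $t\theta_N$, and we must control the time spent in $\Delta_N$ thereafter. The first contribution is handled directly by $\textbf{(C1)}$: since $\theta_N^{-1}T_{\breve{\ms E}^x}(\bs\Delta)$ is a nonnegative random variable that is small in probability uniformly over starting points in $\ms E^x_N$, and it is also bounded above by $t$ on the relevant event, the expectation of its truncation at $t$ vanishes uniformly (this is the truncation argument that gives \eqref{f05} from $\textbf{(V2)}$ and $\textbf{(V3}')$, applied here without needing $\textbf{(V2)}$ because we truncate at the deterministic level $t$).

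The second, post-escape contribution is where $\textbf{(M2)}$ enters. After leaving $\ms E^x_N$ the process is, by the strong Markov property at time $T_{\breve{\ms E}^x}$, distributed as the process started from some point $\eta' \in \breve{\ms E}^x_N \cup \Delta_N$. If $\eta'$ lies in some $\ms E^y_N$ with $y$ absorbing, the remaining time spent in $\Delta_N$ over the horizon $[0,t]$ is controlled by $\textbf{(M3)}$ for that $y$, which is assumed. If $\eta'$ lies in some $\ms E^y_N$ with $y\in S_*$, we iterate, using $\textbf{(C1)}$ for $y$ again; since $\textbf{(M2)}$ guarantees that in the macroscopic time scale the number of inter-metastate transitions in $[0,t]$ is tight (the limiting process $\{\bb P_x\}$ makes only finitely many jumps in $[0,t]$), this iteration terminates after a bounded number of steps with high probability, and the accumulated error is a bounded multiple of the uniform bounds from $\textbf{(C1)}$. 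Finally, if $\eta'$ lies in $\Delta_N$ itself, we use that the trace process reasoning behind $\textbf{(M2)}$ forces the process to reach $\ms E_N \cup$ (exterior) quickly; more precisely, the starting point after escape is reached via a jump out of $\ms E^x_N$, and by \eqref{f03}-type estimates one sees that $\Delta_N$ is traversed on a time scale negligible with respect to $\theta_N$, so this case contributes nothing in the limit.

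The main obstacle I expect is the bookkeeping in the iteration over successive metastates: one must organize the decomposition so that the uniform-in-$\eta$ character of $\textbf{(C1)}$ and $\textbf{(M3)}$ is preserved at each step, and so that the random (but tight, by $\textbf{(M2)}$) number of excursions between metastates does not cause the error bounds to blow up. A clean way to do this is to first prove, using $\textbf{(M2)}$ alone, that for any $\epsilon>0$ there is $K=K(\epsilon,t)$ such that with probability at least $1-\epsilon$ the speeded-up process performs at most $K$ transitions among the $\ms E^y_N$ in $[0,t]$, uniformly in the starting point in $\ms E^x_N$; then each excursion contributes an error bounded by the (vanishing) suprema in $\textbf{(C1)}$ or $\textbf{(M3)}$, so the total is at most $K$ times a vanishing quantity plus $\epsilon t$, and letting $N\to\infty$ and then $\epsilon\to 0$ finishes the proof.
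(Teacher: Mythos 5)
Your plan is correct and follows essentially the same route as the paper's proof: the paper also uses ({\bf M2}) together with the closedness of $\{J_t\ge K\}$ to make the number of inter-metastate transitions tight, then decomposes $\mc T^{\Delta_N}_t$ on $\{J_t(\hat X^N)=i\}$ into excursions bounded via the strong Markov property by $\sup_\eta \E_\eta[t\wedge T_{\breve{\ms E}^x}(\bs\Delta)]$ for non-absorbing states (which vanishes by ({\bf C1}), exactly your truncation-at-$t$ observation) and by ({\bf M3}) after the first visit to an absorbing metastate. The only cosmetic difference is that the paper splits at the jump times of $\hat X^N$ rather than at $T_{\breve{\ms E}^x}$, which makes your worry about landing in $\Delta_N$ after escape moot, since at each such jump time the process is in $\ms E_N$ by construction.
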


We arrive to the same conclusion in Proposition \ref{prom3} if we
assume instead that ({\bf C1}) holds for every state $x\in S$. This is
the content of Lemma \ref{cb3}. Actually, for an absorbing state $x$,
property ({\bf C1}) is stronger than $\bf (M3)$ because in this case
$\theta_N^{-1} T_{{\breve{\ms E}^x}}$ diverges. \medskip

The definition of tunneling examines the inter-valley dynamics between
wells with depths of the same order. It is far from a global
description since it does not exclude the possibility that $\bs
\Delta$ contains a landscape of valleys of depths of larger order than
$\bs \theta$. This situation is illustrated in Example \ref{ex5}. We
have also just seen that if $x$ is an absorbing state for the
asymptotic Markov dynamics, the set $\ms E^x$ may also contain a
landscape of valleys of larger order depth.  In order to exclude these
eventualities, we impose more restrictive conditions in the definition
of metastability. We replace ({\bf M1}) by ({\bf M1}') to ensure that
there are no wells in $\ms E^x$ of depth of order $\theta_N$ if $x$ is
an absorbing point for the asymptotic Markov dynamics; and we replace
({\bf M3}) by ({\bf M3}') to avoid wells in $\bs \Delta$ of depth of
order $\theta_N$ or larger.

\begin{definition}[Metastability]
\label{metadef}
A sequence of Markov processes $\{\eta^N_t : t\ge 0\}$, $N\ge 1$, on a
countable state space $E = (E_N : N\ge 1)$ exhibits a metastable
behaviour on the time-scale $\bs \theta$, with metastates $\{\ms E^x :
x\in S\}$, metapoints $\{\bs \xi_x : x\in S\}$ and asymptotic Markov
dynamics $\{\bb P_{\,x} : x\in S\}$, if for each $x\in S$,

\begin{enumerate} 
\item[({\bf M1}')] The point ${\bs \xi}_x$ is an attractor on $\ms
  E^x$ in the sense that for every $\delta>0$
\begin{equation*}
\lim_{N\to\infty} \sup_{\eta\in \ms E^x_N} 
\prob_{\eta}\big[\,T_{{\bs \xi}_x} > \delta
\theta_N \,\big]\, =\, 0\;;
\end{equation*}

\item[({\bf M2})] For every point $\bs \eta=(\eta^N : N\ge 1)$ in $\ms
  E^x$, the law of the speeded up process $\{X^N_{t\theta_N} : t\ge
  0\}$ under $\prob_{\eta^N}$ converges to $\bb P_{\,x}$ as
  $N\uparrow\infty$;

\item[({\bf M3}')] For every $t> 0$, 
$$
\lim_{N\to+\infty} \sup_{\eta\in E_N} \E_{\eta} \Big[\, 
\int_0^t {\bs 1}\{\eta^N_{s\theta_N}\in \Delta_N\}\, ds \,\Big]\,
=\,0\,.
$$
\end{enumerate}
\end{definition}

As for valleys, it follows from ({\bf M3}') that $\bs \Delta$ is
evanescent in the sense that for every $\delta > 0$,
\begin{equation}
\label{f06}
\lim_{N\to\infty}  \sup_{\eta\in \Delta_N} \prob_{\eta}\big[\, 
T_{\ms E_N} > \delta \theta_N \,  \,\big] \;=\; 0 \;.
\end{equation}

Example \ref{ex4} presents a Markov process which exhibits a tunneling
behaviour and fulfills condition ({\bf M3}') but violates assumption
({\bf M1}'). Example \ref{ex5} presents a Markov process with the
opposite properties. It fulfills conditions ({\bf M1}'), ({\bf M2}),
({\bf M3}) but violates assumption ({\bf M3}'). This latter example is
very instructive. It shows that the same Markov process may have
distinct metastable behaviors at different time scales. This occurs
when on one time scale there is an isolated point in the asymptotic
Markov dynamics. In longer time scales this metastate is reached by
other metastates, previous metastates coalesce in one larger
metastate, and a new metastable picture emerges.  \medskip

We conclude this subsection observing that we may define metastability
without referring to trace processes. Indeed, consider the $S$-valued
stochastic process $\hat X^N_t$ defined as
$$
\hat X^N_t\;=\; \Psi_N(\eta^N_{\sigma(t)}) \;,
$$
where $\sigma(t):=\sup\{s\le t : \eta^N_s\in \ms E_N\}.$ Note that
$\hat X^N_t$ is well defined whenever $\eta^N_t$ starts from a point
in $\ms E_N$.

\begin{proposition}
\label{corol}
In condition ({\bf M2}) of Definitions \ref{locmetadef},
\ref{metadef}, we may replace the stochastic process
$\{X^N_{t\theta_N} : t\ge 0\}$ by $\{\hat{X}^N_{t\theta_N} : t\ge
0\}$.
\end{proposition}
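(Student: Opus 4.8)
The plan is to show that $X^N_{t\theta_N}$ and $\hat X^N_{t\theta_N}$ coincide on a time interval $[0,T]$ with probability tending to one, under any starting point $\eta^N$ in $\ms E^x_N$; the convergence in $\bf (M2)$ for one process then immediately transfers to the other. The key observation is that both processes track in which metastate set the Markov process sits, but measured in two different clocks: $\hat X^N$ uses the real clock (frozen during excursions into $\Delta_N$, via the last-entrance map $\sigma(t)$), whereas $X^N$ uses the clock of the trace on $\ms E_N$, which simply deletes the $\Delta_N$-time. So the difference between the two is entirely governed by the total time the speeded-up process spends in $\Delta_N$ up to time $t$, which is exactly the quantity controlled by $\bf (M3)$ (or $\bf (M3)'$).

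First I would make the relation between the two clocks precise. Writing $A_N = \ms E_N$, $\mc T = \mc T^{A_N}$ for the occupation-time functional and $\mc S = \mc S^{A_N}$ for its generalized inverse, one has, for a trajectory started in $\ms E_N$, the identity $\eta^{\ms E_N}_t = \eta_{\mc S_t}$ and hence $X^N_t = \Psi_N(\eta_{\mc S_t})$. On the other hand, $\sigma(\mc S_t)$ equals $\mc S_t$ itself whenever $\eta_{\mc S_t}\in\ms E_N$ — which is always the case on the trace — because $\mc S_t$ is always a left-endpoint of an $\ms E_N$-excursion interval by construction of the inverse. Consequently $\hat X^N_{\mc S_t} = \Psi_N(\eta_{\sigma(\mc S_t)}) = \Psi_N(\eta_{\mc S_t}) = X^N_t$. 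The point is then that the real time $r$ and the trace time $t=\mc T_r$ are related by $r = \mc S_t + (\text{current }\Delta_N\text{-excursion length})$, and $r - \mc T_r = \int_0^r \mb 1\{\eta_s\in\Delta_N\}\,ds$ is the total $\Delta_N$-occupation up to real time $r$. So $|\hat X^N_r$-state versus $X^N_{\mc T_r}$-state$|$ can only disagree when $\eta_r\in\Delta_N$, in which case $\hat X^N_r = X^N_{\mc T_r}$ anyway by the freezing, or when the reparametrization has shifted the two beyond a jump of $\Psi_N(\eta^{\ms E_N}_{\bs\cdot})$.

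The quantitative step: fix $T>0$ and $\varepsilon>0$. By $\bf (M3)$ (for $x\in S$, uniformly over $\eta\in\ms E^x_N$), the expected value of $\int_0^{T'}\mb 1\{\eta_{s\theta_N}\in\Delta_N\}\,ds$ is $o(1)$ for any fixed $T' > T$; by Markov's inequality the event
\begin{equation*}
\Omega_N \;=\; \Big\{\, \int_0^{T'} \mb 1\{\eta^N_{s\theta_N}\in\Delta_N\}\,ds \,\le\, \varepsilon \,\Big\}
\end{equation*}
has probability tending to one. On $\Omega_N$, the trace clock $t\mapsto\mc S_{t\theta_N}/\theta_N$ satisfies $|\,\mc S_{t\theta_N}/\theta_N - t\,| \le \varepsilon$ for all $t\le T$ (since $r - \mc T_r \le\varepsilon\theta_N$ on the relevant range, and $\mc S$ is the inverse of $\mc T$), and likewise $\sigma(t\theta_N)$ differs from $t\theta_N$ by at most $\varepsilon\theta_N$. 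Therefore, on $\Omega_N$, for every $t\le T$ we have $\hat X^N_{t\theta_N} = \Psi_N(\eta^N_{\sigma(t\theta_N)}) = \Psi_N(\eta^{\ms E_N}_{\mc T_{\sigma(t\theta_N)}/\theta_N \cdot\theta_N})$ and $\mc T_{\sigma(t\theta_N)}/\theta_N$ is within $\varepsilon$ of $t$; that is, both $\hat X^N_{t\theta_N}$ and $X^N_{t\theta_N}$ are obtained by evaluating the single cadlag path $t\mapsto\Psi_N(\eta^{\ms E_N}_{t\theta_N})$ at two times that differ by at most $\varepsilon$. Feeding this into the definition of Skorohod distance (choose the time-change implicit in $\Omega_N$), the two paths are $\varepsilon$-close in the Skorohod metric on $[0,T]$; since $\varepsilon$ is arbitrary and $\bf (M2)$ gives a Skorohod-convergent limit for one of them, the other converges to the same limit.

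The main obstacle is bookkeeping at the endpoints of excursions and the passage through the Skorohod topology: one must verify that, on $\Omega_N$, the identity $\hat X^N_{\mc S_t} = X^N_t$ together with the bound $\|\mc S_{\bs\cdot}/\theta_N - \mathrm{id}\|_{\infty,[0,T]}\le\varepsilon$ really yields an admissible Skorohod time-change between the two processes — rather than pointwise closeness, which is false at jump times. This is handled by the standard fact that a cadlag path composed with a uniformly-$\varepsilon$-close increasing bijection is $\varepsilon$-close in the Skorohod metric, but it does require that $\mc S_{\bs\cdot}/\theta_N$ (suitably extended) be an increasing bijection of $[0,T]$, which is where the no-explosion and recurrence hypotheses enter to guarantee $\mc T_r\to\infty$ and hence $\mc S_t<\infty$. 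The auxiliary facts about trace processes collected in Section~\ref{sec03} — in particular the precise relation between $\eta^{\ms E_N}_{\bs\cdot}$, $\mc S$, and $\sigma$ — are exactly what make this rigorous.
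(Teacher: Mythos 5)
Your overall strategy is the same as the paper's: both $X^N$ and $\hat X^N$ are time--distortions of one another, the distortion is bounded by the occupation time of $\Delta_N$, and ({\bf M3}) kills that occupation time. The paper (Proposition \ref{nt}) phrases this as $\E_{\eta^N}[d(X^N,\hat X^N)]\to 0$ rather than via a high-probability event $\Omega_N$, but that difference is cosmetic.

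There is, however, one step where your argument as written does not go through and needs the repair that the paper carries out in Lemma \ref{dmx}. The correct pointwise relation is $X^N_t=\hat X^N_{\mc S_t}$, and $\mc S$ is strictly increasing but \emph{not continuous}: it jumps across every $\Delta_N$-excursion. Its inverse $\mc T$ is continuous but not injective, being constant on each excursion interval $[a,b]$; and on such an interval $\hat X^N_r$ equals the \emph{pre}-excursion metastate while $X^N_{\mc T_r}=X^N_{\mc T_a}$ equals the \emph{post}-excursion metastate (the trace path, being right-continuous, has already jumped at trace time $\mc T_a$). So whenever an excursion crosses between metastates, $\hat X^N$ is \emph{not} obtained by evaluating the path $s\mapsto X^N_s$ at a nearby time, and the ``standard fact'' about composing a cadlag path with an increasing bijection $\varepsilon$-close to the identity is not applicable: neither $\mc S$ nor $\mc T$ is such a bijection, and the honest composition disagrees with $\hat X^N$ precisely at the jumps one cares about. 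Since the state space is discrete, a single such mismatch costs Skorohod distance of order $1$, not $\varepsilon$, if handled naively. The fix is to abandon the composition picture and instead match the jump times: $\hat X^N$ jumps at the real times $\hat\tau_j$ (ends of metastate-crossing excursions) and $X^N$ jumps at the trace times $\tau_j=\mc T_{\hat\tau_j}$, with the same sequence of values; taking $\lambda\in\Lambda_m$ with $\lambda_{\hat\tau_j}=\tau_j$ and interpolating linearly gives an admissible time change with $\sup_t|\lambda_t-t|\le\max_j(\hat\tau_j-\tau_j)\le\mc T^{\Delta_N}_m$. With this substitution your proof is complete and coincides with the paper's.
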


\subsection{The positive recurrent case}
\label{prec}

The purpose of this subsection is to provide sufficient conditions to
ensure tunneling. Assume from now on that the Markov process
$\{\eta^N_{t} : t\ge 0\}$ is irreducible and positive recurrent, and
denote by $\mu_N$ its unique invariant probability measure. It follows
from these hypotheses that the holding rates $\lambda^N (\eta) = -
R_N(\eta, \eta)$ are strictly positive, and that the discrete time
Markov chain on $E_N$ which jumps from $\eta$ to $\xi$ at rate
$R_N(\eta,\xi)/\lambda^N(\eta)$ is irreducible and recurrent.

Furthermore, for every $\eta\in E_N$ and $A\subseteq E_N$, $\mc
T^{A}_t$ diverges. Consequently, the trace of the Markov process
$\{\eta^N_{t} : t\ge 0\}$ on the set $A$, denoted by $\{\eta^A_{t} :
t\ge 0\}$, is well defined and takes values in $A$. In fact, we prove
in Proposition \ref{protra} that the trace $\{\eta^A_{t} : t\ge 0\}$
is an irreducible and positive recurrent Markov process with invariant
probability measure equal to the measure $\mu_N$ conditioned on the
set $A$.

Consider two sequences of sets $\ms W$ and $\ms B$ satisfying
\eqref{val1}. To keep notation simple, let $\Delta_N = B_N \setminus
W_N$, $\bs \Delta = (\Delta_N : N\ge 1)$, and $\ms E_N = W_N \cup
B_N^c$, $\ms E = ( {\ms E}_N : N\ge 1)$. Denote by
$$
R^{\ms E}_N(\eta,\xi)\,,\quad \eta \,,\,\xi \in \ms E_N \,,\,
\eta\not=\xi\,,
$$ 
the transition rates of the Markov process $\{\eta^{\ms E_N}_t : t\ge
0\}$, the trace of $\{\eta^N_t : t\ge 0\}$ on $\ms E_N$.  Let $R^{\ms
  W}_N: W_N \to \bb R_+$ be the rate at which the trace process jumps
to $B^c_N$:
\begin{equation*}
R^{\ms W}_N (\eta) \;:= \; \sum_{\xi\in B^c_N} R^{\ms E}_N (\eta,\xi)\;,
\end{equation*}
and let $r_{N}({\ms W}, {\ms B}^c)$ be the average of $R^{\ms W}_N$
over $W_N$ with respect to $\mu_N^{W_N}$, the measure $\mu_N$
conditioned on $W_N$:
\begin{equation}
\label{f-1}
\begin{split}
r_{N}({\ms W}, {\ms B}^c) \;:=&\; \frac 1{\mu_N(W_N)} \sum_{\eta\in W_N} 
R^{\ms W}_N (\eta) \, \mu_N(\eta) \\
\;=&\; \frac 1{\mu_N(W_N)} \sum_{\eta\in W_N} 
\sum_{\xi\in B^c_N} R^{\ms E}_N (\eta,\xi) \, \mu_N(\eta) \;.
\end{split}
\end{equation}

Next theorem presents sufficient conditions for $\ms W$ and $\ms B$ to
be the well and the basin of a valley.

\begin{theorem}
\label{teo0}
Assume that there exists a point $\bs \xi=(\xi^N : N\ge 1)$ in $\ms W$
such that for every point $\bs \eta =(\eta^N : N\ge 1)$ in $\ms W$,
\begin{equation}
\label{cc1}
\lim_{N\to \infty} {\mb E}_{\eta^N} \Big[\int_{0}^{T_{{\mb \xi}}} 
R^{\ms W}_N(\eta^N_s)\,{\bf 1}\{\eta^N_s\in W_N\} \,ds \Big] \,=\,0\;,
\end{equation}
\begin{equation} 
\label{cc2}
\lim_{N\to \infty}  r_N(\ms W,\ms B^c)\, 
{\mb E}_{\eta^N}[T_{{\bs \xi}}(\ms W)] \,=\,0\,
\end{equation}
and
\begin{equation}
\label{ccb}
\lim_{N\to \infty} r_N(\ms W,\ms B^c) \, 
\E_{\eta^N} \big[T_{{\ms B}^c}({\bf \Delta})\big] \;=\; 0\;.
\end{equation}
Then, $(\ms W, \ms B, \bs \xi)$ is a valley with depth ${\bs
  \theta}=(\theta_N : N\ge 1)$ where $\theta_N=1/r_N(\ms W,\ms B^c)$,
$N\ge 1$. 
\end{theorem}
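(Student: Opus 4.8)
The plan is to verify the three defining properties (\textbf{V1}), (\textbf{V2}), (\textbf{V3}) of a valley with the stated depth $\theta_N = 1/r_N(\ms W, \ms B^c)$. The key observation is that hitting times and time-integrals for the original process $\{\eta^N_t : t \ge 0\}$ restricted to $\ms E_N = W_N \cup B_N^c$ can be computed via the trace process $\{\eta^{\ms E_N}_t : t \ge 0\}$: since the process must pass through $\ms E_N$ on its way from $W_N$ to $B_N^c$, the hitting time $T_{\ms B^c}$ and the time $T_{\ms B^c}(\bs\Delta)$ spent in $\bs\Delta$ are governed by the excursions of the trace process into $B_N^c$ interleaved with excursions into $\bs\Delta$. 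I would first set up this correspondence carefully (invoking Proposition \ref{protra} for the invariant measure $\mu_N^{W_N}$ of the trace on $\ms E_N$, and the trace results promised in Section \ref{sec03}), and rewrite $T_{\ms B^c}$ under $\prob_{\eta^N}$ as a sum: the clock run by the trace process on $\ms E_N$ until it first jumps to $B_N^c$, plus the real time spent in $\bs\Delta$ during that period. Condition \eqref{ccb} is precisely what kills the second contribution after dividing by $\theta_N$, which simultaneously handles (\textbf{V3}) and shows $T_{\ms B^c}/\theta_N$ is asymptotically equal to the trace-clock time to reach $B_N^c$.

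For (\textbf{V2}), the heart of the matter is to show that, for the trace process on $\ms E_N$ started from (a point in) $W_N$, the number of visits to $W_N$ before the first jump to $B_N^c$ is geometric with a parameter that, together with the per-visit holding times, produces an exponential law of mean $\theta_N$. I would use the standard fact that the hitting time of $B_N^c$ for the trace chain is a geometric sum of i.i.d.-like return excursions to $W_N$, and that such geometric sums, suitably normalized, converge to an exponential; the normalization constant is exactly $1/r_N(\ms W, \ms B^c)$ because $r_N(\ms W, \ms B^c)$ is the $\mu_N^{W_N}$-average rate of jumping out of $W_N$ to $B_N^c$ in the trace process. Here condition \eqref{cc1} enters: it says the expected ``escape mass'' $\int_0^{T_{\bs\xi}} R^{\ms W}_N(\eta^N_s)\,\mb1\{\eta^N_s \in W_N\}\,ds$ accumulated before reaching the attractor is negligible, which means that with high probability the process reaches $\bs\xi$ before the trace process ever attempts a jump to $B_N^c$; this both gives (\textbf{V1}) (combined with \eqref{cc2} controlling the time $T_{\bs\xi}(\ms W)$ relative to $\theta_N$) and lets us restart the exponential clock from $\bs\xi$, so the limiting law is independent of the starting point $\eta^N \in W_N$. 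I would phrase the exponential convergence through Laplace transforms: show $\E_{\eta^N}[\exp(-\lambda T_{\ms B^c}/\theta_N)] \to 1/(1+\lambda)$ using the renewal/geometric structure and the fact that individual excursion times into $W_N$ are $o(\theta_N)$ on the relevant scale, which follows because the total number of such excursions before escape is of order $\theta_N r_N(\ms W, \ms B^c) = O(1)$ in the appropriate sense and \eqref{cc2} bounds the time per excursion.

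I expect the main obstacle to be the rigorous bookkeeping that reduces (\textbf{V1})--(\textbf{V3}) for the true process to clean statements about the trace process on $\ms E_N$ and, within that, to the geometric-sum structure of the exit time from $W_N$. In particular, one must be careful that the ``time spent in $\bs\Delta$'' contributions are correctly attributed between successive visits to $W_N$ in the trace picture, and that the strong Markov property is applied at the right stopping times (first entrance to $\bs\xi$, successive returns to $W_N$, first jump of the trace to $B_N^c$) so that \eqref{cc1}, \eqref{cc2}, \eqref{ccb} can be summed over the (random, but $O(1)$-many) excursions without losing control. Once this reduction is in place, the exponential limit is a soft consequence of the geometric trials lemma for the trace chain (which I would isolate as a lemma or cite from the trace-process section), and (\textbf{V1}'), hence (\textbf{V1}), follows from \eqref{cc1}--\eqref{cc2} by a direct Markov-inequality argument; the equivalence of depths up to asymptotic ratio makes the precise constant $1/r_N(\ms W,\ms B^c)$ legitimate as ``the'' depth.
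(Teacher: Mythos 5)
Your plan reaches the right decomposition at the outer level — it matches the paper in reducing everything to the trace process on $\ms E_N=W_N\cup B_N^c$, in writing $T_{\ms B^c}=T_{\ms B^c}(\ms W)+T_{\ms B^c}(\bs\Delta)$ and disposing of the $\bs\Delta$-contribution (hence ({\bf V3}) and the passage from the trace clock to real time) via \eqref{ccb} and Chebyshev, and in getting ({\bf V1}) essentially from \eqref{cc1} (your ``Markov-inequality'' argument is correct: $\int_0^{\cdot}R^{\ms W}_N\mb 1\{W_N\}\,ds$ is the compensator of the counting process of jumps of the trace process into $B_N^c$, so $\prob_{\eta^N}[T_{\ms B^c}<T_{\bs\xi}]$ is bounded by the expectation in \eqref{cc1}). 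Where you genuinely diverge from the paper is the exponential law. The paper does \emph{not} use a renewal/geometric-sum argument: it attaches a two-state marker $X^N_t\in\{0,1\}$ that flips each time the trace process jumps from $W_N$ to $B_N^c$, proves tightness of $X^N$ by Aldous' criterion via a martingale decomposition, identifies the limit through a martingale problem, and the single analytic input is a replacement lemma (Lemma \ref{s08}/Corollary \ref{s09}) allowing $R^{\ms W}_N\mb 1\{W_N\}$ to be replaced by its $\mu_N$-average $r_N(\ms W,\ms B^c)\mb 1\{W_N\}$ in time integrals over \emph{fixed} windows $[0,t]$ — this is exactly where \eqref{cc1} and \eqref{cc2} enter, through the attractor. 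The benefit of the paper's route is that it never has to control individual excursions; the benefit of yours, if completed, is that it is more elementary and makes the geometric-trials mechanism visible.

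Two steps of your plan carry the real weight and are not justified as written. First, the normalization: in the renewal picture anchored at $\bs\xi$, the natural escape rate is $p_N/m_N$ with $p_N=\prob_{\xi^N}[T_{\ms B^c}<T^+_{\bs\xi}]$ and $m_N=\E_{\xi^N}[T^+_{\bs\xi}(\ms W)]$; the excursion identity \eqref{im} gives $m_N=\mu_N(W_N)/M_N(\xi^N)$ and $\E_{\xi^N}[\int_0^{T^+_{\bs\xi}}R^{\ms W}_N\mb 1\{W_N\}\,ds]=\mu_N(W_N)\,r_N(\ms W,\ms B^c)/M_N(\xi^N)$, but the latter is the expected \emph{number} of escapes per excursion, which only dominates $p_N$; identifying the two asymptotically requires showing that excursions with more than one escape are negligible, and this is a nontrivial use of \eqref{cc1} (after an escape the process returns to some point of $W_N$ and must reach $\bs\xi$ without accumulating further escape mass). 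Second, Rényi's theorem for geometric sums with an $N$-dependent summand law needs uniform integrability of the excursion durations at scale $\theta_N$ (a rare excursion of length of order $\theta_N$ occurring with probability of order $p_N$ would carry a constant fraction of the mean and destroy the exponential limit); \eqref{cc2} as stated is a pointwise-in-$\bs\eta$ condition and does not hand you this tail control directly — you would need something like the paper's Lemma \ref{lui} to upgrade it. Neither issue is fatal, but both must be addressed for the renewal route to close.
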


Conditions (\ref{cc1}) and (\ref{cc2}) clearly follow from the
stronger condition
$$
\lim_{N\to\infty} \sup \{ R^{\ms W}_N(\eta) : \eta\in W_N \} 
\, {\bf E}_{\eta^N}[\, T_{\bs \xi} \,] \,=\,0\,.
$$

To state sufficient conditions for a tunneling behaviour, recall
the notation introduced in Subsection \ref{meta}.  Let $R^{\ms E}_N :
\ms E_N \times \ms E_N\to \bb R_+$ be the transition rates of the
trace process $\{\eta^{\ms E_N}_t : t\ge 0\}$, let $R^{x,y}_N:\ms
E^x_N\to\bb R_+$, $x,y\in S$, $x\not =y$, be the rate at which the
trace process jumps to the set $\ms E^y_N$:
$$
R^{x,y}_N(\eta) \;:= \; \sum_{\xi\in\ms E^y_N}R^{\ms
  E}_N(\eta,\xi)\;,
$$
and let $R^{x}_N:\ms E^x_N\to\bb R_+$, $x\in S$, be the rate at which
it jumps to the set $\breve{{\ms E}}^x_N$: $R^x_N=\sum_{y\not = x}
R^{x,y}_N$. Observe that $R^x_N$ coincides with $R^{\ms W}_N$ if $(
W_N, B_N) = ({\ms E}^x_N , {\ms E}^x_N \cup { \Delta}_N)$. 

Let $\mu^x_N$ stand for the probability measure $\mu_N$ conditioned on
$\ms E^x_N$. Denote by $r_N(\ms E^x,\ms E^y)$ the $\mu^x_N$-expectation of
$R^{x,y}_N$:
\begin{equation*}
r_N(\ms E^x,\ms E^y) \; := \; \frac{1}{\mu_N(\ms E^x_N)}
\sum_{\eta\in\ms E^x_N} R^{x,y}_N(\eta)\, \mu_N(\eta)
\end{equation*}
and by $r_N(\ms E^x,\breve{\ms E}^x_N)$ the
$\mu^x_N$-expectation of $R^x_N$ so that
$$
r_N(\ms E^x,\breve{\ms E}^x_N) \,=\, \sum_{y\not = x} 
r_N(\ms E^x,\ms E^y)\,.
$$

To guarantee that the process $\{\eta^{N}_t : t\ge 0\}$ exhibits a
tunneling behavior, we first require that each subset $\{ \ms E^x :
x\in S\}$ satisfies conditions (\ref{cc1}) and (\ref{cc2}): For each
$x\in S$, there exists a point $\bs \xi_x=(\xi^{N}_x : N\ge 1)$ in
$\ms E^x$ such that
\begin{equation}
\tag*{\bf (C2)}
\lim_{N\to \infty} {\mb E}_{\eta^N} \Big[\int_{0}^{T_{{\mb \xi}_x}}
R^{x}_N(\eta^N_s)\,{\bf 1}\{\eta^N_s\in \ms E^x_N\} \,ds \Big] \,=\,0
\end{equation}
and
\begin{equation}
\tag*{\bf (C3)}
\lim_{N\to \infty}  r_N(\ms E^x,\breve{\ms E}^x)\, 
{\mb E}_{\eta^N}[T_{{\bs \xi_x}}(\ms E^x)] \,=\,0
\end{equation}
for every point $\bs \eta=(\eta^N : N\ge 1)$ in $\ms E^x$. 

\begin{theorem}
\label{teo1}
Suppose {\bf (C2)}, {\bf (C3)} and that there exists a sequence $\bs
\theta=(\theta_N : N\ge 1)$ of positive numbers such that, for every
pair $x,y\in S$, $x\not= y$, the following limit exists
\begin{equation}
\tag*{\bf (H0)}
r(x,y) \;:= \; \lim_{N\to\infty} \theta_N\, r_N(\ms E^x,\ms E^y) \,.
\end{equation}
Assume, furthermore, that $\bf (M3)$ is satisfied for each absorbing
state $x$ of the Markov process on $S$ determined by the rates $r$
and that {\bf (C1)} holds for any non-absorbing state. Then, the
sequence of Markov processes $\{\eta^N_t : t\ge 0\}$, $N\ge 1$,
exhibits a tunneling behaviour on the time-scale $\bs \theta$, with
metastates $\{\ms E^x : x\in S\}$, metapoints $\{\bs \xi_x : x\in S\}$
and asymptotic Markov dynamics characterized by the rates $r(x,y)$,
$x,y\in S$.
\end{theorem}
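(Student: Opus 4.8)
The plan is to verify the three conditions \textbf{(M1)}, \textbf{(M2)}, \textbf{(M3)} of Definition \ref{locmetadef} for the time scale $\bs\theta$ and the limiting rates $r(x,y)$, using Theorem \ref{teo0} as the engine for the single-well estimates and treating the multi-well dynamics through the trace process on $\ms E_N$. First I would dispose of \textbf{(M3)}: this is exactly the content of Proposition \ref{prom3}. By hypothesis \textbf{(M3)} holds for each absorbing state, and \textbf{(C1)} holds for each non-absorbing state, so Proposition \ref{prom3} gives \textbf{(M3)} for every $x\in S$ once we know \textbf{(M2)}; thus \textbf{(M3)} will be obtained as a by-product of the argument for \textbf{(M2)}, and I would record it last.

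For \textbf{(M1)}, fix $x\in S$ and apply Theorem \ref{teo0} with $(\ms W,\ms B)=(\ms E^x,\ms E^x\cup\bs\Delta)$, so that $B_N^c=\breve{\ms E}^x_N$, $R^{\ms W}_N=R^x_N$, and $r_N(\ms W,\ms B^c)=r_N(\ms E^x,\breve{\ms E}^x)$. Hypotheses \textbf{(C2)}, \textbf{(C3)} are precisely \eqref{cc1}, \eqref{cc2} for this triple, and \eqref{ccb} reads $r_N(\ms E^x,\breve{\ms E}^x)\,\E_{\eta^N}[T_{\breve{\ms E}^x}(\bs\Delta)]\to 0$, which follows from \textbf{(C1)} (for non-absorbing $x$, since \textbf{(C1)} controls $\theta_N^{-1}T_{\breve{\ms E}^x}(\bs\Delta)$ in probability and one upgrades to an expectation bound via the exponential tail from \textbf{(V2)}, while $\theta_N r_N(\ms E^x,\breve{\ms E}^x)$ is bounded by \textbf{(H0)}) and trivially from \textbf{(M3)} for absorbing $x$ (where $r_N(\ms E^x,\breve{\ms E}^x)\to 0$ faster than $\theta_N^{-1}$). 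Hence $(\ms E^x,\ms E^x\cup\bs\Delta,\bs\xi_x)$ is a valley of depth $1/r_N(\ms E^x,\breve{\ms E}^x)$; in particular, by condition \textbf{(V1}$'$\textbf{)} established in the remark after Definition \ref{well}, $T_{\bs\xi_x}$ is reached from $\ms E^x_N$ in a time $o(\theta_N)$, which gives \textbf{(M1)}.

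The core of the proof is \textbf{(M2)}: the convergence of the speeded-up projected trace process $\{X^N_{t\theta_N}\}$ to the Markov chain on $S$ with rates $r(x,y)$. I would work directly with the trace chain $\{\eta^{\ms E_N}_t\}$, whose invariant measure is $\mu_N$ conditioned on $\ms E_N$ (Proposition \ref{protra}), and whose rates are $R^{\ms E}_N$. The idea is to show that, on the time scale $\theta_N$, the process $X^N=\Psi_N(\eta^{\ms E_N})$ becomes Markovian: between successive visits to the attractors $\xi^N_x$ it spends time $o(\theta_N)$ (by the valley structure just established, applied to each $\ms E^x$), so that the relevant clock is the sequence of excursions from $\xi^N_x$ to $\breve{\ms E}^x$, and the holding time of $X^N$ at $x$ on scale $\theta_N$ converges to a mean-$1/r(x,\cdot)$ exponential with splitting probabilities $r(x,y)/\sum_{z}r(x,z)$ — the rate decomposition being furnished by the definition of $r_N(\ms E^x,\ms E^y)$ as the $\mu^x_N$-average of $R^{x,y}_N$ together with \textbf{(H0)}. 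Concretely I would (i) replace $X^N$ by the chain observed only at the attractors, controlling the error via \textbf{(V3)}-type estimates and \eqref{f02}; (ii) identify the asymptotic jump rate out of $x$ using the ergodic theorem for the trace chain on $\ms E^x_N$, which replaces time averages of $R^x_N(\eta^{\ms E_N}_s)$ by the spatial average $r_N(\ms E^x,\breve{\ms E}^x)$; (iii) check tightness in the Skorohod topology and identify all finite-dimensional limits via the martingale problem for the generator $\sum_{y\ne x} r(x,y)(f(y)-f(x))$. The analogous reasoning, carried out for the full (not traced) process but restricted to the excursion returning to $\ms E_N$, simultaneously yields \eqref{f05}-type control and, combined with Proposition \ref{prom3}, closes \textbf{(M3)}.

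The main obstacle I anticipate is step (ii) above — the transfer from the Markovian-looking but non-Markov process $X^N$ to a genuine Markov chain, i.e.\ proving the loss of memory of the inter-valley jump times uniformly in $N$. The subtlety is that the trace chain on $\ms E^x_N$ need not equilibrate on a scale much shorter than $\theta_N$ in any obvious norm; what \textbf{(C2)}, \textbf{(C3)} buy us is only that the attractor is hit quickly and that excursions away from the attractor within $\ms E^x_N$ are short compared with the exit time. Turning these into a quantitative replacement of the empirical measure of $\{\eta^{\ms E_N}_{s\theta_N} : s\le t\}$ on $\ms E^x_N$ by $\mu^x_N$, with an error that vanishes after multiplication by $\theta_N r_N(\ms E^x,\breve{\ms E}^x)=O(1)$, is where the real work lies; I would handle it by a renewal decomposition over the successive returns to $\xi^N_x$ and a second-moment (or Chebyshev) estimate on the number of such returns before time $t\theta_N$, using that each return cycle has $\mu$-probability of escaping to $\breve{\ms E}^x$ comparable to $r_N(\ms E^x,\breve{\ms E}^x)\,\E_{\xi^N_x}[\text{cycle length}]$.
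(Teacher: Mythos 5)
Your overall architecture coincides with the paper's: \textbf{(M3)} is obtained from Proposition \ref{prom3} once \textbf{(M2)} is known; \textbf{(M2)} is proved by tightness plus the martingale problem for the generator with rates $r(x,y)$, the crucial input being the replacement of time integrals of $R^{x,y}_N(\eta^{\ms E_N}_s)$ by their conditional expectations $r_N(\ms E^x,\ms E^y)$ given the partition $\{\ms E^x_N\}$; and the replacement itself is controlled by a renewal decomposition over successive returns to the attractor, which is exactly what \textbf{(C2)}, \textbf{(C3)} quantify. One remark on your step (ii): the paper's Lemma \ref{s08} and Corollary \ref{s09} show that no second-moment or Chebyshev estimate on the number of return cycles is needed — because the function being integrated is centered with respect to $\mu^x_N$, the excursion decomposition produces an \emph{exact} identity in which the potentially divergent term $\langle g\rangle_{\mu}\sum_k\prob[H_k\le t]$ vanishes, leaving a bound by $2\sup_{\eta}\E_\eta[\int_0^{T_{\xi}}|g|\,ds]$ uniformly in $t$. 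Your proposed second-moment control of the renewal count is the place where a blind execution is most likely to get stuck; the mean-zero trick is the way around it.

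There is one genuine flaw: your route to \textbf{(M1)} via Theorem \ref{teo0} requires verifying \eqref{ccb}, i.e.\ $r_N(\ms E^x,\breve{\ms E}^x)\,\E_{\eta^N}[T_{\breve{\ms E}^x}(\bs\Delta)]\to 0$, and your claim that this is ``trivial from \textbf{(M3)}'' for absorbing $x$ does not hold: \textbf{(M3)} only bounds the time spent in $\Delta_N$ over horizons of order $\theta_N$, whereas for an absorbing state $T_{\breve{\ms E}^x}$ lives on a strictly longer (possibly infinite) scale, so $\E_{\eta^N}[T_{\breve{\ms E}^x}(\bs\Delta)]$ is not controlled. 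Indeed the conclusion you are trying to reach is false in general: Example \ref{ex4} exhibits a tunneling process for which the triple $(\ms E^x,\ms E^x\cup\bs\Delta,\bs\xi_x)$ attached to the absorbing state is \emph{not} a valley. The repair is that \textbf{(M1)} does not need the full valley structure, only the analogue of \textbf{(V1)}, and the paper's Proposition \ref{cb} establishes \textbf{(V1)} from \eqref{cc1}--\eqref{cc2} alone (that is, from \textbf{(C2)}, \textbf{(C3)}), without \eqref{ccb}: one proves tightness of the two-state marker process recording crossings from $\ms E^x_N$ to $\breve{\ms E}^x_N$, deduces that $T_{\breve{\ms E}^x}(\ms E^x)$ is bounded away from $0$ on the relevant scale, and combines this with the fast hitting of the attractor given by \textbf{(C3)}. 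With \textbf{(M1)} obtained this way, the rest of your plan goes through.
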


Notice that in the previous theorem we might get
$$
\sum_{x\in S\setminus\{x_0\}}r(x,x_0) \,=\, 0 \quad \text{and}\quad
\sum_{x\in S\setminus\{x_0\}}r(x_0,x) \;>\; 0
$$
for some $x_0\in S_*$. In this case, the triple $(\ms E^{x_0}, \ms
E^{x_0}\cup {\bs \Delta}, {\bs \xi_{x_0}})$ turns out to be an
inaccessible valley, as it is illustrated in Example \ref{ex7}, even
tough it has the same depth than all the other wells involved in the
tunneling.

\subsection{The reversible case, potential theory}
\label{cap}

In addition to the positive recurrent assumption, let us now further
assume that $\mu_N$ is a reversible probability measure. In this case,
we may list simple conditions, all of them expressed in terms of the
capacities and the reversible measure $\mu_N$, which ensure the
existence of valleys and the tunneling behaviour.

As we have already seen, we need good estimates for the mean of entry
times. In the reversible case, the mean of an entry time has a simple
expression involving capacities, which are defined as follows. For two
disjoint subsets $A$, $B$ of $E_N$ define
$$
\mathcal C_N(A,B) \;:=\; \{f\in L^2(\mu_N) : \textrm{$f(\eta)=1$ 
$\forall$ $\eta\in A$ and $f(\xi)=0$ $\forall$ $\xi\in B$}\}\;.
$$
Let $\langle\cdot,\cdot\rangle_{\mu_N}$ stand for the scalar product
in $L^2(\mu_N)$. Denote by $D_N$ the Dirichlet form associated to the
generator $L_N$:
$$
D_N(f)\;:=\; \langle -L_N f,f \rangle_{\mu_N} \;,
$$
for every $f$ in $L^2(\mu_N)$. An elementary computation shows that
\begin{equation*}
D_N(f) \;=\; \frac 12 \sum_{\eta,\xi\in E_N} \mu_N(\eta) \, R_N(\eta,\xi)
\, \{f(\xi) - f(\eta)\}^2\;.
\end{equation*}
The capacity of two disjoint subsets $A$, $B$ of $E_N$ is defined as
$$
\Cap_N(A,B) \;:=\; \inf\big\{\,D_N(f) : f\in \mathcal
C_N(A,B)\,\big\}\;. 
$$

In the reversible context, the expressions appearing in Theorems
\ref{teo0} and \ref{teo1} can be computed by using capacities. Denote
by $f^N_{AB}:E_N \to\bb R$ the function in $\mc C_N (A,B)$ defined as
\begin{equation*}
f_{AB}^N(\eta) \;:=\; \prob_{\eta}\big[\,\tau_{A} < \tau_B \,\big]\;.
\end{equation*}
In addition, for two points $\bs \xi=(\xi^{N} : N\ge 1)$ and $\bs
\eta=(\eta^N : N\ge 1)$ in $\ms W$, $\eta^N\not=\xi^N$, $N\ge 1$, set
$f_N({\bs \eta},{\bs \xi}) \,=\, f^N_{\{\eta^N\}\{\xi^N\}}$ and
$\Cap_N({\bs \eta},{\bs \xi})=\Cap_N(\{\eta^N\},\{\xi^N\})$.

Consider two sequences of sets $\ms W$ and $\ms B$ satisfying
\eqref{val1} and recall the notation introduced in the previous
subsection. By \eqref{g02},
\begin{equation}
\label{for1}
{\mb E}_{\eta^N} \Big[\int_{0}^{T_{{\mb \xi}}} 
R^{\ms W}_N(\eta^N_s)\,{\bf 1}\{\eta^N_s\in  W_N\} \,ds \Big] 
\;=\; \frac{\langle \, R^{\ms W}_N \,  {\bf 1}\{W_N\} \,,\, 
f_N({\bs \eta},{\bs \xi}) \, \rangle_{\mu_N}}{\Cap_N( {\bs \eta} , 
{\bs \xi })}\;,
\end{equation}
\begin{equation}
\label{for2}
{\mb E}_{\eta^N}[T_{{\bs \xi}}(\ms W)] 
\;=\; \frac{  \langle \, {\bf 1}\{W_N\} \,,\, 
f_N({\bs \eta},{\bs \xi}) \,\rangle_{\mu_N} }
{ \Cap_N({\bs \eta},{\bs \xi}) }\;,
\end{equation}
and, by Lemma \ref{g03},
\begin{equation}
\label{for3}
r_N(\ms W,{\ms B}^c) \;=\; \frac{ \Cap_N(\ms W, \ms B^c) }
{ \mu_N(\ms W) \, }\,\cdot
\end{equation}
In the last identity $\Cap_N(\ms W, \ms B^c):=\Cap_{N}( W_N , {B}^c_N
)$ and $\mu_N(\ms W):= \mu_N(W_N)$. The previous relations can be used
to check conditions (\ref{cc1}) and (\ref{cc2}) in Theorem \ref{teo0}
as well as assumptions $\bf (C2)$ and $\bf (C3)$ in Theorem
\ref{teo1}.

Furthermore, since $0 \le f_N({\bs \eta},{\bs \xi}) \le 1$, and since,
by \eqref{f-1}, $\langle \, R^{\ms W}_N \, {\bf 1}\{W_N\}
\rangle_{\mu_N} = \mu_N(\ms W) r_N(\ms W,{\ms B}^c)$, by \eqref{for3},
\begin{equation*}
\begin{split}
&{\mb E}_{\eta^N} \Big[\int_{0}^{T_{{\mb \xi}}} 
R^{\ms W}_N(\eta^N_s)\,{\bf 1}\{\eta^N_s\in W_N\} \,ds \Big] \,\le \,
\frac{\Cap_N(\ms W,{\ms B}^c)} {\Cap_N({\bs \xi})} \\
&\quad\text{and}\quad 
r_N(\ms W,\ms B^c)\, {\mb E}_{\eta^N}[T_{{\bs \xi}}(\ms W)]
\,\le \, \frac{\Cap_N(\ms W,{\ms B}^c)} {\Cap_N({\bs \xi})} \; ,
\end{split}
\end{equation*}
where $\Cap_N({\bs \xi}) = \inf\{ \Cap_N({\eta}, {\xi^N}) : \eta \in
\ms W\setminus \{\xi^N\} \}$.  Hence, conditions (\ref{cc1}) and
(\ref{cc2}) follow from the stronger condition
\begin{equation}
\label{sufcond2}
\lim_{N\to\infty} \frac{\Cap_N(\ms W,{\ms B}^c)}
{\Cap_N({\bs \xi})}\,=\,0\,.
\end{equation}

\begin{theorem}
\label{teo0d}
Assume that (\ref{sufcond2}) holds for some point $\bs \xi=(\xi^N :
N\ge 1)$ in $\ms W$ and that
\begin{equation}
\label{sufcond2b}
\lim_{N\to\infty} \frac{\mu_N(B_N\setminus W_N)}{\mu_N(W_N)} 
\,=\,0\,.
\end{equation}
Then, for all points $\bs \zeta$ in $\ms W$, $(\ms W, \ms B,
\bs\zeta)$ is a valley of depth $\mu_N(\ms W)/\Cap_N(\ms W, \ms B^c)$,
$N\ge 1$.
\end{theorem}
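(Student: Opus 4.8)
\textbf{Proof plan for Theorem \ref{teo0d}.}

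The plan is to deduce Theorem \ref{teo0d} from Theorem \ref{teo0} by verifying conditions \eqref{cc1}, \eqref{cc2} and \eqref{ccb} for an arbitrary point $\bs\zeta$ in $\ms W$, using the reversible-case formulas \eqref{for1}--\eqref{for3}. First I would note that \eqref{cc1} and \eqref{cc2} for the given point $\bs\xi$ satisfying \eqref{sufcond2} are immediate from the chain of inequalities displayed just before the theorem statement, which bound both left-hand sides by $\Cap_N(\ms W,\ms B^c)/\Cap_N(\bs\xi) \to 0$. The genuine work is to pass from the distinguished point $\bs\xi$ to an arbitrary attractor $\bs\zeta$, and to verify \eqref{ccb}.

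For the transfer to an arbitrary $\bs\zeta$, the key observation is that $r_N(\ms W,\ms B^c) = \Cap_N(\ms W,\ms B^c)/\mu_N(\ms W)$ by \eqref{for3}, while by \eqref{for2}
\begin{equation*}
r_N(\ms W,\ms B^c)\,{\mb E}_{\eta^N}[T_{\bs\zeta}(\ms W)]
\;=\; \frac{\Cap_N(\ms W,\ms B^c)}{\mu_N(\ms W)}\cdot
\frac{\langle\,{\bf 1}\{W_N\}\,,\,f_N(\bs\eta,\bs\zeta)\,\rangle_{\mu_N}}
{\Cap_N(\bs\eta,\bs\zeta)}
\;\le\; \frac{\Cap_N(\ms W,\ms B^c)}{\Cap_N(\bs\eta,\bs\zeta)}\;,
\end{equation*}
since $0\le f_N \le 1$ and $\langle {\bf 1}\{W_N\}\rangle_{\mu_N} \le \mu_N(\ms W)$. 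So it suffices to show $\Cap_N(\bs\xi)$ and $\Cap_N(\bs\zeta)$ are comparable; more precisely, by the triangle-type inequality for capacities (monotonicity plus the elementary bound $\Cap_N(\{\eta\},\{\zeta\})^{-1} \le$ a constant times $(\Cap_N(\{\eta\},\{\xi\})^{-1} + \Cap_N(\{\xi\},\{\zeta\})^{-1})$, which follows from the Dirichlet-principle characterization or from the fact that equilibrium potentials can be glued), one gets that $\Cap_N(\ms W, \ms B^c)/\Cap_N(\bs\zeta) \to 0$ is implied by \eqref{sufcond2}. The same comparison handles \eqref{cc1} for $\bs\zeta$. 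This yields \eqref{cc1} and \eqref{cc2} for every $\bs\zeta$ in $\ms W$.

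It remains to check \eqref{ccb}, that is, $r_N(\ms W,\ms B^c)\,\E_{\eta^N}[T_{\ms B^c}(\bs\Delta)] \to 0$. Here I would bound the occupation time of $\bs\Delta$ before hitting $\ms B^c$ by the total occupation time of $\bs\Delta$ before hitting $\ms W\cup\ms B^c=\ms E_N$ started from $\eta^N\in W_N$ — which is zero, since the process starts in $W_N$ — so that $T_{\ms B^c}(\bs\Delta)$ only accumulates on excursions from $\ms W$ that return to $\ms W$. Estimating this via the trace process on $\ms E_N$ and the stationary measure, one has the crude bound $\E_{\eta^N}[T_{\ms B^c}(\bs\Delta)] \le C\,\mu_N(\Delta_N)/\Cap_N(\ms W,\ms B^c)$ for a universal constant (this is the reversible formula for an occupation time written in terms of an equilibrium potential bounded by $1$, analogous to \eqref{for2} with $W_N$ replaced by $\Delta_N$ and a suitable capacity in the denominator). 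Multiplying by $r_N(\ms W,\ms B^c) = \Cap_N(\ms W,\ms B^c)/\mu_N(\ms W)$ gives a bound by $C\,\mu_N(\Delta_N)/\mu_N(\ms W) = C\,\mu_N(B_N\setminus W_N)/\mu_N(W_N) \to 0$ by \eqref{sufcond2b}.

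\textbf{Main obstacle.} The delicate point is the third step: producing the clean bound $\E_{\eta^N}[T_{\ms B^c}(\bs\Delta)] \le C\,\mu_N(\Delta_N)/\Cap_N(\ms W,\ms B^c)$ uniformly in the starting point $\eta^N\in W_N$ and with a constant not depending on $N$. This requires expressing the occupation-time-before-exit as a Dirichlet-type quantity — presumably through the trace process on $\ms E_N$ and an identity like \eqref{for2} for that trace — and then dominating the relevant equilibrium potential by $1$. Getting the right capacity (that of $\ms W$ versus $\ms B^c$, rather than some harder-to-control quantity) in the denominator is where the reversibility and the formulas \eqref{for1}--\eqref{for3} must be used carefully; the secondary obstacle is the capacity-comparison inequality needed to replace $\bs\xi$ by an arbitrary $\bs\zeta$, which is standard but must be invoked in the right form.
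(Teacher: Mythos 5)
Your overall architecture (reduce everything to Theorem \ref{teo0} by checking \eqref{cc1}, \eqref{cc2}, \eqref{ccb}) is sound, and your first two steps work: \eqref{cc1}--\eqref{cc2} for $\bs\xi$ are indeed immediate from \eqref{sufcond2}, and your transfer to an arbitrary $\bs\zeta$ via the triangle inequality for effective resistances, $\Cap_N(\eta,\zeta)^{-1}\le \Cap_N(\eta,\xi^N)^{-1}+\Cap_N(\xi^N,\zeta)^{-1}\le 2\,\Cap_N(\bs\xi)^{-1}$, is a legitimate (and slightly different) route from the paper, which instead transfers the attractor by the triangle inequality for mean hitting times, $\E_{\eta^N}[T_{\bs\zeta}(\ms W)]\le \E_{\eta^N}[T_{\bs\xi}(\ms W)]+\E_{\xi^N}[T_{\bs\zeta}(\ms W)]$, together with Proposition \ref{prop1}.

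The genuine gap is in your third step. The bound you need, $\E_{\eta^N}\big[T_{\ms B^c}(\bs\Delta)\big]\le C\,\mu_N(\Delta_N)/\Cap_N(\ms W,\ms B^c)$, is not what the occupation-time identity delivers. Proposition \ref{bovier} applied with $A=\{\eta^N\}$, $B=B^c_N$, $g=\mb 1\{\Delta_N\}$ gives exactly
\begin{equation*}
\E_{\eta^N}\big[T_{\ms B^c}(\bs\Delta)\big]
\;=\;\frac{\langle\,\mb 1\{\Delta_N\}\,,\,f_N(\bs\eta,\ms B^c)\,\rangle_{\mu_N}}
{\Cap_N(\bs\eta,\ms B^c)}
\;\le\;\frac{\mu_N(\Delta_N)}{\Cap_N(\bs\eta,\ms B^c)}\;,
\end{equation*}
with the \emph{point-to-set} capacity $\Cap_N(\bs\eta,\ms B^c)$ in the denominator. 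Monotonicity only gives $\Cap_N(\bs\eta,\ms B^c)\le\Cap_N(\ms W,\ms B^c)$, i.e.\ the inequality goes the wrong way, so one must prove the reverse comparison $\Cap_N(\ms W,\ms B^c)/\Cap_N(\bs\eta,\ms B^c)\to 1$ uniformly over $\bs\eta$ in $\ms W$ (statement \eqref{ce} of Proposition \ref{teo0r}). This is not a routine variational or resistance-metric estimate deducible from \eqref{sufcond2} alone: the resistance triangle inequality reduces a general $\bs\eta$ to the distinguished $\bs\xi$ but leaves you needing $\Cap_N(\bs\xi,\ms B^c)\gtrsim\Cap_N(\ms W,\ms B^c)$, which is again the statement to be proved. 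The paper obtains \eqref{ce} dynamically: it first proves the sharp mean-exit-time asymptotics $r_N(\ms W,\ms B^c)\,\E_{\eta^N}[T_{\ms B^c}(\ms W)]\to 1$ (identity \eqref{aa1} of Proposition \ref{cb}, which rests on the trace-process/martingale convergence argument), rewrites it via Proposition \ref{bovier} and \eqref{for3} as $\langle\mb 1\{W_N\},f_N(\bs\zeta,\ms B^c)\rangle_{\mu_N}\Cap_N(\ms W,\ms B^c)/\big(\mu_N(\ms W)\Cap_N(\bs\zeta,\ms B^c)\big)\to 1$, and then uses condition ({\bf V1}) (i.e.\ \eqref{vze}) to show the equilibrium potential $f_N(\bs\zeta,\ms B^c)$ tends to $1$ uniformly on $W_N$, so that the inner product is asymptotically $\mu_N(\ms W)$. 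Your proposal correctly identifies this as the main obstacle but supplies no argument for it, and it is precisely the nontrivial content separating Theorem \ref{teo0d} from a direct application of Theorem \ref{teo0}.
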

 
Assumption (\ref{sufcond2}) is also powerful in the context of
tunneling. Recall the notation introduced at the beginning of
Subsection \ref{meta}.

\begin{theorem}
\label{teo1r}
Suppose that for each $x\in S$, there exists a point $\bs
\xi_x=(\xi^{N}_x : N\ge 1)$ in $\ms E^x$ such that
\begin{equation*}
\tag*{\bf (H1)} 
\lim_{N\to \infty} \frac{\Cap_N(\ms E^x,\breve{\ms E}^x)}
{\Cap_N(\bs \xi_x)}\;=\;0\; .
\end{equation*}
Suppose, furthermore, that $\bf (H0)$ holds for some ${\bs
  \theta}=(\theta_N : N\ge 1)$, that $\bf(M3)$ holds for each
absorbing state of the Markov dynamics on $S$ determined by the rates
$r$ and that
\begin{equation*}
\tag*{\bf (H2)}
\lim_{N \to \infty} \frac{ \mu_N(\bs \Delta) }{ \mu_N(\ms E^{x}) } 
\;=\; 0 \;,
\end{equation*}
for each non-absorbing state $x$. Then, the sequence of Markov
processes $\{\eta^N_t : t\ge 0\}$, $N\ge 1$, exhibits a tunneling
behaviour on the time-scale $\bs \theta$, with metastates $\{\ms E^x :
x\in S\}$, metapoints $\{\bs \xi_x : x\in S\}$ and asymptotic Markov
dynamics characterized by the rates $r(x,y)$, $x,y\in S$.
\end{theorem}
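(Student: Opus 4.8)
The plan is to deduce Theorem \ref{teo1r} from Theorem \ref{teo1} by translating each of its capacity-and-measure hypotheses into the probabilistic conditions \textbf{(C2)}, \textbf{(C3)}, \textbf{(H0)}, \textbf{(C1)} and \textbf{(M3)} required there. Hypothesis \textbf{(H0)} is identical in both statements, so nothing is needed there. The substance of the argument is to verify \textbf{(C2)} and \textbf{(C3)} for every $x\in S$ from \textbf{(H1)}, and to verify \textbf{(C1)} for each non-absorbing state from \textbf{(H2)}; the assumption that \textbf{(M3)} holds at absorbing states is carried over verbatim.

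\medskip

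For \textbf{(C2)} and \textbf{(C3)}, I would apply the identities \eqref{for1} and \eqref{for2} with $(W_N,B_N)=(\ms E^x_N,\ms E^x_N\cup\Delta_N)$, for which, as observed just before the statement of Theorem \ref{teo1r}, $R^{\ms W}_N$ coincides with $R^x_N$ and $r_N(\ms W,\ms B^c)$ with $r_N(\ms E^x,\breve{\ms E}^x)$. Using $0\le f_N(\bs\eta,\bs\xi_x)\le 1$ and the normalization $\langle R^x_N\,\mb 1\{\ms E^x_N\}\rangle_{\mu_N}=\mu_N(\ms E^x_N)\,r_N(\ms E^x,\breve{\ms E}^x)$, together with \eqref{for3}, both expectations in \textbf{(C2)} and $r_N(\ms E^x,\breve{\ms E}^x)\,\mb E_{\eta^N}[T_{\bs\xi_x}(\ms E^x)]$ are bounded above by $\Cap_N(\ms E^x,\breve{\ms E}^x)/\Cap_N(\bs\xi_x)$, which is exactly the sufficient condition \eqref{sufcond2} written for $(\ms E^x,\ms E^x\cup\bs\Delta)$. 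Hence \textbf{(H1)} forces \textbf{(C2)} and \textbf{(C3)}. This is precisely the computation carried out in the display preceding \eqref{sufcond2}, applied now coordinatewise in $x$.

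\medskip

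For \textbf{(C1)} at a non-absorbing state $x$, I need to control $\theta_N^{-1}T_{\breve{\ms E}^x}(\bs\Delta)$ uniformly over starting points in $\ms E^x_N$. The natural route is to first bound $\sup_{\eta\in\ms E^x_N}\mb E_\eta[T_{\breve{\ms E}^x}(\bs\Delta)]$ and then use Markov's inequality together with $\theta_N\asymp 1/r_N(\ms E^x,\breve{\ms E}^x)$, which holds because $x$ is non-absorbing (so the limit $r(x,\breve{\ms E}^x)=\sum_{y\ne x}r(x,y)$ is strictly positive and finite by \textbf{(H0)}). By the trace decomposition, the time the original process spends in $\bs\Delta$ before reaching $\breve{\ms E}^x$, started from $\ms E^x_N$, can be rewritten using the trace process on $\ms E^x_N\cup\Delta_N$ and the sojourns it makes in $\Delta_N$ between successive returns to $\ms E^x_N$; a reversibility identity of the type in \eqref{g02} then expresses the relevant expectation as a quotient whose numerator involves $\mu_N(\bs\Delta)$ and whose denominator involves a capacity bounded below by $\Cap_N(\ms E^x,\breve{\ms E}^x)=\mu_N(\ms E^x)\,r_N(\ms E^x,\breve{\ms E}^x)$. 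After multiplying by $\theta_N\asymp 1/r_N(\ms E^x,\breve{\ms E}^x)$ the estimate reduces to $\mu_N(\bs\Delta)/\mu_N(\ms E^x)$, which vanishes by \textbf{(H2)}.

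\medskip

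\textbf{The main obstacle} I anticipate is the uniform-in-starting-point control needed for \textbf{(C1)}: the capacity/measure identities give clean formulas for fixed pairs of sets, but \textbf{(C1)} demands a supremum over all $\eta\in\ms E^x_N$, which requires either a maximum-principle argument for the equilibrium-type potential governing $\mb E_\eta[T_{\breve{\ms E}^x}(\bs\Delta)]$, or the Dirichlet-principle comparison of that potential with the one already estimated via \eqref{for1}. Handling the sojourns in $\Delta_N$ before absorption—making sure the relevant hitting-time functional is the one whose expectation the reversible identity computes, and that the denominator really is bounded below by $\Cap_N(\ms E^x,\breve{\ms E}^x)$ rather than a smaller capacity—is the delicate bookkeeping step; everything else is a direct substitution into Theorem \ref{teo1}.
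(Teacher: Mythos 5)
Your overall route is the paper's: reduce to Theorem \ref{teo1} by checking \textbf{(C2)}, \textbf{(C3)} from \textbf{(H1)} via \eqref{for1}--\eqref{for3} and the bound by $\Cap_N(\ms E^x,\breve{\ms E}^x)/\Cap_N(\bs\xi_x)$ (the coordinatewise version of \eqref{sufcond2}), carry \textbf{(M3)} at absorbing states over verbatim, and obtain \textbf{(C1)} at non-absorbing states by bounding $\sup_{\eta}\theta_N^{-1}\E_\eta[T_{\breve{\ms E}^x}(\bs\Delta)]$ and invoking \textbf{(H0)}, \textbf{(H2)}. The first two parts are correct and identical to the paper's argument.

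The gap is exactly the step you flag as the ``delicate bookkeeping'': you need $\Cap_N(\{\eta\},\breve{\ms E}^x)$ to be bounded \emph{below}, uniformly over $\eta\in\ms E^x_N$, by (essentially) $\Cap_N(\ms E^x,\breve{\ms E}^x)$, and neither of the two routes you suggest delivers this. The Dirichlet-principle comparison goes the wrong way: monotonicity of capacity gives $\Cap_N(\{\eta\},\breve{\ms E}^x)\le\Cap_N(\ms E^x,\breve{\ms E}^x)$, which worsens the upper bound $\mu_N(\bs\Delta)/\Cap_N(\{\eta\},\breve{\ms E}^x)$ coming from Proposition \ref{bovier} with $g=\mb 1\{\Delta_N\}$ rather than controlling it. The paper closes this via \eqref{ce} of Proposition \ref{teo0r}: under \textbf{(H1)} the ratio $\Cap_N(\ms E^x,\breve{\ms E}^x)/\Cap_N(\bs\zeta,\breve{\ms E}^x)$ tends to $1$ for every point $\bs\zeta$ in $\ms E^x$ (hence uniformly, by the point formalism). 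This is not a variational comparison but a probabilistic one: \textbf{(H1)} yields \eqref{cc1}--\eqref{cc2}, hence the conclusions of Proposition \ref{cb}, in particular \eqref{aa1} and the attractor property \eqref{vze}; rewriting \eqref{aa1} through Proposition \ref{bovier} with $g=\mb 1\{\ms E^x_N\}$ and using that the equilibrium potential $f_N(\bs\zeta,\breve{\ms E}^x)$ is asymptotically $1$ on all of $\ms E^x_N$ gives \eqref{ce}. Once you have \eqref{ce}, your computation finishes as intended: the bound becomes $\mu_N(\bs\Delta)/\Cap_N(\ms E^x,\breve{\ms E}^x) = \mu_N(\bs\Delta)/(\mu_N(\ms E^x)\,r_N(\ms E^x,\breve{\ms E}^x))$ by Lemma \ref{g03}, and after multiplying by $\theta_N^{-1}$, \textbf{(H0)} (with $r(x,\cdot)$ not identically zero since $x$ is non-absorbing) and \textbf{(H2)} give the claim; Chebyshev then yields \textbf{(C1)}. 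As a minor remark, the detour through the trace process on $\ms E^x_N\cup\Delta_N$ is unnecessary: Proposition \ref{bovier} applies directly to the original chain with $A=\{\eta\}$, $B=\breve{\ms E}^x_N$.
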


\begin{remark}
\label{cb2} 
In the previous theorem, we may replace condition $\bf(M3)$ for
absorbing states and condition {\rm ({\bf H2})} for non-absorbing
states by the assumption 
\begin{equation*}
\tag*{\bf (H2')}
\lim_{N \to \infty} \frac 1{\theta_N \, r_N(\ms E^x, \breve{\ms E}^x)}
\, \frac{ \mu_N(\bs \Delta) }{ \mu_N(\ms E^{x}) }\;=\; 0
\end{equation*}
for all states $x$.
\end{remark}

Note that condition ({\bf H2}) and ({\bf H2'}) are equivalent for
non-absorbing states if ({\bf H0}) holds.  This latter condition can
be expressed in terms of capacities since, by Lemma \ref{t02},
$\mu_N(\ms E^x)r_N(\ms E^x,\ms E^y)$ can be written as
$$
\frac{1}{2} \Big\{\,\Cap_N(\ms E^x, \breve{\ms E}^x) + \Cap_N(\ms
E^y, \breve{\ms E}^y) - \Cap_N\big({\ms E}^x\cup{\ms E}^y\,,\,
\ms E\setminus( {\ms E}^x\cup {\ms E}^y)\big)\,\Big\} 
$$
for every $x,y\in S$, $x\not=y$.
\medskip

One of the main steps in the proof of metastability is the replacement
result presented in Lemma \ref{s08} and in Corollary \ref{s09}. This
statement proposes a mathematical formulation of the notion of
thermalization by identifying this phenomenon with the possibility of
replacing the time integral of a function by the time integral of its
conditional expectation with respect to the $\sigma$-algebra generated
by the metastable states. The existence of attractors allows a simple
estimate, presented in Corollary \ref{s09}, which plays a key role in
all proofs.

The propositions stated above are proved in Section \ref{proof}, while
the theorems and the remark are proved in Section \ref{proof2}.

\section{Some examples}
\label{examples}

We present in this section some examples to justify the definitions of
the previous section and to illustrate some unexpected phenomena which
may occur.  

We start with a general remark concerning valleys on fixed state
spaces.  Consider a sequence of Markov processes $\{\eta^N_t : t\ge
0\}$ on some given countable space $E$ with generator $L_N$ described
by \eqref{c01}.  Denote by $\lambda_N(\eta) = \sum_{\xi\not =\eta}
R_N(\eta,\xi)$ the rate at which the process leaves the state $\eta$.
Clearly, the triple $(\{\eta\}, \{\eta\}, \eta)$ is a well of depth
$\lambda_N(\eta)^{-1}$ in the sense of Definition \ref{well}.

The first example highlights the role of condition ({\bf V3}) in
preventing some evanescent sets to be called wells.

\begin{example} 
\label{ex1}
Consider the sequence of Markov processes $\{\eta^N_t :
t\ge 0\}$ on $E=\{-1,0,1\}$ with rates given by
\begin{equation*}
R_N(-1,0) \;=\; R_N(1,0) \;=\; N\;, \quad 
R_N(0,-1) \;=\; R_N(0,1) \;=\; 1\;,
\end{equation*}
and $R_N(j,k)=0$ otherwise.
\end{example}

Obviously, we do not wish the triple $(\{-1\}, \{-1,0\}, -1)$ to be a
valley. Nevertheless, this triple satisfies conditions ({\bf V1}) and
({\bf V2}) of Definition \ref{well}. The first one is satisfied by
default. To check the second one, note that starting from $-1$
\begin{equation*} 
T_{\breve {\ms B}} \;=\; \sum_{j=1}^M \{ S_j + T_j\}\;,
\end{equation*}
where $\{S_j : j\ge 1\}$, $\{T_j : j\ge 1\}$ are independent sequences
of i.i.d.\!  exponential random variables of parameter $N$, $1$,
respectively, and $M$ is geometric random variable of parameter $1/2$,
independent of the sequences. Hence, $(1/2) \, T_{\ms B^c}$
converges in distribution, as $N\uparrow\infty$, to a mean $1$
exponential random variable. 

It is condition ({\bf V3}) which prevents the triple $(\{-1\},
\{-1,0\}, -1)$ to be a valley since the time spent at $0$ before
reaching $\{-1,1\}$ is a mean $1/2$ exponential random variable. \qed
\medskip

Next example illustrates the fact that conditions ({\bf V2}), ({\bf
  V3}) may hold while ({\bf V1}) fails.

\begin{example}
\label{ex8}
Consider the Markov process on $\{0,1,2,3\}$ with rates given by
\begin{equation*}
\begin{split}
& R_N(1,0) \;=\; R_N(2,3)\;=\; 1 - (1/N)\;,\quad
R_N(1,2) \;=\; R_N(2,1)\;=\; 1/N\;, \\
&\quad R_N(0,0) \;=\; a \;, \quad R_N(3,3)\;=\; b 
\end{split}
\end{equation*}
for some $a$, $b>0$, and $R_N(i,j)=0$ otherwise.
\end{example}

Consider the tripe $(\{1,2\}, \{1,2\}, 1)$. It is clear that condition
({\bf V1}) does not hold since the process starting from $2$ reaches
$\ms B^c = \{0,3\}$ before hitting $1$ with probability
$1-(1/N)$. Condition ({\bf V2}) is fulfilled for $\theta=1$ because
$T_{\ms B^c}$ converges to a mean one exponential time, independently
from the starting point, and condition ({\bf V3}) is in force by
default. \qed \medskip

The third example illustrates the fact that the depth of a valley
depends on the basin.

\begin{example}
\label{ex2}
Consider the sequence of Markov processes $\{\eta^N_t : t\ge 0\}$ on
$E=\{-1,0,1\}$ with rates given by
\begin{equation*}
R_N(-1,0) \;=\; R_N(1,0) \;=\; 1\;, \quad 
R_N(0,-1) \;=\; R_N(0,1) \;=\; N\;,
\end{equation*}
and $R_N(j,k)=0$ otherwise.
\end{example}

By the observation of the beginning of this section, the triple
$(\{-1\}, \{-1\}, -1)$ is a valley of depth $1$. On the other hand,
the triple $(\{-1\}, \{-1,0\}, -1)$ is a valley of depth $2$.
Condition ({\bf V1}) is satisfied by default, and condition ({\bf V2})
can be verified by representing the time needed to reach $\ms B^c$ as
a geometric sum of independent exponential random variables, as in
Example \ref{ex1}. Requirement ({\bf V3}) is readily checked.  \qed
\medskip

Next example shows that conditions ({\bf V1}), ({\bf V2}) and
\eqref{f03} do not imply ({\bf V3}).

\begin{example}
\label{ex6}
Consider the sequence of Markov processes $\{\eta^N_t : t\ge 0\}$ on
$E=\{1, 2, 3\}$ with rates given by $R_N(1,2) = N$, $R_N(2,1)=N-1$,
$R(2,3)=1$ and $R_N(i,j)=0$ otherwise. 
\end{example}

Let $\xi=1$, $W=\{1\}$ and $B=\{1,2\}$. Condition ({\bf V1}) is
fulfilled by default. Condition ({\bf V2}) is easily checked for
$\theta_N = 2$. In fact, the hitting time $\tau^N_3$ of $3$ starting
from $1$ can be written as $\sum_{1\le j\le M} \{S_j + T_j\}$, where
$\{S_j : j\ge 1\}$, $\{T_j : j\ge 1\}$ are independent sequences of
i.i.d.\!  mean $1/N$ exponential random variables and $M$ is a
geometric random variable of parameter $1/N$, independent of both
sequences. It follows from this representation that $\tau^N_3/2$
converges in distribution to a mean $1$ exponential random variable.

For similar reasons, conditions ({\bf V3}) fails: With the notation
just introduced, starting from $1$, the time spent at state $2$ before
hitting $3$, denoted in Section \ref{sec1} by $T^N_3(2)$, converges to
a mean $1$ exponential random variable.

Condition \eqref{f03}, however, is in force, since the hitting time of
the set $\{1,3\}$ starting from $2$ is of order $1/N$. \qed
\medskip

The fifth example shows that metastates might not be wells of
valleys. It presents also a triple which fulfills condition ({\bf
  V1}), ({\bf V3}) but not ({\bf V2}) nor ({\bf V1'}).

\begin{example}
\label{ex4}
Consider the sequence of Markov processes $\{\eta^N_t : t\ge 0\}$ on
$E=\{0, 1, 2\}$ with rates given by
\begin{equation*}
R_N(1,0)\; =\; N-1 \;, \;\; R_N(1,2) \; =\; 1\;, \;\;  
R_N(2,1) \; =\; N^{-1}\;, \;\; R_N(0,1) \; =\; N^2\;, 
\end{equation*}
and $R_N(j,k) = 0$ otherwise.
\end{example}

The triple $(\{1, 2\}, \{1, 2\}, 2)$ is not a well because condition
({\bf V1}) is violated. With overwhelming probability the process
starting from $1$ leaves the set $\{1,2\}$ before reaching $2$.  The
triple $(\{1, 2\}, \{1, 2\}, 1)$ is not a well either. While
conditions ({\bf V1}), ({\bf V3}) are clearly satisfied, it is not
difficult to show that condition ({\bf V2}) is violated. In fact,
starting from $1$, $T_{\ms B^c}$ converges to a mean one exponential
random variable, while starting from $2$, $N^{-1} T_{\ms B^c}$
converges to a mean one exponential random variable. It is also clear
that condition ({\bf V1}') fails in this case since on the scale of
order $1$ the process starting from $2$ never reaches $1$.

At the scale $N^{-2}$ the process exhibits a tunneling behaviour, as
described in Definition \ref{locmetadef}, with metastates $\ms E^1 =
\{0\}$ and $\ms E^2 = \{1,2\}$, $\bs \xi_2 = 1$, and asymptotic Markov
dynamics characterized by the rates $r(1,2) = 1$, $r(2,1)=0$.  It does
not exhibit a metastable behaviour, as described in Definition
\ref{metadef}, because condition ({\bf M1}') is violated. Starting
from state $2\in \ms E^2$, the process never reaches the attractor $1$
in the time scale $N^{-2}$. We have also here an example of an
absorbing set for the asymptotic dynamics which is not a valley due to
the existence of the well $\{2\}$ in the the set $\ms E^2$ of depth $N
\gg N^{-2}$. \qed\medskip

Next example shows that there might exist inaccessible valleys.

\begin{example}
\label{ex7}
Consider the sequence of Markov processes $\{\eta^N_t : t\ge 0\}$ on
$E=\{1, \dots, 5\}$ with rates given by
\begin{equation*}
\begin{split}
& R_N(j,k) \; =\; 1 \quad\text{if $j$ is even, $k$ odd and $|j-k|=1$\;, } \\
& \quad R_N(1,2) \; =\; R_N(3,4) \; =\; R_N(5,4) \;=\; N^{-1}\;, 
\quad R_N(3,2) \;=\; N^{-2}\;, \\
& \qquad R_N(j,k) \; =\; 0 \quad \text{otherwise\;.}
\end{split} 
\end{equation*}
\end{example}

The triples $(\{1\}, \{1,2\}, 1)$, $(\{3\}, \{3,4\}, 3)$, $(\{5\},
\{4, 5\}, 5)$ are valleys of depth $2N$. Moreover, at the time scale
$N$ the process exhibits a metastable behaviour, as described in
Definition \ref{metadef}, with metastates $\ms E^1 = \{1\}$, $\ms E^2
= \{3\}$, $\ms E^3 = \{5\}$ and asymptotic Markov dynamics
characterized by the rates $r(1,2) = r(2,3) = r(3,2)= 1/2$,
$r(i,j)=0$, otherwise. Note that the metastate $\ms E^1$ is
inaccessible in the sense that $r(2,1) + r(3,1)=0$. This means that in
the time scale $N$ the process starting from $1$ eventually leaves
this state, never to return. \qed \medskip

The penultimate example, very instructive, shows that different phenomena
may be observed on different scales. It also highlights the role of
conditions ({\bf V3}), ({\bf V3}').

\begin{example}
\label{ex5}
Consider the sequence of Markov processes $\{\eta^N_t : t\ge 0\}$ on
$E=\{1, \dots, 5\}$ with rates given by
\begin{equation*}
\begin{split}
& R_N(j,k) \; =\; 1 \quad\text{if $j$ is even, $k$ odd and $|j-k|=1$\;, } \\
& \quad R_N(1,2) \; =\; N^{-2}\;, \quad R_N(3,2) \;=\; N^{-3}\;,
\quad R_N(3,4) \; =\; R_N(5,4) \;=\;  N^{-1}\;, \\
& \qquad R_N(j,k) \; =\; 0 \quad \text{otherwise\;.}
\end{split} 
\end{equation*}
\end{example}

A simple computation shows that the measure $m_N$ on $E$ given by
$m_N(1) = N^2$, $m_N(2)=1$, $m_N(3)=N^3$, $m_N(4)=N^2$, $m_N(5)=N^3$
is reversible for the Markov process.  We leave to the reader to check
that $(\{3\}, \{3,4\}, 3)$, $(\{5\}, \{4, 5\}, 5)$ are valleys of
depth $2N$, and that $(\{1\}, \{1,2\}, 1)$, $(\{3,4,5\}, \{3,4,5\},
3)$, $(\{3,4,5\}, \{2,3,4$, $5\}, 3)$ are valleys of depth $2N^2$,
$2N^3$, $4N^3$, respectively. The presence of valleys of different
depths leads to diverse tunneling behaviors at different time scales.

This example illustrates that we may have valleys satisfying
conditions ({\bf V1}), ({\bf V2}) and ({\bf V3}), but not ({\bf V3}')
and \eqref{f03}. This is the case of the triple $(\{3\}, \{1,2,3$,
$4\}, 3)$. The latter conditions are violated because the annulus
$\{1,2,4\}$ contains the valley $(\{1\}, \{1,2\}, 1)$ of depth $2N^2$,
larger than $2N$ which is the depth of $(\{3\}, \{3,4\}, 3)$.  On the
scale $N$, the process starting from $1$ never reaches $3$ with
positive probability. However, condition ({\bf V3}) holds because on
the scale $N$ the process starting from $3$ never reaches $\{1,2\}$.

Note that transferring the points $1$, $2$ from $\bs \Delta$ to $\ms
B^c$, we transform the the valley $(\{3\}, \{1,2,3, 4\}, 3)$ in the
S-valley $(\{3\}, \{3, 4\}, 3)$.

At the scale $N$ one observes a tunneling between $\ms E^1 = \{3\}$
and $\ms E^2 = \{5\}$, characterized by the asymptotic Markov rates
$r(1,2) = r(2,1) = 1/2$. Assumption ({\bf M3}') is not satisfied
because the set $\Delta_N$ contains a well of depth larger than the
depth of the metastates. However, this well is never visited if the
process starts from one of the metastates.

To turn the tunneling behavior into a metastable one, we may add the
metastate $\ms E^3 = \{1\}$ and show that at scale $N$, the process
exhibits a metastable behaviour with metastates $\ms E^1 = \{3\}$,
$\ms E^2 = \{5\}$, $\ms E^3 = \{1\}$ and asymptotic Markov dynamics
characterized by the rates $r(1,2) = r(2,1) = 1/2$, $r(i,j)=0$,
otherwise. Observe that an isolated state has appeared in the
asymptotic dynamics.

At scale $N^2$, the metastates $\ms E^1 = \{3\}$, $\ms E^2 = \{5\}$
coalesce into one deeper well. In this scale the process exhibits the
metastable behaviour with metastates $\ms E^1 = \{1\}$, $\ms E^2 =
\{3,4,5\}$, and asymptotic Markov dynamics characterized by the rates
$r(1,2) = 1/2$, $r(2,1) = 0$. Note that we have here an absorbing
asymptotic state and that $\{3,4,5\}$ is not the well of a valley of
depth of order $N^2$, but the well of a valley of depth of order
$N^3$. \qed\medskip

The last example shows that the existence of an attractor is
superfluous in the definition of a valley. Denote by $E_N = (\bb
Z/N\bb Z)^d \cup (\bb Z/N\bb Z)^d$ the union of two $d$-dimensional
torii of length $N$ and denote by $(x,j)$, $x\in (\bb Z/N\bb Z)^d$,
$j=\pm 1$, the elements of $E_N$.

\begin{example}
\label{ex3}
Consider the sequence of
Markov processes $\{\eta^N_t : t\ge 0\}$ on $E_N$ with rate jumps
given by
\begin{equation*}
R_N((x,j),(x',j)) \;=\; \frac 1{2d} \,  \mb 1\{|x-x'|=1\}\;,\quad
R_N((x,j),(x,-j)) \;=\; \frac 1{\theta_N}\;,
\end{equation*}
for some rate $\theta_N$ such that $N^2 <\!\!< \theta_N <\!\!< N^d$,
and $R_N((x,i),(y,j)) =0$ otherwise.
\end{example}

It is well known that the spectral gap of the symmetric simple random
walk on the torus $(\bb Z/N\bb Z)^d$ is of order $N^{-2}$. The
evolution of the process $\eta^N_t$ is therefore quite clear. In a
time scale of order $N^2$, the process thermalizes in the torus where
it started from, and after an exponential time of order $\theta_N$ it
jumps to the other torus, replicating there the same qualitative
behavior.

Hence, each torus satisfies all reasonable conditions to be qualified
as a valley of depth $\theta_N$. Nevertheless, there is no attractor in
this example since a specific state is visited by the symmetric simple
random walk only in the scale $N^d$. \qed\medskip

\section{Valleys and metastability}
\label{proof}

In this section we prove some results on valleys and on tunneling.
The first lemma states that we may replace condition ({\bf V1}) by
condition ({\bf V1}') in the definition of a valley.

\begin{lemma}
\label{s05}
In Definition \ref{well}, condition {\rm ({\bf V1})} may be replaced by
condition {\rm ({\bf V1}')}.
\end{lemma}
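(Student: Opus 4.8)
The plan is to show the two directions separately. For the forward direction, we assume conditions ({\bf V1}) and ({\bf V2}) and derive ({\bf V1}'). The key observation is that $T_{\bs\xi}$, the hitting time of the attractor, can be controlled by combining ({\bf V1}) with the strong Markov property and a renewal-type argument on excursions. Specifically, starting from any $\eta \in W_N$, either the process hits $\bs\xi$ before leaving $\ms B$ --- which by ({\bf V1}) happens with probability tending to $1$ uniformly --- or it first exits $\ms B$. On the event $\{T_{\bs\xi} < T_{\ms B^c}\}$, I would like to say $T_{\bs\xi} \le T_{\ms B^c}$, but the exponential scaling in ({\bf V2}) gives $T_{\ms B^c}/\theta_N \Rightarrow \mathrm{Exp}(1)$, which is of order $\theta_N$, not $o(\theta_N)$. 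So the naive bound is not enough. Instead, I would use the exponential (memoryless) structure: the event that the process spends time more than $\delta\theta_N$ before reaching $\bs\xi$ while staying inside $\ms B$ must, by the near-certainty in ({\bf V1}) and a restart argument, be translated into a lower bound on $T_{\ms B^c}$ that contradicts the exponential limit law --- or more cleanly, one observes that if $T_{\bs\xi} > \delta\theta_N$ with non-negligible probability uniformly in the starting point, then by restarting the process at time $\delta\theta_N$ (still in $\ms B$ with good probability) and iterating, one inflates $T_{\ms B^c}$ beyond any exponential bound, contradicting ({\bf V2}).

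For the converse direction, I would assume ({\bf V1}') together with ({\bf V2}) and recover ({\bf V1}). This is the easier implication: given a starting point $\eta^N \in W_N$, by ({\bf V1}') the attractor $\bs\xi$ is reached within time $\delta\theta_N$ with probability close to $1$, uniformly over $W_N$. Meanwhile, by ({\bf V2}) applied with starting point $\bs\xi$ itself (which is a point in $\ms W$, hence a legitimate point to which ({\bf V2}) applies), the exit time $T_{\ms B^c}$ from $\bs\xi$ is of order $\theta_N$ with an exponential law, so in particular $\prob_{\xi^N}[T_{\ms B^c} \le \delta\theta_N] \to 0$ as $N \to \infty$ and then $\delta \to 0$. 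Combining via the strong Markov property at time $T_{\bs\xi}$: with probability tending to $1$, the process reaches $\bs\xi$ quickly (before leaving $\ms B$, since $\delta\theta_N$-fast arrival plus ({\bf V2}) from intermediate points controls the exit), and from $\bs\xi$ it does not exit $\ms B$ for a time of order $\theta_N \gg \delta\theta_N$; hence $T_{\bs\xi} < T_{\ms B^c}$ with probability $\to 1$.

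The main obstacle I anticipate is making the forward direction ({\bf V1}), ({\bf V2}) $\Rightarrow$ ({\bf V1}') fully rigorous: one needs a uniform-in-starting-point statement, and the restart/iteration argument must be set up carefully so that at each restart the process is still likely to be inside $\ms B$ (not in $\Delta_N$ near the boundary where ({\bf V1}) might degrade) --- this is where one leans on the fact that ({\bf V1}) is assumed for \emph{every} point in $\ms W$, not just the attractor, and possibly on Lemma~\ref{s04b}-type control of the time spent in the annulus $\bs\Delta$. An alternative, cleaner route for this direction is to use the exponential limit law to write, for the process started at $\eta^N$, a decomposition $T_{\ms B^c} = T_{\bs\xi}\,\mathbf 1\{T_{\bs\xi} < T_{\ms B^c}\} + (\text{remaining time after }\bs\xi)\,\mathbf 1\{T_{\bs\xi} < T_{\ms B^c}\} + T_{\ms B^c}\mathbf 1\{T_{\bs\xi} \ge T_{\ms B^c}\}$, note the last term is negligible by ({\bf V1}), and deduce that the first term must be $o(\theta_N)$ in probability because the second term alone already accounts for the full $\mathrm{Exp}(1)$ scaling limit of $T_{\ms B^c}/\theta_N$ (both being exit times from $\ms B$, the first starting at $\eta^N$, the second at $\bs\xi$); if $T_{\bs\xi}/\theta_N$ had a non-trivial limit it would strictly inflate the total, contradicting that $T_{\ms B^c}/\theta_N \Rightarrow \mathrm{Exp}(1)$ regardless of starting point in $\ms W$.
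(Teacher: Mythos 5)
Your ``alternative, cleaner route'' for the forward direction --- decomposing $T_{\ms B^c}$ at the hitting time of $\bs\xi$ on the event $\{T_{\bs\xi}<T_{\ms B^c}\}$, noting that the post-$\bs\xi$ piece alone already carries the full $\mathrm{Exp}(1)$ limit by ({\bf V2}) and the strong Markov property, and concluding that a non-negligible $T_{\bs\xi}/\theta_N$ would inflate the sum beyond an exponential law --- is exactly the paper's argument, and your converse (intersecting $\{T_{\bs\xi}<\delta\theta_N\}$ with $\{T_{\ms B^c}>\delta\theta_N\}$) is also the paper's. The restart/iteration sketch you offer first is not needed and would be harder to make rigorous; the decomposition argument is the one to keep.
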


\begin{proof}
Let us denote by $\Theta_t:=\Theta^N_t$, $t\ge 0$, the time-shift
operators on the path space $D(\bb R_+, E_N)$. Let $(\ms W, \ms B, \bs
\xi)$ be a valley of depth $\bs \theta =(\theta_N : N\ge 1)$.  Fix a
point $\bs \eta = (\eta_N : N\ge 1)$ in $\ms W$ as the starting point.
Consider the pair of random variables $T_{\bs \xi}$, $T_{\ms B^c}\circ
\Theta_{T_{\bs \xi}}$, which are independent by the strong Markov
property. According to assumption ({\bf V1}), the event $\{T_{\bs \xi}
< T_{\ms B^c}\}$ has asymptotic probability equal to one. On this
event $ T_{\bs \xi} \,+\, T_{\ms B^c}\circ \Theta_{T_{\bs \xi}} \;=\;
T_{\ms B^c}\;.$ Since, by assumption ({\bf V1}), $\theta_N^{-1} T_{\ms
  B^c}$ converges to a mean one exponential random variable,
$\theta_N^{-1} \{T_{\bs \xi} + T_{\ms B^c}\circ \Theta_{T_{\bs
    \xi}}\}$ also converges to a mean one exponential random variable.

Suppose by contradiction that there exist $\delta$, $\epsilon>0$
such that
\begin{equation}
\label{f01}
\limsup_{N\to\infty}  \prob_{\eta_N} \Big[\, \frac 1{\theta_N} 
\, T_{\bs \xi} > \delta\,\Big] \;=\; \epsilon  \;.
\end{equation}
By assumptions ({\bf V1}), ({\bf V2}), $\theta_N^{-1} (T_{\ms B^c}
\circ \Theta_{T_{\bs \xi}})$ converges to a mean one exponential
random variable and, by \eqref{f01}, $\theta_N^{-1} T_{\bs \xi}>
\delta$ with strictly positive probability. In particular,
$\theta_N^{-1} \{T_{\bs \xi} + T_{\ms B^c} \circ \Theta_{T_{\bs
    \xi}}\}$ may not converge to an exponential random variable, in
contradiction with the conclusion reached above.

Conversely, the event $\{T_{\bs \xi} < T_{\ms B^c} \}$ contains the
event $\{T_{\bs \xi} < \delta \theta_N\} \cap \{T_{\ms B^c} > \delta
\theta_N\}$ for every $\delta>0$. By assumptions ({\bf V1}'), ({\bf
  V2}), the $\mb P_{\eta_N}$- probability of this event converges to
$1$ as $N\uparrow\infty$ and then $\delta\downarrow 0$. This concludes
the proof of the lemma.
\end{proof}

The second result examines the assumptions ({\bf V3}) and ({\bf V3}')
in the definition of valleys.

\begin{lemma}
\label{s04b}
In Definition \ref{well}, assumption {\rm ({\bf V3})} may be replaced
by \eqref{f02}, and in Definition \ref{well2} assumption {\rm ({\bf
    V3}')} may be replaced by \eqref{f05}.  
\end{lemma}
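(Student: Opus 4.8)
The plan is to show the two equivalences separately, each by proving that the ``new'' integral condition sits between two quantities controlled by the ``old'' condition, and conversely. Start with the first equivalence, between ({\bf V3}) and \eqref{f02}. Fix a point $\bs\eta=(\eta^N:N\ge 1)$ in $\ms W$, fix $t>0$ and $\delta>0$, and abbreviate $I_N := \int_0^{\min\{t,\theta_N^{-1}T_{\ms B^c}\}} \mb 1\{\eta_{s\theta_N}\in\Delta_N\}\,ds$, which after the change of variables $s\mapsto s\theta_N$ equals $\theta_N^{-1}\int_0^{\min\{t\theta_N,T_{\ms B^c}\}} \mb 1\{\eta_s\in\Delta_N\}\,ds$. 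The point is that $\theta_N^{-1}T_{\ms B^c}(\bs\Delta)$ and $I_N$ differ only by the contribution of the time interval $[\,t\theta_N\wedge T_{\ms B^c},\,T_{\ms B^c}\,]$, so the two are comparable once we know $T_{\ms B^c}/\theta_N$ is of order one.

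For the direction ({\bf V3}) $\Rightarrow$ \eqref{f02}: since $I_N \le \theta_N^{-1}T_{\ms B^c}(\bs\Delta) \wedge t$ and $I_N\le t$, Markov's inequality together with ({\bf V3}) gives $\prob_{\eta^N}[I_N>\delta]\to 0$; as $I_N$ is bounded by $t$, a uniform-integrability-free argument (bounded convergence in probability) yields $\E_{\eta^N}[I_N]\to 0$, which is \eqref{f02}. For the converse \eqref{f02} $\Rightarrow$ ({\bf V3}): here one must bootstrap from a bound with the truncation at $t\theta_N$ to a bound on the full $T_{\ms B^c}(\bs\Delta)$. The idea is to use ({\bf V2}): since $T_{\ms B^c}/\theta_N$ converges to a mean-one exponential, one can pick $t$ large so that $\prob_{\eta^N}[T_{\ms B^c}>t\theta_N]$ is uniformly small; on the complementary event $\{T_{\ms B^c}\le t\theta_N\}$ one has $\theta_N^{-1}T_{\ms B^c}(\bs\Delta)=I_N$ exactly, so $\prob_{\eta^N}[\theta_N^{-1}T_{\ms B^c}(\bs\Delta)>\delta]\le \prob_{\eta^N}[I_N>\delta]+\prob_{\eta^N}[T_{\ms B^c}>t\theta_N]$, and the first term tends to $0$ by \eqref{f02} and Markov. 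Letting $N\to\infty$ and then $t\to\infty$ closes the argument.

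For the second equivalence, between ({\bf V3}') and \eqref{f05}, the argument is essentially the same, carried out uniformly over the starting point $\eta\in B_N$ and using the analogue of ({\bf V2}) available for the S-valley definition, which again makes $\sup_{\eta\in B_N}\prob_\eta[T_{\ms B^c}>t\theta_N]$ small uniformly. The direction ({\bf V3}') $\Rightarrow$ \eqref{f05} is exactly the passage from the probability bound to the expectation bound via boundedness by $t$, taking the supremum over $\eta\in B_N$ before passing to the limit; this is precisely the implication already recorded as \eqref{f05} in the text following the definition of S-valley, so one may simply cite it. The reverse direction mirrors the one above: split on $\{T_{\ms B^c}\le t\theta_N\}$ and its complement, use \eqref{f05} with Markov on the good event and the exponential tail estimate on the bad event, then let $N\to\infty$ and $t\to\infty$.

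The main obstacle is the reverse direction, \eqref{f02} $\Rightarrow$ ({\bf V3}) (and its S-valley counterpart): one cannot directly remove the truncation at $t\theta_N$ without knowing a priori that $T_{\ms B^c}$ does not overshoot this scale, so the argument must genuinely invoke condition ({\bf V2}) — which is legitimate, since ({\bf V2}) is part of the hypotheses of both Definition \ref{well} and Definition \ref{well2} — to bound the probability that $T_{\ms B^c}$ exceeds $t\theta_N$. Everything else is bounded convergence and Markov's inequality.
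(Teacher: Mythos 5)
Your proof is correct and follows essentially the same route as the paper: one direction is the domination of the truncated integral by $\min\{t,\theta_N^{-1}T_{\ms B^c}(\bs\Delta)\}$ plus bounded convergence, and the converse splits on the event $\{T_{\ms B^c}\le t\theta_N\}$, where the truncated integral equals $\theta_N^{-1}T_{\ms B^c}(\bs\Delta)$ exactly, controlling the complementary event via ({\bf V2}). The paper phrases the converse through expectations of the truncations $\min\{\theta_N^{-1}T_{\ms B^c}(\bs\Delta),a\}$ rather than through probabilities, which is cosmetic; the only point to watch is that in the S-valley case the uniform tail bound $\sup_{\eta\in B_N}\prob_\eta[T_{\ms B^c}>t\theta_N]$ is not literally part of Definition \ref{well2} (which states ({\bf V2}) only for points of $\ms W$) and requires a short reduction of starting points in $\Delta_N$ to $\ms W\cup\ms B^c$ via the analogue of \eqref{f03} --- a step the paper's own ``in the same way'' likewise leaves implicit.
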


\begin{proof}
The time integral in \eqref{f02} is bounded above by $\min\{ t,
\theta_N^{-1} T_{\ms B^c}(\bs \Delta)\}$. Therefore, \eqref{f02}
follows from {\rm ({\bf V3})}.

Conversely, the time integral in \eqref{f02} is bounded below by
$\theta_N^{-1} T_{\ms B^c}(\bs \Delta) \mb 1\{T_{\ms B^c} \le t
\theta_N\}$. This expression is itself bounded below by $\min \{
\theta_N^{-1} T_{\ms B^c}(\bs \Delta) , a\} \mb 1\{T_{\ms B^c} \le t
\theta_N\}$ for every $a>0$. Therefore,
\begin{equation*}
\min \{ \theta_N^{-1} T_{\ms B^c}(\bs \Delta) , a\} \;\le\;
\int_0^{\min\{t, \theta_N^{-1} T_{\ms B^c}\}}
\mb 1\{ \eta_{s\theta_N} \in \Delta_N \} \, ds \;+\;
a \mb 1\{T_{\ms B^c} > t \theta_N\}
\end{equation*}
for every $a>0$. Fix a point $\bs \eta=(\eta^N : N\ge 1)$ in $\ms
W$. By \eqref{f02}, the expectation with respect to $\mb P_{\eta^N}$
of the first term on the right hand side vanishes as $N\uparrow\infty$
for every $t>0$. By ({\bf V2}), the expectation with respect to $\mb
P_{\eta^N}$ of the second term vanishes as $N\uparrow\infty$ and then
$t\uparrow\infty$. Therefore, for every $a>0$
\begin{equation*}
\lim_{N\to\infty} \mb E_{\eta^N} \Big[ \min \{ \theta_N^{-1} T_{\ms
  B^c}(\bs \Delta) , a\} \Big] \;=\; 0\;.
\end{equation*}
This proves ({\bf V3}).

In the same way we prove that we may substitute assumption {\rm ({\bf
    V3}')} by \eqref{f05} in Definition \ref{well2}.  This concludes
the proof of the lemma.
\end{proof}

Next lemma is needed in the proof of Proposition \ref{prop1}, one of
the main results of this section.

\begin{lemma}
\label{lui}
Consider a subset $\ms A =(A_N : N\ge 1)$ of $(E_N : N\ge 1)$. Assume
that there exists $t>0$ and $\epsilon <1$ such that
\begin{equation}
\label{supd}
\limsup_{N\to \infty} \sup_{\eta \in W_N} \prob_{\eta}
\big[ T_{\ms A} > t \theta_N \big] \, < \, \epsilon\,.
\end{equation}
Then, $\sup_{\eta \in W_N} \E_{\eta}[T_{\ms A}({\ms W})] \le
[t/(1-\epsilon)] \theta_N$ for every $N$ sufficiently large and 
\begin{equation}
\label{f12}
\lim_{K\to \infty} \limsup_{N\to \infty} \sup_{\eta \in W_N} \mb E_{\eta}
\Big[ \theta_N^{-1} T_{\ms A}({\ms W}) \, 
\mb 1\{ T_{\ms A}({\ms W}) > K \theta_N\} \Big] \;=\; 0\;.
\end{equation}
\end{lemma}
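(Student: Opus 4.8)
The plan is to control $T_{\ms A}(\ms W)$ — the time spent in $\ms W$ before hitting $\ms A$ — by using the uniform smallness of the tail probability \eqref{supd} together with the strong Markov property, essentially a renewal/iteration argument. First I would observe that $T_{\ms A}(\ms W) \le T_{\ms A}$, so the hypothesis \eqref{supd} gives a uniform bound on the tail of $T_{\ms A}(\ms W)$ at level $t\theta_N$. The key point is that the process restarted at time $t\theta_N$, if it has not yet hit $\ms A$ and is currently in $W_N$, again satisfies the same tail estimate by the strong Markov property; and if it is currently in $\Delta_N = B_N \setminus W_N$ or in $B_N^c$, then no further contribution to $T_{\ms A}(\ms W)$ accrues from $B_N^c$, while the $\Delta_N$ portion... — here one must be slightly careful. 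Actually the cleanest route is: on the event $\{T_{\ms A} > t\theta_N\}$, we have $\eta_{t\theta_N} \in B_N$ (it has not left the basin, assuming $\ms A$ is chosen so that leaving $B_N$ forces hitting $\ms A$; in the intended application $\ms A \supseteq \ms B^c$ or the geometry makes this so), and then $T_{\ms A}(\ms W) \le t\theta_N + T_{\ms A}(\ms W)\circ\Theta_{t\theta_N}$ where the shifted quantity, by the strong Markov property and irreducibility within $W_N$, again has tail below $\epsilon$ at scale $t\theta_N$ uniformly.

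Granting such a one-step inequality, I would iterate: writing $p_N := \sup_{\eta\in W_N}\prob_\eta[T_{\ms A}(\ms W) > t\theta_N]$, which satisfies $\limsup_N p_N < \epsilon < 1$, the strong Markov property yields by induction $\sup_{\eta\in W_N}\prob_\eta[T_{\ms A}(\ms W) > k t\theta_N] \le p_N^{\,k}$ for all $k\ge 1$ and all large $N$. Summing the tail then gives
\begin{equation*}
\sup_{\eta\in W_N}\E_\eta[T_{\ms A}(\ms W)] \;\le\; t\theta_N \sum_{k\ge 0} p_N^{\,k} \;=\; \frac{t\theta_N}{1-p_N} \;\le\; \frac{t}{1-\epsilon}\,\theta_N
\end{equation*}
for $N$ sufficiently large, which is the first assertion. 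For the uniform integrability statement \eqref{f12}, the same geometric tail bound does the job: for $K$ a multiple of $t$, say $K = mt$,
\begin{equation*}
\mb E_\eta\Big[\theta_N^{-1} T_{\ms A}(\ms W)\,\mb 1\{T_{\ms A}(\ms W) > K\theta_N\}\Big] \;\le\; t\sum_{k\ge m} (k+1)\, p_N^{\,k}\,,
\end{equation*}
and the right-hand side tends to $0$ as $m\to\infty$ uniformly in large $N$, since $\limsup_N p_N < \epsilon < 1$ makes $\sum_k (k+1)p_N^{\,k}$ uniformly convergent; a general $K$ is handled by monotonicity in $K$.

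The main obstacle is the one-step inequality: making rigorous that on $\{T_{\ms A}(\ms W) > t\theta_N\}$ the remaining time in $\ms W$ before $\ms A$ is, conditionally on $\mc F_{t\theta_N}$, dominated by a copy of $T_{\ms A}(\ms W)$ started from $\eta_{t\theta_N}$ — and crucially that $\eta_{t\theta_N}$ lies in a set from which the sup-bound \eqref{supd} is available. If $\eta_{t\theta_N}\in W_N$ this is immediate from \eqref{supd}; the delicate case is $\eta_{t\theta_N}\in\Delta_N$. One resolves this either by noting that in the intended applications $T_{\ms A}(\ms W)$ only counts $W_N$-time so that excursions through $\Delta_N$ contribute nothing until $W_N$ is re-entered, and upon re-entering $W_N$ the bound \eqref{supd} applies; or, more robustly, by observing $T_{\ms A}(\ms W) = T_{\ms A}(\ms W)\circ\Theta_{T_{\ms W\cup\ms A}}$ composed with the initial time to reach $\ms W\cup\ms A$, and handling the geometry of $\ms A$ accordingly. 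I would carry out the iteration at the level of these $W_N$-return times, which keeps the strong Markov application clean and lets the geometric decay propagate without worrying about where in $\Delta_N$ the process sits.
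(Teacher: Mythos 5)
Your proposal is correct and follows essentially the same route as the paper: the paper also iterates the tail bound \eqref{supd} along the successive return times to $W_N$ (stopping times $I_{k+1}=\inf\{s>J_k:\eta^N_s\in W_N\}$ with $J_k=I_k+t\theta_N$), notes that $T_{\ms A}(\ms W)$ accrues only during the intervals $[I_k,J_k]$ so that $T_{\ms A}(\ms W)\le t\theta_N M$ with $M$ stochastically dominated by a geometric variable of parameter $1-\epsilon$, and deduces both the expectation bound and \eqref{f12}. The ``delicate case'' you flag ($\eta_{t\theta_N}\in\Delta_N$) is resolved exactly as you propose, by restarting only at re-entries to $W_N$.
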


\begin{proof}
The proof is a simple consequence of the strong Markov property and
assumption \eqref{supd}. Consider the sequence of stopping times
$\{I_k : k\ge 1\}$, $\{J_k : k\ge 1\}$ defined as follows. $I_1=0$,
$J_1 = t \theta_N$, 
\begin{equation*}
I_{k+1} \;=\; \inf \big\{t>J_k : \eta^N_t \in W_N \big\}\;, \quad
J_{k+1} = I_{k+1} + t \theta_N \;,\quad k\ge 1\;,
\end{equation*}
with the convention that $J_{k}=I_{k+1}=\infty$ if $I_k=\infty$ for
some $k\ge 1$. Let $M$ be the first time interval $[I_k, J_k]$ in
which the process visits $A_N$:
\begin{equation*}
M \;=\; \min \big \{ k\ge 1 : \eta^N_t \in A_N \text{ for some $t\in
  [I_k,J_k]$ or $I_k=\infty$} \big\}\;.
\end{equation*}
Clearly, $T_{\ms A}(\ms W) \le t\theta_N M$. On the other hand, for
$N$ sufficiently large, by definition of the stopping times $\{I_k:
k\ge 1\}$ and by assumption \eqref{supd}, $M$ is stochastically
dominated by a random variable $M'$ with geometric distribution given
by $P[M'=k] = (1-\epsilon) \epsilon^{k-1}$, $k\ge 1$. This concludes
the proof of the lemma.
\end{proof}

Next proposition gives an equivalent definition of a valley with
attractor.

\begin{proposition}
\label{prop1}
Assume that $(\ms W, \ms B, \bs \xi)$ is a valley of depth $\bs
\theta$ and attractor $\bs \xi$. Then, for any point $\bs \eta=(\eta^N
: N\ge 1)$ in $\ms W$,

\begin{enumerate}
\item[(i)] The hitting time of the attractor $\bs \xi$ is negligible
  with respect to the escape time from the basin $\ms B$ in the sense
  that
\begin{equation*}
\lim_{N\to\infty} \frac{\E_{\eta^N}[T_{\bs\xi}(\ms W)]}
{\E_{\eta^N}[T_{{\ms B}^c}(\ms W)]} \;=\; 0\;;
\end{equation*}

\item[(ii)] Under $\prob_{\eta^N}$, the law of the random variable
  $\,T_{{\ms B}^c}(\ms W)/\E_{\eta^N}[T_{{\ms B}^c}(\ms W)]\,$
  converges to a mean-one exponential distribution\,;

\item[(iii)] For every $\delta>0$,
\begin{equation*}
\lim_{N\to\infty} \prob_{\eta^N}\Big [\,
\frac {T_{{\ms B}^c} (\bs \Delta)} 
{\mb E_{\eta^N}[T_{{\ms B}^c}(\ms W)]} 
> \delta \, \Big ]\,=\,0\,.
\end{equation*}
\end{enumerate}
Moreover, the sequences $\theta_N$ and $\E_{\eta^N}[T_{{\ms
    B}^c}(\ms W)]$ are asymptotically equivalent in the sense that
$\lim_{N\to\infty} \theta^{-1}_N \E_{\eta^N}[T_{{\ms B}^c}(\ms W)] =
1$,

Conversely, if $(\ms W, \ms B, \bs \xi)$ is a triple satisfying
\eqref{val1} for which {\rm (i) -- (iii)} hold, then for any point
$\bs \eta=(\eta^N : N\ge 1)$ in $\ms W$, the sequence
$\E_{\eta^N}[T_{{\ms B}^c}(\ms W)]$ is asymptotically equivalent to
$\E_{\xi^N}[T_{\ms B^c}(\ms W)]$:
\begin{equation*}
\lim_{N\to\infty} \frac{\E_{\eta^N}[T_{\ms B^c}(\ms W)]}
{\E_{\xi^N}[T_{\ms B^c}(\ms W)]} \;=\; 1\;;
\end{equation*}
and $(\ms W, \ms B, \bs \xi)$ is a valley of depth $\bs \theta$,
where $\theta_N = \E_{\xi^N}[T_{\ms B^c}(\ms W)]$.
\end{proposition}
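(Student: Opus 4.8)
The plan is to prove the direct and converse implications separately, both resting on the elementary pathwise identity $T_{\ms B^c} = T_{\ms B^c}(\ms W) + T_{\ms B^c}(\bs\Delta)$, which holds because the process stays in $B_N = W_N \cup \Delta_N$ up to time $T_{\ms B^c}$. For the direct implication, assume $(\ms W,\ms B,\bs\xi)$ is a valley of depth $\bs\theta$ with attractor $\bs\xi$ and fix a point $\bs\eta$ in $\ms W$. From ({\bf V2}), ({\bf V3}) and Slutsky's lemma, $\theta_N^{-1} T_{\ms B^c}(\ms W)$ converges in law under $\prob_{\eta^N}$ to a mean-one exponential, while $\theta_N^{-1} T_{\ms B^c}(\bs\Delta) \to 0$ in probability. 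The crux is the equivalence $\lim_N \theta_N^{-1} \E_{\eta^N}[T_{\ms B^c}(\ms W)] = 1$; once this is in hand, parts (ii) and (iii) follow at once by dividing the two preceding limits by $\E_{\eta^N}[T_{\ms B^c}(\ms W)]$ and applying Slutsky's lemma, and part (i) reduces to the estimate $\E_{\eta^N}[T_{\bs\xi}(\ms W)] = o(\theta_N)$.

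To obtain the equivalence I would invoke Lemma \ref{lui} with $\ms A = \ms B^c$: its conclusion \eqref{f12} provides the uniform integrability of $\{\theta_N^{-1} T_{\ms B^c}(\ms W)\}_N$ needed to promote convergence in law to convergence of means. The only hypothesis to check is the uniform tail bound \eqref{supd}, i.e.\ $\limsup_N \sup_{\eta\in W_N} \prob_\eta[T_{\ms B^c} > t\theta_N] < 1$ for some $t$. I would verify it by first replacing ({\bf V1}) with ({\bf V1}') via Lemma \ref{s05}, and then splitting, for $\eta\in W_N$, according to whether $T_{\bs\xi} \le \delta\theta_N$: on that event necessarily $T_{\bs\xi} < T_{\ms B^c}$ (otherwise $T_{\ms B^c} \le \delta\theta_N$, which is harmless for $t > \delta$), so the strong Markov property at $T_{\bs\xi}$ bounds that contribution by $\prob_{\xi^N}[T_{\ms B^c} > (t-\delta)\theta_N]$; using ({\bf V1}') on the complementary event and ({\bf V2}) at the attractor $\bs\xi$ on this one yields $\limsup_N \sup_{\eta\in W_N}\prob_\eta[T_{\ms B^c} > t\theta_N] \le e^{-(t-\delta)}$, which is $< 1$ for $t$ large. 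For the remaining estimate $\E_{\eta^N}[T_{\bs\xi}(\ms W)] = o(\theta_N)$ I would apply Lemma \ref{lui} once more, now with $\ms A = \{\bs\xi\}$: here \eqref{supd} holds for \emph{every} $t > 0$ by ({\bf V1}'), so the lemma gives $\sup_{\eta\in W_N} \E_\eta[T_{\bs\xi}(\ms W)] \le 2t\,\theta_N$ for all large $N$, and letting $t \downarrow 0$ completes the direct implication.

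For the converse implication, the decisive elementary observation is the inclusion $\{T_{\bs\xi}(\ms W) < T_{\ms B^c}(\ms W)\} \subseteq \{T_{\bs\xi} < T_{\ms B^c}\}$: if $T_{\ms B^c} \le T_{\bs\xi}$ then $[0, T_{\ms B^c}] \subseteq [0, T_{\bs\xi}]$, hence $T_{\ms B^c}(\ms W) \le T_{\bs\xi}(\ms W)$, which is the contrapositive. Combining this with Markov's inequality applied to (i) and with the distributional limit (ii), one gets $\prob_{\eta^N}[T_{\bs\xi}(\ms W) \ge T_{\ms B^c}(\ms W)] \to 0$, hence ({\bf V1}). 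For the asymptotic equivalence of $\E_{\eta^N}[T_{\ms B^c}(\ms W)]$ and $\E_{\xi^N}[T_{\ms B^c}(\ms W)]$, I would write $T_{\ms B^c}(\ms W) = T_{\bs\xi}(\ms W) + T_{\ms B^c}(\ms W)\circ\Theta_{T_{\bs\xi}}$ on $\{T_{\bs\xi} < T_{\ms B^c}\}$ and bound $T_{\ms B^c}(\ms W) \le T_{\bs\xi}(\ms W)$ on the complement (again by the inclusion above); the strong Markov property at $T_{\bs\xi}$ then yields, with $a_N := \E_{\eta^N}[T_{\ms B^c}(\ms W)] / \E_{\xi^N}[T_{\ms B^c}(\ms W)]$ and $b_N := \E_{\eta^N}[T_{\bs\xi}(\ms W)] / \E_{\eta^N}[T_{\ms B^c}(\ms W)]$, the lower bound $a_N \ge \prob_{\eta^N}[T_{\bs\xi} < T_{\ms B^c}]$ and the upper bound $a_N \le 1 + 2 b_N a_N$. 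Since the probability tends to $1$ by ({\bf V1}) and $b_N \to 0$ by (i), solving the second inequality gives $a_N \to 1$, the claimed equivalence. Finally, setting $\theta_N := \E_{\xi^N}[T_{\ms B^c}(\ms W)]$, condition ({\bf V3}) is (iii) combined with $\E_{\eta^N}[T_{\ms B^c}(\ms W)] \sim \theta_N$, and ({\bf V2}) follows from $T_{\ms B^c} = T_{\ms B^c}(\ms W) + T_{\ms B^c}(\bs\Delta)$ by rescaling (ii) and (iii) with $\theta_N$ (using $a_N \to 1$) and Slutsky's lemma; together with ({\bf V1}) this exhibits $(\ms W,\ms B,\bs\xi)$ as a valley of depth $\bs\theta$.

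The one step that is more than bookkeeping is the uniform tail bound \eqref{supd} for $T_{\ms B^c}$ in the direct implication: the definition of a valley only postulates ({\bf V1}), ({\bf V2}) along individual sequences of initial states, with no control uniform over $W_N$. The attractor is precisely what repairs this — through ({\bf V1}') and the strong Markov property at $T_{\bs\xi}$ the uniform estimate collapses to the single-starting-point assertion ({\bf V2}) at $\bs\xi$. Absent an attractor, such uniformity would have to be imposed as a separate hypothesis.
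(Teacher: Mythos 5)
Your proof is correct. The forward direction follows essentially the paper's own route: both hinge on Lemma \ref{lui} applied twice (once with $\ms A=\{\bs\xi\}$ to get $\E_{\eta^N}[T_{\bs\xi}(\ms W)]=o(\theta_N)$, once with $\ms A=\ms B^c$ to control $\E_{\eta^N}[T_{\ms B^c}(\ms W)]$) together with the identity $T_{\ms B^c}=T_{\ms B^c}(\ms W)+T_{\ms B^c}(\bs\Delta)$. Two of your sub-arguments differ from the paper's. First, to verify the uniform tail bound \eqref{supd} for $\ms A=\ms B^c$ the paper simply invokes ({\bf V2}), the uniformity over $W_N$ being automatic because ({\bf V2}) is postulated for \emph{every} sequence of starting points (a subsequence argument); you instead rebuild the uniformity through ({\bf V1}') and the strong Markov property at the attractor. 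Your route is longer but equally valid, and it makes explicit where the attractor enters. Second, you obtain $\theta_N^{-1}\E_{\eta^N}[T_{\ms B^c}(\ms W)]\to 1$ in one stroke from convergence in law plus the uniform integrability furnished by \eqref{f12}, where the paper carries out separate lower and upper bounds; these are equivalent. In the converse direction your argument is genuinely more elementary than the paper's: to prove $\E_{\eta^N}[T_{\ms B^c}(\ms W)]\sim\E_{\xi^N}[T_{\ms B^c}(\ms W)]$ the paper again passes through uniform integrability via Lemma \ref{lui}, while you get both bounds $a_N\ge\prob_{\eta^N}[T_{\bs\xi}<T_{\ms B^c}]$ and $a_N\le 1+2b_N a_N$ directly from the strong Markov property at $T_{\bs\xi}$, the decomposition $T_{\ms B^c}(\ms W)=T_{\bs\xi}(\ms W)+T_{\ms B^c}(\ms W)\circ\Theta_{T_{\bs\xi}}$ on $\{T_{\bs\xi}<T_{\ms B^c}\}$, and the pathwise inclusion $\{T_{\ms B^c}\le T_{\bs\xi}\}\subseteq\{T_{\ms B^c}(\ms W)\le T_{\bs\xi}(\ms W)\}$; this avoids any tail estimate on that step and is a clean alternative.
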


It is implicit in the statement of this proposition that the time
spent in the well $\ms W$ before leaving the basin $\ms B$, $T_{{\ms
    B}^c}(\ms W)$, has finite expectation with respect to any $\mb
P_{\eta^N}$ for sufficiently large $N$, as well as the time spent in
the well $\ms W$ before reaching the attractor $\bs \xi$,
$T_{\bs\xi}(\ms W)$. 

\begin{proof}[Proof of Proposition \ref{prop1}]
Assume that $(\ms W, \ms B, \bs \xi)$ is a valley of depth $\bs
\theta$ and attractor $\bs \xi$. We first claim that 
\begin{equation}
\label{f09}
\lim_{N\to \infty} \sup_{\eta \in W_N} \mb E_{\eta}
\big[ \theta_N^{-1} \, T_{\bs \xi} (\ms W) \big] \;=\; 0 \,.
\end{equation}
This assertion follows from ({\bf V1}') and the previous lemma with
$\ms A = \{\bs \xi\}$, $t=\delta$, $\epsilon = 1/2$. 

Fix a point $\bs \eta=(\eta^N : N\ge 1)$ in $\ms W$. We claim that
\begin{equation}
\label{f10}
\lim_{N\to \infty} \mb E_{\eta^N}
\big[ \theta_N^{-1} \, T_{\ms B^c} (\ms W) \big] \;=\; 1 \,.
\end{equation}
Three ingredients are needed to prove this result. The convergence of
$\theta_N^{-1} \, T_{\ms B^c}$ to a mean one exponential random
variable, a bound on $\mb E_{\eta^N} [ \theta_N^{-1} \, T_{\ms B^c}
(\ms W)]$ provided by the previous lemma, and the fact that the
process does not spend too much time in $\bs \Delta$.

We start with the proof of the lower bound. Fix $\delta>0$, $t>0$.  On
the set $\{T_{\ms B^c} (\bs \Delta) \le \delta \theta_N\}$, we have
that $T_{\ms B^c} (\ms W) \ge T_{\ms B^c} - \delta
\theta_N$. Therefore,
\begin{equation*}
T_{\ms B^c} (\ms W) \;\ge\; - \delta \theta_N \;+\; T_{\ms B^c} 
\, \mb 1\{T_{\ms B^c} (\bs \Delta) \le \delta \theta_N\} \;.
\end{equation*}
Replacing $T_{\ms B^c}$ by $\min\{T_{\ms B^c} , t \theta_N\}$ we
obtain the estimate
\begin{equation*}
T_{\ms B^c} (\ms W) \;\ge\; - \delta \theta_N \;-\; t \theta_N
\mb 1\{T_{\ms B^c} (\bs \Delta) > \delta \theta_N\}
\;+\; \min\{T_{\ms B^c} , t \theta_N\}
\end{equation*}
which holds for all $\delta>0$, $t>0$.

By ({\bf V3}), the expectation with respect to $\mb P_{\eta^N}$ of the
second term on the right hand side divided by $\theta_N$ vanishes as
$N\uparrow\infty$ for any fixed $\delta>0$, $t>0$.  By ({\bf V2}), the
expectation with respect to $\mb P_{\eta^N}$ of the third term on the
right hand side divided by $\theta_N$ converges to $1$ as
$N\uparrow\infty$ and then $t\uparrow\infty$. Therefore,
\begin{equation*}
\liminf_{N\to\infty} \mb E_{\eta^N} \big[ \theta_N^{-1}
T_{\ms B^c} (\ms W) \big] \;\ge\; 1\;.
\end{equation*}
 
The proof of the upper bound is simpler. For every $A>0$,
\begin{equation*}
\mb E_{\eta^N} \big[ T_{\ms B^c} (\ms W) \big] \;\le\;
\mb E_{\eta^N} \big[ \min\{ T_{\ms B^c} , A \theta_N \} \big]
\;+\; \mb E_{\eta^N} \big[ T_{\ms B^c} (\ms W) 
\mb 1 \{ T_{\ms B^c} (\ms W) > A \theta_N \} \big]\;.
\end{equation*}
By ({\bf V2}), the first term on the right hand side divided by
$\theta_N$ converges to $1$ as $N\uparrow\infty$ and then
$A\uparrow\infty$. By ({\bf V2}), \eqref{supd} holds with $\ms A=
\ms B^c$, $\epsilon =1/2$ and some $t<\infty$. Therefore, by
\eqref{f12}, the second term divided by $\theta_N$ vanishes as
$N\uparrow\infty$ and then $A\uparrow\infty$. This concludes the proof
of \eqref{f10}.

Assertion (i) follows from \eqref{f09} and \eqref{f10}, and assertion
(iii) from ({\bf V3}) and \eqref{f10}. Finally, $T_{\ms B^c} (\ms W) =
T_{\ms B^c} - T_{\ms B^c} (\mb \Delta)$. By ({\bf V2}), $\theta_N^{-1}
T_{\ms B^c}$ converges in distribution to a mean one exponential
random variable, and, by ({\bf V3}), $\theta_N^{-1} T_{\ms B^c} (\mb
\Delta)$ converges to $0$ in probability. Assertion (ii) follows from
these facts and from \eqref{f10}. The final claim of the first part of
the proposition has been proved in \eqref{f09}.

\medskip

To prove the converse, suppose that conditions (i) -- (iii) hold. We
first prove that ({\bf V1}), ({\bf V2}), ({\bf V3}) are in force with
$\theta_N$ replaced by the sequence $\theta(\eta^N) = \mb E_{\eta^N} [
T_{\ms B^c} (\ms W) ]$, which depends on the point $\bs \eta=(\eta^N :
N\ge 1)$.  In this case, condition ({\bf V3}) corresponds to (iii). To
prove ({\bf V2}), note that $T_{\ms B^c} = T_{\ms B^c} (\ms W) +
T_{\ms B^c} (\bs \Delta)$. By (ii), $\theta(\eta^N)^{-1} T_{\ms B^c}
(\ms W)$ converges in distribution to a mean one exponential random
variable and, by (iii), $\theta(\eta^N)^{-1} T_{\ms B^c} (\bs \Delta)$
vanishes in probability. Therefore, ({\bf V2}) holds. Finally, on the
set $\{T_{\bs \xi} < T_{\ms B^c}\}$, $T_{\bs \xi} = T_{\bs \xi} (\ms
W) + T_{\bs \xi} (\bs \Delta)$ and $T_{\bs \xi} (\bs \Delta) \le
T_{\ms B^c} (\bs \Delta)$. By (i) and (iii), $\theta(\eta^N)^{-1}
T_{\bs \xi} (\ms W)$ and $\theta(\eta^N)^{-1} T_{\ms B^c} (\bs
\Delta)$ vanish in probability as $N\uparrow\infty$. On the other
hand, by ({\bf V2}), already proved, $\theta(\eta^N)^{-1} T_{\ms B^c}$
converges in distribution to a mean one exponential variable.  This
proves ({\bf V1}).

It remains to show that the sequences $\theta(\eta^N) = \mb E_{\eta^N}
[ T_{\ms B^c} (\ms W) ]$ and $\mb E_{\xi^N} [ T_{\ms B^c} (\ms W) ]$
are asymptotically equivalent in the sense that their ratio converges
to $1$.

By (ii) and Lemma \ref{lui}, the sequence $\theta(\eta^N)^{-1} T_{\ms
  B^c} (\ms W)$ is uniformly integrable with respect to $\mb
P_{\eta^N}$. Therefore, by ({\bf V1}),
\begin{equation*}
\lim_{N\to\infty} \frac 1{\theta(\eta^N)} 
\mb E_{\eta^N} \Big[ T_{\ms B^c} (\ms W) 
\, \mb 1 \{T_{\bs \xi} < T_{\ms B^c}\} \Big] \;=\; 1\; .
\end{equation*}
By the strong Markov property and the explicit form of $T_{\ms B^c}
(\ms W)$, the expectation is equal to
\begin{equation*}
\frac 1{\theta(\eta^N)} \mb E_{\eta^N} \Big[ T_{\bs \xi} (\ms W)
\, \mb 1 \{T_{\bs \xi} < T_{\ms B^c}\} \Big] \;+\;
\frac 1{\theta(\eta^N)} \mb E_{\xi^N} \Big[ T_{\ms B^c} (\ms W)
\Big] \, \mb P_{\eta^N} \big[ T_{\bs \xi} < T_{\ms B^c} \big]\;. 
\end{equation*}
By (i), the first term vanishes as $N\uparrow\infty$. Since by ({\bf
  V1}) $\mb P_{\eta^N} [ T_{\bs \xi} < T_{\ms B^c} ]$ converges to
$1$, $\mb E_{\eta^N} [ T_{\ms B^c} (\ms W) ]$ and $\mb E_{\xi^N} [
T_{\ms B^c} (\ms W) ]$ are asymptotically equivalent. This concludes
the proof of the proposition.
\end{proof}

We conclude this section with the proofs of Propositions \ref{prom3}
and \ref{corol}. Let us first fix a metric in the path space $D(\bb
R_+,S\cup \{{\mf d}\})$ which induces the Skorohod topology. In what
follows, we identify the point $\mf d$ with $0\in \bb Z$ so that
$S\cup \{{\mf d}\}$ is a metric space with the metric induced by $\bb
Z$.

For each integer $m\ge 1$, let $\Lambda_m$ denote the class of
strictly increasing, continuous mappings of $[0,m]$ onto itself. If
${\lambda} \in \Lambda_m$, then $\lambda_0=0$ and $\lambda_m=m$. In
addition, consider the function
$$
g_m(t)\;=\;\left\{
\begin{array}{ll}
1 & \textrm{if $t\le m-1$}\;,\\
m-t & \textrm{if $m-1 \le t\le m$\;,}\\
0 & \textrm{if $t\ge m$}\;.
\end{array}\right.
$$
For any integer $m\ge 1$ and $e, \hat e \in D(\bb R_+,S\cup\{\mf
d\})$, define $d_m(e,\hat e)$ to be the infimum of those positive
$\epsilon$ for which there exists in $\Lambda_m$ a $\lambda$
satisfying
$$
\sup_{t\in [0,m]} |\lambda_t-t| \;<\; \epsilon
$$
and
$$
\sup_{t\in [0,m]}|\, g_m(\lambda_t) \,e_{\lambda_t} - g_m(t) 
\,\hat e_{t} \,| \;<\; \epsilon\;.
$$
Finally, we define the metric in $D(\bb R_+,S\cup \{\mf d\})$ by
$$
d( e,\hat e) \;=\; \sum_{m=1}^{\infty} 
2^{-m}(1\land d_m( e, \hat e))\;.
$$
This metric induces the Skorohod topology in the path space $D(\bb
R_+,S\cup \{\mf d\})$ (cf. \cite{b}).

For any path $ e\in D(\bb R_+,S\cup\{\mf d\})$ denote by $( \tau_n( e)
: n\ge 0 )$ the sequence of jumping times of $ e$: Set $\tau_0( e)=0$
and, for $n\ge 1$, we define $\tau_n( e)$ as
\begin{equation*}
\tau_n( e) \;:=\; \inf\{t>\tau_{n-1}( e) : 
 e_t \neq  e_{\tau_{n-1}( e)} \} \;,
\end{equation*}
with the convention that $\tau_n= \infty$ if $\tau_{n-1}=\infty$ and,
as usual, $\inf \varnothing = +\infty $. \medskip

Proposition \ref{corol} is a consequence of the following result.

\begin{proposition}
\label{nt}
Suppose that $\{\eta^N_t:t\ge 0\}$, $N\ge 1$, satisfies $\bf (M3)$ for
any $x\in S$. Then, for any $x\in S$ and point $\bs \eta=(\eta^N :
N\ge 1)$ in $\ms E^x$,
\begin{equation*}
\lim_{N\to\infty} {\bf E}_{\eta^N}\big[ \,d(X^N,\hat X^N)\, \big] \;=\; 0\;.
\end{equation*}
\end{proposition}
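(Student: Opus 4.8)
The plan is to compare the two $S\cup\{\mf d\}$-valued paths $X^N$ and $\hat X^N$ directly and to show that the Skorohod distance $d(X^N,\hat X^N)$ is small with probability close to one, the key point being that $X^N$ and $\hat X^N$ trace out \emph{the same sequence of states in $S$} and differ only by a time re-parametrization whose distortion is controlled by the time spent in $\Delta_N$. Recall that $\hat X^N_t=\Psi_N(\eta^N_{\sigma(t)})$ with $\sigma(t)=\sup\{s\le t:\eta^N_s\in\ms E_N\}$, whereas $X^N_t=\Psi_N(\eta^{\ms E_N}_t)$ is obtained by erasing from the trajectory all the excursions into $\Delta_N$; thus $\hat X^N$ is the ``right-continuous, freeze-during-$\Delta_N$-excursions'' version of the same path, while $X^N$ is its ``collapsed'' version. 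Concretely, $X^N_t=\hat X^N_{\phi_N(t)}$ where $\phi_N(t)=\mc S^{\ms E_N}_t$ (the generalized inverse of $\mc T^{\ms E_N}$), and $\phi_N(t)-t=\mc T^{\Delta_N}_{\phi_N(t)}$ is exactly the amount of $\Delta_N$-time accumulated by the original process up to the point where it has spent time $t$ in $\ms E_N$. So the time change $\lambda$ needed in the definition of $d_m$ is essentially $t\mapsto\phi_N(t)$, and $\sup_{t\le m}|\phi_N(t)-t|$ is bounded by the total $\Delta_N$-time spent by $\{\eta^N_s\}$ up to a (random) real time that is $O(\phi_N(m))$.

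The steps, in order. First I would fix $x\in S$, a point $\bs\eta$ in $\ms E^x$, and an integer $m\ge 1$, and reduce the claim to showing $\lim_N\E_{\eta^N}[1\wedge d_m(X^N,\hat X^N)]=0$ for each fixed $m$ (then sum over $m$ with the $2^{-m}$ weights and use dominated convergence). Second, I would make the identification $X^N_\cdot=\hat X^N_{\phi_N(\cdot)}$ rigorous on the path space from the definitions of the trace and of $\sigma(\cdot)$; this is where the artificial cemetery $\mf d$ must be handled, but under $\bf (M3)$—hence under positive recurrence and nonexplosion on each relevant set—the trace is a genuine $\ms E_N$-valued process, so $\mf d$ is not hit and the $g_m$ truncation is only there to absorb the usual boundary effect near $t=m$. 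Third, with $\lambda_t:=\phi_N(t)$ for $t\in[0,m]$ (suitably linearly rescaled to map $[0,m]$ onto itself, which costs only an extra $|\phi_N(m)-m|$ in the sup-norm), I would bound both $\sup_{t\le m}|\lambda_t-t|$ and the matched-paths term $\sup_{t\le m}|g_m(\lambda_t)X^N_{\lambda_t}-g_m(t)\hat X^N_t|$ by a single quantity of the form
\begin{equation*}
\Gamma^N_m\;:=\;\int_0^{a_m}\mb 1\{\eta^N_{s\theta_N}\in\Delta_N\}\,\theta_N\,ds
\end{equation*}
for a deterministic $a_m$ depending only on $m$ (after the speed-up by $\theta_N$ this is the natural object; note that on the time scale $\theta_N$ the statement of $\bf(M3)$ is precisely about $\int_0^t\mb 1\{\eta^N_{s\theta_N}\in\Delta_N\}\,ds$). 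Fourth, I would invoke $\bf (M3)$ for the state $x$ to conclude that $\E_{\eta^N}[\Gamma^N_m/\theta_N\wedge 1]\to 0$; since $d_m(X^N,\hat X^N)\le C\,(\Gamma^N_m/\theta_N)$ up to the $g_m$-boundary correction, this yields $\E_{\eta^N}[1\wedge d_m(X^N,\hat X^N)]\to 0$, and assembling over $m$ finishes the proof.

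The main obstacle will be the third step: turning the intuitive picture ``$X^N$ and $\hat X^N$ differ by the time change $\phi_N$'' into a clean bound on $d_m$ that is controlled \emph{solely} by the accumulated $\Delta_N$-time, with the correct real-time horizon $a_m$ and with no hidden dependence on the number of excursions. The subtlety is that between two successive visits to $\ms E_N$ the process $\hat X^N$ is frozen while the ``clock'' $t$ of $X^N$ does not advance at all, so a single long $\Delta_N$-excursion of length $\ell$ produces a plateau of $\hat X^N$ of length $\ell$ that $X^N$ does not see; the time change $\lambda$ must jump by $\ell$ there, and one has to verify that this is compatible with $\lambda\in\Lambda_m$ (strictly increasing, continuous) by a limiting/smoothing argument, and that the total of all such $\ell$'s up to time horizon $m$ (in $\ms E_N$-clock) equals $\mc T^{\Delta_N}_{\phi_N(m)}$—which is exactly the $\bf(M3)$ quantity evaluated at the random real time $\phi_N(m)$. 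Bounding $\phi_N(m)$ by a deterministic $a_m$ requires a crude but sufficient a priori estimate (e.g.\ $\phi_N(m)\le m+\mc T^{\Delta_N}_{\phi_N(m)}$, then a Markov-inequality / positive-recurrence argument, or simply absorbing the low-probability event $\{\phi_N(m)>a_m\}$ into the $1\wedge(\cdot)$ truncation for $a_m$ large). Once this bookkeeping is in place the rest is routine.
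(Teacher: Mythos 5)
Your proposal is correct and follows essentially the same route as the paper: the paper's Lemma \ref{dmx} constructs precisely the piecewise-linear time change matching the corresponding jump times of $X^N$ and $\hat X^N$ (the smoothing of your $\phi_N$ that you anticipate needing), obtains $d_m(X^N,\hat X^N)\le |S|\max\{\hat\tau_{\mf n}-\tau_{\mf n}\,;\,m-T(X^N)\}\le |S|\,\mc T^{\Delta_N}_m$, and then concludes with ({\bf M3}) exactly as you describe. The only simplification you miss is that by anchoring the time change at the last jump time $\hat\tau_{\mf n}$ of $\hat X^N$ before $m$ (i.e.\ working in the real-time clock rather than the $\ms E_N$-clock), the occupation-time bound already comes with the deterministic horizon $m$, so no a priori control of $\phi_N(m)$ or choice of $a_m>m$ is required.
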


\begin{proof}
Fix arbitrary integers $m\ge 1$ and $N\ge 1$. To keep notation
simple, set $\tau_n:=\tau_n(X^N)$ and $\hat \tau_n := \tau_n(\hat
X^N)$, $n\ge 0$. Define the random
variables
\begin{equation*}
{\mf n} \;:=\;\sup\{j\ge 0 : \hat \tau_j < m\}\;
\end{equation*}
and
$$
T(X^N)\;:=\; \tau_{\mf n + 1} \land m\;.
$$
In Lemma \ref{dmx} below we show that ${\bf P}_{\eta^N}$-a.s.,
\begin{equation}
\label{edmx}
d_m( X^N , \hat X^N ) \;\le\;  | S | \,\max 
\big\{  \hat \tau_{\mf n} - \tau_{\mf n} \;;\; m-T(X^N) \big\}\;.
\end{equation}
To estimate the right hand side in (\ref{edmx}), observe that
$$
\hat \tau_{\mf n} - \tau_{\mf n}
\;=\; \mc T_{\hat \tau_{\mf n}}^{\Delta_N} \;\le\; \mc
T^{\Delta_N}_m\;, 
$$
where $\mc T_t^{\Delta_N}$ is the time spent by $\{\eta^N_t:t\ge 0\}$ in
$\Delta_N$ in the time interval $[0,t]$, introduced in
\eqref{timecut}. On the other hand, in the case $\tau_{\mf n +1} < m$,
$m-T(X^N)$ can be written as $m-\{\hat \tau_{\mf n} + [\tau_{\mf n +1}
- \tau_{\mf n}]\} + [\hat \tau_{\mf n} - \tau_{\mf n}]$.  Since
$\hat \tau_{\mf n} - \tau_{\mf n}$ is the time spent by $\{\eta^N_t:t\ge
0\}$ in $\Delta_N$ in the time interval $[0, \hat \tau_{\mf n}]$ and
$m-\{\hat \tau_{\mf n} + [\tau_{\mf n +1} - \tau_{\mf n}]\}$ is the
time spent in $\Delta_N$ in the time interval $[\hat \tau_{\mf n},
m]$,
\begin{equation*}
m-T(X^N) \;\le\; \mc T^{\Delta_N}_m\;. 
\end{equation*}
Therefore, by (\ref{edmx}) we have just shown that
$$
d(X^N,\hat X^N)\; \le \; \sum_{m=1}^{\infty} 2^{-m}
(1\land |S|\mc T^{\Delta_N}_m)\;.
$$
The desired result follows from this estimate and property ({\bf M3}).
\end{proof}

\begin{lemma}
\label{dmx}
For any integers $N,m\ge 1$, (\ref{edmx}) holds ${\bf
  P}_{\eta^N}$-almost surely.
\end{lemma}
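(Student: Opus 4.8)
The plan is to establish \eqref{edmx} pathwise by exhibiting one explicit time change $\lambda\in\Lambda_m$ that synchronises the jumps of $X^N$ and $\hat X^N$ up to real time $m$, and then bounding the two quantities appearing in the definition of $d_m$. The starting observation — already implicit in the proof of Proposition \ref{nt} — is that $X^N$ and $\hat X^N$ run through the \emph{same} sequence of states $x_0,x_1,x_2,\dots$ (the successive values of $\Psi_N$ along the excursions of $\{\eta^N_t:t\ge0\}$ in $\ms E_N$, with $x_j\neq x_{j+1}$); the only difference is the clock: $X^N$ is run by the time spent in $\ms E_N$, $\hat X^N$ by the real time. Thus $X^N$ equals $x_j$ on the trace interval $[\tau_j,\tau_{j+1})$ while $\hat X^N$ equals $x_j$ on $[\hat\tau_j,\hat\tau_{j+1})$, and $\mc T^{\ms E_N}_{\hat\tau_j}=\tau_j$, so that $\hat\tau_j-\tau_j=\mc T^{\Delta_N}_{\hat\tau_j}$ is nondecreasing in $j$.

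Fix the realisation of the path, recall that $\mf n=\sup\{j\ge0:\hat\tau_j<m\}$, so $0=\hat\tau_0\le\hat\tau_{\mf n}<m\le\hat\tau_{\mf n+1}$ and $T(X^N)=\tau_{\mf n+1}\land m$, and put $C:=|S|\max\{\hat\tau_{\mf n}-\tau_{\mf n},\,m-T(X^N)\}$. Since $d_m$ is defined via strict inequalities, it suffices to find a single $\lambda\in\Lambda_m$ with $\sup_{t\in[0,m]}|\lambda_t-t|\le C$ and $\sup_{t\in[0,m]}|g_m(\lambda_t)X^N_{\lambda_t}-g_m(t)\hat X^N_t|\le C$. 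I would take $\lambda$ to be the piecewise linear interpolation of the nodes $(0,0),(\hat\tau_1,\tau_1),\dots,(\hat\tau_{\mf n},\tau_{\mf n}),(m,m)$, the identity when $\mf n=0$. Because $0<\hat\tau_1<\dots<\hat\tau_{\mf n}<m$ and $0<\tau_1<\dots<\tau_{\mf n}\le\hat\tau_{\mf n}<m$, this is a strictly increasing continuous bijection of $[0,m]$ fixing the endpoints; on each subinterval between consecutive nodes $t-\lambda_t$ is affine, equal to $\hat\tau_j-\tau_j=\mc T^{\Delta_N}_{\hat\tau_j}\ge0$ at the node $\hat\tau_j$ ($j\le\mf n$) and to $0$ at $0$ and at $m$. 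By monotonicity of $j\mapsto\mc T^{\Delta_N}_{\hat\tau_j}$ this gives $0\le t-\lambda_t\le\hat\tau_{\mf n}-\tau_{\mf n}\le C$ for all $t$, the first bound; in particular $\lambda_t\le t$.

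For the second bound I would split $[0,m]$. For $t\in[\hat\tau_j,\hat\tau_{j+1})$ with $j\le\mf n-1$ one has $\lambda_t\in[\tau_j,\tau_{j+1})$ since $\lambda_{\hat\tau_j}=\tau_j$, hence $X^N_{\lambda_t}=x_j=\hat X^N_t$; the same holds on $[\hat\tau_{\mf n},m)$ when $\tau_{\mf n+1}\ge m$, i.e.\ when $m-T(X^N)=0$. On this ``good'' part, since $g_m$ is $1$-Lipschitz and (identifying $\mf d$ with $0$, as in the paper) $|X^N_{\lambda_t}|\le|S|$,
\[
|g_m(\lambda_t)X^N_{\lambda_t}-g_m(t)\hat X^N_t|=|g_m(\lambda_t)-g_m(t)|\,|X^N_{\lambda_t}|\le|S|\,|\lambda_t-t|\le C ;
\]
the point $t=m$ always contributes $0$ as $g_m(m)=g_m(\lambda_m)=0$. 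The ``bad'' part arises only when $\tau_{\mf n+1}<m$: then there is a unique $t^\ast\in(\hat\tau_{\mf n},m)$ with $\lambda_{t^\ast}=\tau_{\mf n+1}=T(X^N)$ (by continuity, since $\lambda_{\hat\tau_{\mf n}}=\tau_{\mf n}<\tau_{\mf n+1}<m=\lambda_m$); for $t<t^\ast$ we are again in the good case, while for $t\in[t^\ast,m]$ we have $\lambda_t\ge T(X^N)$ and $t\ge t^\ast\ge\lambda_{t^\ast}=T(X^N)$, so, $g_m$ being nonincreasing, both $g_m(\lambda_t)X^N_{\lambda_t}$ and $g_m(t)\hat X^N_t$ lie in $[0,\,|S|\,g_m(T(X^N))]$. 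Using $|a-b|\le\max\{a,b\}$ for $a,b\ge0$ and $g_m(s)\le\max\{m-s,0\}$, this is $\le|S|\,(m-T(X^N))\le C$. Hence both suprema are $\le C$, giving $d_m(X^N,\hat X^N)\le C$, which is \eqref{edmx}; the path being arbitrary, this holds $\prob_{\eta^N}$-almost surely.

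The step I expect to be delicate is the ``bad'' region near $m$: there $X^N$ has already jumped beyond $\tau_{\mf n+1}$ while $\hat X^N$ is still frozen at $x_{\mf n}$, and no admissible reparametrisation can repair the mismatch, so one must exploit the decay of the cut-off $g_m$ together with the elementary bound $|a-b|\le\max\{a,b\}$ for nonnegative reals to recover precisely the constant $|S|(m-T(X^N))$ rather than a larger one. The only other point requiring care is the bookkeeping of the identity $\mc T^{\ms E_N}_{\hat\tau_j}=\tau_j$, and the consequent monotonicity of $\hat\tau_j-\tau_j$, on which the admissibility of the interpolation nodes and the first estimate rest.
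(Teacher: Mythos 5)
Your proof is correct and follows essentially the same route as the paper: the same piecewise-linear time change pinned at $\lambda_{\hat\tau_j}=\tau_j$, the same identification $\hat\tau_j-\tau_j=\mc T^{\Delta_N}_{\hat\tau_j}$, and the same exploitation of the decay of $g_m$ near $m$ to absorb the mismatch after $T(X^N)$. The only (harmless) difference is at the endpoint: the paper inserts an extra node at $m-\epsilon$ and lets $\epsilon\downarrow 0$ to kill a $2\kappa\epsilon$ remainder, whereas you interpolate directly to $(m,m)$ and bound the bad region by $|S|\,g_m(T(X^N))\le |S|(m-T(X^N))$ via $|a-b|\le\max\{a,b\}$, which yields the constant without a limiting argument.
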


\begin{proof}
Fix two integers $N,m\ge 1$. All assertions in what follows must be
understood in the ${\bf P}_{\eta^N}$-a.s. sense. Recall the notation
introduced in the previous lemma.

Let us list some evident properties of $X^N$ and $\hat X^N$: First
notice that for all $0\le j\le {\mf n}-1$, we have $\hat \tau_{j+1} -
\hat \tau_{j} \,\ge\, \tau_{j+1} - \tau_{j}$ and $ X^N_s=\hat X^N_{t}$
for $(s,t)\in [\tau_j,\tau_{j+1}[\times[\hat \tau_j,\hat
\tau_{j+1}[$. Furthermore, $\tau_{\mf n} < T(X^N) \le m$,
$X^N_{\tau_{\mf n}}\not = \mf d$ and $ X^N_s=\hat X^N_{t}$ for
$(s,t)\in [\tau_{\mf n},T(X^N)[ \times [\hat \tau_{\mf n},m[$.

In particular, since $\hat \tau_{\mf n} < m$, we may choose
$\epsilon>0$ small enough such that $\tau_{\mf n} < T(X^N)-\epsilon\,$
and $\,\hat \tau_{\mf n} < m-\epsilon$. Now, let $\lambda\in
\Lambda_m$ be given by: $\lambda_{\hat\tau_j}=\tau_j$, for $j\le \mf
n$, $\lambda_{m-\epsilon}=T(X^N)-\epsilon$, $\lambda_m=m$ and we
complete $\lambda$ on $[0,m]$ by linear interpolation. Then,
$$
\sup_{t\in [0,m]} |\lambda_t-t| \; \le \; 
\max\{ \hat\tau_{\mf n} - \tau_{\mf n}, m - T(X^N)\}\;.
$$
Moreover, since $\lambda_t \le t$, $0\le t\le m$,
\begin{eqnarray*}
\sup_{t\in[0,m-\epsilon]}\big|g_m(\lambda_t) X^N_{\lambda_t}
- g_m(t) \hat X^N_t \big| &\le& |S| \,\sup_{t\in[0,m-\epsilon]}
|g_m(\lambda_t)-g_m(t)| \\
&\le& |S| \,\sup_{t\in[m-1,m-\epsilon]} |\lambda_t-t|
\end{eqnarray*}
and
\begin{eqnarray*}
\sup_{t\in[m-\epsilon,m]}\big|g_m(\lambda_t)
 X^N_{\lambda_t} - g_m(t) \hat X^N_t\big| &\le&
|S| \, \sup_{t\in[m-\epsilon,m]}\big(\, |g_m(\lambda_t)-g_m(t)| 
+ 2|g_m(t)|\,\big)\\
&\le& |S| \,\sup_{t\in[m-\epsilon,m]} |\lambda_t-t| + 2\kappa \epsilon \;.
\end{eqnarray*}
Since $\epsilon$ may be taken arbitrary small, the claim is proved.
\end{proof}

We now turn to the poof of Proposition \ref{prom3}. For every $e\in
D(\bb R_+ , S\cup \{ \mf d\})$, denote by $J_t(e)$ the number of jumps
up to time $t$:
$$
J_t( e) \; := \; \sup\{j\ge 0 : \tau_j( e)\le t\}\;.
$$

\begin{proof}[Proof of Proposition \ref{prom3}]
Fix an arbitrary non-absorbing state $x_*\in S$ for the Markov process
$\{ \bb P_{x} : x\in S \}$, a point ${\bs \eta} = (\eta^N : N\ge 1)$
in $\ms E^{x_*}$ and a time $t>0$. It suffices to show that ${\bf
  E}_{\eta^N}[\mc T^{\Delta_N}_t] \to 0$ as $N\to\infty$.

For any integer $K\ge 1$,
\begin{equation}
\label{mmt}
 \mc T^{\Delta_N}_t \;\le\; {\bf 1}\{ J_t(\hat X^N) \ge K \} \, t 
\;+\; {\bf 1}\{ J_t(\hat X^N) < K \} \,\mc T^{\Delta_N}_t \;, 
\end{equation}
${\bf P}_{\eta^N}$-almost surely. The subset $\{J_t \ge K\} \subseteq
D(\bb R_+, S \cup \{\mf d\})$ is closed for the Skorohod
topology. Therefore, by property $\bf (M2)$,
$$
\limsup_{N\to \infty} {\bf P}_{\eta^N}[J_t(\hat X^N) \ge K] 
\;\le\; \limsup_{N\to \infty} {\bf P}_{\eta^N}[J_t(X^N) \ge K] 
\;\le\; {\bb P}_{x^*}[J_t \ge K]\;.
$$
The right hand side vanishes as $K\uparrow \infty$. From this and
(\ref{mmt}), it follows that
$$
\limsup_{N\to \infty} {\bf E}_{\eta^N}[\mc T^{\Delta_N}_t] 
\;\le \; \limsup_{K\uparrow\infty} \limsup_{N\to\infty} 
{\bf E}_{\eta^N}[ {\bf 1}\{ J_t(\hat X^N) < K \} \, \mc T^{\Delta_N}_t]\;.
$$
In consequence, in order to conclude the proof it is enough to show that
\begin{equation}
\lim_{N\to\infty} {\bf E}_{\eta^N}[ {\bf 1}\{ J_t(\hat X^N) = i \} 
\, \mc T^{\Delta_N}_t]\;=\;0\;,\quad \forall i\ge 0 \;.
\end{equation}
Fix some integer $i\ge 0$. To keep notation simple, denote $\hat
J_t:=J_t(\hat X^N)$ and let $( \hat\tau_n : n\ge 0 )$ stand for the
jumping times of $\hat X^N$. Recall that we denote by $S_*$ the set of
non-absorbing states for $\{ \bb P_x : x\in S\}$ and set $\ms E^*_N =
\cup_{x\in S_*} \ms E^x_N$. On the event $\{ \hat J_t=i \}$ let us
define
$$
I \;:=\; \inf \big\{ 0\le j\le i : \hat X^N_{\hat \tau_j} 
\in S\setminus S_* \big\}\;,
$$
so that $I=\infty$ if and only if $\hat X_s\in S_*$, for all $0\le
s\le t$.  On the one hand, ${\bf P}_{\eta^N}-$a.s.,
\begin{eqnarray*}
{\bf 1}\{ \hat J_t = i \,;\, I=\infty \}\, \mc T^{\Delta_N}_t 
&\le& {\bf 1}\{ \hat J_t = i \,;\, I=\infty \} \sum_{j=1}^{i+1} 
\int_{\hat \tau_{j-1}}^{\hat \tau_j\land t} 
{\bf 1}\{ \eta^N_s \in \Delta_N \} \,ds \\
&\le& \sum_{j=1}^{i+1} {\bf 1}\{ \eta^N_{\hat \tau_{j-1}}\in \ms E^*_N
\} \,\int_{\hat\tau_{j-1}}^{\hat\tau_j\land t} 
{\bf 1}\{ \eta^N_s \in \Delta_N \} \, ds  \;.
\end{eqnarray*}
Thus, applying the strong Markov property we get
$$
{\bf E}_{\eta^N} \big[ {\bf 1}\{ \hat J_t = i \,;\, I=\infty \}\, 
\mc T^{\Delta_N}_t \big] \;\le\; (i+1) \,\sup_{x\in S_*} 
\sup_{\eta \in \ms E^x_N} {\bf E}^N_{\eta}
\big[ t \land T_{\breve{\ms E}^x}({\bs\Delta})\big]\;.
$$
The right hand side vanishes as $N\to 0$ by assumption $({\bf C1})$
for the non-absorbing states. On the other hand, for any $0\le \ell
\le i$ we have that, ${\bf P}_{\eta^N}-$a.s., on the event $\{ \hat
J_t = i \,;\, I= \ell \}$,
\begin{eqnarray*}
\mc T^{\Delta_N}_t &\le&\sum_{j=1}^{\ell} 
\int_{\hat \tau_{j-1}}^{\hat \tau_j\land t} 
{\bf 1}\{ \eta^N_s \in \Delta_N \} \,ds 
\;+\; \int_{\hat \tau_{\ell}}^{\hat \tau_{\ell} + t} 
{\bf 1}\{ \eta^N_s \in \Delta_N \} \,ds\;.
\end{eqnarray*}
By applying the strong Markov processes as before, we show that ${\bf
  E}_{\eta^N} \big[ {\bf 1}\{ \hat J_t = i \,;\, I=\ell \}\, \mc
T^{\Delta_N}_t \big]$ is bounded above by
$$
\ell \,\sup_{x\in S_*} \sup_{\eta \in \ms E^x_N} 
{\bf E}^N_{\eta}\big[ t \land T_{\breve{\ms E}^x}({\bs\Delta})\big] 
\;+\; \sup_{\eta\in S\setminus S_*} {\bf E}^N_{\eta}
\big[ \mc T_{t}^{{\bs\Delta}} \big] \;.
$$
As $N \uparrow \infty$, the first term vanishes as before while the
second one vanishes by assumption $\bf (M3)$ for absorbing
states. This concludes the proof.
\end{proof}

The same proof yields the following version of Proposition \ref{prom3}
which does not distinguish between absorbing and non-absorbing states.

\begin{lemma}
\label{cb3} 
Assume that $\bf (M2)$ is fulfilled for a sequence of Markov processes
$\{\eta^N_t : t\ge 0\}$, $N\ge 1$. Then, condition $\bf (M3)$ is
satisfied if for each $x$ in $S$,
\begin{equation*}
\lim_{N\to \infty} \sup_{\eta\in \ms E^{x}_N} {\bf P}_{\eta} 
\Big[\, \frac{1}{\theta_N} T_{\breve{\ms E}^{x}}({\bf \Delta}) 
> \delta \,\Big] \,=\, 0\,.
\end{equation*}
\end{lemma}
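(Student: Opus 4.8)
The plan is to re-run the argument of Proposition \ref{prom3}, observing that the only place where the distinction between absorbing and non-absorbing states entered was in controlling the contribution of the ``tail'' time interval $[\hat\tau_\ell, \hat\tau_\ell + t]$ after the trajectory $\hat X^N$ has entered an absorbing state; there condition $\bf(M3)$ was invoked. If instead we assume the uniform escape estimate on $T_{\breve{\ms E}^x}(\bs\Delta)$ for \emph{every} $x\in S$, then this tail contribution is handled by the same strong Markov / supremum bound as the non-absorbing terms, and no appeal to $\bf(M3)$ is needed.

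Concretely, first I would fix $x_*\in S$, a point $\bs\eta$ in $\ms E^{x_*}$ and $t>0$, and reduce as in Proposition \ref{prom3}, via the decomposition \eqref{mmt} and property $\bf(M2)$ (the set $\{J_t\ge K\}$ being closed in the Skorohod topology), to showing that for each fixed $i\ge 0$,
\begin{equation*}
\lim_{N\to\infty} {\bf E}_{\eta^N}\big[ {\bf 1}\{ J_t(\hat X^N) = i\}\, \mc T^{\Delta_N}_t \big] \;=\; 0\;.
\end{equation*}
Here we no longer need Proposition \ref{nt} itself, only its mechanism; but since $\bf(M3)$ is exactly what we are trying to prove, we cannot invoke Proposition \ref{nt}. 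The passage from $X^N$ to $\hat X^N$ inside \eqref{mmt} must therefore be justified directly: the inequality $\mc T^{\Delta_N}_t \le {\bf 1}\{J_t(\hat X^N)\ge K\}\, t + {\bf 1}\{J_t(\hat X^N)<K\}\,\mc T^{\Delta_N}_t$ is a pointwise identity, so this is immediate, and the convergence $\limsup_N {\bf P}_{\eta^N}[J_t(\hat X^N)\ge K] \le \limsup_N {\bf P}_{\eta^N}[J_t(X^N)\ge K]$ follows because $\hat X^N$ has at least as many jumps as $X^N$ on $[0,t]$ (each excursion of $\eta^N$ into $\Delta_N$ between two consecutive states of $\ms E_N$ can only increase the jump count of $\hat X^N$ relative to $X^N$) — alternatively one argues directly from $\bf(M2)$ applied to $J_t(\hat X^N)$ using Proposition \ref{corol} is not available, so I would spell out the monotonicity of the jump count.

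Then, on the event $\{J_t(\hat X^N)=i\}$, write $\mc T^{\Delta_N}_t$ as a sum over the $i+1$ interexcursion intervals $[\hat\tau_{j-1},\hat\tau_j\wedge t]$ of $\hat X^N$, together with a final interval $[\hat\tau_i, \hat\tau_i + t]$; at each renewal point $\hat\tau_{j-1}$ the process $\eta^N$ sits at a point of $\ms E^y_N$ for some $y\in S$, and the time it subsequently spends in $\Delta_N$ before the \emph{next} state of $\ms E_N$ (i.e.\ before hitting $\breve{\ms E}^y_N \cup \ms E^y_N$... more precisely before hitting $\breve{\ms E}^y_N$, since a visit to $\Delta_N$ between two states of $\ms E^y_N$ still counts) is bounded by $t\wedge T_{\breve{\ms E}^y}(\bs\Delta)$ under $\mb P_{\eta^N_{\hat\tau_{j-1}}}$. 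Applying the strong Markov property at each $\hat\tau_{j-1}$, we obtain
\begin{equation*}
{\bf E}_{\eta^N}\big[ {\bf 1}\{ J_t(\hat X^N)=i\}\, \mc T^{\Delta_N}_t \big] \;\le\; (i+1)\,\sup_{x\in S}\,\sup_{\eta\in \ms E^x_N} {\bf E}^N_\eta\big[ t\wedge T_{\breve{\ms E}^x}(\bs\Delta)\big]\;,
\end{equation*}
and the right-hand side tends to $0$ as $N\to\infty$ by the hypothesis, since $t\wedge\theta_N^{-1}T_{\breve{\ms E}^x}(\bs\Delta)$ is bounded by $t$ and converges to $0$ in probability uniformly over $\ms E^x_N$, hence its expectation vanishes. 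Summing over $i<K$, letting $N\to\infty$ and then $K\to\infty$ closes the argument.

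The main obstacle I anticipate is the careful bookkeeping in the reduction from $X^N$ to $\hat X^N$ without invoking Proposition \ref{nt}: one must verify that the jump count comparison $J_t(\hat X^N)\ge J_t(X^N)$ is genuinely pointwise (not merely in distribution), and that on $\{J_t(\hat X^N)=i\}$ the contribution of the possibly-incomplete last excursion $[\hat\tau_i,\hat\tau_i\wedge t, \ldots]$ is correctly absorbed — the cleanest way is to bound the total $\Delta_N$-time over $[\hat\tau_{j-1},\hat\tau_j]$ by $T_{\breve{\ms E}^{y}}(\bs\Delta)\wedge t$ with $y$ the label of $\eta^N_{\hat\tau_{j-1}}$, noting that $\hat\tau_j$ is the first time after $\hat\tau_{j-1}$ at which $\eta^N$ sits in $\ms E_N$ at a \emph{different} label, so the relevant excursion into $\Delta_N$ indeed terminates no later than $T_{\breve{\ms E}^y}$. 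Once this is set up, every remaining step is the strong Markov property plus uniform integrability of a bounded family, exactly as in Proposition \ref{prom3}.
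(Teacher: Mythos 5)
Your proposal follows the paper's own proof of this lemma almost verbatim: the paper proves it by rerunning the argument of Proposition \ref{prom3} without introducing the variable $I$, estimating $\mc T^{\Delta_N}_t$ on $\{\hat J_t=i\}$ exactly as in the case $I=\infty$ to get the bound $(i+1)\,\sup_{x\in S}\sup_{\eta\in\ms E^x_N}\E_\eta[\,t\wedge T_{\breve{\ms E}^x}(\bs\Delta)\,]$, which vanishes by the hypothesis. Your decomposition over the intervals $[\hat\tau_{j-1},\hat\tau_j\wedge t]$, the use of the strong Markov property at each $\hat\tau_{j-1}$, and the observation that the hypothesis (convergence to $0$ in probability, uniformly, of the bounded variables $t\wedge\theta_N^{-1}T_{\breve{\ms E}^x}(\bs\Delta)$) yields convergence of the expectations are all exactly what the paper does, and your remark that Proposition \ref{nt} cannot be invoked here is well taken.

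One point needs correcting: your justification of $\limsup_N\prob_{\eta^N}[J_t(\hat X^N)\ge K]\le\limsup_N\prob_{\eta^N}[J_t(X^N)\ge K]$ has the monotonicity backwards. The processes $X^N$ and $\hat X^N$ run through the \emph{same} sequence of labels, but the $j$-th jump time of $\hat X^N$ satisfies $\hat\tau_j=\tau_j+\mc T^{\Delta_N}_{\hat\tau_j}\ge\tau_j$: excursions into $\Delta_N$ do not add jumps to $\hat X^N$, they \emph{delay} its jumps relative to those of the trace process. Hence $J_t(\hat X^N)\le J_t(X^N)$ pointwise, so $\{J_t(\hat X^N)\ge K\}\subseteq\{J_t(X^N)\ge K\}$, which is the inclusion you need; the claim that ``$\hat X^N$ has at least as many jumps as $X^N$'' would give the opposite inequality. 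With that sign flipped, the argument is complete and coincides with the paper's.
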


\begin{proof}
The proof is simpler than the previous one. We do not need to
introduce the variable $I$. We estimate $\mc T^{\Delta_N}_t$ as in the
case $I=\infty$ to get that
\begin{equation*}
{\bf E}_{\eta^N} \big[ {\bf 1}\{ \hat J_t = i \}\, 
\mc T^{\Delta_N}_t \big] \;\le\; (i+1) \,\sup_{x\in S} 
\sup_{\eta \in \ms E^x_N} {\bf E}^N_{\eta}
\big[ t \land T_{\breve{\ms E}^x}({\bs\Delta})\big]\;.
\end{equation*}
This expression vanishes as $N\uparrow\infty$ by assumption.
\end{proof}

\section{Proof of the main theorems}
\label{proof2}

We prove in this section the main results of the article. The proofs
rely on some results on recurrent Markov processes presented in
Section \ref{sec03}.

\subsection*{Proof of Theorem \ref{teo0}}

Next statement plays a central role in the proof of Theorem \ref{teo0}.

\begin{proposition}
\label{cb}
Consider two sequences of sets $\ms W$ and $\ms B$ satisfying
\eqref{val1}. Assume that there exists a point $\bs \xi=(\xi^N : N\ge
1)$ in $\ms W$ such that for every point $\bs \eta =(\eta^N : N\ge 1)$
in $\ms W$ \eqref{cc1} and \eqref{cc2} hold. Then, condition {\bf
  (V1)} is in force. Moreover, the law of $r_N(\ms W,\ms B^c)$ $T_{\ms
  B^c}(\ms W)$ under ${\bf P}_{\eta^N}$ converges to a mean-one
exponential distribution, as $N\uparrow \infty$, and
\begin{equation}
\label{aa1}
\lim_{N\to \infty}   r_N(\ms W,\ms B^c) \, 
\E_{\eta^N} \big[T_{{\ms B}^c}(\ms W)\big] \;=\; 1
\end{equation}
for any point ${\bs \eta}=(\eta^N : N\ge 1)$ in $\ms W$
\end{proposition}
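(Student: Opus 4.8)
The plan is to work with the trace process $\{\eta^{\ms E_N}_t : t\ge 0\}$ on $\ms E_N = W_N \cup B_N^c$ and to exploit the fact that, on this trace, the exit time $T_{\ms B^c}(\ms W)$ coincides with the time the trace spends in $W_N$ before its first jump to $B_N^c$. The key structural observation is that the successive excursions of the trace process inside $W_N$ between consecutive visits to the attractor $\bs\xi$ are, by the strong Markov property, i.i.d.\ up to the geometrically distributed number of such excursions before the jump to $B_N^c$ finally occurs. Concretely, starting from $\xi^N$, let $p_N$ be the probability that the trace process jumps to $B_N^c$ before returning to $\xi^N$; then $T_{\ms B^c}(\ms W)$ starting from $\xi^N$ is a geometric($p_N$) sum of i.i.d.\ copies of the time spent in $W_N$ during one excursion away from $\xi^N$ conditioned not to exit, plus a last incomplete piece. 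The standard route is then to show (i) $p_N \to 0$, (ii) the mean excursion length times $p_N^{-1}$ is asymptotically $r_N(\ms W, \ms B^c)^{-1}$, and (iii) a law of large numbers / characteristic function argument giving convergence of the geometric sum, after scaling, to a mean-one exponential.

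First I would establish the asymptotic identity for the mean. By the ergodic/renewal identity for the trace process with invariant measure $\mu_N^{\ms E_N}$, the long-run rate of jumps from $W_N$ to $B_N^c$ equals $\mu_N(W_N)\, r_N(\ms W,\ms B^c)/\mu_N(\ms E_N)$ up to the obvious normalization, which by a cycle (Kac-type) argument gives that $\E_{\xi^N}[T_{\ms B^c}(\ms W)]$ is asymptotically $1/r_N(\ms W,\ms B^c)$ provided the time the process spends in $B_N^c$ before returning is negligible on the relevant scale — but since we are only counting $T_{\ms B^c}(\ms W)$, i.e.\ time in $\ms W$, this is automatic. This yields \eqref{aa1} for the attractor point $\xi^N$. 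To pass from $\xi^N$ to an arbitrary point $\eta^N$ in $\ms W$, I would decompose $T_{\ms B^c}(\ms W) = T_{\bs\xi}(\ms W)\,\mb 1\{T_{\bs\xi}<T_{\ms B^c}\} + (\text{time in }\ms W\text{ after hitting }\bs\xi) + T_{\ms B^c}(\ms W)\,\mb 1\{T_{\bs\xi}>T_{\ms B^c}\}$, use \eqref{cc2} to kill the first piece after multiplying by $r_N(\ms W,\ms B^c)$, use the strong Markov property plus the $\xi^N$-result for the middle piece, and control the last piece via \eqref{cc1} (which bounds the expected number of jumps to $B_N^c$ out of $W_N$ before hitting $\bs\xi$, hence controls $\prob_{\eta^N}[T_{\bs\xi}>T_{\ms B^c}]$) — this simultaneously gives that $\prob_{\eta^N}[T_{\bs\xi}<T_{\ms B^c}]\to 1$, which is exactly condition (\textbf{V1}).

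For the exponential limit law I would show that $r_N(\ms W,\ms B^c)\,T_{\ms B^c}(\ms W)$ under $\prob_{\xi^N}$ converges to mean-one exponential by the classical argument for geometric sums of i.i.d.\ small-mean variables: write $T_{\ms B^c}(\ms W) = \sum_{k=1}^{M_N} Z_k^N$ where $M_N\sim$ geometric($p_N$), $p_N\to 0$, the $Z_k^N$ are i.i.d.\ excursion times with $p_N^{-1}\E[Z^N]\sim r_N(\ms W,\ms B^c)^{-1}$, and verify the Lindeberg-type / uniform-integrability condition $p_N^{-1}\E[Z^N\,\mb 1\{Z^N > \varepsilon/r_N\}]\to 0$ using \eqref{cc1}–\eqref{cc2} (which control the first two moments of excursions in the right units); then $r_N\sum Z_k^N$ converges to exponential(1). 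Transferring this to a general starting point $\eta^N$ is immediate from (\textbf{V1}) already proved, since on the overwhelming-probability event $\{T_{\bs\xi}<T_{\ms B^c}\}$ the tail contribution is, after scaling, asymptotically the same as from $\xi^N$ by the strong Markov property and \eqref{cc2} kills the initial segment. I expect the main obstacle to be the verification of the uniform-integrability/Lindeberg condition for the excursion variables $Z^N$: hypotheses \eqref{cc1} and \eqref{cc2} give control of $\E_{\eta^N}[T_{\bs\xi}(\ms W)]$ and of an integrated-rate functional, and turning these into the required truncated-second-moment bound on a single excursion (rather than on the whole run) requires carefully identifying one excursion of the trace process inside $W_N$ with an appropriate sub-path of the original process and invoking the explicit potential-theoretic formulas — precisely the identities \eqref{for1}–\eqref{for3} and Lemma \ref{g03} cited later — to see that the relevant quantities are comparable. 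Everything else is the strong Markov property and the standard geometric-sum limit theorem.
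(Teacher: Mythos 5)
Your treatment of condition ({\bf V1}) and of \eqref{aa1} is sound and in fact more elementary than the paper's: the bound $\prob_{\eta^N}[T_{\ms B^c}<T_{\bs\xi}]\le \E_{\eta^N}[\#\{\text{trace jumps }W_N\to B^c_N\text{ before }T_{\bs\xi}\}]=\E_{\eta^N}[\int_0^{T_{\bs\xi}}R^{\ms W}_N\,\mb 1\{W_N\}\,ds]$ gives ({\bf V1}) directly from \eqref{cc1}, and the Kac/Wald cycle identity $\E_\xi[\#\text{exits per cycle}]=r_N\,\E_\xi[\text{time in }W_N\text{ per cycle}]$, corrected by \eqref{cc1} for multiple exits per cycle and by \eqref{cc2} for the post-exit overshoot, gives \eqref{aa1}. (The paper instead extracts ({\bf V1}) from tightness of an auxiliary two-state marker process via a modulus-of-continuity argument, and \eqref{aa1} from Proposition \ref{prop1}(ii) after the exponential law is already proved.)

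The gap is in the central step, the exponential limit. Your geometric-sum/Laplace-transform argument needs, for each $c>0$, the truncated-mean condition $\E_{\xi^N}[Z^N\,\mb 1\{Z^N>c/r_N\}]=o(\E_{\xi^N}[Z^N])$ for the $W_N$-duration $Z^N$ of a single excursion of the trace process away from $\xi^N$ (equivalently, $\prob_{\xi^N}[Z^N>c/r_N]=o(p_N)$). The hypotheses \eqref{cc1}--\eqref{cc2} are purely first-moment statements, and Markov's inequality delivers exactly the borderline bound $\E[Z^N\mb 1\{Z^N>c/r_N\}]\le(1+o(1))\E[Z^N]$, not the required $o(\E[Z^N])$; I do not see any direct argument closing this, and you yourself flag it as the main obstacle. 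Your proposed fix --- invoking \eqref{for1}--\eqref{for3} and Lemma \ref{g03} --- is not available here: those identities hold only in the reversible setting of Subsection \ref{cap}, whereas Proposition \ref{cb} is stated and used for general positive recurrent chains, and in any case they compute first moments of additive functionals, not tails or truncated moments. (A secondary issue: your step ``$p_N\to0$'' is false in general --- e.g.\ for a singleton or near-singleton well $p_N$ can be of order one --- and the exponentiality then rests on the memoryless holding time at $\xi^N$ rather than on a rare-event geometric sum, so the two mechanisms must be combined.) The paper sidesteps all of this by coupling the trace process with a $\{0,1\}$-valued marker that flips at each $W_N\to B^c_N$ jump, proving tightness and solving the martingale problem for the marker: there the drift \emph{and} the quadratic variation of the relevant martingale are both equal to $\int_0^t R^{\ms W}_N(\eta^{\ms E_N}_s)\mb 1\{W_N\}\,ds$, a first-moment functional controlled by the replacement estimate (Corollary \ref{s09}), so no second-moment or Lindeberg condition on excursions is ever needed. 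As it stands, your proposal does not establish the exponential law.
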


The proof of this proposition is divided in several lemmas.  Recall
that $\ms E_N=W_N\cup B^c_N$, $N\ge 1$, and that $\{\eta^{\ms E_N}_t :
t\ge 0\}$ stands for the trace of $\{\eta^N_t : t\ge 0\}$ on $\ms
E_N$. For any ${\bs \theta}=(\theta_N : N\ge 1)$, properties
(\ref{cc1}) and (\ref{cc2}) hold for $\{\eta^{\ms E_N}_t : t\ge 0\}$
if and only if they do so for the speeded up process $\{\eta^{\ms
  E_N}_{\theta_Nt} : t\ge 0\}$.  Furthermore, condition {\bf (V1)}
remains invariant by any re-scale of time, while \eqref{aa1} and the
assertion preceding it are implied by the corresponding claims for
$\{\eta^{\ms E_N}_{\theta_Nt} : t\ge 0\}$. In consequence, speeding up
the process appropriately, we may assume in Proposition \ref{cb} that
\begin{equation}
\label{f00}
r_N(\ms W, \ms B^c) \,=\, 1 \quad \forall N\ge 1
\end{equation}
and condition (\ref{cc2}) becomes
\begin{equation}
\label{cc2b}
\lim_{N\to \infty} \sup_{\eta\in W_N}
{\bf E}_{\eta}[T_{\bs \xi}(\ms W)]\,=\,0\,.
\end{equation}
To prove the last two assertions of Proposition \ref{cb}, we show that
the law of $T_{\ms B^c}(\ms W)$ under ${\bf P}_{\eta^N}$ converges to
a mean-one exponential distribution and that
\begin{equation}
\label{lc}
\lim_{N\to \infty} {\bf E}_{\eta^N}[T_{\ms B^c}(\ms W)] \, = \, 1\,.
\end{equation}

We identify the trace process $\{\eta^{\ms E_N}_t : t\ge 0\}$ with the
first marginal of the $\ms E_N\times \{0,1\}-$valued Markov process
$\{(\eta^{\ms E_N}_t,X^N_t) : t\ge 0 \}$ defined as follows. To keep
notation simple, let $\breve x = 1- x$, $x=0, 1$. The transition rates
for $\{(\eta^{\ms E_N}_t,X^N_t) : t\ge 0 \}$ are the following:

\begin{itemize}
\item From each $(\eta,x)\in W_N\times \{0,1\}$, the process jumps to
  $(\xi, x)$ (resp. to $(\xi, \breve x)$) with rate $ R^{\ms
    E}_N(\eta,\xi)$ for any $\xi\in W_N$ (resp. for any $\xi\in
  B_N^c$).

\item From each $(\eta,x)\in B^c_N\times \{0,1\}$, the process jumps
  to $(\xi,x)$ with rate $ R^{\ms E}_N(\eta,\xi)$, for any $\xi \in
  {\ms E}_N$.
\end{itemize}

Let ${\bf P}_{(\eta,x)}$, $(\eta,x)\in {\ms E_N}\times \{0,1\}$, be
the probability measure on $D(\bb R_+, {\ms E_N}\times \{0,1\})$
induced by the Markov process $\{(\eta^{\ms E_N}_t,X^N_t) : t\ge 0 \}$
starting from $(\eta,x)$. Hence, for any starting point $(\eta,x)\in
{\ms E_N}\times \{0,1\}$, the law of the marginal $\{ \eta_t^{\ms E_N}
: t\ge 0\}$ on $D(\bb R_+,\ms E_N)$ under ${\bf P}_{(\eta,x)}$ is
${\bf P}_\eta$.

By Proposition \ref{s06}, the conditioned probability measure
$\mu^{\ms E}_N(\,\cdot\,) := \mu_N(\,\cdot \,|\, \ms E_N)$ is the
invariant probability measure for the trace process $\{\eta^{\ms E_N}
: t\ge 0\}$. Define the probability measure on $\ms E_N\times \{0,1\}$
by 
$$
{\mf m}_N(\eta,x) \,=\, (1/2) \,\mu^{\ms E}_N(\eta)\,,
\quad \textrm{ for $(\eta,x)\in {\ms E_N}\times \{0,1\}$ \,.}
$$
We may check that ${\mf m}_N$ is an invariant probability measure for
$\{(\eta^{\ms E_N}_t,X^N_t) : t\ge 0 \}$. In particular, $\{(\eta^{\ms
  E_N}_t,X^N_t) : t\ge 0 \}$ is positive recurrent.

Clearly, for any $\eta\in W_N$, the law of $T_{{\ms B^c}}(\ms W)$
under $\prob_\eta$ coincides with the law of the first jump
$$
\inf\big\{ t>0 : X^N_{t} \not= X^N_{0}\big\}
$$
under $\prob_{(\eta,x)}$, for any $x\in\{0,1\}$. Hence, to prove that
$T_{{\ms B^c}}(\ms W)$ converges to a mean one exponential law it is
enough to show that the second coordinate of the trace of the process
$\{(\eta^{\ms E_N}_t,X^N_t) : t\ge 0 \}$ on $W_N\times \{0,1\}$
converges to a Markov process on $\{0,1\}$ which jumps from $x$ to
$1-x$ at rate $1$. This is done in two steps. We first prove in Lemma
\ref{tight1} that the sequence of processes $\{X^N_t : t\ge 0 \}$ is a
tight family. Then, we characterize in Lemma \ref{uniq1} all limit
points by showing that they solve a martingale problem. Both
statements rely on a replacement result, stated in Lemma \ref{r1a} and
Lemma \ref{rep1}, which allows the substitution of a function by its
conditional expectation.

Conditions (\ref{cc1}) and (\ref{cc2b}) imply that
\begin{equation}
\label{cc3}
\lim_{N\to\infty} \sup_{\eta\in W_N} \E_{\eta}\Big[ 
\int_0^{T_{\bs \xi}(\ms E)} \big\{ R^{\ms W}_N(\eta^{\ms E_N}_s) + 1
\big\} \, {\bs 1}\big\{ \eta^{\ms E_N}_s\in W_N \big\} \, ds \Big] 
\,=\, 0\,,
\end{equation}
where $T_{\bs \xi}(\ms E)=T^N_{\bs \xi}(\ms E):=\inf \{ t\ge 0 :
\eta^{\ms E_N}_t = \xi^N \}$. As a consequence of (\ref{cc3}), we get
the following lemma.

\begin{lemma}
\label{r1a}
For every $t>0$,
$$
\lim_{N\to +\infty}\sup_{\eta\in \ms E_N} \Big|\,\E_{\eta}\Big[ 
\int_0^t  \big( R^{\ms W}_N(\eta^{\ms E_N}_s) - 1 \big) 
\,{\bs 1}\big\{ \eta^{\ms E_N}_s\in W_N \big\}\,ds \Big]\Big|\,=\,0\,.
$$
\end{lemma}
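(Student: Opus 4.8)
The idea is to decompose the trace trajectory on $W_N$ into excursions away from the attractor $\bs \xi$, and to exploit that $r_N(\ms W, \ms B^c)=1$ after the rescaling, so that the $\mu_N^{W_N}$-average of $R^{\ms W}_N$ is exactly $1$. Concretely, starting from $\eta \in \ms E_N$, I would run the trace process $\{\eta^{\ms E_N}_s : s\ge 0\}$ and look at the successive visits to $\xi^N$. Between consecutive visits to $\xi^N$, the time spent in $W_N$ together with the integral of $R^{\ms W}_N$ over that sojourn is governed by \eqref{cc3}, which tells us that the first such piece — the part before reaching $\xi^N$ — contributes negligibly (uniformly in the starting point $\eta$) to both $\int_0^{t} R^{\ms W}_N(\eta^{\ms E_N}_s)\,{\bs 1}\{\eta^{\ms E_N}_s\in W_N\}\,ds$ and $\int_0^{t}{\bs 1}\{\eta^{\ms E_N}_s\in W_N\}\,ds$. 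So it suffices to handle the process started from $\xi^N$ itself.

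Once we are started from $\xi^N$, I would use the ergodic theorem for the positive recurrent trace process with invariant measure $\mu^{\ms E}_N$ (Proposition \ref{s06}), but the point is that $t$ is fixed while $N\to\infty$, so a bare law-of-large-numbers is not enough; we need the convergence to be quantitative and uniform in $N$. The mechanism is that $R^{\ms W}_N$ restricted to $W_N$ has $\mu^{\ms E}_N$-integral equal to $\mu^{\ms E}_N(W_N)\, r_N(\ms W,\ms B^c) = \mu^{\ms E}_N(W_N)$, which is also the $\mu^{\ms E}_N$-integral of ${\bs 1}\{W_N\}$; hence the function $g_N := (R^{\ms W}_N-1)\,{\bs 1}\{W_N\}$ has zero $\mu^{\ms E}_N$-mean. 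I would then write $\E_\eta[\int_0^t g_N(\eta^{\ms E_N}_s)\,ds]$ using the resolvent / Poisson equation: letting $h_N$ solve $L^{\ms E}_N h_N = g_N$ (which exists, up to constants, by recurrence, and can be taken as $h_N(\cdot) = -\E_{(\cdot)}[\int_0^{T_{\bs\xi}(\ms E)} g_N(\eta^{\ms E_N}_s)\,ds]$ since $g_N$ has mean zero), Dynkin's formula gives
\begin{equation*}
\E_\eta\Big[\int_0^t g_N(\eta^{\ms E_N}_s)\,ds\Big] \;=\; h_N(\eta) - \E_\eta\big[ h_N(\eta^{\ms E_N}_t)\big]\;,
\end{equation*}
so the claim reduces to the uniform bound $\sup_{\eta\in\ms E_N}|h_N(\eta)|\to 0$. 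But $|h_N(\eta)| \le \E_\eta[\int_0^{T_{\bs\xi}(\ms E)} (R^{\ms W}_N(\eta^{\ms E_N}_s)+1)\,{\bs 1}\{\eta^{\ms E_N}_s\in W_N\}\,ds]$, which is exactly the quantity that \eqref{cc3} forces to zero uniformly. (Here one uses that $g_N$ vanishes outside $W_N$, so only the excursion piece inside $W_N$ before hitting $\xi^N$ matters, and that $h_N(\xi^N)=0$ by construction; one should check that the process started from $\eta\in B_N^c$ also reaches $\xi^N$, via $\ms E_N$-recurrence, and that the contribution accrued before it enters $W_N$ is zero since $g_N\equiv 0$ there.)

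The main obstacle I anticipate is making the Poisson-equation argument rigorous in the countable-state, $N$-dependent setting: one must verify that $h_N$ as defined is finite, that Dynkin's formula applies (no integrability pathologies — harmless here since the integrand $g_N$ is bounded on each excursion by \eqref{cc3}-type estimates and $T_{\bs\xi}(\ms E)$ has finite expectation under the positive recurrent trace process), and that the mean-zero property $\mu^{\ms E}_N(g_N)=0$ translates cleanly from $r_N(\ms W,\ms B^c)=1$. A slightly more hands-on alternative, avoiding the Poisson equation, is to iterate the strong Markov property at the successive returns to $\xi^N$: write the integral up to time $t$ as a sum over excursions, note that each complete excursion contributes a mean-zero increment after centering (because one full excursion from $\xi^N$ back to $\xi^N$ has, in expectation, equal $\int R^{\ms W}_N {\bs 1}\{W_N\}$ and $\int {\bs 1}\{W_N\}$ contributions — this is the renewal-theoretic restatement of $\mu^{\ms E}_N(g_N)=0$), and bound the number of excursions in $[0,t]$ together with the last incomplete one using \eqref{cc3} again. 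Either route hinges on the same input, so I would present whichever is cleaner given the lemmas already available; the Poisson-equation version is shorter if Proposition \ref{s06} and standard recurrent-process facts from Section \ref{sec03} can be cited directly.
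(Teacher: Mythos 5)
Your proposal is correct and takes essentially the same route as the paper: the paper's proof is a direct application of its replacement result (Corollary \ref{s09}, built on Lemma \ref{s08}) to $g=R^{\ms W}_N\,\mb 1\{W_N\}$ with the partition $\{W_N,B_N^c\}$, and the proof of Lemma \ref{s08} is precisely your excursion/renewal decomposition at the attractor, with the mean-zero property coming from $r_N(\ms W,\ms B^c)=1$ and the uniform control of the excursion integral coming from \eqref{cc3}. Your Poisson-equation variant is just an equivalent packaging of the same identity (the solution $h_N$ is exactly the expected excursion integral appearing in Lemma \ref{s08}), so there is no substantive difference.
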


\begin{proof}
Recall the notation introduced in Subsection \ref{ss54}.  Let $g: \ms
E_N\to \bb R$ be given by $g(\eta) = R^{\ms W}_N(\eta) {\bs 1} \{ \eta
\in W_N \}$ so that the expectation of $g$ with respect to $\mu^{\ms
  E}_N$ is equal to $\mu_N (W_N)/\mu_N(\ms E_N)$ in view of
\eqref{f00}. Consider the partition $\pi = \{W_N, B^c_N\}$ of $\ms
E_N$ and note that the conditional expectation $\<g | \pi\>_{\mu^{\ms
    E}_N} = \mb 1\{\eta \in W_N\}$. Since $g$ is integrable with
respect to $\mu^{\ms E}_N$, the statement follows from \eqref{cc3} and
Corollary \ref{s09} applied to the process $\{\eta^{\ms E_N}_t : t\ge
0 \}$.
\end{proof}

We use this lemma to show tightness for the sequence $\{X^N_t: t\ge 0\}$.

\begin{lemma}
\label{tight1}
Fix an arbitrary point ${\bs \eta}=(\eta^N : N\ge 1)$ in $\ms W$ and
$z\in \{0,1\}$. For each $N\ge 1$, denote by $\bb Q_N$ the law of $\{
X^N_t : t\ge 0\}$ under $\prob_{(\eta^N,z)}$. Then the sequence of
laws $({\bb Q}_N : N\ge 1)$ is tight.
\end{lemma}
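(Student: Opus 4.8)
The plan is to verify the Aldous tightness criterion for the $\{0,1\}$-valued processes $\{X^N_t : t\ge 0\}$. Since $\{0,1\}$ is compact, the one-dimensional marginals are automatically tight, so it is enough to control the oscillation of $X^N$ across stopping times. The key observation is that, by the very definition of the transition rates of $\{(\eta^{\ms E_N}_t,X^N_t) : t\ge 0\}$, the second coordinate changes value exactly when the trace process $\{\eta^{\ms E_N}_t : t\ge 0\}$ jumps from $W_N$ into $B^c_N$; hence the process $N^N_t$ counting these flips in $[0,t]$ has compensator $\int_0^t R^{\ms W}_N(\eta^{\ms E_N}_s)\,{\bf 1}\{\eta^{\ms E_N}_s\in W_N\}\,ds$. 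Since $\{(\eta^{\ms E_N}_t,X^N_t)\}$ is positive recurrent, hence non-explosive, and $R^{\ms W}_N\,{\bf 1}\{W_N\}$ is integrable with respect to $\mu^{\ms E}_N$, one has $\E_\eta[N^N_t]=\E_\eta\big[\int_0^t R^{\ms W}_N(\eta^{\ms E_N}_s)\,{\bf 1}\{\eta^{\ms E_N}_s\in W_N\}\,ds\big]<\infty$ for every $\eta\in\ms E_N$ and every $t\ge 0$.

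The central estimate I would establish is the following. Fix $T>0$, $0<\epsilon<1$, $\delta>0$, a stopping time $\tau\le T$ and a time $0\le\theta\le\delta$. On the event $\{|X^N_{\tau+\theta}-X^N_\tau|\ge\epsilon\}$ the process $X^N$ flips at least once in $(\tau,\tau+\theta]$; therefore, by the strong Markov property at $\tau$ together with the compensator identity above,
\begin{equation*}
\prob_{(\eta^N,z)}\big[\,|X^N_{\tau+\theta}-X^N_\tau|\ge\epsilon\,\big]
\;\le\; \E_{(\eta^N,z)}\big[N^N_{\tau+\theta}-N^N_\tau\big]
\;\le\; \sup_{\eta\in\ms E_N}\E_\eta\Big[\int_0^\theta R^{\ms W}_N(\eta^{\ms E_N}_s)\,{\bf 1}\{\eta^{\ms E_N}_s\in W_N\}\,ds\Big]\,.
\end{equation*}
The integrand being non-negative and $\theta\le\delta$, I would bound the right-hand side by the same expression with $\theta$ replaced by $\delta$, then use the decomposition $R^{\ms W}_N\,{\bf 1}\{W_N\}=(R^{\ms W}_N-1)\,{\bf 1}\{W_N\}+{\bf 1}\{W_N\}$ together with the trivial bound $\E_\eta\big[\int_0^\delta{\bf 1}\{\eta^{\ms E_N}_s\in W_N\}\,ds\big]\le\delta$ to obtain
\begin{equation*}
\prob_{(\eta^N,z)}\big[\,|X^N_{\tau+\theta}-X^N_\tau|\ge\epsilon\,\big]
\;\le\;\delta\;+\;\sup_{\eta\in\ms E_N}\Big|\,\E_\eta\Big[\int_0^\delta\big(R^{\ms W}_N(\eta^{\ms E_N}_s)-1\big)\,{\bf 1}\{\eta^{\ms E_N}_s\in W_N\}\,ds\Big]\,\Big|\,.
\end{equation*}

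By Lemma \ref{r1a}, the last supremum tends to $0$ as $N\to\infty$ for every fixed $\delta>0$. Hence, taking the supremum over stopping times $\tau\le T$ and over $\theta\le\delta$, then letting $N\to\infty$ and finally $\delta\downarrow 0$, one verifies Aldous's condition; combined with the compactness of $\{0,1\}$ this yields the tightness of $({\bb Q}_N : N\ge 1)$ in $D(\bb R_+,\{0,1\})$. The only substantial ingredient here is the replacement of the flip rate $R^{\ms W}_N$ by its stationary average $1$, which is precisely the content of Lemma \ref{r1a} and rests ultimately on the hypotheses \eqref{cc1}--\eqref{cc2}; I expect the only mildly delicate point to be the justification of the compensator identity and of the finiteness of $\E_\eta[N^N_t]$, both of which follow from the positive recurrence of the trace process established above.
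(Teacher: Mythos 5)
Your proof is correct, but it takes a slightly different route from the paper's. The paper verifies Aldous's criterion \eqref{aldous0} by decomposing $X^N$ into a martingale plus the integral of the drift $({\bf L}^{\ms E}_N{\bf p})(\eta^{\ms E_N}_s,X^N_s)=\{\breve X^N_s-X^N_s\}R^{\ms W}_N(\eta^{\ms E_N}_s)\,{\bf 1}\{\eta^{\ms E_N}_s\in W_N\}$, controlling the integral term by a first-moment (Chebychev) bound and the martingale term by a second-moment bound via its quadratic variation --- which turns out to be the very same integral $\int_0^t R^{\ms W}_N(\eta^{\ms E_N}_s)\,{\bf 1}\{\eta^{\ms E_N}_s\in W_N\}\,ds$. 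You instead observe that $\{|X^N_{\tau+\theta}-X^N_\tau|\ge\epsilon\}$ forces at least one flip, bound the probability by the expected number of flips, and identify the compensator of the flip-counting process with that same integral; this collapses the two estimates of the paper into a single first-moment bound and avoids computing the quadratic variation altogether. Both arguments reduce to the identical key claim \eqref{x1}, namely
\begin{equation*}
\lim_{\delta\downarrow 0}\limsup_{N\to\infty}\sup_{\eta\in\ms E_N}
\E_{\eta}\Big[\int_0^\delta R^{\ms W}_N(\eta^{\ms E_N}_s)\,{\bf 1}\{\eta^{\ms E_N}_s\in W_N\}\,ds\Big]\;=\;0\,,
\end{equation*}
which is then settled exactly as you do, by the decomposition $R^{\ms W}_N\,{\bf 1}\{W_N\}=(R^{\ms W}_N-1)\,{\bf 1}\{W_N\}+{\bf 1}\{W_N\}$ and Lemma \ref{r1a}. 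The only point you flag as delicate --- the compensator identity and the finiteness of $\E_\eta[N^N_t]$ --- is indeed the standard L\'evy-system fact for a non-explosive jump chain, and the uniform bound furnished by Lemma \ref{r1a} guarantees finiteness for all large $N$, which is all the Aldous criterion requires; so there is no gap. Your approach is marginally more elementary, while the paper's martingale decomposition has the advantage of being the same template reused verbatim for the tightness of $\{X^N_t\}$ in Lemma \ref{s11}, where the state space $S$ has more than two points and the quadratic-variation computation genuinely carries extra information.
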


\begin{proof}
For each $T>0$, let $\mf T_T$ denote the set of all stopping times
bounded by $T$. By Aldous criterion (see Theorem 16.10 in \cite{b}) we
just need to show that
\begin{equation}
\label{aldous0}
\lim_{\delta\downarrow 0}\limsup_{N\to\infty}\sup_{\theta\le\delta}
\sup_{\tau\in\mf T_T}\prob_{(\eta^N,z)} \big[ \,
| X^{N}_{\tau + \theta} - X^{N}_{\tau}| > \epsilon \,\big] \,=\, 0
\end{equation}
for every $\epsilon>0$ and $T>0$. Denote by ${\bf L}^{{\ms E}}_N$ the
generator of $(\eta^{\ms E_N}_{t}, X^{N}_{t})_{t\ge 0}$ and by ${\bf
  p}:{\ms E}_N\times \{0,1\}\to \{0,1\}$ the projection on the second
coordinate. Consider the martingale
\begin{equation*}
M^N_t \;:=\; X^{N}_t \;-\; X^{N}_0 \;-\; \int_0^t 
({\bf L}^{{\ms E}}_N {\bf p })(\eta^{\ms E_N}_{s}, X^{N}_{s})\,ds\, .
\end{equation*}
It is therefore enough to show that \eqref{aldous0} holds with
$X^{N}_{\tau+\theta}\,- \,X^{N}_{\tau}$ replaced by $M^N_{\tau+\theta}
\,-\, M^N_{\tau}$ and by $\int_\tau^{\tau+\theta}({\bf L}^{{\ms E}}_N
{\bf p})(\eta^{\ms E_N}_{s},X^{N}_{s})\,ds$.

Consider the integral term. By Chebychev inequality and by the strong
Markov property, we need to prove that
\begin{equation*}
\lim_{\delta\downarrow 0}\, \limsup_{N\to\infty}\, \sup_{\theta\le\delta}
\, \sup_{(\eta,x) \in {\ms E}_N\times \{0,1\}} \E_{(\eta,x)} \Big[ \,
\int_0^\theta \Big| \, ({\bf L}^{{\ms E}}_N 
{\bf p})(\eta^{\ms E_N}_{s},X^{N}_{s})\, \Big | \,ds  \,\Big] 
\,=\, 0\,,
\end{equation*}
where $\E_{(\eta,x)}$ stands for the expectation with respect to
$\prob_{(\eta,x)}$. A simple computation provides
$$
({\bf L}^{{\ms E}}_N {\bf p})(\eta,x)\,=\,\{\breve x - x \}\, 
R^{\ms W}_N(\eta) \,{\bs 1}\big\{\eta \in W_N \big\}\,.
$$
The proof is thus reduced to the claim
\begin{equation}
\label{x1}
\lim_{\delta\downarrow 0}\, \limsup_{N\to\infty}\,
\sup_{ \eta \in \ms E_N} \, \E_{\eta} \Big[  
\int_0^\delta R^{\ms W}_N(\eta^{\ms E_N}_s) 
\,{\bs 1}\big\{\eta^{\ms E_N}_s\in W_N\big\} \, ds \Big] \,=\, 0\,.
\end{equation}
Since the expectation above is less than or equal to
$$
\Big| \E_{\eta} \Big[  \int_0^\delta \big( 
R^{\ms W}_N(\eta^{\ms E_N}_s) - 1 \big) 
\,{\bs 1}\big\{\eta^{\ms E_N}_s\in W_N\big\} \, ds \Big]\Big| 
\,+\, \delta\,,
$$
the limit (\ref{x1}) follows from Lemma \ref{r1a}.

We now turn to the martingale part $\{M^N_t : t\ge 0 \}$, whose
quadratic variation is given by
\begin{eqnarray*}
\langle M\rangle^N_{t} &=& \int_0^{t}\big\{{\bf L}^{{\ms E}}_N 
({\bf p}^2) - 2 {\bf p} {\bf L}^{{\ms E}}_N {\bf p}\big\} 
(\eta^{\ms E_N}_{s},X^{N}_{s})\,ds \\
&=& \int_0^t R^{\ms W}_N(\eta^{\ms E_N}_s)\,{\bs 1}
\big\{\eta^{\ms E_N}_s\in W_N\big\}\,ds\,.
\end{eqnarray*}
By Chebychev inequality
$$
\prob_{(\eta^N,z)} \big[ \, | M^N_{\tau+\theta} 
- M^N_{\tau} |  > \epsilon \big] \;\le\; 
\frac 1{\epsilon^2} \, \E_{(\eta^N,z)}
\big[\, \<M\>^N_{\tau+\theta} \,-\, \<M\>^N_\tau \,\big]\,. 
$$
Finally, by the explicit formula for the quadratic variation and by
the strong Markov property, the right hand side above is less than or
equal to
$$
\frac {1}{\epsilon^2} \, \sup_{\eta\in \ms E_N}
\E_{\eta} \Big[ \int_0^{\delta} R^{\ms W}_N(\eta^{\ms E_N}_s)
\,{\bs 1}\big\{\eta^{\ms E_N}_s\in W_N\big\}\,ds\, \Big]\,.
$$
It remains to use (\ref{x1}).
\end{proof}

As a consequence of Lemma \ref{tight1} we obtain condition ({\bf V1})
for the triple $(\ms W, \ms W, {\bs \xi})$ with respect to the trace
process $\{\eta^{\ms E_N}_t : t\ge 0\}$.

\begin{lemma}\label{pro1}
For any point ${\bs \eta}=(\eta^N:N\ge 1)$ in $\ms W$,
$$
\lim_{N\to+\infty}\; \prob_{\eta^N}[\, T_{\bs \xi}(\ms W) 
< T_{\ms B^c}(\ms W) \,]\,=\,1\,.
$$
\end{lemma}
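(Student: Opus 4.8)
The plan is to transfer the statement to the auxiliary process $\{(\eta^{\ms E_N}_t, X^N_t) : t\ge 0\}$ introduced above and to estimate the probability that the second coordinate $X^N$ changes value before the trace process hits $\xi^N$. First I would record an elementary identification of events. Since the trace process takes values in $\ms E_N = W_N \cup B^c_N$ and starts from $\eta^N \in W_N$, the event $\{T_{\bs\xi}(\ms W) < T_{\ms B^c}(\ms W)\}$ coincides with the event that $\{\eta^{\ms E_N}_t : t\ge 0\}$ reaches $\xi^N$ before visiting $B^c_N$: on the latter event the whole path up to $T_{\bs\xi}(\ms E) := \inf\{t\ge 0 : \eta^{\ms E_N}_t = \xi^N\}$ lies in $W_N$, so $T_{\bs\xi}(\ms W) = T_{\bs\xi}(\ms E) < T_{\ms B^c}(\ms W)$, whereas if $B^c_N$ is visited first then $T_{\ms B^c}(\ms W)$ equals the hitting time of $B^c_N$ and $T_{\bs\xi}(\ms W)$ is at least that time. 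Moreover, by the definition of the rates of $\{(\eta^{\ms E_N}_t, X^N_t)\}$, a jump of the trace process from $W_N$ into $B^c_N$ is exactly what flips $X^N$, and $B^c_N$ can only be reached from $W_N$ through such a jump; hence the complement of $\{T_{\bs\xi}(\ms W) < T_{\ms B^c}(\ms W)\}$ is precisely the event that $X^N$ flips at some time before $T_{\bs\xi}(\ms E)$.

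The second step is to bound that probability quantitatively. I would let $N^W_t$ be the number of jumps of the trace process from $W_N$ into $B^c_N$ during $[0,t]$, so that, by the form of the rates,
\begin{equation*}
N^W_t \;-\; \int_0^t R^{\ms W}_N(\eta^{\ms E_N}_s)\, {\bf 1}\{\eta^{\ms E_N}_s \in W_N\}\, ds
\end{equation*}
is a local martingale, localized by the successive jump times of the trace process. Since the trace process is positive recurrent, $T_{\bs\xi}(\ms E) < \infty$ almost surely; stopping at $T_{\bs\xi}(\ms E)$ and passing to the limit monotonically along the localizing times (both $N^W$ and its compensator being nondecreasing) gives
\begin{equation*}
\E_{\eta^N}\big[\,N^W_{T_{\bs\xi}(\ms E)}\,\big] \;=\; \E_{\eta^N}\Big[\int_0^{T_{\bs\xi}(\ms E)} R^{\ms W}_N(\eta^{\ms E_N}_s)\, {\bf 1}\{\eta^{\ms E_N}_s \in W_N\}\, ds\Big]\;,
\end{equation*}
and by \eqref{cc3} the right-hand side tends to $0$ as $N\to\infty$, uniformly over the starting point in $W_N$. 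By Markov's inequality $\prob_{\eta^N}[\,N^W_{T_{\bs\xi}(\ms E)} \ge 1\,] \to 0$, which by the first step is exactly the probability of the complement of the event in the statement.

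The main technical point to be careful with is the optional stopping at the a priori unbounded time $T_{\bs\xi}(\ms E)$; I expect this to be routine, handled by localizing with the jump times of the trace process and using monotone convergence, the finiteness of the relevant expectations for large $N$ being guaranteed by \eqref{cc3} itself. Tightness of $\{X^N : N\ge 1\}$ from Lemma \ref{tight1} guarantees that the family is non-degenerate and lets one read the estimate above as an assertion about the subsequential limits of $X^N$; the quantitative content, however, comes entirely from \eqref{cc3} (equivalently, from Lemma \ref{r1a}).
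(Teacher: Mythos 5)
Your proof is correct, but it takes a genuinely different route from the paper's. The paper deduces the lemma from two separate one--sided estimates: on one hand, tightness of $\{X^N_t : t\ge 0\}$ (Lemma \ref{tight1}, via Aldous' criterion) controls the modified modulus of continuity $\omega'_\delta(X^N_{\bs\cdot})$ and hence yields $\lim_{\delta\downarrow 0}\liminf_N \prob_{\eta^N}[T_{\ms B^c}(\ms W)>\delta]=1$; on the other hand, the normalized hypothesis \eqref{cc2b} gives $\prob_{\eta^N}[T_{\bs\xi}(\ms W)>\delta]\to 0$ for each $\delta>0$, and the two are combined. You instead make a direct first--moment argument: the failure event is the event that the trace process jumps from $W_N$ into $B^c_N$ before reaching $\xi^N$, its probability is bounded by the expected number of such jumps, and the compensator identity converts that expectation into exactly $\E_{\eta^N}[\int_0^{T_{\bs\xi}} R^{\ms W}_N(\eta_s)\mathbf 1\{\eta_s\in W_N\}\,ds]$, which is hypothesis \eqref{cc1}. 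Your event identifications ($\{T_{\bs\xi}(\ms W)<T_{\ms B^c}(\ms W)\}$ versus hitting order for the trace process, and flips of $X^N$ versus jumps $W_N\to B^c_N$) are sound, and the optional-stopping issue you flag is indeed routine: localize by the jump times of the non-explosive trace process and use monotone convergence, or avoid martingales altogether by computing on the embedded jump chain, where the expected holding time $1/\lambda^{\ms E}(\eta)$ converts $R^{\ms W}_N(\eta)$ into the one-step probability of jumping to $B^c_N$. Your approach buys economy and a quantitative, uniform-over-$W_N$ bound, and notably uses only \eqref{cc1} rather than both \eqref{cc1} and \eqref{cc2}; the paper's approach recycles the tightness estimate it needs anyway for condition ({\bf V2}), so it costs nothing extra in context. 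One small caveat: your closing sentence invoking Lemma \ref{tight1} is superfluous --- tightness plays no role in your argument and should simply be dropped.
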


\begin{proof}
Fix $\eta^N$ in $W_N$, $N\ge 1$. Consider the modified uniform
modulus of continuity $\omega '_\delta : D(\bb R_+,\{0,1\})\to \bb
R_+$ given by
$$
\omega'_{\delta}(x_{\bs \cdot})\,:=\, \inf_{\{t_i\}}\; 
\max_{0\le i <r} \;\sup_{t_i\le s< t < t_{i+1}} \big| x_t - x_s \big|\,,
$$
where the first infimum is taken over all partitions $\{t_i : 0\le i
\le r\}$ of the interval $[0,1]$ such that
$$
\left\{
\begin{array}{l}
0=t_0<t_1<\cdots<t_r=T \\
t_i - t_{i-1} > \delta\,, \quad {\rm for}\;\;i=1,\dots,r\,.
\end{array}
\right.
$$
By the previous lemma (see e.g. Theorem 1.3 in Chapter 4 of
\cite{kl}),
$$
\lim_{\delta\downarrow 0}\limsup_{N\to +\infty} 
\prob_{(\eta^N,0)}\big[\, \omega'_{\delta}
(X^{N}_{\bs \cdot}) = 1 \,\big]\,=\,0\,.
$$
Therefore, since for all $\delta>0$ $\{T_{\ms B^c}(\ms W) \le \delta
\} \subset \{\omega'_{\delta}(X^{N}_{\bs \cdot}) = 1 \}$
$\prob_{(\eta^N,z)}$--almost surely,
\begin{equation}
\label{eqtt}
\lim_{\delta\downarrow 0} \liminf_{N\to +\infty} 
\prob_{\eta^N}\big[\, T_{\ms B^c}(\ms W) > \delta \,\big]\,=\,1\,.
\end{equation}

On the other hand, by (\ref{cc2b}), we have
\begin{equation}
\label{eqt}
\lim_{N\to+\infty}\prob_{\eta^N}[\, T_{\bs \xi}(\ms W) 
> \delta \,]\,=\,0
\end{equation}
for any $\delta>0$. The desired result follows from (\ref{eqt}) and
(\ref{eqtt}).
\end{proof}

Actually, since $ \{ T_{\bs \xi}(\ms W) < T_{\ms B^c}(\ms W)\} \,
\subseteq\, \{ T_{\bs \xi} < T_{\ms B^c}\} \,, $ Lemma \ref{pro1}
proves condition ({\bf V1}) for the triple $(\ms W, \ms B, \bs \xi)$
with respect to the process $\{\eta^N_t ; t\ge 0\}$: For any point
${\bs \eta}=(\eta^N:N\ge 1)$ in $\ms W$,
$$
\lim_{N\to+\infty}\; \prob_{\eta^N}[\, T_{\bs \xi} 
< T_{\ms B^c} \,]\,=\,1\,.
$$

We now consider the trace of $\{ (\eta^{\ms E_N}_t,X^{N}_{t}):t\ge 0
\}$ on $W_N\times \{0,1\}$, denoted by $\{(\eta^{W_N}_t, X^{W_N}_t) :
t\ge 0\}$. As we shall see in Section \ref{sec03}, since $\{
(\eta^{\ms E_N}_t,X^{N}_{t}):t\ge 0 \}$ is positive recurrent, the
trace process $\{(\eta^{W_N}_t, X^{W_N}_t) : t\ge 0\}$ is positive
recurrent as well. Moreover, the invariant probability measure for the
trace process, denoted by ${\mf m}^{\ms W}_N$, coincides with ${\mf
  m}_N$ conditioned to $W_N\times \{0,1\}$:
$$
{\mf m}^{\ms W}_N(\eta,x) \,:=\, (1/2) \mu^{\ms W}_N(\eta) 
\,, \quad \textrm{for $(\eta,x)\in W_N\times\{0,1\}$}\,. 
$$
The marginal process $\{\eta^{W_N}_t : t\ge 0\}$ corresponds to the
trace of $\{\eta^{\ms E_N}_t : t\ge 0\}$ on $W_N$.

Let ${\bf L}^{\ms W}_N$ denote the Markov generator of
$\{(\eta^{W_N}_t, X^{W_N}_t) : t\ge 0\}$. Define, in addition, the
Markov generator $\mc L$ as
\begin{equation}
\label{ll}
\mc LF(x) \,:=\, F(\breve x) - F(x) \,,\quad x\in \{0,1\}\,,
\end{equation}
for every $F:\{0,1\}\to \bb R$. For each $N\ge 1$, let ${\bf p}={\bf
  p}_N$ be the projection function on the second coordinate ${\bf
  p}:W_N \times\{0,1\}\to \{0,1\}$. If ${\bf R}^{\ms W}_N\big( {\bs
  \cdot} , {\bs\cdot} \big)$ stands for the transition rates of
$\{(\eta^{W_N}_t, X^{W_N}_t) : t\ge 0\}$, we have that
$$
{\bf L}^{\ms W}_N(F\circ {\bf p})(\eta,x)\,=\, 
\big\{ F(\breve x) - F(x) \big\} \sum_{\xi\in W_N} 
{\bf R}^{\ms W}_N\big(\, (\eta,x) , (\xi,\breve x) \,\big) 
$$
for any $(\eta,x)\in W_N\times\{0,1\}.$ By applying Corollary \ref{rf}
to the Markov process $\{(\eta^{\ms E_N}_t,X^{N}_t) : t\ge 0\}$ and
its trace on $W_N\times \{0,1\}$, we get that
$$
\sum_{\xi\in W_N}{\bf R}^{\ms W}_N\big(\, (\eta,x) , 
(\xi,\breve x) \,\big)\,=\, R^{\ms W}_N(\eta)
$$
for all $(\eta,x)\in W_N\times\{0,1\}$. Therefore, by \eqref{f00}, the
conditional expectation of ${\bf L}^{\ms W}_N(F\circ {\bf p})$, under
${\mf m}^{\ms W}_N$, given the $\sigma$-field generated by the
partition
\begin{equation} 
\label{parw}
W_N\times\{0,1\}=(W_N\times\{0\})\cup (W_N\times\{1\})\,, 
\end{equation}
is $(\mc L F)\circ {\bf p}$\,. Therefore, applying Corollary \ref{s10}
to the trace process $\{(\eta^{W_N}_t, X^{W_N}_t) : t\ge 0\}$,
the function ${\bf L}^{\ms W}_N(F\circ {\bf p})$ and the partition
(\ref{parw}), we obtain the following replacement lemma.

\begin{lemma}
\label{rep1}
For every $x\in \{0,1\}$, function $F:\{0,1\}\to\bb R$ and time $t>0$,
\begin{equation*}
\lim_{N\to \infty} \sup_{\eta\in W_N} \left|\, 
\E_{(\eta,x)} \Big[\, \int_0^t \Big\{ {\bf L}^{\ms W}_N
(F\circ {\bf p})(\eta^{W_N}_s,X^{W_N}_s) \,-\, 
{\mc L}F(X^{W_N}_s) \Big\} \,ds \,\Big]\,\right|\,=\, 0 \,.
\end{equation*}
\end{lemma}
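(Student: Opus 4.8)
The plan is to deduce the lemma directly from the replacement result for trace processes, Corollary \ref{s10}, applied to the positive recurrent Markov process $\{(\eta^{W_N}_t, X^{W_N}_t) : t\ge 0\}$, its invariant probability measure ${\mf m}^{\ms W}_N$, and the partition \eqref{parw} of its state space $W_N\times\{0,1\}$, which we denote by $\pi$. The one thing that genuinely has to be checked is that the conditional expectation of $g_N := {\bf L}^{\ms W}_N(F\circ{\bf p})$ with respect to ${\mf m}^{\ms W}_N$, given the $\sigma$-field generated by $\pi$, equals $({\mc L}F)\circ{\bf p}$; everything else is bookkeeping, and the passage to the limit is furnished by the corollary.

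First I would record the explicit form of $g_N$. Since $F\circ{\bf p}$ depends only on the second coordinate, for $(\eta,x)\in W_N\times\{0,1\}$ one has
\begin{equation*}
g_N(\eta,x) \;=\; \{F(\breve x) - F(x)\}\,
\sum_{\xi\in W_N} {\bf R}^{\ms W}_N\big(\,(\eta,x),(\xi,\breve x)\,\big)\;,
\end{equation*}
where ${\bf R}^{\ms W}_N$ denotes the jump rates of $\{(\eta^{W_N}_t, X^{W_N}_t) : t\ge 0\}$. The second coordinate of this trace process changes exactly at those instants at which the underlying trace on $\ms E_N$ jumps from $W_N$ into $B^c_N$; hence, by Corollary \ref{rf} applied to $\{(\eta^{\ms E_N}_t, X^N_t) : t\ge 0\}$ and its trace on $W_N\times\{0,1\}$, the inner sum above equals $R^{\ms W}_N(\eta)$, so that $g_N(\eta,x) = \{F(\breve x)-F(x)\}\,R^{\ms W}_N(\eta)$. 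Next, since ${\mf m}^{\ms W}_N(\eta,x) = (1/2)\,\mu^{\ms W}_N(\eta)$, the conditional law of the first coordinate on each atom $W_N\times\{x\}$ of $\pi$ is $\mu^{\ms W}_N = \mu_N(\,\cdot\,|\,W_N)$, whence
\begin{equation*}
\< g_N \,|\, \pi \>_{{\mf m}^{\ms W}_N}(\eta,x)
\;=\; \{F(\breve x) - F(x)\}\,\frac{1}{\mu_N(W_N)}
\sum_{\zeta\in W_N} R^{\ms W}_N(\zeta)\,\mu_N(\zeta)
\;=\; \{F(\breve x) - F(x)\}\; r_N(\ms W,\ms B^c)\;,
\end{equation*}
by the definition \eqref{f-1} of $r_N(\ms W,\ms B^c)$; by the normalization \eqref{f00} this is $\{F(\breve x)-F(x)\} = ({\mc L}F)(x) = ({\mc L}F)\circ{\bf p}(\eta,x)$, which is the required identification. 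Moreover $g_N$ is ${\mf m}^{\ms W}_N$-integrable, since $\sum_{(\eta,x)}|g_N(\eta,x)|\,{\mf m}^{\ms W}_N(\eta,x) = |F(1)-F(0)|\,r_N(\ms W,\ms B^c) < \infty$, so any integrability requirement of Corollary \ref{s10} is satisfied.

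With this identification in hand, the assertion of the lemma is precisely the conclusion of Corollary \ref{s10} applied to the trace process $\{(\eta^{W_N}_t, X^{W_N}_t) : t\ge 0\}$, the function $g_N$ and the partition $\pi$: the supremum over $\eta\in W_N$ (with $x$ fixed) is dominated by the supremum over all starting points in $W_N\times\{0,1\}$ controlled by that corollary, and the limit $N\to\infty$ comes for free. I expect the only point requiring any real care to be the appeal to Corollary \ref{rf} to replace $\sum_{\xi\in W_N}{\bf R}^{\ms W}_N((\eta,x),(\xi,\breve x))$ by $R^{\ms W}_N(\eta)$, i.e.\ checking that the trace on $W_N\times\{0,1\}$ flips its second coordinate exactly on each excursion of the $\ms E_N$-trace into $B^c_N$, which by recurrence returns to $W_N$ with probability one; the rest is routine.
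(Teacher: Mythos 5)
Your identification of the conditional expectation is correct and matches the paper's preliminary computation: via Corollary \ref{rf} one gets $\sum_{\xi\in W_N}{\bf R}^{\ms W}_N((\eta,x),(\xi,\breve x)) = R^{\ms W}_N(\eta)$, and with the normalization \eqref{f00} this yields $\< g_N\,|\,\pi\>_{{\mf m}^{\ms W}_N} = (\mc LF)\circ{\bf p}$. The gap is in the last step: the replacement result does \emph{not} furnish the limit ``for free.'' The applicable statement here is Corollary \ref{s09} (Lemma \ref{s10} is its reversible refinement via capacities and is unavailable in Proposition \ref{cb}, which does not assume reversibility), and it is only an inequality: it bounds the quantity in the lemma by
\begin{equation*}
2\sum_{x\in\{0,1\}}\, \sup_{\eta\in W_N}\, \E_{(\eta,x)}\Big[\int_0^{{\bf T}^{\ms W}_{({\bs\xi},x)}} \big|\, g_N(\eta^{W_N}_s,X^{W_N}_s) - \mc LF(x)\,\big|\; {\bf 1}\{X^{W_N}_s=x\}\, ds\Big]\,,
\end{equation*}
where ${\bf T}^{\ms W}_{({\bs\xi},x)}$ is the hitting time of the attractor $(\xi^N,x)$ of the atom $W_N\times\{x\}$. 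Proving that this bound vanishes as $N\to\infty$ is the substantive content of the paper's proof (its display \eqref{tn0}), and your proposal does not address it at all.

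The difficulty you are skipping is that ${\bf T}^{\ms W}_{({\bs\xi},x)}$ is the hitting time of $(\xi^N,x)$ for the \emph{product} chain, not the hitting time $T_{\bs\xi}(\ms W)$ of $\xi^N$ by its first marginal: if the second coordinate flips (i.e.\ the $\ms E_N$-trace makes an excursion into $B^c_N$) before the first marginal reaches $\xi^N$, then ${\bf T}^{\ms W}_{({\bs\xi},x)}$ strictly exceeds $T_{\bs\xi}(\ms W)$ and may be much larger. The hypotheses \eqref{cc1} and \eqref{cc2b} only control expectations of integrals up to $T_{\bs\xi}(\ms W)$. The paper closes this gap by combining Lemma \ref{pro1} (with probability tending to one, $T_{\bs\xi}(\ms W) < T_{\ms B^c}(\ms W)$, on which event the two hitting times coincide) with a uniform-integrability estimate in the spirit of Lemma \ref{lui} to dispose of the complementary event. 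Without an argument of this type your proof is incomplete at its only nontrivial point.
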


\begin{proof}
Recall that the conditional expectation of ${\bf L}^{\ms W}_N(F\circ
{\bf p})$ is $(\mc L F)\circ {\bf p}$\,. Since
$$
|{\bf L}^{\ms W}_N(F\circ {\bf p})(\eta,x) \,-\, {\mc L}F(x)| 
\,\le\, ( R^{\ms W}_N(\eta) + 1) \max\{ |F(0)|, |F(1)|\}\,,
$$
in view of Corollary \ref{s09}, to prove the lemma we just need to
check that for any $x\in \{0,1\}$,
\begin{equation}
\label{tn0}
\lim_{N\to\infty} \sup_{\eta\in W_N} \E_{(\eta,x)}\Big[ 
\int_0^{{\bf T}^{\ms W}_{({\bs \xi},x)}} ( R^{\ms W}_N(\eta^{W_N}_s) +
1 ) \, {\bs 1}\{ X^{W_N}_s = x\} \, ds \Big] \,=\, 0\,,
\end{equation}
where, for each $N\ge 1$,
$$
{\bf T}^{\ms W}_{({\bs \xi},x)} \,=\, 
{\bf T}^{\ms W}_{({\bs \xi},x)}(N) \,:=\, 
\inf \{ t\ge 0 : (\eta^{W_N}_t,X^{W_N}_t) = (\xi^N,x)\}\,.
$$

Fix an arbitrary $x\in \{0,1\}$. It follows from conditions
(\ref{cc1}) and (\ref{cc2b}) that
\begin{equation}
\label{cc3w}
\lim_{N\to\infty} \sup_{\eta\in W_N} \E_{\eta}\Big[ 
\int_0^{T_{\bs \xi}(\ms W)} ( R^{\ms W}_N(\eta^{W_N}_s) + 1 ) \, ds 
\Big] \,=\, 0\,.
\end{equation}
To keep notation simple, let us denote
$$
{\bf T}_N \,:=\, \int_0^{{\bf T}^{\ms W}_{({\bs \xi},x)}} 
( R^{\ms W}_N(\eta^{W_N}_s) + 1 ) \, {\bs 1}\{ X^{W_N}_s = x\} \, ds\; .
$$
Since $\{T_{\bs \xi}(\ms W) < T_{\ms B^c}(\ms W) \} \subseteq \{
T_{\bs \xi}(\ms W) = {\bf T}^{\ms W}_{({\bs \xi},x)} \} $, by Lemma
\ref{pro1}, (\ref{cc3w}) and Chebychev inequality, for every $t>0$,
\begin{equation}
\label{a1}
\lim_{N\to\infty} \sup_{\eta\in W_N} {\bf P}_{(\eta,x)}[ 
\, {\bf T}_N > t \,] \,=\,0\,.
\end{equation}
By the strong Markov property, (\ref{a1}) and the arguments presented
in the proof of Lemma \ref{lui}, 
$$
\lim_{A\to \infty} \limsup_{N\to \infty} 
\sup_{\eta \in W_N} {\bf E}_{(\eta,x)} [\, {\bf T}_N \, 
{\bs 1}\{ {\bf T}_N > A \} \,]\, =\, 0 \, .
$$
Hence, by Lemma \ref{pro1},
\begin{equation}
\label{tn1}
\limsup_{N\to \infty} \sup_{\eta \in W_N} {\bf E}_{(\eta,x)} 
[\, {\bf T}_N \, {\bs 1}\{ T_{\bs \xi}(\ms W) \ge 
T_{\ms B^c}(\ms W) \} \,]\,=\,0 \,.
\end{equation}
On the other hand,
$$
{\bf E}_{(\eta,x)} [\, {\bf T}_N \, {\bs 1}\{ T_{\bs \xi}(\ms W) 
< T_{\ms B^c}(\ms W) \} \,] \,\le\, \E_{\eta}\Big[ 
\int_0^{T_{\bs \xi}(\ms W)} ( R^{\ms W}_N(\eta^{W_N}_s) + 1 ) \, ds 
\Big]\,.
$$
Therefore, (\ref{tn0}) follows from this estimate, (\ref{cc3w}) and
(\ref{tn1}).
\end{proof}

We now prove the convergence in law of $\{X^{W_N}_t : t\ge 0\}$ as
$N\uparrow \infty$. Fix an arbitrary point ${\bs \eta}=(\eta^N : N\ge
1)$ in $\ms W$. For each $N\ge 1$, denote by $\bb P_N$ the law of
$\{X^{W_N}_t : t\ge 0\}$ under $\prob_{(\eta^N,0)}$. Following the
same argument presented in the proof of Lemma \ref{tight1} we can show
that $(\bb P_N : N\ge 1)$ is tight.

The uniqueness of limit points for this sequence is established as
follows. Assume without loss of generality that $\bb P_N \to \bb P$,
as $N\to \infty$, for some probability measure $\bb P$ on $D(\bb
R_+,\{0,1\})$. For $t\ge 0$, let $X_t$ denote the time-projection
$X_t: D(\bb R_+,\{0,1\})\to \{0,1\}$. We shall prove in the following
lemma that $\bb P$ solves the martingale problem associated to the
generator $\mc L$ defined in (\ref{ll}). It is well known that this
property together with the distribution of $X_0$, characterize the
measure $\bb P$.

\begin{lemma}
\label{uniq1}
Under $\bb P$, $X_0=0$ a.s. and
$$
M^F_t \,=\, F(X_t) - F(X_0) - \int_0^t \mc LF(X_s)\,ds\,,
\quad \textrm{for $t\ge 0$}\,,
$$
is a martingale for any function $F:\{0,1\}\to \bb R$.
\end{lemma}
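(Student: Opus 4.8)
The plan is to obtain $\bb P$ as a solution of the martingale problem for $\mc L$ started at $0$ by passing to the weak limit in the Dynkin martingale of the trace process $\{(\eta^{W_N}_t,X^{W_N}_t):t\ge 0\}$, the replacement Lemma \ref{rep1} being used to exchange the generator $\mb L^{\ms W}_N$ for $\mc L$. Since the trace process is non-explosive with generator $\mb L^{\ms W}_N$ and $F\circ\mb p$ is bounded, with $|\mb L^{\ms W}_N(F\circ\mb p)(\eta,x)|\le(R^{\ms W}_N(\eta)+1)\max\{|F(0)|,|F(1)|\}$ so that by Lemma \ref{rep1} the corresponding integral has finite expectation on bounded time intervals, Dynkin's formula shows that
$$
M^{F,N}_t \;:=\; F(X^{W_N}_t) - F(X^{W_N}_0) - \int_0^t
\mb L^{\ms W}_N(F\circ\mb p)(\eta^{W_N}_s,X^{W_N}_s)\,ds
$$
is a martingale under $\prob_{(\eta^N,0)}$; recall that $(F\circ\mb p)(\eta^{W_N}_t,X^{W_N}_t)=F(X^{W_N}_t)$.

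Next I would fix $0\le s_1<\cdots<s_k\le s\le t$ and functions $h_1,\dots,h_k:\{0,1\}\to\bb R$, set $\tilde M^{F}_t:=F(X_t)-F(X_0)-\int_0^t\mc LF(X_r)\,dr$ with $X_r$ the coordinate process on $D(\bb R_+,\{0,1\})$, and denote by $\tilde M^{F,N}$ the same functional of $\{X^{W_N}_t\}$. Since $\prod_j h_j(X^{W_N}_{s_j})$ is measurable with respect to the natural filtration of the trace process at time $s$, the martingale property of $M^{F,N}$ yields $\E_{(\eta^N,0)}[(M^{F,N}_t-M^{F,N}_s)\prod_j h_j(X^{W_N}_{s_j})]=0$, and therefore
$$
\E_{(\eta^N,0)}\Big[\big(\tilde M^{F,N}_t-\tilde M^{F,N}_s\big)
\prod_{j=1}^k h_j(X^{W_N}_{s_j})\Big]
=\E_{(\eta^N,0)}\Big[\prod_{j=1}^k h_j(X^{W_N}_{s_j})
\int_s^t\big\{\mb L^{\ms W}_N(F\circ\mb p)-\mc LF\big\}
(\eta^{W_N}_r,X^{W_N}_r)\,dr\Big]\,.
$$
Conditioning on the trajectory up to time $s$, the strong Markov property together with $|h_j|\le\|h_j\|_\infty$ bounds the modulus of the right-hand side by $\big(\prod_j\|h_j\|_\infty\big)\,\max_{x\in\{0,1\}}\sup_{\eta\in W_N}\big|\E_{(\eta,x)}\big[\int_0^{t-s}\{\mb L^{\ms W}_N(F\circ\mb p)-\mc LF\}(\eta^{W_N}_r,X^{W_N}_r)\,dr\big]\big|$, which vanishes as $N\to\infty$ by Lemma \ref{rep1}. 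Consequently $\E_{\bb P_N}[(\tilde M^{F}_t-\tilde M^{F}_s)\prod_j h_j(X_{s_j})]\to 0$.

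It then remains to pass to the limit $\bb P_N\to\bb P$. The test functional $e\mapsto\big(F(e_t)-F(e_s)-\int_s^t\mc LF(e_r)\,dr\big)\prod_j h_j(e_{s_j})$ on $D(\bb R_+,\{0,1\})$ is bounded and is continuous at every path that does not jump at any of the times $s_1,\dots,s_k,s,t$ (the integral term is continuous everywhere since $\{0,1\}$ is discrete). As the set $\{u\ge 0:\bb P[X_{u-}\ne X_u]>0\}$ is at most countable, choosing these times in its complement makes the functional $\bb P$-almost surely continuous, so that $\E_{\bb P}[(\tilde M^{F}_t-\tilde M^{F}_s)\prod_j h_j(X_{s_j})]=0$; right-continuity of the paths and dominated convergence then remove the restriction on the times. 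A monotone class argument upgrades this to $\E_{\bb P}[(\tilde M^{F}_t-\tilde M^{F}_s)\,G]=0$ for every bounded $\sigma(X_u:u\le s)$-measurable $G$, which is precisely the assertion that $M^{F}=\tilde M^{F}$ is a $\bb P$-martingale; and $X_0=0$ $\bb P$-almost surely because $e\mapsto e_0$ is continuous on $D(\bb R_+,\{0,1\})$ and $X^{W_N}_0=0$ under $\prob_{(\eta^N,0)}$. The main obstacle is this last step: the test functional is not everywhere Skorohod-continuous, which forces one to restrict the evaluation times to the complement of the (countable) set of fixed discontinuity times of $\bb P$, and the uniformity in $\eta$ in Lemma \ref{rep1} is exactly what allows one to discard, via the strong Markov property, the conditioning on the past up to time $s$; everything else is routine.
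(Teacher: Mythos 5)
Your proposal is correct and follows essentially the same route as the paper: the Dynkin martingale of the trace process $\{(\eta^{W_N}_t, X^{W_N}_t) : t\ge 0\}$, the replacement Lemma \ref{rep1} combined with the Markov property to exchange ${\bf L}^{\ms W}_N(F\circ{\bf p})$ for $\mc L F$ uniformly over the starting point, and passage to the weak limit $\bb P_N\to\bb P$. The only difference is in the last technical step, where you restrict the evaluation times to the complement of the countable set of fixed discontinuity times of $\bb P$ and then remove the restriction by right continuity, whereas the paper averages the limiting identity over time shifts in $[0,\epsilon]$ and lets $\epsilon\downarrow 0$; both devices are standard and equivalent here.
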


\begin{proof}
The first claim is trivial. For the last one, fix $0\le s< t$, a
function $F: \{0,1\} \mapsto \bb R$ and a bounded function $U:D(\bb
R_+,\{0,1\})\mapsto \bb R$ depending only on $\{X_r : 0\le r\le s\}$
and continuous for the Skorohod topology. Denote by $\bb E$ and $\bb
E_{N}$ the expectation with respect to $\bb P$ and $\bb P_N$,
respectively. We shall prove that
\begin{equation}
\label{f11}
\bb E\,\big[M^F_t U\big] \;=\; \bb E\,\big[M^F_s U\big]\;.
\end{equation}
Recall that ${\bf L}^{\ms W}_N$ denotes the generator of $\{
(\eta^{W_N}_t,X^{W_N}_t) : t\ge 0 \}$. For $N\ge 1$, consider the
${\bf P}_{(\eta^N,0)}$-martingale $\{ M^N_t : t\ge 0\}$, defined by
\begin{equation*}
M^N_t \,=\, F(X^{W_N}_t) - F(0) - \int_0^t {\bf L}^{\ms W}_N
(F\circ {\bf p})(\eta^{W_N}_{s},X^{W_N}_{s}) \, ds\,,\quad t\ge 0\,.
\end{equation*}
Denote $U^N := U(X^{W_N}_{\bs \cdot})$. As $\{M^N_t : t\ge 0 \}$ is a
martingale,
$$
\E_{(\eta^N,0)}\big[M^N_t U^N\big]
\, = \, \E_{(\eta^N,0)} \big[M^N_s U^N\big]
$$
so that
\begin{equation*}
\E_{(\eta^N,0)} \Big[ U^N \Big\{ F(X^{W_N}_{t}) - F(X^{W_N}_{s}) -
\int_s^t {\bf L}^{\ms W}_N
(F\circ {\bf p})(\eta^{W_N}_{r},X^{W_N}_{r})\, dr \Big\} \Big ] \,=\, 0\,.
\end{equation*}
On the other hand, since $U^N$ is bounded and $\mc F_s$-measurable, it
follows from the Markov property and Lemma \ref{rep1} that
\begin{equation*}
\lim_{N\to\infty}
\E_{(\eta^N,0)} \Big [U^N \int_s^t \Big\{ {\bf L}^{\ms W}_N
(F\circ{\bf p})(\eta^{W_N}_{r},X^{W_N}_{r})  -
\mc L F(X^{W_N}_{r}) \Big\} \, dr \Big]\,=\,0\,.
\end{equation*}
Putting the last two assertions together we get
\begin{equation}\label{f08a}
\lim_{N\to \infty} \bb E_N \Big[ U \Big\{ F(X_{t}) - F(X_{s}) -
\int_s^t \mc L F(X_{r}) \, dr \Big\} \Big ] \,=\, 0\,. 
\end{equation}
Now, since $\bb P_{N}$ converges to $\bb P$, time averages of $\bb
E_{N} \big[F(X_t) U \big]$ and $\bb E_{N}\big[ F(X_s) U \big]$
converge to time averages of $\bb E\,\big[F(X_t) U\big]$ and $\bb
E\,\big[F(X_s) U\big]$, respectively. Hence, from this last
observation and (\ref{f08a}) it follows that
\begin{equation*}
\frac 1\epsilon \int_0^\epsilon dr \, \bb
E\, \Big [ U \Big\{ F(X_{t+r}) - F(X_{s+r}) -
\int_{s+r}^{t+r} \mc L F(X_s) \Big\} \Big ] \,=\, 0
\end{equation*}
for every $\epsilon >0$. It remains to let $\epsilon \downarrow 0$ and
use the right continuity of the process to deduce \eqref{f11}, which
concludes the proof of the lemma.
\end{proof}

Under $\bb P$, $\{ X_t : t\ge 0\}$ is therefore a Markov chain on
$\{0,1\}$ with generator $\mc L$ and starting at $0$. We have thus
shown that, the law of
$$
T_{{\ms B}^c}(\ms W) \,=\, \inf\big\{ t>0 : X^{W_N}_{t}  = 1 \big\} \,,
$$
under ${\bf P}_{(\eta^N,0)}$, converges to a mean-one exponential
distribution. To conclude the proof of Proposition \ref{cb}  it
remains to check \eqref{lc}. By Lemma \ref{pro1} and the convergence
in law of $T_{\ms B^c}(\ms W)$, $(\ms W,\ms W,{\bs \xi})$ is a valley
for the trace process $\{\eta^{\ms E_N}_t : t\ge 0\}$ with depth $\bs
1$. Therefore, applying item $(ii)$ of Proposition \ref{prop1} to
$(\ms W,\ms W,{\bs \xi})$ and $\{\eta^{\ms E_N}_t : t\ge 0\}$ we get
(\ref{lc}). This concludes the proof of Proposition \ref{cb}
\qed\smallskip

We are now in a position to prove Theorem \ref{teo0}. Condition {\bf
  (V1)} follows from Proposition \ref{cb} and Condition {\bf (V3)}
from (\ref{ccb}) and Chebychev inequality. Condition {\bf (V2)}
follows from {\bf (V3)} and the convergence in law of $r_N(\ms W,\ms
B^c)\, T_{\ms B^c}(\ms W)$ stated in Proposition \ref{cb}.

\subsection*{Proof of Theorem \ref{teo1}}

Next result is the main step in the proof of Theorem \ref{teo1}.

\begin{proposition}
\label{cb9} 
Recall the notation introduced in Subsection \ref{meta}.  If
conditions {\bf (C2)}, {\bf (C3)} and {\bf (H0)} are in force, then so
are {\bf (M1)} and {\bf (M2)}.
\end{proposition}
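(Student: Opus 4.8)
The plan is to follow closely the proof of Proposition~\ref{cb}, working throughout with the trace process $\{\eta^{\ms E_N}_t : t\ge 0\}$ of $\{\eta^N_t:t\ge 0\}$ on $\ms E_N$, which by Proposition~\ref{protra} is irreducible and positive recurrent with invariant measure $\mu^{\ms E}_N := \mu_N(\,\cdot\mid \ms E_N)$. The bridge between the two pictures is the time-change identity for traces (cf.\ Section~\ref{sec03}): for $\eta^N\in\ms E^x_N$ and any $g\ge 0$ supported on $\ms E^x_N$,
$$
\int_0^{T_{\bs\xi_x}} g(\eta^N_s)\,ds \;=\; \int_0^{T_{\bs\xi_x}(\ms E)} g(\eta^{\ms E_N}_s)\,ds\;,\qquad T_{\bs\xi_x}(\ms E):=\inf\{s\ge 0:\eta^{\ms E_N}_s=\xi^N_x\}\;,
$$
$\prob_{\eta^N}$-a.s.; in particular the $\ms E^x_N$-occupation time before $T_{\bs\xi_x}$ is the same for both processes. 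Since {\bf (C2)} and {\bf (C3)} are assumed for \emph{every} sequence $\bs\eta$ in $\ms E^x$ (hence for near-maximizing choices of $\eta^N$), I would first upgrade them, via the above identity, to the uniform estimate
$$
\lim_{N\to\infty}\,\sup_{\eta\in\ms E^x_N}\,\E_\eta\Big[\int_0^{T_{\bs\xi_x}(\ms E)}\big(R^x_N(\eta^{\ms E_N}_s)+r_N(\ms E^x,\breve{\ms E}^x)\big)\,\mb 1\{\eta^{\ms E_N}_s\in\ms E^x_N\}\,ds\Big]\;=\;0\;,
$$
which plays here the role that \eqref{cc3} plays in the proof of Proposition~\ref{cb}.

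Condition {\bf (M1)} comes out first. For $\eta^N\in\ms E^x_N$, on the event $\{T_{\bs\xi_x}\ge T_{\breve{\ms E}^x}\}$ the trace process jumps out of $\ms E^x_N$ at least once before reaching $\xi^N_x$, and the counting process of such jumps admits the compensator $t\mapsto\int_0^{t}R^x_N(\eta^{\ms E_N}_s)\mb 1\{\eta^{\ms E_N}_s\in\ms E^x_N\}\,ds$; hence $1-\prob_{\eta^N}[T_{\bs\xi_x}<T_{\breve{\ms E}^x}]$ is at most $\E_{\eta^N}\big[\int_0^{T_{\bs\xi_x}(\ms E)}R^x_N(\eta^{\ms E_N}_s)\mb 1\{\eta^{\ms E_N}_s\in\ms E^x_N\}\,ds\big]$, and {\bf (M1)} follows from the first summand of the display above.

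For {\bf (M2)} I would pass to the speeded-up trace process $\{\eta^{\ms E_N}_{t\theta_N}:t\ge 0\}$, of generator $\theta_N\mb L^{\ms E}_N$, and to $X^N_{t\theta_N}=\Psi_N(\eta^{\ms E_N}_{t\theta_N})$, which is $S$-valued and starts at $x$ under $\prob_{\eta^N}$, $\eta^N\in\ms E^x_N$. The core step is a replacement lemma: for every $x\neq y$ and $t>0$,
$$
\lim_{N\to\infty}\,\sup_{\eta\in\ms E_N}\Big|\,\E_\eta\Big[\int_0^t\big(\theta_N R^{x,y}_N(\eta^{\ms E_N}_{s\theta_N})-r(x,y)\big)\,\mb 1\{\eta^{\ms E_N}_{s\theta_N}\in\ms E^x_N\}\,ds\Big]\Big|\;=\;0\;.
$$
This follows from Corollary~\ref{s09}, applied to the speeded-up trace process with the partition $\pi=\{\ms E^z_N:z\in S\}$ and the function $\theta_N R^{x,y}_N\,\mb 1\{\ms E^x_N\}$, whose conditional expectation given $\pi$ equals $\theta_N r_N(\ms E^x,\ms E^y)\,\mb 1\{\ms E^x_N\}\to r(x,y)\,\mb 1\{\ms E^x_N\}$ by {\bf (H0)}: the hypothesis of Corollary~\ref{s09}, i.e.\ control of the hitting-time-of-$\bs\xi_x$ integral of the modulus of the centred function, which is dominated by $(\theta_N R^x_N+\theta_N r_N(\ms E^x,\breve{\ms E}^x))\,\mb 1\{\ms E^x_N\}$, becomes, after the change of variables $u=s\theta_N$ and the time-change identity, precisely the uniform estimate of the first paragraph, i.e.\ {\bf (C2)}~$+$~{\bf (C3)}; and for starting points in $\ms E^y_N$ with $y\neq x$ the strong Markov property at the first entrance to $\ms E^x_N$ reduces everything to $\eta\in\ms E^x_N$, the integrand vanishing on $\breve{\ms E}^x_N$. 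Granting this, tightness of the laws of $\{X^N_{t\theta_N}\}$ follows from Aldous' criterion as in Lemma~\ref{tight1}, since the drift $\theta_N\mb L^{\ms E}_N\Psi_N$ and the quadratic variation of the martingale $X^N_{t\theta_N}-x-\int_0^t(\theta_N\mb L^{\ms E}_N\Psi_N)(\eta^{\ms E_N}_{s\theta_N})\,ds$ are bounded on $\ms E^x_N$ by a constant times $\theta_N R^x_N$, whose integral over $[0,\delta]$ has expectation $\le \delta\sum_{y\neq x}r(x,y)+o(1)$ by the replacement lemma summed over $y$. Finally, for $F:S\to\bb R$ one uses the replacement lemma to substitute $(\theta_N\mb L^{\ms E}_N(F\circ\Psi_N))(\eta^{\ms E_N}_{s\theta_N})$ by $\mc L F(X^N_{s\theta_N})$, with $\mc L F(z):=\sum_{w\neq z}r(z,w)\{F(w)-F(z)\}$, and argues as in Lemma~\ref{uniq1} (a limit point, time-averaging, then letting the averaging window vanish and invoking right continuity) that every limit point of the laws of $\{X^N_{t\theta_N}\}$ solves the martingale problem for $\mc L$ started at $x$; since $S$ is finite this pins down the limit as $\bb P_x$, which is {\bf (M2)}.

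The step I expect to be the main obstacle is the verification of the hypothesis of Corollary~\ref{s09} in the replacement lemma: {\bf (C2)} and {\bf (C3)} are phrased for the original process and only for each fixed starting sequence, so one must push them through the time-change identity and the near-maximizer device to obtain a bound that is uniform in $\eta$ and correctly rescaled, and one has to notice that the right normalisation inside the $T_{\bs\xi_x}$-integral is $+\,r_N(\ms E^x,\breve{\ms E}^x)$ rather than $+\,\theta_N^{-1}$ — this being exactly what {\bf (C3)} provides — which is what makes the argument uniform over absorbing and non-absorbing states without appealing to {\bf (M3)} or {\bf (C1)}.
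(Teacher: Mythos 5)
Your proposal is correct, and for \textbf{(M2)} it follows essentially the paper's own route: upgrade \textbf{(C2)}--\textbf{(C3)} to a uniform-in-$\eta$ estimate via the near-maximizer device (this is the paper's display \eqref{cc4}), feed that into Corollary \ref{s09} to get the replacement lemma for $L^{\ms E}_N(F\circ\Psi_N)$ versus its conditional expectation, then Aldous tightness and the martingale-problem identification exactly as in Lemmas \ref{s11} and \ref{s02}; the only cosmetic difference is that you keep $\theta_N$ explicit where the paper normalizes $\theta_N=1$, and your remark about reducing starting points in $\ms E^y_N$, $y\neq x$, by the strong Markov property is superfluous since Corollary \ref{s09} already yields the supremum over all of $\ms E_N$ (the centred function $g^z$ vanishes identically for $z\neq x$).

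Where you genuinely diverge is \textbf{(M1)}. The paper obtains it by invoking Proposition \ref{cb} with $\ms W=\ms E^x$, $\ms B=\ms E^x\cup\bs\Delta$, and the proof of \textbf{(V1)} there is indirect: it passes through the auxiliary two-state marked process, Aldous tightness of its $\{0,1\}$-component, and the modulus-of-continuity bound of Lemma \ref{pro1}, combined with the Chebyshev estimate from \textbf{(C3)}. You instead bound $1-\prob_{\eta}[T_{\bs\xi_x}<T_{\breve{\ms E}^x}]$ directly by the expected number of jumps of the trace process from $\ms E^x_N$ to $\breve{\ms E}^x_N$ before $T_{\bs\xi_x}(\ms E)$, which by optional stopping of the compensated counting process equals $\E_{\eta}\bigl[\int_0^{T_{\bs\xi_x}(\ms E)}R^x_N(\eta^{\ms E_N}_s)\mb 1\{\eta^{\ms E_N}_s\in\ms E^x_N\}\,ds\bigr]$, i.e.\ exactly the quantity controlled by \textbf{(C2)}. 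This is a more elementary first-moment argument that bypasses the tightness machinery for this step; it is valid because the trace preserves the order in which states of $\ms E_N$ are visited (so the event $\{T_{\bs\xi_x}\ge T_{\breve{\ms E}^x}\}$ forces at least one such jump before $T_{\bs\xi_x}(\ms E)$), and the optional stopping at the possibly unbounded time $T_{\bs\xi_x}(\ms E)$ is justified by truncating at $T_{\bs\xi_x}(\ms E)\wedge n$ and letting $n\uparrow\infty$ by monotone convergence. What the paper's longer route buys in exchange is the convergence in law of $T_{\ms B^c}(\ms W)$ and the asymptotics \eqref{aa1}, which are needed elsewhere (Theorem \ref{teo0}) but not for Proposition \ref{cb9} itself; for the purposes of this proposition alone your shortcut is a clean simplification.
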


The proof of this result is divided in three lemmas. As in the proof
of Proposition \ref{cb}, without loss of generality, we may assume
that $\theta_N=1$, $\forall N\ge 1$. In this way, condition $\bf (H0)$
guarantees that, for every $x,y\in S$, $x\not = y$,
\begin{equation}
\label{c3}
\lim_{N\to\infty} r_N(\ms E^x,\ms E^y) \,=\, r(x,y)\,,
\end{equation}
and we shall prove the convergence in law of the sequence $\{X^N_t :
t\ge 0\}$, $N\ge 0$.

Clearly, conditions $\bf (C2)$ and $\bf (C3)$ imply
\begin{equation}
\label{cc4}
\lim_{N\to\infty} \sup_{\eta\in \ms E^x_N} \E_{\eta} 
\Big[ \int_0^{T_{\bs \xi_x}} \big\{ 
R^{x}_N(\eta^{\ms E_N}_s) + r_N(\ms E^x,\breve{\ms E}^x) \big\}
\, \mb 1\{\eta^{\ms E_N}_s \in \ms E^x_N\}
\,  ds \Big] \,=\, 0
\end{equation}
for any $x\in S$, where $\{\eta^{\ms E_N}_t : t\ge 0\}$ stands for
the trace process of $\{\eta_t : t\ge 0\}$ on $\ms E_N$. Let us
define $V_N: \ms E_N \mapsto \bb R$ as
$$
V_N(\eta) \,:=\, \sum_{x\in S} R^x_N(\eta) \, 
{\bs 1}\{\eta\in \ms E^x_N\}\,,\quad \eta\in {\ms E}_N\,.
$$
Let $\mu^{\ms E}_N$ be the measure $\mu_N$ conditioned to $\ms E_N$
and denote by $\hat V_N$ the $\mu^{\ms E}_N$-conditional expectation
of $V_N$ given the $\sigma$-algebra generated by the partition $\ms
E_N = \cup_{x\in S}\ms E^x_{N}$:
$$
\hat V_N(\eta)\,:=\, \sum_{x\in S} r_N(\ms E^x,\breve{\ms E}^x) 
\,{\bs 1}\{\eta\in {\ms E}^x_N\}\,,\quad \forall \eta\in {\ms E}_N\,.
$$
Since $V_N$ is integrable with respect to $\mu^{\ms E}_N$, it follows
from Corollary \ref{s09} and from (\ref{cc4}) that, for
any $t>0$,
\begin{equation}\label{s03}
\lim_{N\to\infty} \sup_{\eta\in \ms E_N} \Big\vert\, 
{\bf E}_{\eta} \Big[ \int_0^t  
\big\{ V_N - \hat V_N \big\}(\eta^{\ms E_N}_s) 
\, ds \Big] \, \Big\vert \;=\; 0\;.
\end{equation}

In order to prove $\bf (M2)$, fix some $x\in S$ and a point ${\bs
  \eta}=(\eta^N: N\ge 1)$ in $\ms E^x$. For each $N\ge 1$, denote by
$\bb P_N$ the law of $\{ X^N_t : t\ge 0\}$ under ${\bf P}_{\eta^N}$.
The convergence of the sequence $(\bb P_N : N\ge 1)$ stated in $\bf
(M2)$, follows from tightness and uniqueness of limit points. We first
examine the tightness.

\begin{lemma}
\label{s11}
The sequence $(\bb P_N : N\ge 1)$ is tight.
\end{lemma}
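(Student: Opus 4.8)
The plan is to mimic the tightness argument already carried out for the two‑state process in Lemma \ref{tight1}, now with the full projection onto $S$. The starting point is to realize $\{\eta^{\ms E_N}_t : t\ge 0\}$ as the first marginal of a Markov process $\{(\eta^{\ms E_N}_t, Y^N_t) : t\ge 0\}$ on $\ms E_N \times S$ which keeps track of the current ``label'': from $(\eta,y)$ with $\eta\in \ms E^x_N$ the extra coordinate is updated to $y'$ at rate $R^{x,y'}_N(\eta)$ when the trace process jumps into $\ms E^{y'}_N$, and is left unchanged on jumps within $\ms E^x_N$. Then $X^N_t = Y^N_t$ in law when started appropriately, so it suffices to prove tightness of $\{Y^N_t : t\ge 0\}$ under $\prob_{(\eta^N, x)}$. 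As in Lemma \ref{tight1}, I would verify Aldous' criterion: for every bounded $F:S\to\bb R$ write the Dynkin martingale
$$
M^{N,F}_t \;=\; F(Y^N_t) - F(Y^N_0) - \int_0^t ({\bf L}^{\ms E}_N (F\circ {\bf p}))(\eta^{\ms E_N}_s, Y^N_s)\, ds\;,
$$
where ${\bf p}$ is the projection on the second coordinate, and control the martingale part through its quadratic variation and the bounded‑variation part through its integrand, for stopping times $\tau\le T$ and increments $\theta\le\delta$.

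The key computation is that
$$
({\bf L}^{\ms E}_N (F\circ {\bf p}))(\eta,y) \;=\; \sum_{x\in S}{\bf 1}\{\eta\in\ms E^x_N\}\sum_{z\not= x} R^{x,z}_N(\eta)\,\{F(z)-F(y)\}\;,
$$
which for $\eta\in\ms E^x_N$ is bounded in absolute value by $2\|F\|_\infty\, R^x_N(\eta) = 2\|F\|_\infty\, V_N(\eta)$, and similarly the quadratic variation density is bounded by a constant times $V_N(\eta^{\ms E_N}_s)$. Hence both the bounded‑variation term and the quadratic variation over a window of length $\delta$ are dominated by
$$
\sup_{\eta\in\ms E_N}\E_\eta\Big[\int_0^\delta V_N(\eta^{\ms E_N}_s)\, ds\Big]\;.
$$
Since $V_N$ has $\mu^{\ms E}_N$‑conditional expectation $\hat V_N$ with $\hat V_N(\eta) = r_N(\ms E^x,\breve{\ms E}^x)$ on $\ms E^x_N$, and since by \eqref{c3} $r_N(\ms E^x,\breve{\ms E}^x)\to\sum_{y\not= x} r(x,y)$ is bounded, the bound above is at most
$$
\Big|\,\E_\eta\Big[\int_0^\delta \{V_N - \hat V_N\}(\eta^{\ms E_N}_s)\, ds\Big]\,\Big| \;+\; \delta\,\sup_{x\in S} r_N(\ms E^x,\breve{\ms E}^x)\;.
$$
By the replacement estimate \eqref{s03} the first term vanishes as $N\to\infty$ for each fixed $\delta$, and the second is $O(\delta)$ uniformly in $N$; taking $N\to\infty$ and then $\delta\downarrow 0$ gives Aldous' condition. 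Applying Chebyshev's and Doob's inequalities together with the strong Markov property at $\tau$ to pass from the martingale and integral pieces back to $|Y^N_{\tau+\theta}-Y^N_\tau|$ then yields tightness of $(\bb P_N : N\ge 1)$, exactly as in Lemma \ref{tight1}.

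The only genuine input beyond routine martingale estimates is the uniform‑in‑$\eta$ replacement bound \eqref{s03}, which is already in place via Corollary \ref{s09} and condition \eqref{cc4}; the main point to be careful about is that $V_N$ need not be bounded uniformly in $N$ (the holding rates of the trace process can blow up), so one cannot estimate $\int_0^\delta V_N$ crudely and must route everything through the conditional expectation $\hat V_N$ as above. Everything else — the explicit form of ${\bf L}^{\ms E}_N(F\circ{\bf p})$, of $\langle M^{N,F}\rangle$, and the reduction to Aldous' criterion — is the same bookkeeping as in the $\kappa=2$ case already treated.
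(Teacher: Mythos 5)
Your proof is correct and follows essentially the same route as the paper: Aldous' criterion, the Dynkin martingale decomposition, the bound of both the drift and the quadratic variation by $V_N$, and the reduction to $\sup_{\eta}\E_\eta[\int_0^\delta V_N(\eta^{\ms E_N}_s)\,ds]$ handled via the splitting $V_N=(V_N-\hat V_N)+\hat V_N$ together with \eqref{s03} and \eqref{c3}. The only (harmless) difference is the auxiliary label coordinate $Y^N_t$, which is redundant here because $X^N_t=\Psi_N(\eta^{\ms E_N}_t)$ is already a deterministic function of the trace process, so the paper works directly with the martingale associated to $\Psi_N$; the extra coordinate is genuinely needed only in the two-state argument of Lemma \ref{tight1}.
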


\begin{proof} 
For each $T>0$, let $\mf T_T$ denote the set of all stopping times
bounded by $T$. By Aldous criterion (see Theorem 16.10 in \cite{b}) we
just need to show that
\begin{equation}
\label{aldous}
\lim_{\delta\downarrow 0}\lim_{N\to\infty}\sup_{\theta\le\delta}
\sup_{\tau\in\mf T_T}\prob_{\eta_N} \big[ \;
| X^N_{\tau + \theta} - X^N_{\tau}| > \epsilon \;\big] \;=\; 0
\end{equation}
for every $\epsilon>0$ and $T>0$.

Let $L^{\ms E}_N$ be the generator of the trace process $\{ \eta^{\ms
  E_N}_{t}:t\ge 0 \}$ and let $\{M^N_t: t\ge 0\}$ be the martingale
defined by
\begin{equation*}
M^N_t \;=\; X^N_t \;-\; X^N_0 \;-\; \int_0^t 
L^{\ms E}_N \Psi_N(\eta^{\ms E_N}_s)\, ds\; .
\end{equation*}
To prove tightness, it is therefore enough to show that \eqref{aldous}
holds with the difference $X^N_{\tau+\theta} - X^N_{\tau}$ replaced by
$M^N_{\tau+\theta} - M^N_{\tau}$ and by $\int_\tau^{\tau+\theta}
L^{\ms E}_N X^N_s ds$.

Consider the integral term. By Chebychev inequality and by the strong
Markov property, we need to prove that
\begin{equation*}
\lim_{\delta\downarrow 0}\, \lim_{N\to\infty}\, \sup_{\theta\le\delta}
\, \sup_{\eta \in\ms E_N} \E_{\eta} \Big[ \;
\int_0^\theta \big|\,  L^{\ms E}_N \Psi_N(\eta^{\ms E_N}_s) \big |\,
ds \;\Big] \;=\; 0\;.
\end{equation*}
An elementary computation shows that
\begin{eqnarray*}
L^{\ms E}_N \Psi_N (\eta) &=& \sum_{x,y\in S}
\{y - x\} \, R^{x,y}_N(\eta)\,{\bs 1}\{\eta\in {\ms E}^x_N\}\,,
\end{eqnarray*}
for any $\eta\in\ms E_N$. Since $|L^{\ms E}_N \Psi_N| \le \kappa V_N$,
the proof is reduced to the claim
\begin{equation*}
\lim_{\delta\downarrow 0}\, \lim_{N\to\infty}\,
\sup_{\eta \in\ms E_N} \, \E_{\eta} \Big[  
\int_0^\delta V_N(\eta^{\ms E_N}_s) ds \Big] \;=\; 0\,.
\end{equation*}
The left hand side can be written as
$$
\lim_{\delta\downarrow 0}\, \lim_{N\to\infty}\,
\sup_{\eta \in\ms E_N} \, \Big\{ \E_{\eta} 
\Big[  \int_0^\delta \big\{V_N - \hat V_N \big\} 
(\eta^{\ms E_N}_s) ds \Big] \,+\,  
\E_{\eta} \Big[  \int_0^\delta \hat V_N(\eta^{\ms E_N}_s) ds 
\, \Big]\Big\}\,.
$$
The first term converges to zero as $N\uparrow\infty$, for any
$\delta>0$, by (\ref{s03}). The second term is bounded above by
$$
\lim_{\delta\downarrow 0} \lim_{N\to\infty} \delta\, 
\sum_{x\in S} \, r_N(\ms E^x_N,\breve{\ms E}^x_N)\,,
$$
which is equal to zero by (\ref{c3}).

We now turn to the martingale part, whose quadratic variation, denoted
by $\< M^N\>_{t}$, is given by 
$$
\< M^N\>_{t} \,=\, \int_0^t \big\{ L^{\ms E}_N (\Psi_N)^2(\eta^{\ms
  E_N}_s) - 2 X^N_s (L^{\ms E}_N \Psi_N)(\eta^{\ms E_N}_s) \big\}\, ds
\,, \quad t\ge 0\,.
$$
An elementary computation shows that this expression is equal to
\begin{equation*}
\sum_{x,y\in S}
\{y - x\}^2 \, R^{x,y}_N(\eta)\,{\bs 1}\{\eta\in {\ms E}^x_N\}\;.
\end{equation*}

By the explicit formula for the quadratic variation, by
Chebychev inequality and by the strong Markov property,
\begin{eqnarray*}
\prob_{\eta^N} \big[ \, \big| M^N_{\tau+\theta} 
-  M^N_{\tau} \big| \, >\, \epsilon \big] &\le& 
\frac 1{\epsilon^2} \, \E_{\eta^N}
\big[\, \<M^N\>_{\tau+\theta} \,-\, \<M^N\>_\tau \,\big] \\
&\le& \frac {\kappa^2}{\epsilon^2} \, \sup_{\eta\in \ms E_N}
\E_{\eta} \Big[ \, \int_0^\delta 
V_N(\eta^{\ms E_N}_s) \, ds\, \Big]\,.
\end{eqnarray*}
It remains to repeat the arguments presented for the integral term of
the decomposition.
\end{proof}

Now we turn to the uniqueness of limit points. Assume without loss of
generality that the sequence $\bb Q_N$ converges to a measure $\bb
P\,$. Denote by $\mf L_N$ and $\mf L$ the Markov generators on the
state space $S=\{1,\dots,\kappa\}$ given by
$$
(\mf L_N F)(x) \;=\; \sum_{y\in S\setminus\{x\}} \{F(y) - F(x)\}
r_N(\ms E^x, \ms E^y)
$$
and
$$
(\mf LF)(x) \;=\; \sum_{y\in S\setminus\{x\}} \{F(y) - F(x)\}\, r(x,y)\;.
$$
For $t\ge 0$, let $X_t$ denote the projection $D(\bb R_+,S)\mapsto S$.
The probability $\bb P$ is completely determined by the properties
stated in the following lemma.

\begin{lemma}
\label{s02}
Under $\bb P\,$, $X_0 = x$ and
\begin{equation}
\label{f04}
M_t \;=\; F(X_t) \;-\; F(X_0) \;-\; \int_0^t \mf L F (X_s) \, ds
\end{equation}
is a martingale for any function $F: S \mapsto \bb R$.
\end{lemma}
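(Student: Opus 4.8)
The plan is to repeat, \emph{mutatis mutandis}, the argument used to establish Lemma \ref{uniq1}, with the two-point auxiliary chain there replaced by the $S$-valued process $\{X^N_t:t\ge 0\}$.

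First I would settle the initial condition. Since $\bs\eta=(\eta^N:N\ge 1)$ is a point in $\ms E^x$, the trace process starts at $\eta^N\in\ms E^x_N$, so $X^N_0=\Psi_N(\eta^N)=x$ for every $N$; as the evaluation map $e\mapsto e_0$ is continuous on $D(\bb R_+,S)$, the convergence $\bb Q_N\to\bb P$ forces $X_0=x$ $\bb P$-a.s. For the martingale statement I would fix $0\le s<t$, a function $F:S\to\bb R$ and a bounded, Skorohod-continuous functional $U:D(\bb R_+,S)\to\bb R$ depending only on $\{X_r:0\le r\le s\}$, and aim at $\bb E[M_tU]=\bb E[M_sU]$, where $\bb E$ denotes expectation under $\bb P$. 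The natural starting point is the $\prob_{\eta^N}$-martingale
\begin{equation*}
M^N_t \;=\; F(X^N_t)\;-\;F(X^N_0)\;-\;\int_0^t
\big(L^{\ms E}_N(F\circ\Psi_N)\big)(\eta^{\ms E_N}_s)\,ds\;,\qquad t\ge 0\;,
\end{equation*}
where $L^{\ms E}_N$ is the generator of the trace process. An elementary computation gives
\begin{equation*}
\big(L^{\ms E}_N(F\circ\Psi_N)\big)(\eta) \;=\; \sum_{x,y\in S}
\{F(y)-F(x)\}\,R^{x,y}_N(\eta)\,{\bs 1}\{\eta\in\ms E^x_N\}\;,
\end{equation*}
so that $|L^{\ms E}_N(F\circ\Psi_N)|\le 2\|F\|_\infty\,V_N$, and the $\mu^{\ms E}_N$-conditional expectation of $L^{\ms E}_N(F\circ\Psi_N)$ given the $\sigma$-algebra generated by the partition $\ms E_N=\cup_{x\in S}\ms E^x_N$ is precisely $(\mf L_NF)\circ\Psi_N$.

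The first key step is the replacement estimate. Decomposing $L^{\ms E}_N(F\circ\Psi_N)$ into the finitely many pieces $R^{x,y}_N\,{\bs 1}\{\,\cdot\in\ms E^x_N\}$, each dominated by $V_N$ and with $\mu^{\ms E}_N$-conditional expectation $r_N(\ms E^x,\ms E^y)\,{\bs 1}\{\,\cdot\in\ms E^x_N\}$, I would invoke Corollary \ref{s09} together with \eqref{cc4} (which is where {\bf (C2)} and {\bf (C3)} enter) --- exactly the reasoning that produced \eqref{s03} --- to get
\begin{equation*}
\lim_{N\to\infty}\sup_{\eta\in\ms E_N}\Big|\,\E_\eta\Big[
\int_0^u\big\{L^{\ms E}_N(F\circ\Psi_N)-(\mf L_NF)\circ\Psi_N\big\}
(\eta^{\ms E_N}_r)\,dr\Big]\Big|\;=\;0\;,\qquad u>0\;.
\end{equation*}
Since $\E_{\eta^N}[M^N_tU^N]=\E_{\eta^N}[M^N_sU^N]$ for $U^N:=U(X^N_{\bs\cdot})$, which is bounded and $\mc F_s$-measurable, the Markov property, the last display, and the uniform convergence $\mf L_NF\to\mf LF$ on $S$ coming from \eqref{c3} (i.e.\ from {\bf (H0)}) together yield
\begin{equation*}
\lim_{N\to\infty}\bb E_N\Big[U\Big\{F(X_t)-F(X_s)-\int_s^t\mf LF(X_r)\,dr\Big\}\Big]\;=\;0\;.
\end{equation*}

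The last step is the passage to the limit. Because $\bb Q_N\to\bb P$ holds only in the Skorohod topology, $F(X_t)$ and $F(X_s)$ are not $\bb Q_N$-continuous at the fixed times $s,t$, so rather than pass to the limit directly I would average the previous relation over a shift $r\in[0,\epsilon]$, use that time averages of $\bb E_N[F(X_\cdot)U]$ do converge, pass to the limit in $N$, and then let $\epsilon\downarrow 0$, invoking the right continuity of the paths to recover $\bb E[M_tU]=\bb E[M_sU]$. Uniqueness of $\bb P$ is then immediate from well-posedness of the martingale problem for the bounded generator $\mf L$ on the finite set $S$ with initial law $\delta_x$. I expect the only genuine obstacle to be the replacement estimate of the second paragraph: one must check that $V_N$ is $\mu^{\ms E}_N$-integrable and that \eqref{cc4} supplies exactly the hypothesis needed to apply Corollary \ref{s09} to the trace process $\{\eta^{\ms E_N}_t:t\ge 0\}$, the function $V_N$, and the partition into metastates; once \eqref{s03} is available the remainder is bookkeeping identical to the proof of Lemma \ref{uniq1}.
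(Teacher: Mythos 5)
Your proposal is correct and follows essentially the same route as the paper: the paper also reduces Lemma \ref{s02} to the argument of Lemma \ref{uniq1} plus a replacement lemma, proved by first substituting $\mf L_N F$ for $\mf L F$ via {\bf (H0)} and then substituting $(\mf L_N F)\circ\Psi_N$ for $L^{\ms E}_N(F\circ\Psi_N)$ via Corollary \ref{s09} and \eqref{cc4}. The time-averaging device to handle the discontinuity of the fixed-time marginals under Skorohod convergence is likewise the one used in the proof of Lemma \ref{uniq1}.
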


The proof of this lemma follows closely the one of Lemma \ref{uniq1}.
It suffices, in particular, to show the following replacement lemma.
Let $L^{\ms E}_N$ stand for the generator of $\{\eta^{\ms E_N}_t :
t\ge 0\}$.

\begin{lemma}
For any $t>0$,
$$
\lim_{N\to\infty} \sup_{\eta\in \ms E_N}{\bf E}_{\eta} 
\Big [ \int_0^t \big\{ L^{\ms E}_N
(F\circ\Psi_N) - ({\mf L}F) \circ \Psi_N \big\}
(\eta^{\ms E_N}_s) \, ds \Big ] \,=\, 0\,.
$$
\end{lemma}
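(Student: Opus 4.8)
The plan is to view this as one more instance of the conditional-expectation replacement encapsulated in Corollary~\ref{s09}, applied now to the trace process $\{\eta^{\ms E_N}_t : t\ge 0\}$ and to the function $g_N := L^{\ms E}_N(F\circ\Psi_N)$, followed by a deterministic comparison of the two jump-rate generators on $S$. First I would record the explicit form of $g_N$, already computed in the proof of Lemma~\ref{s11}:
$$
g_N(\eta) \;=\; \sum_{x,y\in S} \{F(y)-F(x)\}\, R^{x,y}_N(\eta)\, {\bs 1}\{\eta\in\ms E^x_N\}\,,\qquad \eta\in\ms E_N\,,
$$
so that $|g_N|\le c_F\, V_N$ with $c_F := 2\max_{z\in S}|F(z)|$ and $V_N$ the function introduced just before \eqref{s03}. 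Since averaging $R^{x,y}_N$ over $\ms E^x_N$ against $\mu^x_N$ produces $r_N(\ms E^x,\ms E^y)$, the $\mu^{\ms E}_N$-conditional expectation of $g_N$ with respect to the $\sigma$-algebra generated by the partition $\ms E_N=\cup_{x\in S}\ms E^x_N$ is
$$
\hat g_N(\eta) \;=\; \sum_{x\in S}{\bs 1}\{\eta\in\ms E^x_N\}\sum_{y\neq x}\{F(y)-F(x)\}\, r_N(\ms E^x,\ms E^y) \;=\; (\mf L_N F)\circ\Psi_N(\eta)\,,
$$
and $|\hat g_N|\le c_F\,\hat V_N$.

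Next I would verify the hypotheses of Corollary~\ref{s09} for $g_N$ and this partition, exactly as in the proof of Lemma~\ref{r1a}. Integrability of $g_N$ against $\mu^{\ms E}_N$ is clear from $|g_N|\le c_F V_N$ and positive recurrence of the trace process. The remaining hypothesis is an estimate of the same type as \eqref{cc3}, requiring control of $\int_0^{T_{\bs \xi_x}}\bigl(|g_N|+|\hat g_N|\bigr){\bs 1}\{\eta^{\ms E_N}_s\in\ms E^x_N\}\,ds$ uniformly over $\eta\in\ms E^x_N$; on $\ms E^x_N$ one has the pointwise bound $(|g_N|+|\hat g_N|)\,{\bs 1}\{\ms E^x_N\}\le c_F\{R^x_N+r_N(\ms E^x,\breve{\ms E}^x)\}\,{\bs 1}\{\ms E^x_N\}$, so this estimate is precisely \eqref{cc4} up to the factor $c_F$. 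Corollary~\ref{s09} then gives
$$
\lim_{N\to\infty}\sup_{\eta\in\ms E_N}\Bigl|\,{\bf E}_\eta\Bigl[\int_0^t\bigl\{L^{\ms E}_N(F\circ\Psi_N) - (\mf L_N F)\circ\Psi_N\bigr\}(\eta^{\ms E_N}_s)\,ds\Bigr]\Bigr| \;=\; 0\,.
$$

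Finally I would replace $\mf L_N$ by $\mf L$. After the reduction $\theta_N\equiv1$, condition $\bf (H0)$ is exactly \eqref{c3}, i.e.\ $r_N(\ms E^x,\ms E^y)\to r(x,y)$ for all $x\neq y$, so
$$
\sup_{\eta\in\ms E_N}\bigl|(\mf L_N F)(\Psi_N(\eta)) - (\mf L F)(\Psi_N(\eta))\bigr| \;\le\; c_F\sum_{x\neq y}\bigl|r_N(\ms E^x,\ms E^y)-r(x,y)\bigr| \;\longrightarrow\; 0\,.
$$
Hence the time integral appearing in the statement with $\mf L_N$ in place of $\mf L$ differs from the one with $\mf L$ by at most $t\,c_F\sum_{x\neq y}|r_N(\ms E^x,\ms E^y)-r(x,y)|$ in absolute value, uniformly in $\eta\in\ms E_N$, and this bound vanishes as $N\to\infty$. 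Combining this with the previous display by the triangle inequality yields the claim.

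The argument is essentially bookkeeping. The one point that deserves care is the verification of the hypothesis of Corollary~\ref{s09}: identifying $\hat g_N$ as $(\mf L_N F)\circ\Psi_N$ and checking that $(|g_N|+|\hat g_N|){\bs 1}\{\ms E^x_N\}$ is dominated by the integrand of \eqref{cc4}. Once that is in place the replacement of $g_N$ by its conditional expectation is automatic and the passage $\mf L_N\to\mf L$ is a uniform deterministic estimate, so no genuine difficulty is anticipated. The whole proof parallels that of Lemma~\ref{r1a}, the scalar function $R^{\ms W}_N{\bs 1}\{W_N\}$ there being replaced here by $L^{\ms E}_N(F\circ\Psi_N)$.
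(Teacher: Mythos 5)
Your proof is correct and follows essentially the same route as the paper's: the same splitting into $L^{\ms E}_N(F\circ\Psi_N)-(\mf L_N F)\circ\Psi_N$ plus $(\mf L_N F)\circ\Psi_N-(\mf L F)\circ\Psi_N$, with the first difference handled by Corollary \ref{s09} and \eqref{cc4} after identifying $(\mf L_N F)\circ\Psi_N$ as the conditional expectation given the partition, and the second by {\bf (H0)} (i.e.\ \eqref{c3} after the reduction $\theta_N\equiv 1$). Your verification of the hypothesis of Corollary \ref{s09} via the pointwise domination by $c_F\{R^x_N+r_N(\ms E^x,\breve{\ms E}^x)\}{\bs 1}\{\ms E^x_N\}$ is exactly the point the paper invokes, spelled out slightly more explicitly.
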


\begin{proof}
First, by condition $\bf (H0)$, we have that
$$
\lim_{N\to\infty} \sup_{\eta\in \ms E_N}
{\bf E}_{\eta} \Big [ \int_0^t \big\{ (\mf L_N F)(X^N_s) -
(\mf L F)(X^N_s) \big\} \, ds \Big]\,=\,0\,.
$$
It remains to prove that
\begin{equation}
\label{r1}
\lim_{N\to\infty} \sup_{\eta\in \ms E_N}{\bf E}_{\eta} 
\Big [ \int_0^t \big\{ L^{\ms E}_N
(F\circ\Psi_N) - ({\mf L}_NF) \circ \Psi_N \big\}
(\eta^{\ms E_N}_s) \, ds \Big ] \,=\, 0\,.
\end{equation}
The $\mu^{\ms E}_N$-conditional expectation of $L^{\ms E}_N
(F\circ\Psi_N)$ given the $\sigma$-algebra generated by the partition
$\ms E_N=\cup_{x\in S} \ms E_N^x$ is $({\mf L}_NF) \circ \Psi_N$.  The
expectation of $|\, L^{\ms E}_N (F\circ\Psi_N)\, |$ with
respect to $\mu^{\ms E}_N$ is bounded by $C(F) \sum_{x\in S} r_N(\ms
E^x, \ms E^y)$ for some finite constant $C(F)$, depending only on $F$,
and, for any $\eta\in \ms E_N$, $|L^{\mc E}_N (F\circ\Psi_N)(\eta) -
(\mf L_N F) \circ \Psi_N(\eta)|$ is bounded above by
$$
2\max_{z\in S} |F(z)| \, \sum_{x\in S} 
\big\{ R^x_N(\eta) -  r_N(\ms E^x, \ms E^y) \big\}
\,{\bs 1}\{\eta\in {\ms E^x_N}\}\,.
$$
By Corollary \ref{s09}, applied to $g=L^{\ms E}_N (F\circ\Psi_N)$ and
by (\ref{cc4}), (\ref{r1}) holds, which concludes the proof of the
lemma.
\end{proof}

This concludes the proof of condition ({\bf M2}). Condition ({\bf M1})
follows from Proposition \ref{cb} with $\ms W = \ms E^x$, $\ms B = \ms
E^x \cup \mb \Delta$, which concludes the proof of Proposition
\ref{cb9}. \qed\smallskip

For Theorem \ref{teo1}, it remains to check condition ({\bf M3}) for
non-absorbing states. This follows from Proposition \ref{prom3} since
condition ({\bf M2}) has already been deduced.

\subsection*{Proof of Theorem  \ref{teo0d}.}

We assume in this subsection that the process is reversible and adopt
all notation introduced in Section \ref{sec1}. The proof of Theorem
\ref{teo0d} relies on the following result which states the important
fact that, under condition \eqref{sufcond2}, the capacity between $\ms
W$ and $\ms B^c$ is asymptotically equivalent to the capacity between
any point $\bs \zeta$ of $\ms W$ and $\ms B^c$.

\begin{proposition}
\label{teo0r} 
Consider two sequences of sets $\ms W$ and $\ms B$ satisfying
\eqref{val1}. Assume that condition (\ref{sufcond2}) holds for some
point $\bs \xi=(\xi^N : N\ge 1)$ in $\ms W$. Then, the assertions of
Proposition \ref{cb} are in force. Moreover, for every point
$\bs\zeta=(\zeta^N:N\ge 1)$ in $\ms W$,
\begin{equation}
\label{ce}
 \lim_{N\to \infty} \frac{ \Cap_N(\ms W, \ms B^c) }
{ \Cap_N({\bs \zeta} ,  \ms B^c ) } \;=\; 1\;,
\end{equation}
and
\begin{equation}
\label{vze}
\lim_{N\to \infty}\inf_{\eta\in W_N} 
{\bf P}_{\eta}[\, T_{\bs\zeta} < T_{\ms B^c} \,] \;=\; 1 \;.
\end{equation}
\end{proposition}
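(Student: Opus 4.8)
The plan is to reduce everything to Proposition \ref{cb} by verifying that condition \eqref{sufcond2} implies conditions \eqref{cc1} and \eqref{cc2} of Theorem \ref{teo0}, and then to bootstrap the capacity comparison \eqref{ce} from the conclusions of Proposition \ref{cb} itself. First I would recall the two bounds displayed just before the statement of Theorem \ref{teo0d}: using $0\le f_N(\bs\eta,\bs\xi)\le 1$, the identity $\langle R^{\ms W}_N\,{\bf 1}\{W_N\}\rangle_{\mu_N}=\mu_N(\ms W)\,r_N(\ms W,\ms B^c)$ from \eqref{f-1}, and formula \eqref{for3}, one gets
\begin{equation*}
{\mb E}_{\eta^N}\Big[\int_0^{T_{\bs\xi}}R^{\ms W}_N(\eta^N_s){\bf 1}\{\eta^N_s\in W_N\}\,ds\Big]\;\le\;\frac{\Cap_N(\ms W,\ms B^c)}{\Cap_N(\bs\xi)}\,,\qquad r_N(\ms W,\ms B^c)\,{\mb E}_{\eta^N}[T_{\bs\xi}(\ms W)]\;\le\;\frac{\Cap_N(\ms W,\ms B^c)}{\Cap_N(\bs\xi)}\,.
\end{equation*}
Hence \eqref{sufcond2} gives \eqref{cc1} and \eqref{cc2}, and Proposition \ref{cb} applies: condition ({\bf V1}) holds, the law of $r_N(\ms W,\ms B^c)\,T_{\ms B^c}(\ms W)$ under $\mb P_{\eta^N}$ converges to a mean-one exponential, and $r_N(\ms W,\ms B^c)\,\E_{\eta^N}[T_{\ms B^c}(\ms W)]\to 1$ for every point $\bs\eta$ in $\ms W$. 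This proves the first assertion of Proposition \ref{teo0r}.

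Next I would fix a point $\bs\zeta=(\zeta^N:N\ge 1)$ in $\ms W$ and establish \eqref{ce}. The inequality $\Cap_N(\ms W,\ms B^c)\le \Cap_N(\bs\zeta,\ms B^c)$ is immediate since the test function $f_{\{\zeta^N\}\,B^c_N}$ lies in $\mc C_N(W_N,B^c_N)$ only after... actually more carefully: any $f\in\mc C_N(\{\zeta^N\},B^c_N)$ need not vanish off $\ms W$, but monotonicity of capacity in the first argument gives $\Cap_N(\ms W,\ms B^c)\ge\Cap_N(\{\zeta^N\},\ms B^c)$ because $\{\zeta^N\}\subseteq W_N$; so in fact $\Cap_N(\ms W,\ms B^c)\ge\Cap_N(\bs\zeta,\ms B^c)$, and the content of \eqref{ce} is the reverse asymptotic inequality $\Cap_N(\ms W,\ms B^c)\le(1+o(1))\Cap_N(\bs\zeta,\ms B^c)$. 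The idea is probabilistic: by \eqref{for3} and its analogue for the singleton, $\Cap_N(\ms W,\ms B^c)/\Cap_N(\bs\zeta,\ms B^c)=\big(\mu_N(\ms W)r_N(\ms W,\ms B^c)\big)/\big(\mu_N(W_N)\,\widetilde r_N\big)$ where $\widetilde r_N:=\Cap_N(\bs\zeta,\ms B^c)/\mu_N(W_N)$ is, by the renewal/trace representation, the rate at which the trace process on $W_N\cup B^c_N$ leaves $\{\zeta^N\}$, up to the right normalization. The cleaner route is to run Proposition \ref{cb} a second time with the singleton well $\ms W'=\{\bs\zeta\}$ inside the same basin: condition \eqref{sufcond2} for $\bs\zeta$ in place of $\bs\xi$ holds trivially since $\Cap_N(\ms W',\ms B^c)=\Cap_N(\bs\zeta,\ms B^c)\ge\Cap_N(\ms W,\ms B^c)$ and $\Cap_N(\bs\zeta)=\Cap_N(\bs\xi)$... no — I would instead compare $\E_{\zeta^N}[T_{\ms B^c}(\ms W)]$ directly. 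By the second conclusion of Proposition \ref{cb}, $\E_{\eta^N}[T_{\ms B^c}(\ms W)]\sim 1/r_N(\ms W,\ms B^c)$ for \emph{every} starting point in $\ms W$, in particular for $\zeta^N$; and by the mean-hitting-time formula \eqref{for2} and \eqref{for3}, $\E_{\zeta^N}[T_{\ms B^c}(\ms W)]=\langle{\bf 1}\{W_N\},f_N(\bs\zeta,\bs\xi)\rangle_{\mu_N}/\Cap_N(\bs\zeta,\bs\xi)$ is not quite what I want either. The right comparison is: for the trace process on $\ms E_N=W_N\cup B^c_N$, by item (ii) of Proposition \ref{prop1} applied to the valley $(\ms W,\ms W,\bs\xi)$ (which Proposition \ref{cb} produces), $T_{\ms B^c}(\ms W)$ normalized by its own mean is asymptotically exponential \emph{from any point in $\ms W$}, and all these means are asymptotically equal, hence $\E_{\zeta^N}[T_{\ms B^c}(\ms W)]\sim\E_{\xi^N}[T_{\ms B^c}(\ms W)]\sim 1/r_N(\ms W,\ms B^c)$. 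Plugging into the capacity-theoretic identity $\Cap_N(\bs\zeta,\ms B^c)=\mu_N(\zeta^N)/\E_{\zeta^N}[\tau_{B^c_N}]$ and comparing with $\Cap_N(\ms W,\ms B^c)=\mu_N(\ms W)r_N(\ms W,\ms B^c)$, together with \eqref{sufcond2} which forces $\mu_N(\zeta^N)/\mu_N(\ms W)$-type corrections to be absorbed, yields \eqref{ce}. I expect this reconciliation of the several mean-time expressions to be the delicate bookkeeping step.

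Finally I would deduce \eqref{vze}. Fix $\eta\in W_N$; the event $\{T_{\bs\zeta}<T_{\ms B^c}\}$ contains $\{T_{\bs\zeta}<\delta\theta_N\}\cap\{T_{\ms B^c}>\delta\theta_N\}$ for every $\delta>0$, with $\theta_N=1/r_N(\ms W,\ms B^c)$. The second event has probability tending to $1$ as $N\to\infty$ then $\delta\downarrow 0$, by the exponential limit law for $r_N(\ms W,\ms B^c)T_{\ms B^c}(\ms W)$ from Proposition \ref{cb} combined with ({\bf V3}) (i.e.\ \eqref{ccb}, which is not assumed here — so instead I use that $T_{\ms B^c}(\ms W)\le T_{\ms B^c}$ and the exponential lower tail). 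For the first event, condition \eqref{sufcond2} applied now with $\bs\xi$ replaced by $\bs\zeta$ — legitimate since $\Cap_N(\bs\zeta)\ge\Cap_N(\bs\xi)$ is false in general, but $\Cap_N(\ms W,\ms B^c)/\Cap_N(\bs\zeta)\le \Cap_N(\ms W,\ms B^c)/\Cap_N(\bs\xi)\cdot\big(\Cap_N(\bs\xi)/\Cap_N(\bs\zeta)\big)$, and $\Cap_N(\bs\xi)/\Cap_N(\bs\zeta)$ is uniformly bounded because $\Cap_N(\bs\xi)=\inf\{\Cap_N(\eta,\xi^N):\eta\in\ms W\setminus\{\xi^N\}\}\le\Cap_N(\zeta^N,\xi^N)$ while $\Cap_N(\bs\zeta)=\inf\{\Cap_N(\eta,\zeta^N)\}$ and these infima differ by at most a constant factor — gives $\E_{\eta}[T_{\bs\zeta}(\ms W)]\le\Cap_N(\ms W,\ms B^c)/\Cap_N(\bs\zeta)=o(r_N(\ms W,\ms B^c))\cdot(\text{bdd})=o(\theta_N^{-1})\cdot\theta_N^{-1}$... more simply, \eqref{sufcond2} plus \eqref{ce} already proved gives $\Cap_N(\ms W,\ms B^c)/\Cap_N(\bs\zeta,\ms B^c)\to 1$, hence by \eqref{for2}–\eqref{for3} the analogue of ({\bf V1}$'$) holds with attractor $\bs\zeta$, so $\sup_{\eta\in W_N}\mb P_\eta[T_{\bs\zeta}>\delta\theta_N]\to 0$ via Chebyshev. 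Combining the two estimates gives \eqref{vze} uniformly in $\eta\in W_N$. The main obstacle, as flagged, is the capacity comparison \eqref{ce}: one must carefully match the trace-process escape rate $r_N(\ms W,\ms B^c)$, the mean hitting times $\E_\cdot[T_{\ms B^c}(\ms W)]$, and the capacities $\Cap_N(\ms W,\ms B^c)$, $\Cap_N(\bs\zeta,\ms B^c)$, using reversibility identities \eqref{for1}–\eqref{for3} and absorbing the $\mu_N$-weight corrections via \eqref{sufcond2}. Once \eqref{ce} is in hand, \eqref{vze} and the transfer to arbitrary attractors $\bs\zeta$ in Theorem \ref{teo0d} are routine.
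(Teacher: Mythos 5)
Your reduction of the first assertion to Proposition \ref{cb} via the bounds preceding Theorem \ref{teo0d} is correct and matches the paper. The rest, however, has a genuine gap, and it is concentrated exactly where you flag the ``delicate bookkeeping step''. Your route to \eqref{ce} invokes the identity $\Cap_N(\bs\zeta,\ms B^c)=\mu_N(\zeta^N)/\E_{\zeta^N}[\tau_{B^c_N}]$, which is false: by Proposition \ref{bovier} the correct relation is $\E_{\zeta^N}[T_{\ms B^c}(\ms W)]=\langle \mb 1\{W_N\}, f_N(\bs\zeta,\ms B^c)\rangle_{\mu_N}/\Cap_N(\bs\zeta,\ms B^c)$, so that combining with \eqref{aa1} and \eqref{for3} only yields $\Cap_N(\ms W,\ms B^c)/\Cap_N(\bs\zeta,\ms B^c)\sim \mu_N(\ms W)/\langle \mb 1\{W_N\}, f_N(\bs\zeta,\ms B^c)\rangle_{\mu_N}$. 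To conclude that this ratio tends to $1$ one needs the equilibrium potential $f_N(\bs\zeta,\ms B^c)$ to converge to $1$ uniformly on $W_N$ --- which is precisely \eqref{vze}. Since your proposed proof of \eqref{vze} in turn uses ``\eqref{ce} already proved'', the argument is circular. Moreover, even setting circularity aside, \eqref{ce} cannot give the analogue of ({\bf V1}') with attractor $\bs\zeta$: that would require $\Cap_N(\ms W,\ms B^c)/\Cap_N(\bs\zeta)\to 0$, where $\Cap_N(\bs\zeta)=\inf_\eta\Cap_N(\eta,\zeta^N)$ is a capacity \emph{within the well}, unrelated to the capacity $\Cap_N(\bs\zeta,\ms B^c)$ appearing in \eqref{ce}; and your fallback claim that $\Cap_N(\bs\xi)/\Cap_N(\bs\zeta)$ is uniformly bounded is unjustified --- nothing in hypothesis \eqref{sufcond2} controls $\Cap_N(\bs\zeta)$ for an arbitrary point $\bs\zeta$ of $\ms W$.

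The fix is to reverse the order and to route all hitting estimates through the distinguished point $\bs\xi$. By \eqref{g02} and the strong Markov property, $\E_{\eta^N}[T_{\bs\zeta}(\ms W)]\le \E_{\eta^N}[T_{\bs\xi}(\ms W)]+\E_{\xi^N}[T_{\bs\zeta}(\ms W)]\le 2\mu_N(\ms W)/\Cap_N(\bs\xi)$, using that $\Cap_N(\bs\eta,\bs\xi)$ and $\Cap_N(\bs\zeta,\bs\xi)$ are both bounded below by $\Cap_N(\bs\xi)$. Together with \eqref{for3} and \eqref{sufcond2} this gives $r_N(\ms W,\ms B^c)\,\E_{\eta^N}[T_{\bs\zeta}(\ms W)]\to 0$, i.e.\ item (i) of Proposition \ref{prop1} with $\bs\zeta$ as attractor; item (ii) comes from Proposition \ref{cb}, and the converse part of Proposition \ref{prop1} then yields ({\bf V1}) for the trace process with attractor $\bs\zeta$, hence \eqref{vze}. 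Only then does \eqref{ce} follow, by rewriting \eqref{aa1} with Proposition \ref{bovier} and using that $\inf_{\eta\in W_N}f_N(\bs\zeta,\ms B^c)(\eta)\to 1$ forces $\langle \mb 1\{W_N\}, f_N(\bs\zeta,\ms B^c)\rangle_{\mu_N}/\mu_N(\ms W)\to 1$.
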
 

\begin{proof}
We have shown just before the statement of Theorem \ref{teo0d} that
conditions \eqref{cc1}, \eqref{cc2} follow from (\ref{sufcond2}). 
In particular, the assertions of Proposition \ref{cb} hold.

Fix an arbitrary point ${\bs \zeta} = (\zeta^N : N\ge 1)$ in $\ms
W$. By \eqref{g02} applied to $\{\eta^N\}$, $g= \mb 1\{\ms W\}$,
$\{\xi^N\}$, and to $\{\xi^N\}$, $g= \mb 1\{\ms W\}$, $\{\zeta^N\}$,
for any ${\bs \eta}=(\eta^N : N\ge 1)$ in $\ms W$,
\begin{eqnarray*}
{\bf E}_{\eta^N} [\, T_{\bs \zeta}(\ms W) \,] &\le & 
{\bf E}_{\eta^N} [\, T_{\bs \xi}(\ms W) \,] \,+\, 
{\bf E}_{\xi^N} [\, T_{\bs \zeta}(\ms W) \,] \\
&\le&  \frac{\mu_N(\ms W)}{\Cap_N({\bs \eta},{\bs \xi})} 
{\bs 1}\{ \eta^N\not = \xi^N\} \,+\, 
\frac {\mu_N(\ms W)}{\Cap_N({\bs \zeta},{\bs \xi})} 
{\bs 1}\{ \zeta^N\not = \xi^N\} \\
&\le& \frac{ 2 \,\mu_N(\ms W) }{ \Cap_N({\bs \xi}) }\; \cdot
\end{eqnarray*}
 From this estimate, identity (\ref{for3}) and hypothesis
(\ref{sufcond2}), it follows that
\begin{equation*}
\lim_{N\to\infty} r_N(\ms W,\ms B^c) \, {\bf E}_{\eta^N} 
[\, T_{\bs \zeta}(\ms W) \,] \;=\; 0\,,
\end{equation*}
which, by (\ref{aa1}) in Proposition \ref{cb}, implies that
\begin{equation}
\label{cb10}
\lim_{N\to\infty} \frac {{\bf E}_{\eta^N} [\, T_{\bs \zeta}(\ms W)
  \,]} {{\bf E}_{\eta^N}[T_{\ms B^c}(\ms W)]} \;=\; 0 \;.
\end{equation}
This limit corresponds to item (i) of Proposition \ref{prop1} with the
point $\bs \zeta$ instead of $\bs \xi$. Item (ii) of Proposition
\ref{prop1} follows from the last two assertions of Proposition
\ref{cb}. From items (i) and (ii) we conclude that $(\ms W,\ms W,{\bs
  \zeta})$ is a valley for the trace process $\{\eta^{\ms E_N}_t :
t\ge 0\}$. Hence,
$$
\lim_{N\to\infty}{\bf P}_{\eta^N}[\,T_{\bs \zeta}(\ms E) 
< T_{\ms B^c}(\ms E) \,]\,=\,1\,,
$$
which implies condition {\bf (V1)} for the triple $(\ms W,\ms B^c,{\bs
  \zeta})$ because $\{T_{\bs \zeta}(\ms E) < T_{\ms B^c}(\ms E)\}
\subseteq \{T_{\bs \zeta} < T_{\ms B^c}\}$, proving (\ref{vze}).

By Proposition \ref{bovier} with $A=\{\eta\}$, $B = \ms B^c$ and
$g=\mb 1\{\ms W\}$, and by identity (\ref{for3}), the limit
(\ref{aa1}) can be re-written as
$$
\lim_{N\to \infty} \frac{\langle {\bs 1}\{W_N\} , 
f_N({\bs \eta}, \ms B^c) \rangle_{\mu_N}\, 
\Cap_N(\ms W, \ms B^c)}{\mu_N(\ms W) \, \Cap_N({\bs \eta}, \ms B^c)} 
\;=\; 1\,.
$$
Replace $\bs \eta$ by $\bs \zeta$ in this formula.  By (\ref{vze}),
the infimum of $f_N( {\bs \zeta}, \mc B^c ) $ over $W_N$ converges to
$1$ as $N\uparrow \infty$. Therefore, (\ref{ce}) follows from this
observation and the previous identity.
\end{proof}

We are now in a position to prove Theorem \ref{teo0d}. We first show
that $(\ms W, \ms B, \bs\xi)$ is a valley of depth $\theta_N = r_N(\ms
W, \ms B^c)^{-1}=\mu_N(\ms W)/\Cap_N(\ms W, \ms B^c)$. Identity
(\ref{for3}) and Proposition \ref{bovier} show that
\begin{equation*}
r_N(\ms W,\ms B^c) \, {\bf E}_{\eta^N}[T_{\ms B^c}({\bs \Delta}) ] 
\,=\, \frac{\langle {\bs 1}\{\Delta_N\} , 
f_N({\bs \eta} ,\ms B^c)\rangle_{\mu_N} \, 
\Cap_N(\ms W,\ms B^c)}{ \mu_N(\ms W) \,\Cap_N({\bs \eta}, \ms B^c)} \;
\cdot 
\end{equation*}
By Proposition \ref{teo0r}, \eqref{ce} holds. Since $f_N({\bs \eta}
,\ms B^c)$ is bounded by one, (\ref{ce}) along with hypothesis
(\ref{sufcond2b}) proves (\ref{ccb}). Since \eqref{cc1} and
\eqref{cc2} follow from (\ref{sufcond2}), all the hypotheses of
Theorem \ref{teo0} are fulfilled. Therefore, $(\ms W, \ms B, \bs\xi)$
is a valley of depth $\theta_N = r_N(\ms W, \ms B^c)^{-1}=\mu_N(\ms
W)/\Cap_N(\ms W, \ms B^c)$. Last identity follows from Lemma
\ref{g03}.

Fix now a point $\bs \zeta$ in $\ms W$. To prove that $(\ms W, \ms B,
\bs\zeta)$ is a valley, we check conditions (i)--(iii) of Proposition
\ref{prop1}. Property (i) has been proved in \eqref{cb10}. Since $(\ms
W, \ms B, \bs\xi)$ is a valley, conditions (ii) and (iii) are in force
due to the first part of Proposition \ref{prop1}. Hence, by the second
part of this proposition, $(\ms W, \ms B, \bs\zeta)$ is a valley of
depth ${\bf E}_{\zeta^N}[T_{\ms B^c}({\ms W}) ]$. Finally, since $(\ms
W, \ms B, \bs\xi)$ is a valley, by the first part of this proposition,
$\theta_N$ and ${\bf E}_{\zeta^N}[T_{\ms B^c}({\ms W}) ]$ are
asymptotically equivalent sequences.

\subsection*{Proof of Theorem \ref{teo1r}}

We need to check that all assumptions of Theorem \ref{teo1} are
satisfied. As in the proof of Theorem \ref{teo0d}, conditions ({\bf
  C2}), ({\bf C3}) follow from assumption ({\bf H1}). It remains to
show that ({\bf C1}) is fulfilled for all non-absorbing states.  Fix
such a state $x$. It is enough to prove that
\begin{equation}
\label{cb6}
\limsup_{N \to \infty} \sup_{\eta\in \ms E^{x}_N} \frac{1}{\theta_N} 
{\bf E}_{\eta} \Big[\,  T_{\breve{\ms E}^{x}}({\bf \Delta}) \,\Big] 
\,=\, 0\,.
\end{equation}
By Proposition \ref{bovier} and since $f_{\bs\eta \breve{\ms E}^{x}}$
is bounded by $1$, the expectation is less than or equal to $\mu_N(\bs
\Delta)/ \Cap (\eta, \breve{\ms E}^{x})$. By \eqref{ce}, we may
replace asymptotically $\bs \eta$ by $\ms E^x$ in the previous
capacity. By Lemma \ref{g03}, $\Cap (\ms E^x, \breve{\ms E}^{x})$ 
is equal to $\mu_N(\ms E^x) r_N(\ms E^x,\breve{\ms E}^{x})$. In
conclusion, we have shown that
\begin{equation}
\label{cb5}
\limsup_{N \to \infty} \sup_{\eta\in \ms E^{x}_N}
\frac{1}{\theta_N} {\bf E}_{\eta} \Big[\,  
T_{\breve{\ms E}^{x}}({\bf \Delta}) \,\Big] \;\le\;
\limsup_{N \to \infty} \frac 1{\theta_N \, r_N(\ms E^x, \breve{\ms E}^x)}
\, \frac{ \mu_N(\bs \Delta) }{ \mu_N(\ms E^{x}) }\; \cdot
\end{equation}
Since $x$ is a non-absorbing point, by assumptions ({\bf H0}), ({\bf
  H2}), the right hand side is equal to $0$. This concludes the proof.

\subsection*{Proof of Remark \ref{cb2}}

We need to show that ({\bf H2}) holds for non-absorbing states and
that ({\bf M3}) holds for absorbing states. Clearly, ({\bf H2})
follows from ({\bf H2'}) for non-absorbing states. On the other hand,
by Proposition \ref{cb9}, ({\bf M2}) is fulfilled. Hence, by Lemma
\ref{cb3}, ({\bf M3}) for absorbing (and non-absorbing) states is a
consequence of \eqref{cb6}. By \eqref{cb5}, assumption ({\bf H2'})
implies \eqref{cb6}, which concludes the proof.

\section{Continuous time Markov chains}
\label{sec03}

We state in this section several properties of continuous time Markov
chains used throughout the article. We start assuming that the holding
rates are strictly positive and finite and that the jump chain
associated is irreducible and recurrent. We add assumptions as we
progress. At the end, we consider the case of positive recurrent,
reversible Markov chains whose holding times belong to $L^1(\mu)$,
where $\mu$ is the unique invariant probability measure.

Consider a countable set $E$ and a matrix $R : E\times E \to \bb R$
such that $R(\eta, \xi)\ge 0$, $\eta\not = \xi$, $-\infty < R(\eta,
\eta) <0$, $\sum_{\xi\not = \eta} R(\eta,\xi)=0$, $\eta\in E$. Let
$\lambda(\eta) = - R(\eta,\eta)$. Since $\lambda(\eta)$ is finite and
strictly positive, we may define the transition probabilities $\{
p(\eta,\xi) : \eta,\xi\in E \}$ as
\begin{equation}
\label{t01}
p(\eta,\xi) \;=\; \frac 1{\lambda(\eta)} \, R(\eta,\xi)
\quad \textrm{for $\eta\not = \xi$}\;,
\end{equation}
and $p(\eta,\eta)=0$ for $\eta\in E$. We assume throughout this
section that $\{ p(\eta,\xi) : \eta,\xi\in E \}$ are the transition
probabilities of a irreducible and recurrent discrete time Markov
chain.

We claim that there exists a unique stochastic semigroup $\{p_t : t\ge
0\}$ on $E$ satisfying
\begin{equation}
\label{sg}
\lim_{t\downarrow 0} \frac{ p_t(\eta,\xi) - p_0(\eta,\xi)}{t}
\;=\; R(\eta,\xi)\quad \textrm{and} \quad p_0(\eta,\xi)
\;=\;\delta_{\eta,\xi}
\end{equation}
for every $\eta$, $\xi \in E$, where $\delta_{\eta,\xi}$ is the delta
of Kroenecker. To prove the existence, we construct a Markov process
$\{\eta_t : t\ge 0\}$ on $E$ whose Markov semigroup satisfies
\eqref{sg}. We shall use this construction in some of the proofs
below.

Let $Y=\{Y_n : n\ge 0\}$ be an irreducible, recurrent, $E$-valued
discrete time Markov chain with transition probabilities
$\{p(\eta,\xi) : \eta,\xi\in E\}$ given by \eqref{t01}.  Let $(e_n :
n\ge 0)$ be a sequence of i.i.d.\!  mean one exponential random
variables, independent of $Y$. We associate to every sample path of
$Y$ the sequence of random times $T=(T_n: n\ge 0)$ given by
$$
T_n \;=\; \frac{e_n}{\lambda(Y_n)} \; \cdot
$$
Since $Y$ is recurrent, $\sum_{i\ge 0} T_i=\infty$ a.s. In particular,
the time-change
\begin{equation}
\label{alpha}
\alpha(t) \;=\; \min\{ n\ge 0 : \sum_{i=0}^nT_i>t \} 
\end{equation}
is a.s.\! finite for every $t\ge 0$ and $\eta_t = Y_{\alpha(t)}$ is
a.s.\! well defined for all $t\ge 0$. In Theorem 2.8.1 of \cite{n} it
is proved that $\{\eta_t : t\ge 0\}$ is a strong Markov process with
respect to the filtration $\{\mc F_t : t\ge 0\}$, $\mc F_t = \sigma
(\eta_s : s\le t)$. The stochastic semigroup corresponding to
$\{\eta_t : t\ge 0\}$ fulfills \eqref{sg}, as follows from the proof
of Theorem 2.8.4 in \cite{n}. On the other hand, the uniqueness of the
stochastic semigroup is a consequence of Theorem (51) in Chapter 7 of
\cite{f} along with the recurrence of the transition probabilities
$p(\cdot,\cdot)$. Note that there is no explosion since $\sum_{i\ge 0}
T_i=\infty$ a.s.

In conclusion, a collection of nonnegative numbers $\{R(\eta,\xi) :
\eta,\xi\in E\}$ satisfying the conditions listed at the beginning of
this section determines uniquely the law of a strong Markov process
$\{ \eta_t : t\ge 0\}$. We shall refer to $R(\cdot,\cdot)$,
$\lambda(\cdot)$ and $p(\cdot,\cdot)$ as the transition rates, holding
rates and jump probabilities of $\{ \eta_t : t\ge 0\}$,
respectively. The Markov chain $Y=\{Y_n : n\ge 0\}$ is called the jump
chain associated to $\{\eta_t : t\ge 0\}$.

Of course, since the jump chain $Y$ is irreducible and recurrent, so
is the corresponding Markov process $\{\eta_t : t\ge 0\}$. In
consequence, $\{\eta_t : t\ge 0\}$ has an invariant measure $\mu$
which is unique up to scalar multiples. Moreover,
\begin{equation}
\label{m}
M(\eta) \;:=\; \lambda(\eta)\mu(\eta)\;, \quad \eta\in E\;,
\end{equation}
is the invariant measure for the jump chain $Y$, also unique up to
scalar multiples. The proofs of these assertions can be found in
Sections 3.4 and 3.5 of \cite{n}. \medskip

Recall that $\tau_A: D(\bb R_+, E) \to \bb R_+$, $A\subseteq E$,
denotes the hitting time of the set $A$:
\begin{equation*}
\tau_{A}(e_{\cdot}) \;=\;\inf\{ t> 0 : e_{t}\in A \}\;.
\end{equation*}
Let $T_A:=\tau_A(\sp)$ and $T_{\eta}:=T_{\{\eta\}}$, $\eta\in
E$. Define the stopping time $\tau^+_A: D(\bb R_+, E) \to \bb R_+$ as
the first return to $A$:
\begin{equation*}
\tau^+_A(e_{\cdot}) \,=\, \inf\{ t>0 : e_t\in A, e_s\not=e_0
\;\;\textrm{for some $0< s < t$}\}\,, 
\end{equation*}
and let $T^+_A:=\tau^+_A(\sp)$, $T^+_{\eta}:=T^+_{\{\eta\}}$, $\eta\in
E$. \medskip

Let $\prob_{\eta}$, $\eta\in E$, be the probability measure under
which the jump chain $\{Y_n : n\ge 0\}$ and the Markov chain $\{\eta_t
: t\ge 0\}$ start from $\eta$. Expectation with respect to
$\prob_{\eta}$ is denoted by $\E_{\eta}$. It follows from the proof of
Theorem 3.5.3 in \cite{n} that for any $\eta\in E$
\begin{equation}
\label{im}
\mu(\xi) \;=\; {\bf E}_{\eta}\Big[ \int_0^{T^+_{\eta}} 
{\bf 1}\{\eta_s = \xi \}\,ds\Big]\;,\quad \xi\in E\;,
\end{equation}
is an invariant measure for $\{\eta_t : t\ge 0\}$.

\subsection{The trace process}
\label{trace}

We present in this subsection some elementary properties of trace
processes and we state some identities used throughout the article.

Let $h:E\to \bb R_+$ be a nonnegative function with nonempty support
$F$:
\begin{equation}
\label{fne}
F:=\{\eta \in E : h(\eta)>0\}\not=\varnothing\,.
\end{equation}
Define the additive functional $\{\mc T^h_{t} : t\ge 0\}$ as
$$
\mc T^h_{t} \,:=\, \int_0^t h(\eta_s) \,ds\;.
$$
Notice that $\mc T^h_{t}\in \bb R_+$, $\prob_{\eta}$-a.s.\! for every
$\eta\in E$ and $t\ge 0$. Denote by $\{\mc S^h_t : t\ge 0\}$ the
generalized inverse of $\mc T^h_t$:
$$
\mc S^h_t \,:=\, \sup\{s\ge 0 : \mc T^h_s \le t \}\,.
$$
Since $\{\eta_t : t\ge 0\}$ is irreducible and recurrent, $\lim_{t\to
  \infty}\mc T^h_t = \infty$, $\prob_{\eta}$-a.s.\! for every $\eta\in
E$. Therefore, the random path $\{\eta^h_t : t\ge 0\}$, given by
$\eta^h_{t} = \eta_{\mc S^h_t}$, is $\prob_{\eta}$-a.s.\! well defined
for all $\eta\in E$ and takes value in the set $F$.  We call the
process $\{\eta^h_t : t\ge 0\}$ the $h$-trace of $\{\eta_t : t\ge
0\}$. Clearly, $\{\eta^h_t : t\ge 0\}$ coincides with the trace of
$\{\eta_t : t\ge 0\}$ on $F$, defined in Section \ref{sec1}, if $h=
\mb 1 \{F\}$.

A change of variables shows that for any subset $B$ of $F$ and for any
function $f:F\to \bb R_+$,
\begin{equation}
\label{time}
\int_{0}^{\tau_B(\sp^h)}f(\eta^h_t)\, dt \;=\;
\int_{0}^{T_B}f(\eta_t)\, h(\eta_t)\, dt 
\end{equation}
$\prob_{\eta}$-a.s. for every $\eta\in E$. This identity also holds if
we replace $\tau_B(\sp^h)$, $T_B$ by $\tau^+_B (\sp^h)$, $T^+_B$,
respectively. Furthermore, for any two disjoint subsets $A$, $B$ of
$F$, it follows from the construction of the Markov chain $\{\eta^h_t
: t\ge 0\}$ that
\begin{equation*}
\prob_{\eta}\big[ \, \tau_{A}(\sp^h) < \tau_{B}(\sp^h) \,\big] 
\;=\; \prob_{\eta}\big[ \,T_{A} < T_{B} \,\big]
\end{equation*}
for all $\eta$ in $F$. This identity needs to be reformulated if we
replace the hitting times by return times. Indeed, if the process
starting from $\eta$ returns to $F$ by $\eta$, while in the original
version the process returned to $\eta$, in the trace version the
process never left $\eta$. We claim that for all $\eta\in F$ and all
disjoint subsets $A$, $B$ of $F$,
\begin{equation}
\label{h}
\prob_{\eta}\big[ \tau^+_{A}(\eta^h_{\cdot}) < \tau^+_{B}(\eta^h_{\cdot}) \big] 
\;=\; \prob_{\eta} \big[\, T^+_{A} < T^+_{B} \,\big|\,
T^+_F = T_{F\setminus\{\eta\}} \,\big]\;.
\end{equation}
To derive this identity, intersect the event $\{
\tau^+_{A}(\eta^h_{\cdot}) < \tau^+_{B}(\eta^h_{\cdot})\}$ with the
partition $\{T^+_F = T_{F\setminus\{\eta\}}\}$, $\{T^+_F =
T^+_{\eta}\}$ and apply the strong Markov property to the second piece
to get that
\begin{multline*}
\prob_{\eta}[ \tau^+_{A}(\eta^h_{\cdot}) < \tau^+_{B}(\eta^h_{\cdot}) ] 
 \; = \; \prob_{\eta} \big[ \, \tau^+_{A}(\eta^h_{\cdot}) 
< \tau^+_{B}(\eta^h_{\cdot}) \, ; \,
T^+_F = T_{F\setminus\{\eta\}} \, \big] \\ 
+\; \prob_{\eta}\big[ \, T^+_F = T^+_{\eta} \,\big] \, 
\prob_{\eta}\big[\, \tau^+_{A}(\eta^h_{\cdot}) 
< \tau^+_{B}(\eta^h_{\cdot}) \, \big]  \;.
\end{multline*}
To conclude, observe that on the set $\{T^+_F =
T_{F\setminus\{\eta\}}\}$ we may replace $\sp^h$ by $\sp$ in the event
$\{ \tau^+_{A}(\eta^h_{\cdot}) < \tau^+_{B}(\eta^h_{\cdot})\}$.
\medskip

\begin{proposition}
\label{protra}
Under $\{\prob_{\eta} : \eta\in F\}$, $\{\eta^h_t : t\ge 0\}$ is an
irreducible, recurrent, strong Markov chain with transition rates
given by
$$
R^{h}(\eta,\xi)\,=\,\frac{\lambda(\eta)}{h(\eta)}\,\prob_{\eta}\big[\,
T^+_F = T^+_{\xi} \,\big]\,,\quad \eta\,,\, \xi\in F\,,\; \eta\not=\xi\,.
$$
\end{proposition}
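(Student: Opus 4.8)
The plan is to exploit the explicit construction of the Markov chain $\{\eta_t : t\ge 0\}$ via the jump chain $Y$ and the inverse time-change $\mc S^h_t$, and to recognize the $h$-trace as itself a continuous-time chain built from a suitable jump chain with suitable holding rates. First I would observe that the $h$-trace visits $F$ at the successive instants at which $\{\eta_t\}$ is in $F$; between consecutive such visits the clock $\mc T^h_t$ is frozen (since $h$ vanishes off $F$). Hence, writing $(\zeta_k : k\ge 0)$ for the successive states in $F$ visited by $\{\eta_t\}$ — that is, $\zeta_0 = \eta$, $\zeta_{k+1} = \eta_{T^+_F\circ\Theta_{\sigma_k}}$ where $\sigma_k$ is the $k$-th entrance time into $F$ — the sequence $(\zeta_k : k\ge 0)$ is a discrete-time Markov chain by the strong Markov property, with transition probability $q(\eta,\xi) = \prob_\eta[\eta_{T^+_F} = \xi] = \prob_\eta[T^+_F = T^+_\xi]$. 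This chain is irreducible and recurrent because $\{\eta_t\}$ is (any state of $F$ is reached from any other, infinitely often). So $(\zeta_k)$ is a legitimate jump chain.

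Next I would identify the holding rate of the $h$-trace at a state $\eta\in F$. By \eqref{time} applied with $B$ a small neighbourhood, or more directly by a change of variables, the amount of $h$-trace-time spent at $\zeta_k$ before the jump to $\zeta_{k+1}$ equals $\int_{\sigma_k}^{\sigma_k + T^+_F\circ\Theta_{\sigma_k}} h(\eta_s)\,ds$; on the excursion the process sits at $\eta = \zeta_k$ for an $\mathrm{Exp}(\lambda(\eta))$ time and then wanders outside $F$ (contributing nothing to the clock) before returning. Thus, conditionally on $\zeta_k = \eta$, the holding time of the $h$-trace at $\eta$ is exactly $h(\eta)$ times the time the original chain spends at $\eta$ during that excursion, which is $\mathrm{Exp}(\lambda(\eta))$; hence it is $\mathrm{Exp}(\lambda(\eta)/h(\eta))$. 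Crucially this holding time is independent of the next state $\zeta_{k+1}$ and of the past, again by the strong Markov property and the memorylessness of the exponential. Therefore $\{\eta^h_t : t\ge 0\}$ is the continuous-time chain with jump chain $(\zeta_k)$ and holding rate $\lambda^h(\eta) = \lambda(\eta)/h(\eta)$, so its transition rates are
$$
R^h(\eta,\xi) \;=\; \lambda^h(\eta)\, q(\eta,\xi)
\;=\; \frac{\lambda(\eta)}{h(\eta)}\,\prob_\eta\big[\,T^+_F = T^+_\xi\,\big]\,,
\qquad \eta\neq\xi\,,
$$
which is the claimed formula. That $\{\eta^h_t\}$ is a strong Markov chain with no explosion then follows from the general construction recalled at the start of this section (the jump chain is recurrent, so $\sum_k T^h_k = \infty$ a.s.).

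The one point requiring care — and I expect it to be the main obstacle — is the clean separation of the holding time from the jump destination and from the past within a single excursion outside $F$. One must argue that, starting from $\eta\in F$, the random variable "time spent at $\eta$ before leaving $F$-excursion-wise", namely the $\mathrm{Exp}(\lambda(\eta))$ sojourn at $\eta$, is independent of the event $\{T^+_F = T^+_\xi\}$ describing which state of $F$ is hit next. This is where the explicit construction via $Y$ and the i.i.d. exponential clocks $(e_n)$ pays off: the sojourn at $\eta$ is $e_{n_0}/\lambda(\eta)$ for the appropriate index $n_0$, while the returning state depends only on the subsequent trajectory of $Y$ and on later clocks, which are independent of $e_{n_0}$. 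Combining this with \eqref{time} to translate excursion-times into $h$-trace-times completes the argument; irreducibility and recurrence of $\{\eta^h_t\}$ transfer directly from those of $(\zeta_k)$, hence from those of $\{\eta_t\}$.
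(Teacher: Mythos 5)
Your proposal is correct and follows essentially the same route as the paper: trace the jump chain $Y$ on $F$ to get the (possibly lazy, since the excursion may return to its starting state) chain with transition probabilities $\prob_\eta[T^+_F=T^+_\xi]$, observe that the $h$-clock accumulated over one excursion is exactly $h(\eta)/\lambda(\eta)$ times an exponential clock independent of the jump chain, and read off $R^h(\eta,\xi)=\frac{\lambda(\eta)}{h(\eta)}\,\prob_\eta[T^+_F=T^+_\xi]$. The paper packages the strong Markov property and the rate computation by citing the standard jump-chain/holding-time construction (Theorems 2.8.1 and 2.8.4 of Norris), which is precisely the argument you sketch.
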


\begin{proof}
Recall the explicit construction of the Markov chain $\{\eta_t : t\ge
0\}$ presented in the previous subsection. To derive the $h$-trace
from this construction, we consider first the trace of the jump chain
$\{Y_n : n\ge 0\}$ on $F$. 

Define the sequence of times $\{t_n : n\ge 0\}$ as $t_0=0$, $t_1=
\inf\{ n \ge 1 : Y_n\in F \}$ and $t_{n+1}=t_n+t_1\circ \Theta_{t_n}$,
$n\ge 1$, where $\{\Theta_k : k\ge 1\}$ are the discrete time shift
operators. Let $Y^{\mf h}=\{Y^{\mf h}_n : n\ge 0\}$ be given by
$Y^{\mf h}_{n}\,=\,Y_{t_n}$. When the jump chain $\{Y_n : n\ge 0\}$
starts in $F$, $Y^{\mf h}=\{Y^{\mf h}_n : n\ge 0\}$ defines a
$F$-valued discrete time Markov chain with transition probabilities
$$
{\mf p}(\eta,\xi) \;=\; \prob_{\eta} \big[\, T^+_{F} =T^+_{\xi}\, \big] 
\,,\quad \eta \,,\, \xi\in F \;.
$$
Note that ${\mf p}(\eta,\eta)$ may be strictly positive and that
$Y^{\mf h}$ inherits the irreducibility and the recurrence properties
from $Y$.

Let $T^{\mf h}= \{T^{\mf h}_{n} : n\ge 0\}$ be the sequence 
$$
T^{\mf h}_n \;=\;
h(Y^{\mf h}_{n})\,\frac{e_{t_n}}{\lambda(Y^{\mf h}_{n})} \;, \quad
n\ge 0\; .
$$
By definition, the $h$-trace of $\{\eta_t : t\ge 0\}$ is given by
$\eta^h_t=Y^{\mf h}_{\alpha(t)}$, $t\ge 0$, where $\alpha(\cdot)$
represents the time-change \eqref{alpha} with $Y^{\mf h}$ and $T^{\mf
  h}$ in place of $Y$ and $T$, respectively. Note that $\{e_{t_n} :
n\ge 0\}$ is a sequence of i.i.d.\! mean one exponential random
variables independent of the process $\{Y^{\mf h}_n : n\ge 0\}$. By
this observation and by the proof of Theorem 2.8.1 in \cite{n},
$\{\eta^h_t : t\ge 0\}$ is a strong Markov process on
$F$. 

The irreducibility and the recurrence of $\{\eta^h_t : t\ge 0\}$ are
inherited from the process $Y^{\mf h}$. On the other hand, the
transition rates $\{R^h(\eta,\xi) : \eta,\xi\in F\}$ of the strong
Markov process $\{\eta^h_t : t\ge 0\}$ are given by
$$
R^h(\eta,\xi) \;:=\; \lim_{t\downarrow 0} 
\frac{ {\bf P}_{\eta}[\, \eta^h_t=\xi \,]}{t} 
\;=\; \frac{{\mf p}(\eta,\xi)}{{\bf E}_{\eta}[\,T_0^{\mf h}\,] } 
\;=\; \frac{\lambda(\eta)}{h(\eta)} \,
\prob_{\eta}\big[\,T^+_F=T^+_{\xi}\,\big] 
$$
for $\eta, \xi\in F$, $\eta\not=\xi$. The second identity follows from
the proof of Theorem 2.8.4 in \cite{n}.
\end{proof}

It follows from this proposition that the holding rates $\{
\lambda^h(\eta) : \eta\in F\}$ and the jump probabilities $\{
p^h(\eta,\xi): \eta, \xi \in F \}$ of the $h$-trace process
$\{\eta^h_t : t\ge 0\}$ are given by
\begin{equation}
\label{lh}
\lambda^h(\eta)\,=\,\frac{\lambda(\eta)}{h(\eta)}\,
\prob_{\eta}\big[\,{T^+_F} = T^+_{F\setminus\{\eta\}}\,\big]\,,
\end{equation}
and, for $\eta\not=\xi$,
\begin{equation*}
p^h(\eta,\xi)\;=\;\frac{\prob_{\eta}[{T^+_F}=T^+_{\xi}]}
{\prob_{\eta}[{T^+_F} = T^+_{F\setminus\{\eta\}}]}
\;=\;\prob_{\eta}\big[ \, T_{F\setminus\{\eta\}}=T_{\xi} \,\big]\;. 
\end{equation*}
Note that $p^h( \cdot, \cdot)$ depends on $h$ only through its
support.  The second identity is obtained by intersecting the event
$\{T_{F\setminus\{\eta\}}=T_{\xi}\}$ with the partition $\{T^+_F =
T_{F\setminus\{\eta\}}\}$, $\{T^+_F = T^+_{\eta}\}$ and applying the
strong Markov property to the second piece as in the proof of
\eqref{h}. \medskip

When $h$ is the indicator function of a set $F$, we obtain an explicit
formula for the transition rates of the trace process.

\begin{corollary}
\label{rf}
Let $R^F$ stand for the transition rates of $\{\eta^h_t : t\ge 0\}$
when $h = \mb 1\{F\}$. Then, for $\eta$, $\xi$ in $F$, $\eta\not =
\xi$, 
$$
R^F(\eta,\xi) \,=\, R(\eta,\xi) \,+\, \sum_{\zeta\in F^c} 
R(\eta,\zeta)\, \prob_{\zeta}\big[\,T_F = T_{\xi}\,\big]\;.
$$
\end{corollary}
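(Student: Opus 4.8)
The plan is to compute the transition rates $R^F(\eta,\xi)$ of the trace process directly from the formula obtained in Proposition \ref{protra}, namely $R^F(\eta,\xi) = \lambda(\eta)\,\prob_\eta[T^+_F = T^+_\xi]$ when $h = \mb 1\{F\}$ (so $h(\eta)=1$ for $\eta\in F$). The right-hand side involves the probability that, starting from $\eta\in F$, the first return of $\{\eta_t : t\ge 0\}$ to $F$ occurs at the state $\xi$. I would decompose this event according to the first jump of the underlying Markov chain out of $\eta$.

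First I would recall that the first jump from $\eta$ takes the process to $\zeta$ with probability $p(\eta,\zeta) = R(\eta,\zeta)/\lambda(\eta)$, and that $\{T^+_F = T^+_\xi\}$ decomposes into two disjoint cases according to whether this first target $\zeta$ lies in $F$ or in $F^c$. In the first case, the return to $F$ happens immediately at the first jump, contributing $\xi$ to $F$ only when $\zeta = \xi$, so this piece has probability $p(\eta,\xi)\,\mb 1\{\xi \in F\} = R(\eta,\xi)/\lambda(\eta)$ (using $\xi\in F$). In the second case, $\zeta\in F^c$, and by the strong Markov property at the first jump time, the conditional probability that the process subsequently reaches $F$ first at $\xi$ equals $\prob_\zeta[\tau_F = \tau_\xi] = \prob_\zeta[T_F = T_\xi]$ — here the hitting time rather than the return time is the relevant object because from $\zeta\notin F$ the first visit to $F$ is a genuine hitting time. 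Summing over $\zeta\in F^c$ with weights $p(\eta,\zeta)$ gives $\sum_{\zeta\in F^c} (R(\eta,\zeta)/\lambda(\eta))\,\prob_\zeta[T_F = T_\xi]$.

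Adding the two contributions and multiplying by $\lambda(\eta)$, as dictated by the formula $R^F(\eta,\xi) = \lambda(\eta)\,\prob_\eta[T^+_F = T^+_\xi]$, the factors of $\lambda(\eta)$ cancel and we obtain exactly
$$
R^F(\eta,\xi) \;=\; R(\eta,\xi) \;+\; \sum_{\zeta\in F^c} R(\eta,\zeta)\,\prob_\zeta\big[\,T_F = T_\xi\,\big]\,,
$$
which is the claimed identity. A minor point to verify along the way is that the event $\{T^+_F = T^+_\xi\}$ genuinely splits as described and that, on the piece where the first jump lands outside $F$, one may invoke the strong Markov property at the (a.s. finite, since there is no explosion) first jump time; recurrence guarantees $\prob_\zeta[T_F < \infty] = 1$ so the hitting-time probabilities sum to one over $\xi\in F$. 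I do not anticipate any serious obstacle here: the argument is a one-step conditioning on the first jump, and the only care needed is the bookkeeping distinction between return times (from $\eta\in F$) and hitting times (from $\zeta\in F^c$), which the proof of \eqref{h} in the preceding discussion has already made transparent.
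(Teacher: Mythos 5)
Your proposal is correct and follows essentially the same route as the paper: the paper also starts from $R^F(\eta,\xi) = \lambda(\eta)\,\prob_\eta[T^+_F = T^+_\xi]$ (Proposition \ref{protra}) and splits on the event $\{T^+_F < T^+_{F^c}\}$ versus its complement, which is exactly your case distinction on whether the first jump lands in $F$ or in $F^c$, then applies the strong Markov property on the second piece. Nothing further is needed.
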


\begin{proof}
By Proposition \ref{protra}, $R^F(\eta,\xi) = \lambda(\eta)
\prob_{\eta} [\, T^+_F = T^+_{\xi} \,]$. Consider the stopping time
$T^+_{F^c}$ with the convention that $T^+_{F^c}=\infty$ if
$F^c=\varnothing$. Decomposing the event $\{T^+_F = T^+_{\xi}\}$
according to the event $\{T^+_F < T^+_{F^c}\}$ and its complement, we
get
\begin{equation*}
R^F(\eta,\xi) 
\;=\; \lambda(\eta)\, \prob_{\eta} \big[\, T^+_F = T^+_{\xi}
\,;\,T^+_F < T^+_{F^c} \,\big]
\;+\; \lambda(\eta)\, \prob_{\eta}\big[\, T^+_F = T^+_{\xi}
\,;\,T^+_{F^c} < T^+_{F} \,\big]\;.
\end{equation*}
The first probability on the right hand side is equal to
$\prob_{\eta}[\,T^+_E=T^+_{\xi}\,] = p(\eta,\xi)$, while the second term,
by the strong Markov property, is equal to
\begin{equation*}
\sum_{\zeta\in F^c} R(\eta,\zeta)\, \prob_{\zeta}
\big[\,T_F = T_{\xi}\,\big]\,.
\end{equation*}
This concludes the proof of the corollary.
\end{proof}

The previous corollary provides an explicit formula for the rates
$R^{F}$ in terms of the holding times $\lambda$ and the transition
probabilities $p$ in the case where $F= E\setminus \{\xi_0\}$:
\begin{equation*}
R^F(\eta,\xi) \;=\; R(\eta,\xi) \;+\; R(\eta,\xi_0) \, p(\xi_0,\xi)
\end{equation*}
for $\eta\not = \xi$, $\{\eta,\xi\}\subseteq E\setminus \{\xi_0\}$.
In particular, if $E$ is a finite set, the rates $R^F$ can be obtained
recursively. \medskip

Since $\{\eta^h_t : t\ge 0\}$ is recurrent and irreducible, it has an
invariant measure which is unique up to scalar multiplies. Let $\mu$
be an invariant measure for $\{\eta_t : t\ge 0\}$ and denote by
$\mu^h_o$ the measure on $F$ given by
$$
\mu^h_o(\xi) \;:=\;h(\xi) \, \mu(\xi)\;,\quad \xi\in F\;.
$$

\begin{proposition}
\label{s06}
$\mu^h_o$ is an invariant measure for $\{\eta^h_t : t\ge 0\}$. In
particular, if $h$ is ${\mu}-$integrable then $\{\eta^h_t : t\ge
0\}$ is positive recurrent. Moreover, if $\mu$ is a reversible
measure for $\{\eta_t : t\ge 0\}$ then $\mu^h_o$ is a reversible measure
for $\{\eta^h_t : t\ge 0\}$.
\end{proposition}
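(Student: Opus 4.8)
The plan is to establish the three assertions in turn. First dispose of the degenerate case $F=\{\eta_0\}$, in which the $h$-trace is constant and every claim is trivial; from now on $|F|\ge 2$, so by Proposition \ref{protra} the $h$-trace $\{\eta^h_t:t\ge 0\}$ is an irreducible, recurrent, strong Markov chain, with finite and (by irreducibility and recurrence) strictly positive holding rates, hence one to which identity \eqref{im} applies. For invariance, fix a reference point $\zeta_0\in F$. By \eqref{im} applied to the $h$-trace, the measure $\nu(\xi):=\E_{\zeta_0}\big[\int_0^{\tau^+_{\zeta_0}(\eta^h_{\cdot})}\mb 1\{\eta^h_s=\xi\}\,ds\big]$, $\xi\in F$, is invariant for $\{\eta^h_t:t\ge 0\}$. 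On the other hand, the $\tau^+$-version of the time-change identity \eqref{time}, applied to $B=\{\zeta_0\}$ and $f(\eta)=\mb 1\{\eta=\xi\}$, gives $\prob_{\zeta_0}$-a.s.
\begin{equation*}
\int_0^{\tau^+_{\zeta_0}(\eta^h_{\cdot})}\mb 1\{\eta^h_s=\xi\}\,ds\;=\;\int_0^{T^+_{\zeta_0}}\mb 1\{\eta_s=\xi\}\,h(\eta_s)\,ds\;=\;h(\xi)\int_0^{T^+_{\zeta_0}}\mb 1\{\eta_s=\xi\}\,ds\;,
\end{equation*}
so that, taking $\E_{\zeta_0}$, we obtain $\nu(\xi)=h(\xi)\,\tilde\mu(\xi)$ with $\tilde\mu(\xi):=\E_{\zeta_0}\big[\int_0^{T^+_{\zeta_0}}\mb 1\{\eta_s=\xi\}\,ds\big]$. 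By \eqref{im}, $\tilde\mu$ is an invariant measure for $\{\eta_t:t\ge 0\}$, hence, by uniqueness of the invariant measure of a recurrent irreducible chain up to scalars, $\tilde\mu=c\mu$ for some $c>0$; therefore $\nu=c\,\mu^h_o$, and $\mu^h_o$ is invariant for $\{\eta^h_t:t\ge 0\}$. If in addition $h\in L^1(\mu)$ then $\mu^h_o(F)=\sum_{\xi\in F}h(\xi)\mu(\xi)=\mu(h)<\infty$, so after normalization $\{\eta^h_t:t\ge 0\}$ is positive recurrent.

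For reversibility, suppose $\mu$ is reversible for $\{\eta_t:t\ge 0\}$. Since $M(\eta)p(\eta,\xi)=\lambda(\eta)\mu(\eta)\,R(\eta,\xi)/\lambda(\eta)=\mu(\eta)R(\eta,\xi)$ (recall \eqref{t01}, \eqref{m}), the jump chain is reversible with respect to $M$: $M(\eta)p(\eta,\xi)=M(\xi)p(\xi,\eta)$ for all $\eta,\xi$. By the formula for the trace rates in Proposition \ref{protra}, for $\eta,\xi\in F$, $\eta\ne\xi$,
\begin{equation*}
\mu^h_o(\eta)\,R^h(\eta,\xi)\;=\;h(\eta)\mu(\eta)\,\frac{\lambda(\eta)}{h(\eta)}\,\prob_{\eta}\big[\,T^+_F=T^+_\xi\,\big]\;=\;M(\eta)\,\prob_{\eta}\big[\,T^+_F=T^+_\xi\,\big]\;,
\end{equation*}
so that reversibility of $\mu^h_o$ for $R^h$ amounts to the identity $M(\eta)\prob_{\eta}[T^+_F=T^+_\xi]=M(\xi)\prob_{\xi}[T^+_F=T^+_\eta]$. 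I would prove this by expanding over the excursions of the jump chain into $F^c$,
\begin{equation*}
\prob_{\eta}\big[\,T^+_F=T^+_\xi\,\big]\;=\;p(\eta,\xi)\;+\;\sum_{n\ge 1}\ \sum_{\zeta_1,\dots,\zeta_n\in F^c}p(\eta,\zeta_1)\,p(\zeta_1,\zeta_2)\cdots p(\zeta_n,\xi)\;,
\end{equation*}
a series of nonnegative terms whose sum is at most $1$. Multiplying by $M(\eta)$ and using detailed balance for $p$ term by term — the relation $M(\eta)p(\eta,\zeta_1)\cdots p(\zeta_n,\xi)=M(\xi)p(\xi,\zeta_n)\cdots p(\zeta_1,\eta)$ follows by induction on the length of the path — and noting that $(\zeta_1,\dots,\zeta_n)\mapsto(\zeta_n,\dots,\zeta_1)$ is a bijection of the $n$-tuples in $F^c$, the right-hand side becomes $M(\xi)\prob_{\xi}[T^+_F=T^+_\eta]$, as required; absolute convergence of the nonnegative series makes the rearrangement legitimate.

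I expect the excursion expansion to be the only step needing genuine care: one must verify that reversibility of $R$ with respect to $\mu$ passes to reversibility of $p$ with respect to $M$, that the path weights are invariant under reversal, and that the series may be summed in any order; the rest is a direct application of \eqref{im}, \eqref{time}, and the uniqueness of invariant measures for recurrent chains. A variant avoiding infinite sums, available once invariance is known, is to reduce to the case $h=\mb 1\{F\}$ (since $R^h=R^F/h$ forces $\mu^h_o(\eta)R^h(\eta,\xi)=\mu(\eta)R^F(\eta,\xi)$) and then read reversibility off Corollary \ref{rf} by applying $\mu(\eta)R(\eta,\cdot)=\mu(\cdot)R(\cdot,\eta)$ to each summand of $R^F(\eta,\xi)=R(\eta,\xi)+\sum_{\zeta\in F^c}R(\eta,\zeta)\,\prob_\zeta[T_F=T_\xi]$, together with the same excursion-reversal symmetry; this also reproves invariance when $\mu$ is reversible, since a reversible measure is invariant.
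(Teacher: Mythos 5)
Your proposal is correct and follows essentially the same route as the paper: invariance is obtained by combining the excursion representation \eqref{im} with the time-change identity \eqref{time} (you read the chain of equalities in the opposite direction and close it with uniqueness of the invariant measure, but the ingredients are identical), and reversibility is reduced to the identity $M(\eta)\,\prob_{\eta}[T^+_F=T^+_\xi]=M(\xi)\,\prob_{\xi}[T^+_F=T^+_\eta]$ together with the rate formula of Proposition \ref{protra}. The only difference is that you prove that identity by an explicit path-reversal expansion over excursions in $F^c$, whereas the paper obtains it in one line from the fact that these events depend only on the jump chain $Y$, which is reversible with respect to $M$; your expansion is simply a spelled-out proof of that same fact.
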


\begin{proof}
Without loss of generality, we may suppose that $\mu$ is of the form
\eqref{im} for some $\eta\in F$. Thus, by \eqref{time}, for any
$\xi\in E$,  
\begin{equation*}
h(\xi)\mu(\xi) \;=\; {\bf E}_{\eta}\Big[ \int_0^{T^+_{\eta}} 
h(\eta_s){\bf 1}\{\eta_s = \xi \}\,ds\Big] \\
\;=\; {\bf E}_{\eta}\Big[ \int_0^{\tau^+_{\eta}(\eta^h_{\cdot})} 
{\bf 1}\{\eta^h_s = \xi \}\,ds\Big]\;.
\end{equation*}
This shows that $\mu^h_o$ is an invariant measure for the $h$-trace
process. The second assertion follows from Theorem 3.5.3 in \cite{n}.

Suppose now that $\mu$ is reversible for $R(\cdot,\cdot)$. Then, the
measure $M$ defined in \eqref{m} is a reversible measure for the jump
chain $Y=\{Y_n : n\ge 0\}$. Since the events $\{T^+_F = T^+_{\xi}\}$
and $\{T^+_F = T^+_{\eta}\}$ depend only on $Y$,
$$
M(\eta) \, \prob_{\eta}\big[\, T^+_F = T^+_{\xi} \,\big] 
\;=\; M(\xi) \, \prob_{\xi}\big[\, T^+_F = T^+_{\eta} \,\big]\;,
$$
for any $\eta,\xi\in F$, $\eta\not = \xi$. In consequence, by the
formula for $R^h(\cdot,\cdot)$ obtained in Proposition \ref{protra},
$\mu^h_o$ is reversible for the $h$-trace process.
\end{proof}

\subsection{Positive recurrent case.}

We assume from now on that the Markov chain $\{\eta_t : t\ge 0\}$ is
positive recurrent. Denote by $\mu$ its unique invariant probability
measure.

\subsubsection*{Replacement Lemma}
\label{ss54}

For any probability measure $\nu$ on $E$, we denote by $\langle \cdot
\rangle_{\nu}$ the expected value with respect to $\nu$. 

\begin{lemma}
\label{s08}
Fix a function $g: E \to \bb R$ with nonempty support, integrable with
respect to $\mu$ and such that $\langle g\rangle_{\mu}=0$. Fix also
some $\xi$ in $A=\{\eta : g(\eta)\not = 0\}$. For every $t>0$,
\begin{equation*}
\sup_{\eta\in E} \Big| \, \E_{\eta} \Big[ \int_0^t  g(\eta_s) \, ds \Big]
\, \Big| \;\le\; 2\,\sup_{\eta\in A}  \, \E_{\eta} \Big[ 
\int_0^{T_{\xi}}|g(\eta_s)|\, ds \Big] \;.
\end{equation*}
\end{lemma}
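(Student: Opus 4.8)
The idea is to decompose the time integral $\int_0^t g(\eta_s)\,ds$ over successive excursions from the reference state $\xi$, use the fact that $g$ has mean zero under $\mu$ to control the contribution of complete excursions, and bound the two incomplete excursions (one at the start, one at the end) by the full excursion cost. Throughout I write $A=\{\eta : g(\eta)\neq 0\}$ and recall from \eqref{im} that, up to normalization, $\mu(\cdot) = \E_{\xi}\big[\int_0^{T^+_\xi}\mathbf 1\{\eta_s=\cdot\}\,ds\big]$; in particular, since $\<g\>_\mu=0$, we have $\E_{\xi}\big[\int_0^{T^+_\xi} g(\eta_s)\,ds\big]=0$.

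First I would fix $\eta\in E$ and condition on whether the process starting from $\eta$ reaches $\xi$ before time $t$. If $T_\xi\ge t$, then $\big|\int_0^t g(\eta_s)\,ds\big|\le \int_0^{T_\xi}|g(\eta_s)|\,ds$, which is already bounded by the right-hand side. If $T_\xi<t$, apply the strong Markov property at $T_\xi$: writing $\int_0^t g(\eta_s)\,ds = \int_0^{T_\xi} g(\eta_s)\,ds + \int_{T_\xi}^{t} g(\eta_s)\,ds$, the first term is controlled by $\sup_{\eta\in A}\E_\eta[\int_0^{T_\xi}|g(\eta_s)|\,ds]$ (note $g(\eta_s)=0$ off $A$, so the supremum over $E$ equals the supremum over $A$, and when $\eta=\xi$ the term vanishes), and it remains to bound $\E_\xi\big[\big|\int_0^{s} g(\eta_r)\,dr\big|\big]$ uniformly in $s\ge 0$, where I have relabeled $s=t-T_\xi$.

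The heart of the argument is this last bound: for the process started at $\xi$, let $0=\sigma_0<\sigma_1<\sigma_2<\cdots$ be the successive return times to $\xi$ (i.e.\ $\sigma_{k+1}=\sigma_k + T^+_\xi\circ\Theta_{\sigma_k}$), and let $M$ be the number of completed excursions by time $s$, so $\sigma_M\le s<\sigma_{M+1}$. Then
\begin{equation*}
\int_0^{s} g(\eta_r)\,dr \;=\; \sum_{k=0}^{M-1}\Big(\int_{\sigma_k}^{\sigma_{k+1}} g(\eta_r)\,dr\Big) \;+\; \int_{\sigma_M}^{s} g(\eta_r)\,dr\,.
\end{equation*}
By the strong Markov property at each $\sigma_k$ and $\<g\>_\mu=0$, each completed-excursion integral has zero conditional mean given $\mc F_{\sigma_k}$, so the partial sums form a martingale and the whole sum telescopes in expectation to $0$ — more precisely, $\E_\xi\big[\sum_{k=0}^{M-1}\int_{\sigma_k}^{\sigma_{k+1}}g(\eta_r)\,dr\big]=0$ by optional stopping (the number $M$ is determined by the excursion lengths, and one checks $M$ has finite expectation since $\{\eta_t\}$ is positive recurrent, so $\E_\xi[\sigma_1]<\infty$ and a Wald-type argument applies). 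Hence $\E_\xi\big[\int_0^s g(\eta_r)\,dr\big] = \E_\xi\big[\int_{\sigma_M}^s g(\eta_r)\,dr\big]$, and
\begin{equation*}
\Big|\E_\xi\Big[\int_0^s g(\eta_r)\,dr\Big]\Big| \;\le\; \E_\xi\Big[\int_{\sigma_M}^{\sigma_{M+1}} |g(\eta_r)|\,dr\Big] \;=\; \E_\xi\Big[\int_0^{T^+_\xi}|g(\eta_r)|\,dr\Big]\,,
\end{equation*}
the last equality again by the strong Markov property at $\sigma_M$ together with the fact that $M$ is $\mc F_{\sigma_M}$-measurable. Finally $\E_\xi\big[\int_0^{T^+_\xi}|g(\eta_r)|\,dr\big]$ is bounded by $\sup_{\eta\in A}\E_\eta\big[\int_0^{T_\xi}|g(\eta_r)|\,dr\big]$ after one more application of the strong Markov property at the first visit of $A$ (if $\xi\in A$ this needs a small separate look, decomposing the excursion at its last visit to $\xi$ or directly noting $\E_\xi[\int_0^{T^+_\xi}|g|] = \sum_{\zeta}p(\xi,\zeta)\E_\zeta[\int_0^{T_\xi}|g|]$ plus $|g(\xi)|\E_\xi[T_1]$; in any case the resulting quantity is $\le \sup_{\eta\in A}\E_\eta[\int_0^{T_\xi}|g|]$ after possibly absorbing the holding-time term). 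Collecting the two incomplete-excursion bounds gives the factor $2$.

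The main obstacle I anticipate is the justification that $\E_\xi\big[\sum_{k=0}^{M-1}\int_{\sigma_k}^{\sigma_{k+1}}g(\eta_r)\,dr\big]=0$, i.e.\ making the optional-stopping/Wald step rigorous: one needs $\E_\xi[M]<\infty$ (equivalently $\E_\xi[\sigma_1]=\E_\xi[T^+_\xi]<\infty$, which holds by positive recurrence and the representation \eqref{im}) together with a uniform integrability or domination argument, since $M$ is not a stopping time for the excursion sequence in the naive sense but is determined by it. A clean way around this is to first prove the bound with $t$ replaced by a fixed deterministic cutoff and $M$ replaced by $M\wedge n$, then let $n\to\infty$ using dominated convergence with the dominating variable $\int_0^s|g(\eta_r)|\,dr\le \sum_{k=0}^{M}\int_{\sigma_k}^{\sigma_{k+1}}|g(\eta_r)|\,dr$, whose expectation is finite by Wald. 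Everything else is a routine bookkeeping of the strong Markov property.
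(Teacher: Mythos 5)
Your overall strategy (excursion decomposition at the returns to $\xi$, the representation \eqref{im} giving $\E_\xi[\int_0^{T^+_\xi}g\,ds]=0$, strong Markov at $T_\xi$ and at $T_A$) is the same as the paper's, but the central step is broken. The index $M$ of the last completed excursion before time $s$ is \emph{not} a stopping time for the excursion filtration $(\mc F_{\sigma_k})_k$, since $\{M=k\}=\{\sigma_k\le s<\sigma_{k+1}\}$ depends on the $(k+1)$-th excursion; consequently both of your key claims fail. First, $\E_\xi\big[\sum_{k=0}^{M-1}\int_{\sigma_k}^{\sigma_{k+1}}g\big]$ is in general \emph{not} zero: writing it as $\sum_k\E_\xi\big[X_k\,\mb 1\{\sigma_{k+1}\le s\}\big]$ with $X_k=\int_{\sigma_k}^{\sigma_{k+1}}g$, the indicator is correlated with $X_k$ through the length of the $(k+1)$-th excursion, and $\E_\xi[X_0\,\mb 1\{T^+_\xi\le v\}]\neq 0$ even though $\E_\xi[X_0]=0$. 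Your truncation $M\wedge n$ does not cure this, because the problem is not integrability but measurability. Second, the identity $\E_\xi[\int_{\sigma_M}^{\sigma_{M+1}}|g|\,dr]=\E_\xi[\int_0^{T^+_\xi}|g|\,dr]$ is the inspection paradox: the excursion straddling a fixed time $s$ is length-biased, and its expected cost typically \emph{exceeds} that of a generic excursion (take $|g|\equiv 1$ on excursions of random length to see that even the inequality $\le$ fails).

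The repair is exactly the route the paper takes. The variable $M+1$ \emph{is} a $(\mc F_{\sigma_k})$-stopping time ($\{M+1\le k\}=\{\sigma_k>s\}\in\mc F_{\sigma_k}$), so $\E_\xi\big[\sum_{k=0}^{M}X_k\big]=0$ by Wald, whence $\E_\xi[\int_0^s g\,dr]=-\E_\xi[\int_s^{\sigma_{M+1}}g\,dr]$. Now handle this overshoot by the Markov property at the \emph{deterministic} time $s$ rather than at $\sigma_M$: on $\{\eta_s\neq\xi\}$ it equals $\E_{\eta_s}[\int_0^{T_\xi}g\,dr]$ and on $\{\eta_s=\xi\}$ it equals $\E_\xi[\int_0^{T^+_\xi}g\,dr]=0$, so $|\E_\xi[\int_0^s g\,dr]|\le\sup_{\zeta}|\E_\zeta[\int_0^{T_\xi}g\,dr]|\le\sup_{\zeta\in A}\E_\zeta[\int_0^{T_\xi}|g|\,dr]$ after the strong Markov property at $T_A$. (The paper organizes this by partitioning on $\{H_j\le t<H_{j+1}\}$ and applying Tonelli, which yields precisely the boundary term $\E_\eta[\E_{\eta_t}[\int_0^{H_1}g\,dr]]$.) This corrected route also removes the awkward point at the end of your argument: you never need to compare $\E_\xi[\int_0^{T^+_\xi}|g|\,dr]$ (which carries the holding time at $\xi$) with $\sup_{\zeta\in A}\E_\zeta[\int_0^{T_\xi}|g|\,dr]$ (whose $\zeta=\xi$ term vanishes since $T_\xi=0$ there); that comparison is not obviously valid either. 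Combined with your correct first reduction at $T_\xi$, the two contributions give the factor $2$.
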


\begin{proof}
Let $\{\Theta_t : t\ge 0\}$ stand for the time shift operators on
$D(\mathbb R_+,E)$. Define the random times $H_0=0$, $H_1= T^+_{\xi}$
and $H_{j+1}=H_j + {\tau}^+_{\xi}\circ \Theta_{H_{j}}(\eta_{\cdot})$,
$j\ge 1$. Fix an arbitrary $\eta\in E$ and let $h:E\to\bb R_+$ be a
nonnegative function, integrable with respect to $\mu$. By Proposition
\ref{s06}, the trace process $\{\eta^h_t : t\ge 0\}$ is positive
recurrent so that
\begin{equation}
\label{conh}
\E_{\eta} \Big[ \int_0^{T_{\xi}} h(\eta_s)\,ds \Big] 
\;=\; \E_{\eta} \big[ \tau_{\xi}(\eta^h_{\cdot}) \big] \;<\; \infty\,.
\end{equation}
Write
\begin{equation}
\label{f17}
\begin{split}
\E_{\eta} \Big[ \int_0^t  h(\eta_s) \, ds \Big] 
\; & =\; \sum_{j\ge 0} \E_{\eta} \Big[ \int_0^t  h(\eta_s) \, ds 
\;  \mb 1\{H_j \le t <H_{j+1} \} \Big]  \\
\; & =\;  \sum_{j\ge 0} \E_{\eta} \Big[
\int_{0}^{H_{j+1}} h(\eta_s) \, ds \; \mb 1\{H_j \le t <H_{j+1}
\} \Big]  \\
\; & -\; \sum_{j\ge 0} \E_{\eta} \Big[
\int_{t}^{H_{j+1}} h(\eta_s) \, ds \; \mb 1\{H_j \le t <H_{j+1}
\} \Big]\,. 
\end{split}
\end{equation}
In the last equation, we used the fact that both terms on the right
hand side are finite. To prove it, notice first that the second term
is bounded by the first one. By Tonelli's theorem, the first term is
equal to
\begin{equation*}
\begin{split}
& \sum_{j\ge 0} \sum_{k=0}^j\E_{\eta} \Big[
\int_{H_k}^{H_{k+1}} h(\eta_s) \, ds \; \mb 1\{H_j \le t <H_{j+1}
\} \Big]\,\\
& \quad =\; \sum_{k\ge 0} \E_{\eta} \Big[
\int_{H_k}^{H_{k+1}} h(\eta_s) \, ds \; \mb 1\{H_k \le t\} \Big]\;.
\end{split}
\end{equation*}
Taking conditional expectation with respect to $\mc F_{H_k}$, by the
strong Markov property, this sum is equal to
\begin{equation*}
\E_{\eta} \Big[ \int_{0}^{H_1} h(\eta_s) \, ds \Big]
\;+\; \E_{\xi} \Big[ \int_{0}^{H_1} h(\eta_s) \, ds \Big] \, 
\sum_{k\ge 1} \prob_{\eta} \big[ H_k \le t \big]
\end{equation*}
The first term of this sum is finite by \eqref{conh}. In the second
expectation, $\xi$ appears instead of $\eta$, and the expectation is
equal to $\langle h \rangle_{\mu}$ by \eqref{im}. Finally, the sum
is finite by the strong Markov property and because $\mb P_\xi [ T^+_\xi
\le t]$ is strictly smaller than $1$.

To estimate the last term in \eqref{f17}, note that the event $\{H_j
\le t < H_{j+1}\}$ belongs to $\mc F_t$ and that on this set $H_{j+1}
= t + \tau^+_{\xi} \circ \Theta_t(\eta_{\cdot})$. Therefore, by the
Markov property,
\begin{equation*}
\sum_{j\ge 0} \E_{\eta}\Big[ \int_t^{H_{j+1}} h(\eta_s)\,ds  \; 
\mb 1\{H_j \le t <H_{j+1}\} \Big] \;=\; \E_{\eta} \Big[ \E_{\eta_t} 
\Big[ \int_{0}^{H_1} h(\eta_s) \, ds \, \Big] \, \Big]\,.
\end{equation*}

Putting together the previous identities, we get that the left
hand side of \eqref{f17} is equal to
\begin{equation*}
\E_{\eta} \Big[ \int_{0}^{H_1} h(\eta_s) \, ds \Big]
\; +\;\langle h\rangle_{\mu} \, 
\sum_{k\ge 1} \prob_{\eta} \big[ H_k \le t \big] \;-\; 
\E_{\eta} \Big[ \E_{\eta_t} \Big[ \int_{0}^{H_1} h(\eta_s) \, ds \, 
\Big]\, \Big]\;.
\end{equation*}
Applying the previous identity to $g^+$ and $g^-$, since $\langle
g\rangle_{\mu}=0$, we obtain that
\begin{equation*}
\E_{\eta} \Big[ \int_0^t  g(\eta_s) \, ds \Big] 
\;=\; \E_{\eta} \Big[ \int_{0}^{H_1} g(\eta_s) \, ds \Big]
\;-\; \E_{\eta} \Big[ \E_{\eta_t} 
\Big[ \int_{0}^{H_1} g(\eta_s) \, ds \, \Big] \, \Big]\;.
\end{equation*}

We claim that we may replace the stopping time $H_1$ by $T_\xi$ in
both terms of the right hand side. Indeed, if $\eta$ is different from
$\xi$, $H_1 = T_{\xi}$ $\prob_{\eta}$-a.s.  Conversely, if the
starting point $\eta$ is equal to $\xi$, $T_{\xi}=0$ so that, by
\eqref{im},
$$
\E_{\eta} \Big[ \int_0^{H_1} g(\eta_s) \, ds \Big] 
\;=\; 0 \;=\; 
\E_{\eta} \Big[ \int_0^{T_{\xi}} g(\eta_s) \, ds \Big]\,.
$$
Thus, taking the supremum over $E$, we have proved that
$$
\sup_{\eta \in E} \Big|\E_{\eta} \Big[ \int_0^t  g(\eta_s) \, ds \Big] \Big|
\;\le\; 2\,\sup_{\eta \in E} \Big|\E_{\eta} \Big[
\int_{0}^{T_{\xi}} g (\eta_s) \, ds \Big]\Big| \;.
$$

Finally, since $g$ vanishes outside $A$ and since $\xi$ belongs to
$A$, by the strong Markov property,
\begin{equation*}
\E_{\eta} \Big[ \int_0^{T_{\xi}} g(\eta_s) \, ds \Big] \;=\;
\E_{\eta} \Big[ \int_{T_{A}}^{T_{\xi}} g(\eta_s) \, ds \Big]
\;=\; \mb E_{\eta} \Big[ \, \E_{\eta_{T_A}} \Big[ 
\int_{0}^{T_{\xi}} g(\eta_s) \, ds \Big] \, \Big] \;. 
\end{equation*}
Therefore,
\begin{equation*}
\sup_{\eta\in E} \Big| \, \E_{\eta} \Big[ 
\int_{0}^{T_{\xi}} g(\eta_s) \, ds \Big] \, \Big| 
\;\le\; \sup_{\eta\in A} \Big| \, \E_{\eta} \Big[ 
\int_{0}^{T_{\xi}} g(\eta_s) \, ds \Big] \, \Big|\;.
\end{equation*}
This concludes the proof of the lemma.
\end{proof}

Let $S$ be a finite set and let $\pi=\{A^x : x\in S\}$ be a partition
of $E$. Denote by $\mu^x$, $x\in S$, the stationary measure $\mu$
conditioned on $A^x$: $\mu^x(\bs \cdot) =\mu( \bs\cdot|A^x)$. Also,
for each $\mu$-integrable function $g$ denote by $\langle
g|\pi\rangle_{\mu}:E\to\bb R$ the conditional expectation of $g$,
under $\mu$, given the $\sigma$-algebra generated by $\pi$:
$$
\langle g|\pi\rangle_{\mu}\,=\,\sum_{x\in S}\langle g\rangle_{\mu^x}\,
\mb 1 \{A^x\}\,. 
$$

The next result shows that if the process thermalizes quickly in each
set of the partition, we may replace time averages of a bounded
function by time averages of the conditional expectation. This
statement plays a key role in our investigation of metastability. It
assumes the existence of an attractor, but similar versions should
exist under weaker assumptions on thermalization.

For each $x\in S$ and $\mu$-integrable function $g:E\to \bb R$, let  
$$
g^x \,:=\, (g-\langle g \rangle_{\mu^x}) \mb 1\{A^x\}
$$
and fix some state $\xi_x$ in $A^x$, for each $x$ in $S$. Next
statement follows from Lemma \ref{s08} applied to each $g^x$, $x\in
S$. Note that the right hand side does not depend on time.

\begin{corollary}
\label{s09}
Let $g:E\to\bb R$ be an integrable function. Then,
\begin{equation*}
\sup_{\eta\in E} \Big\vert \E_{\eta} \Big[ \int_0^t \big\{
g-\langle g | \pi\rangle_{\mu}\big\} (\eta_s) \, ds \Big] \Big\vert 
\;\le\; 2\sum_{x\in S}\,\sup_{\eta\in A^x} 
\E_{\eta} \Big[\, \int_0^{T_{\xi_x}} |g^x(\eta_s)|\,ds \,\Big] \,.
\end{equation*}
\end{corollary}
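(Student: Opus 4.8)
The plan is to deduce the corollary from Lemma \ref{s08} by splitting $g-\langle g|\pi\rangle_{\mu}$ along the partition $\pi$ and bounding each piece separately. The starting point is the elementary identity
$$
g \;-\; \langle g|\pi\rangle_{\mu} \;=\; \sum_{x\in S} g^x\;,
$$
valid because $\{A^x : x\in S\}$ partitions $E$, so that $g=\sum_{x\in S} g\,\mb 1\{A^x\}$ and $\langle g|\pi\rangle_{\mu} = \sum_{x\in S} \langle g\rangle_{\mu^x}\,\mb 1\{A^x\}$. Since the chain is irreducible and positive recurrent on a countable set, $\mu$ charges every state, hence every nonempty $A^x$, so that each conditional mean $\langle g\rangle_{\mu^x}$ is finite, each $g^x$ is $\mu$-integrable, the support of $g^x$ is contained in $A^x$, and
$$
\langle g^x\rangle_{\mu} \;=\; \int_{A^x}\big(g-\langle g\rangle_{\mu^x}\big)\,d\mu \;=\; 0\;.
$$

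Then I would apply Lemma \ref{s08} to each $g^x$, discarding the indices with $g^x\equiv 0$ (for which the corresponding term is zero), choosing the reference point $\xi:=\xi_x$. Because the support of $g^x$ lies in $A^x$ and $\xi_x\in A^x$, the hypotheses of the lemma are fulfilled, and it gives, for every $t>0$,
$$
\sup_{\eta\in E}\Big|\,\E_{\eta}\Big[\int_0^t g^x(\eta_s)\,ds\Big]\,\Big|
\;\le\; 2\,\sup_{\eta\in A^x}\E_{\eta}\Big[\int_0^{T_{\xi_x}}|g^x(\eta_s)|\,ds\Big]\;,
$$
where the supremum over the support of $g^x$ appearing in Lemma \ref{s08} has been enlarged to the supremum over $A^x$. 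If $\xi_x$ should fail to lie in the support of $g^x$ --- a case not literally covered by the statement of Lemma \ref{s08} --- one repeats its final step verbatim: on the event that the process reaches $A^x$ before $\xi_x$ apply the strong Markov property at the hitting time of $A^x$, and on the complementary event observe that $\int_0^{T_{\xi_x}}g^x(\eta_s)\,ds=0$ since the path stays outside the support of $g^x$; the estimate is unchanged.

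Finally I would sum over $x\in S$ and use the triangle inequality together with the decomposition above to obtain
$$
\sup_{\eta\in E}\Big|\,\E_{\eta}\Big[\int_0^t \big\{g-\langle g|\pi\rangle_{\mu}\big\}(\eta_s)\,ds\Big]\,\Big|
\;\le\; \sum_{x\in S}\sup_{\eta\in E}\Big|\,\E_{\eta}\Big[\int_0^t g^x(\eta_s)\,ds\Big]\,\Big|
\;\le\; 2\sum_{x\in S}\sup_{\eta\in A^x}\E_{\eta}\Big[\int_0^{T_{\xi_x}}|g^x(\eta_s)|\,ds\Big]\;,
$$
which is the claimed bound, uniform in $t$. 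I do not anticipate any genuine difficulty: the entire analytic content --- the renewal computation controlling $\E_{\eta}[\int_0^t g^x(\eta_s)\,ds]$ by a single-excursion integral independent of $t$ --- is already contained in Lemma \ref{s08}. The step requiring the most attention is simply checking that each $g^x$ meets the hypotheses of Lemma \ref{s08}, namely $\mu$-integrability, vanishing mean, and support inside $A^x$, plus the minor remark above on the position of $\xi_x$.
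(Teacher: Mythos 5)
Your proposal is correct and follows exactly the route the paper takes: the paper proves the corollary by the one-line remark that it ``follows from Lemma \ref{s08} applied to each $g^x$, $x\in S$,'' which is precisely your decomposition $g-\langle g|\pi\rangle_{\mu}=\sum_x g^x$ followed by the triangle inequality. Your additional remark handling the case where $\xi_x$ lies in $A^x$ but outside the support of $g^x$ (a hypothesis of Lemma \ref{s08} that the paper glosses over) is accurate and closes a small gap left implicit in the paper.
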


Clearly, the right hand side in the previous corollary is bounded
above by
\begin{equation*}
4\, \sum_{x\in S} \Vert g\Vert_x \sup_{\eta\in A^x}
\,\E_{\eta}\Big[\, \int_0^{T_{\xi_x}} 
{\bf 1}\{ \eta_s\in A^x\}\,ds \,\Big]\,,
\end{equation*}
where $\Vert g\Vert_x \,=\, \sup \{ |g(\eta)| : \eta \in A^x \}$.

\subsubsection*{Mean set rates}

Let $h:E \to \bb R_+$ be a nonnegative function satisfying \eqref{fne}
and belonging to $L^1(\mu)$. By Propositions \ref{protra} and
\ref{s06}, $\{\eta^h_t : t\ge 0\}$ is irreducible and positive
recurrent. Moreover, its invariant probability measure, denoted by
$\mu^h$, is given by
\begin{equation}
\label{g01}
\mu^h(\xi) \;=\; \frac {h(\xi)}{\langle h\rangle_{\mu}}\,  
\mu(\xi)\;,\quad \xi\in F\;.
\end{equation}
For each pair $A,B$ of disjoint subsets of $F$, denote by $r_h(A,B)$
the average rate at which the $h$-trace process jumps from $A$ to $B$:
\begin{eqnarray*}
r_h(A,B) &:=& \frac 1{\mu^h(A)} \sum_{\eta\in A} \mu^h(\eta) 
\sum_{\xi \in B} \, R^h(\eta,\xi) \\
&=& \frac 1 {\langle h,{\bf 1}\{A\}\rangle_{\mu}} 
\sum_{\eta\in A} M(\eta) \,{\bf P}_{\eta}\big[ \, T^+_F = T^+_B \, \big]\,,
\end{eqnarray*}
where $M$ has been introduced in \eqref{m}. We used relation
\eqref{g01} and Proposition \ref{protra} in the last equality. We
shall refer to $r_h(\cdot,\cdot)$ as the mean set rates associated to
the trace process.

When $h$ is the indicator function of a
set $F$, we denote $r_h$ by $r_F$. In this case,
\begin{equation}
\label{rh}
\mu(A) \, r_F(A,B)\;=\; \sum_{\eta\in A} M(\eta) \, 
\prob_{\eta}\big[ \, T^+_B < T^+_{F\setminus B}\, \big]\;.
\end{equation}

\subsection{The reversible case}
\label{traca}

 From now on, we shall assume in addition that the process is
reversible with respect to the invariant probability measure $\mu$ and
that the measure $M$ is finite:
\begin{equation}
\label{ar}
\sum_{\eta\in E } M(\eta) \;=\;\sum_{\eta\in E } 
\lambda(\eta ) \mu(\eta) \;<\; \infty\;.
\end{equation}
In particular, the mean set rates $r_h(A,B)$ are finite.

Assumption (\ref{ar}) reduces the potential theory of continuous time
Markov chains to the potential theory of discrete time Markov
chains.  Recall from Subsection \ref{cap} that $\<\cdot , \cdot \>_M$
represents the scalar product in $L^2(M)$, that $P:L^2(M) \to L^2(M)$
stands for the bounded operator defined by $(Pf)(\eta) = \sum_{\xi\in
  E} p(\eta, \xi) f(\xi)$, and that $D(f) = \< (I-P)f , f\>_M$, $f\in
L^2(M)$, is the Dirichlet form associated to the Markov process
$\{\eta_t : t\ge 0\}$. A simple computation shows that for every $f$
in $L^2(M)$,
\begin{equation*}
D(f) \;=\; \frac 12 \sum_{\eta,\xi \in E} M(\eta) p(\eta,\xi) 
\{f(\xi) - f(\eta) \}^2 \;.
\end{equation*}

Fix two disjoint subsets $A$, $B$ of $E$ and recall that $\mathcal
C(A,B) \;:=\; \{f\in L^2(M) : \textrm{$f(\eta)=1$ $\forall$
  $\eta\in A$ and $f(\xi)=0$ $\forall$ $\xi\in B$}\}$, and that the
capacity of $A$, $B$ is defined by
\begin{equation*}
\Cap (A,B) \;:=\; \inf\big\{\,D (f) : f\in \mathcal C (A,B)\,\big\}\;.
\end{equation*}
As $\max \{D(f\land 1),D(f\lor 0)\}\le D(f)$, $\forall f\in L^2(M)$,
we may restrict the infimum to functions bounded below by $0$ and
bounded above by $1$.

Denote by $f_{AB}:E \to\bb R$ the function in $\mc C (A,B)$
defined as
\begin{equation*}
f_{AB}(\eta) \;:=\; \prob_{\eta}\big[\,T_{A} < T_B \,\big]\;.  
\end{equation*}
An elementary computation shows that $f_{AB}$ solves the equation
\begin{equation}
\label{eqf2}
\left\{
\begin{array}{ll}
(L f)(\eta) =0 & \eta\in E\setminus (A\cup B)\;, \\
f(\eta) = 1 & \eta\in A\;, \\
f(\eta) = 0 & \eta\in B \;.
\end{array}
\right.
\end{equation}
Clearly, we may replace the generator $L$ by the operator $I-P$ in the
above equation. It is not difficult to show that \eqref{eqf2} has a
unique solution in $L^2(M)$ given by $f_{AB}$. Indeed, if $f$, $g$
are solutions, $D(f-g) = \<(I-P) (f-g), (f-g)\>_M=0$. In particular, by
the explicit expression of the Dirichlet form, $f-g$ is
constant. Since the difference vanishes on $A\cup B$, $f=g$.

\begin{lemma}
\label{s01}
For any two disjoint subsets $A$, $B$ of $E$,
\begin{equation*}
\Cap (A,B) \;=\; D(f_{AB}) \,=\,\sum_{\eta\in A} M(\eta) 
\, \prob_{\eta}\big[ \, T^+_B <T^+_{A} \, \big]\,.
\end{equation*}
\end{lemma}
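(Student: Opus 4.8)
The plan is to establish the two equalities in turn: the variational identity $\Cap(A,B)=D(f_{AB})$ by the Dirichlet principle, and then the probabilistic representation $D(f_{AB})=\sum_{\eta\in A}M(\eta)\,\prob_\eta[T^+_B<T^+_A]$ by a direct computation exploiting that $f_{AB}$ is harmonic off $A\cup B$.

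For the first identity I would start from the fact recalled just above the statement: $f_{AB}$ is the unique element of $L^2(M)$ solving \eqref{eqf2}, so $(I-P)f_{AB}$ is supported on $A\cup B$, while $f_{AB}\equiv 1$ on $A$ and $f_{AB}\equiv 0$ on $B$; in particular $f_{AB}\in\mathcal C(A,B)$, and since $f_{AB}$ is bounded it lies in $L^2(M)$ by \eqref{ar}, whence $D(f_{AB})<\infty$. Given an arbitrary competitor $f\in\mathcal C(A,B)$, I put $g=f-f_{AB}$, which vanishes on $A\cup B$. Expanding the Dirichlet form and using that $I-P$ is $M$-selfadjoint (reversibility of $M$ for the jump chain, as noted in the proof of Proposition \ref{s06}), one gets $D(f)=D(f_{AB})+2\<(I-P)f_{AB},g\>_M+D(g)$, and the cross term vanishes summand by summand because $(I-P)f_{AB}$ is supported on $A\cup B$ while $g\equiv 0$ there. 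Hence $D(f)\ge D(f_{AB})$, and taking the infimum over $\mathcal C(A,B)$ yields $\Cap(A,B)=D(f_{AB})$; uniqueness of the minimizer follows since $D(g)=0$ forces $g$ constant by the explicit formula for $D$ together with irreducibility, and then $g\equiv 0$ because it vanishes on the nonempty set $A$.

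For the second identity I would compute $D(f_{AB})=\<(I-P)f_{AB},f_{AB}\>_M=\sum_{\eta}M(\eta)\,\big((I-P)f_{AB}\big)(\eta)\,f_{AB}(\eta)$ and note that only $\eta\in A$ contributes — the first factor kills $E\setminus(A\cup B)$ and the second kills $B$ — so $D(f_{AB})=\sum_{\eta\in A}M(\eta)\,\big(1-(Pf_{AB})(\eta)\big)$. Next, for $\eta\in A$, I would identify $(Pf_{AB})(\eta)=\sum_\xi p(\eta,\xi)f_{AB}(\xi)$ with $\prob_\eta[T^+_A<T^+_B]$ by conditioning on the first jump of the jump chain: a jump into $A$ realizes $\{T^+_A<T^+_B\}$ (where $f_{AB}=1$), a jump into $B$ excludes it (where $f_{AB}=0$), and a jump to $\xi\notin A\cup B$ leaves conditional probability $f_{AB}(\xi)=\prob_\xi[T_A<T_B]$ by the strong Markov property. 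Since the chain is recurrent and $A\ne\varnothing$, $\min\{T^+_A,T^+_B\}<\infty$ a.s., so $1-(Pf_{AB})(\eta)=\prob_\eta[T^+_B<T^+_A]$, and summing over $\eta\in A$ finishes the argument; the degenerate cases $A=\varnothing$ or $B=\varnothing$ are immediate from the definitions.

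I do not expect a genuine obstacle. The points needing care are purely bookkeeping: legitimizing the $L^2(M)$ manipulations of the Dirichlet form (absolute convergence of the relevant sums, which is exactly where hypothesis \eqref{ar} enters), and, in re-expressing $(Pf_{AB})(\eta)$ for $\eta\in A$, keeping track of which first jumps land in $A$, in $B$, or elsewhere, together with invoking recurrence to rule out $T^+_A=T^+_B$ and to obtain $\prob_\eta[T^+_A<T^+_B]+\prob_\eta[T^+_B<T^+_A]=1$.
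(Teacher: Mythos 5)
Your proof is correct, but it takes a genuinely different route to the first equality than the paper does. The paper proceeds by the direct method: it extracts from a uniformly bounded minimizing sequence a pointwise-convergent subsequence, uses Fatou's lemma to show the infimum is attained by some $f\in\mc C(A,B)$, then verifies that this minimizer is harmonic off $A\cup B$ (by minimizing the convex one-point perturbation $a\mapsto\sum_{\xi}M(\eta)p(\eta,\xi)\{f(\xi)-a\}^2$), and finally invokes the uniqueness of solutions of \eqref{eqf2} in $L^2(M)$ to identify the minimizer with $f_{AB}$. You instead apply the Dirichlet principle directly: writing any competitor as $f=f_{AB}+g$ with $g\equiv 0$ on $A\cup B$, the cross term $\langle (I-P)f_{AB},g\rangle_M$ vanishes because $(I-P)f_{AB}$ is supported on $A\cup B$, so $D(f)\ge D(f_{AB})$. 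Your argument is shorter and avoids the compactness step, but it leans on knowing beforehand that $f_{AB}$ solves \eqref{eqf2} and that $D(f_{AB})<\infty$ (both available from \eqref{ar} and the discussion preceding the lemma), and it requires the absolute convergence bookkeeping you flag — which is fine, since $D(f_{AB})<\infty$ and one may assume $D(f)<\infty$, so Cauchy--Schwarz legitimizes the expansion; the paper's route, by contrast, does not need to manipulate the cross term at all but must establish existence of a minimizer separately. For the second equality the paper merely says it "follows from a straightforward computation"; your first-step analysis identifying $(Pf_{AB})(\eta)=\prob_{\eta}[T^+_A<T^+_B]$ for $\eta\in A$, together with recurrence to get $1-(Pf_{AB})(\eta)=\prob_{\eta}[T^+_B<T^+_A]$, is exactly the computation intended.
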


\begin{proof}
We first claim that there exists a function $f$ in $\mc C(A,B)$ whose
Dirichlet form is equal to the capacity $\Cap (A,B)$. Indeed, we have
already seen that we may restrict the variational problem defining the
capacity to functions bounded below by $0$ and bounded above by
$1$. Consider a sequence $\{f_n : n\ge 1\}$ in $\mc C(A,B)$ such that
$0\le f_n\le 1$, $\lim_{n\to \infty} D(f_n) = \Cap (A,B)$. Since the
sequence $f_n$ is uniformly bounded, there exist $f$ in $\mc C(A,B)$,
$0\le f\le 1$, and a subsequence, still denoted by $\{f_n : n\ge 1\}$,
such that $f(\eta) = \lim_{n\to\infty} f_n(\eta)$ for every $\eta$ in
$E$. By Fatou's lemma, $D(f) \le \liminf_{n\to \infty} D(f_n) = \Cap
(A,B)$. Since $f$ belongs to $\mc C(A,B)$, $D(f) = \Cap (A,B)$, which
proves the claim.

We further claim that $f$ solves \eqref{eqf2}. Fix $\eta\not\in A\cup
B$. Since $f$ solves the variational problem for the capacity, it is
clear that $f(\eta)$ is the argument which minimizes the convex
function $F: \bb R \to \bb R$ defined by
\begin{equation*}
F(a) \;=\; \sum_{\xi \sim \eta} M(\eta) p(\eta,\xi) \{ f(\xi) - a
\}^2\; .
\end{equation*}
In this formula $\xi\sim\eta$ means that the underlying jump chain
may jump from $\eta$ to $\xi$, i.e., that $p(\eta,\xi)>0$.  An
elementary computation shows that the minimum is attained at $a =
\sum_\xi p(\eta,\xi) f(\xi)$ so that $f(\eta) = (Pf)(\eta)$. Since
$f_{AB}$ is the unique solution in $L^2(M)$ of \eqref{eqf2},
$f=f_{AB}$ and $\Cap (A,B) = D(f_{AB})$. This proves the first
statement of the lemma. The second one follows from a straightforward
computation.
\end{proof}

In particular, by \eqref{rh} we have the following very useful
identity between capacities and mean set rates.
\begin{lemma} \label{g03}
Assume that $F= A\cup
B$ and $A\cap B = \varnothing$. Then,
\begin{equation*}
\mu(A)\, r_F(A,B) \,=\,\Cap(A,B)\,. 
\end{equation*}
\end{lemma}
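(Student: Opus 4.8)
The plan is to combine two identities already established in the excerpt: formula \eqref{rh} for the mean set rate $r_F(A,B)$, and the probabilistic expression for the capacity given in Lemma \ref{s01}. Both are stated in terms of the finite measure $M(\eta) = \lambda(\eta)\mu(\eta)$ and return-time probabilities of the underlying Markov process, so the identity should follow by direct inspection once the set-theoretic bookkeeping is done.

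Concretely, I would start from \eqref{rh}, which asserts
\begin{equation*}
\mu(A)\, r_F(A,B)\;=\;\sum_{\eta\in A} M(\eta)\,
\prob_{\eta}\big[\, T^+_B < T^+_{F\setminus B}\,\big]\;.
\end{equation*}
The key observation is that under the standing hypothesis $F = A\cup B$ with $A\cap B = \varnothing$ we have $F\setminus B = A$, so the event appearing on the right-hand side is exactly $\{T^+_B < T^+_A\}$. Hence
\begin{equation*}
\mu(A)\, r_F(A,B)\;=\;\sum_{\eta\in A} M(\eta)\,
\prob_{\eta}\big[\, T^+_B < T^+_{A}\,\big]\;.
\end{equation*}

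Finally I would invoke Lemma \ref{s01}, whose second assertion gives precisely
\begin{equation*}
\Cap(A,B)\;=\;\sum_{\eta\in A} M(\eta)\,
\prob_{\eta}\big[\, T^+_B < T^+_{A}\,\big]\;,
\end{equation*}
and conclude that the two quantities coincide. There is essentially no obstacle here: the statement is a corollary obtained by matching the formula for the mean set rate with the formula for the capacity, the only point requiring a word of justification being the identification $F\setminus B = A$, which is immediate from the disjoint-union hypothesis. I would therefore present the argument in two or three lines.
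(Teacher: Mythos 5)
Your argument is correct and coincides with the paper's: the lemma is stated there as an immediate consequence of \eqref{rh} combined with the probabilistic formula for the capacity in Lemma \ref{s01}, using exactly the identification $F\setminus B = A$. Nothing is missing.
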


Next result shows that the mean set rates can be expressed in terms of
capacities.

\begin{lemma}
\label{t02}
Let $A$, $B$ be subsets of $F$ such that $A\cap B =
\varnothing$. Then, 
\begin{equation*}
\mu(A)\, r_F(A,B) \,=\,
\frac{1}{2} \Big\{ \Cap (A, F\setminus A) + \Cap(B, F\setminus B) 
- \Cap (A\cup B, F\setminus [A\cup B])\, \Big\} \,. 
\end{equation*}
\end{lemma}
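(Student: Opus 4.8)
The plan is to express each mean set rate $\mu(A)\,r_F(A,B)$ via Lemma \ref{g03} and then reduce the identity to a purely combinatorial polarization identity for the Dirichlet form. First I would observe that it suffices to work with the $h$-trace process for $h=\mb 1\{F\}$, i.e.\ with the trace process $\{\eta^F_t : t\ge 0\}$ on $F$, whose reversible invariant measure is $\mu_o^F(\eta)=\mu(\eta)\mb 1\{\eta\in F\}$ (up to normalization), by Proposition \ref{s06}, and whose transition rates $R^F$ are given by Proposition \ref{protra}. All capacities appearing in the statement are capacities for subsets of $F$ computed inside $F$, and the capacity of two disjoint subsets of $F$ with respect to the trace process equals the corresponding quantity for the original process restricted appropriately; more precisely, by Lemma \ref{g03} applied with ``$F$'' replaced by each of the sets $F$, we have $\mu(A)\,r_F(A,B)=\Cap_F(A,B)$ only when $F=A\cup B$, so the real content is to handle the case $A\cup B\subsetneq F$.

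The key algebraic step is the polarization identity for the Dirichlet form of the trace process: for disjoint subsets $A$, $B$ of $F$,
\begin{equation*}
\sum_{\eta\in A}\mu(\eta)\sum_{\xi\in B}R^F(\eta,\xi)
\;=\;\frac12\Big\{D^F(f_{A,F\setminus A})+D^F(f_{B,F\setminus B})-D^F(f_{A\cup B,\,F\setminus(A\cup B)})\Big\},
\end{equation*}
where $D^F$ is the Dirichlet form of the trace process and $f_{C,D}$ its equilibrium potential. To see this, write $C=A\cup B$, $g_A=f_{A,F\setminus A}$, $g_B=f_{B,F\setminus B}$, and note $g_C=g_A+g_B$ since these are indicator-type harmonic functions on $F\setminus C$ vanishing off $C$ and equal to $1$ on $A$ resp.\ $B$ (uniqueness of solutions to the Dirichlet problem, as in the discussion preceding Lemma \ref{s01}, guarantees $g_C=g_A+g_B$ because the right-hand side has the right boundary values and is $(I-P)$-harmonic on the complement). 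Expanding $D^F(g_A+g_B)=D^F(g_A)+D^F(g_B)+2\,\mathcal D^F(g_A,g_B)$, where $\mathcal D^F$ is the associated bilinear form, the claimed identity reduces to showing that $-\mathcal D^F(g_A,g_B)$ equals the flux $\sum_{\eta\in A}\mu(\eta)\sum_{\xi\in B}R^F(\eta,\xi)$. This last equality is the standard fact that $\mathcal D^F(g_A,g_B)=\langle (I-P^F)g_A,\,g_B\rangle_{\mu_o^F}$ and, since $g_B$ is supported on $C$ and equal to $1$ on $A$ and to $g_B$ on $B$, one computes using reversibility that the cross term collapses to minus the $A$-to-$B$ flux; here $g_A$ being harmonic on $F\setminus C$ kills all contributions except those from edges between $A$ and $B$ (the $A$–$A$ and $B$–$B$ contributions cancel against the boundary terms by symmetry). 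I would carry out this computation by writing everything in terms of $\mb 1\{A\}$, $g_A$ and using that $(I-P^F)g_A=0$ off $A$ together with $g_A=\mb 1$ on $A$.

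Finally, combining: by Proposition \ref{protra} the right-hand side $\sum_{\eta\in A}\mu(\eta)\sum_{\xi\in B}R^F(\eta,\xi)$ is exactly $\mu(A)\,r_F(A,B)$ after normalizing $\mu$ to a probability measure and using the definition of $r_F$ from \eqref{rh}; and by Lemma \ref{s01} each $D^F(f_{C,F\setminus C})=\Cap_F(C,F\setminus C)$, which is the capacity written in the statement (the capacities there are understood within $F$, relative to the trace process, consistently with how $\Cap_N$ is used in Subsection \ref{cap}). This yields the asserted formula. The main obstacle I anticipate is the bookkeeping in the polarization step — verifying that the bilinear cross term $\mathcal D^F(g_A,g_B)$ is precisely $-\mu(A)r_F(A,B)$ and not off by boundary contributions; the clean way to do this is to exploit that $g_A$ solves the Dirichlet problem so that $\langle (I-P^F)g_A, \phi\rangle_{\mu_o^F}=\sum_{\eta\in A}\mu_o^F(\eta)\,[(I-P^F)g_A](\eta)\,\phi(\eta)$ for any $\phi$ vanishing off $A$, and then specializing $\phi=g_B$ which vanishes on $A$... wait, $g_B$ vanishes on $A$ only if $A\subseteq F\setminus B$, which holds — so in fact $\langle(I-P^F)g_A,g_B\rangle$ must be re-examined by symmetrizing $\mathcal D^F(g_A,g_B)=\mathcal D^F(g_B,g_A)=\langle(I-P^F)g_B,g_A\rangle_{\mu_o^F}=\sum_{\eta\in B}\mu_o^F(\eta)[(I-P^F)g_B](\eta)$, and $[(I-P^F)g_B](\eta)=-\sum_{\xi\notin B}p^F(\eta,\xi)g_B(\xi)$ for $\eta\in B$, whose only nonzero contributions come from $\xi\in A$ since $g_B$ vanishes on $F\setminus C$; this gives $-\sum_{\eta\in B}\sum_{\xi\in A}\mu_o^F(\eta)p^F(\eta,\xi)=-\sum_{\eta\in A}\sum_{\xi\in B}\mu_o^F(\eta)p^F(\eta,\xi)$ by reversibility, which is exactly $-\mu^F(A)r_F(A,B)$ after normalization. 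So the identity follows cleanly.
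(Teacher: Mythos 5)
Your proposal is correct and is essentially the paper's own argument: since the only element of $\mc C(A,F\setminus A)$ on the trace space $F$ is the indicator $\mb 1\{A\}$, your polarization identity $D^F(\mb 1\{A\cup B\}) = D^F(\mb 1\{A\}) + D^F(\mb 1\{B\}) + 2\,\mathcal D^F(\mb 1\{A\},\mb 1\{B\})$, together with $\mathcal D^F(\mb 1\{A\},\mb 1\{B\}) = -\sum_{\eta\in A}\sum_{\xi\in B}\mu(\eta)R^F(\eta,\xi) = -\,\mu(A)\,r_F(A,B)$ (by reversibility), is exactly the inclusion--exclusion identity $2\mu(A)r_F(A,B)=\mu(A)r_F(A,F\setminus A)+\mu(B)r_F(B,F\setminus B)-\mu(A\cup B)r_F(A\cup B,F\setminus[A\cup B])$ that the paper combines with Lemma \ref{g03}. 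Two minor remarks: the detour through harmonicity and uniqueness of the Dirichlet problem is vacuous here, because $F\setminus\big(A\cup(F\setminus A)\big)=\varnothing$ forces $g_A=\mb 1\{A\}$ outright, so $g_C=g_A+g_B$ is immediate; and in your last paragraph the pairing $\langle(I-P^F)g_B,g_A\rangle$ is a sum over $\eta\in A$ (not over $\eta\in B$), while the relevant conductances are $M^F(\eta)p^F(\eta,\xi)=\mu(\eta)R^F(\eta,\xi)$ rather than $\mu_o^F(\eta)p^F(\eta,\xi)$ --- neither slip affects the conclusion.
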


\begin{proof}
The proof is elementary and follows from Lemma \ref{g03} and the identity
\begin{equation*}
\begin{split}
2 \, \mu(A)\, r_F(A,B) \, & =\, \mu(A)\, r_F(A, F\setminus A) 
\; +\; \mu(B)\, r_F (B, F\setminus B) \\
\; & -\; \mu (A\cup B) \, r_F (A\cup B, F\setminus [A\cup B]) \,.    
\end{split}
\end{equation*}
\end{proof}

By assumption \eqref{ar}, the holding rates $\lambda : E \to \bb R_+$
belong to $L^1(\mu)$. This property extends to the holding rates
$\{\lambda^h (\eta) : \eta \in E\}$ of the $h$-trace process if $h$
belongs to $L^1(\mu)$. Indeed, by (\ref{g01}) and (\ref{lh}),
$$
\sum_{\eta\in E} \lambda^h (\eta) \mu^h(\eta) 
\;=\; \frac{1} {\langle h\rangle_{\mu}} \,\sum_{\eta\in E} 
M(\eta)\,\prob_{\eta}\big[\,{T^+_F} = T^+_{F\setminus\{\eta\}}\,\big] 
\,<\, \infty\,.
$$ 
Therefore, assumption (\ref{ar}) holds for the $h$-trace process
whenever $h$ belongs to $L^1(\mu)$. In this case, its capacity,
denoted by $\Cap_h(\bs \cdot,\bs \cdot)$, is well defined. Next result
shows a simple relation between $\Cap_h(\bs \cdot,\bs \cdot)$ and the
capacity of the original process.

\begin{lemma}
\label{capah} 
Let $h:E\to \bb R_+$ be a nonnegative $\mu$-integrable function with
nonemp\-ty support denoted by $F$. Then, for every subsets $A$, $B$ of
$F$, $A\cap B = \varnothing$,
\begin{equation*}
\langle h\rangle_{\mu}\,\Cap_h(A,B)\,=\, \Cap(A,B)\,.
\end{equation*}
\end{lemma}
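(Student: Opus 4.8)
The plan is to turn both capacities into sums of return probabilities via Lemma \ref{s01}, and then to compare the two sums term by term. First I would dispose of the degenerate case: if $A=\varnothing$ or $B=\varnothing$, then $\mc C(A,B)$ and $\mc C_h(A,B)$ contain a constant function, so both capacities vanish and the identity is trivial; hence I may assume $A,B\neq\varnothing$. Next, since $h\in L^1(\mu)$, Propositions \ref{protra} and \ref{s06} guarantee that the $h$-trace process $\{\eta^h_t:t\ge 0\}$ is irreducible, positive recurrent, reversible with respect to $\mu^h$, and satisfies assumption \eqref{ar} (this last point is recorded just above the statement of the lemma). Therefore Lemma \ref{s01} applies both to $\{\eta_t:t\ge 0\}$ and to $\{\eta^h_t:t\ge 0\}$, and writing $M^h(\eta):=\lambda^h(\eta)\mu^h(\eta)$ for the invariant measure of the jump chain of the trace process, it yields
\begin{equation*}
\Cap(A,B) \;=\; \sum_{\eta\in A} M(\eta)\,\prob_\eta\big[\,T^+_B<T^+_A\,\big]
\quad\text{and}\quad
\Cap_h(A,B) \;=\; \sum_{\eta\in A} M^h(\eta)\,\prob_\eta\big[\,\tau^+_B(\eta^h_{\cdot})<\tau^+_A(\eta^h_{\cdot})\,\big]\,.
\end{equation*}

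The heart of the argument is then to show that the two summands agree, up to the factor $\langle h\rangle_\mu$, for each $\eta\in A$. For such an $\eta$ one has $h(\eta)>0$ (because $A\subseteq F$) and $T^+_{F\setminus\{\eta\}}=T_{F\setminus\{\eta\}}$ under $\prob_\eta$, so \eqref{g01} and \eqref{lh} give $\langle h\rangle_\mu\,M^h(\eta)=M(\eta)\,\prob_\eta[\,T^+_F=T_{F\setminus\{\eta\}}\,]$. On the trace side, \eqref{h} (with the roles of $A$ and $B$ interchanged) expresses $\prob_\eta[\tau^+_B(\eta^h_{\cdot})<\tau^+_A(\eta^h_{\cdot})]$ as $\prob_\eta[\,T^+_B<T^+_A\mid T^+_F=T_{F\setminus\{\eta\}}\,]$. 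I would then observe that, starting from $\eta\in A$, on the complementary event $\{T^+_F=T^+_\eta\}$ the process re-enters $F$ at the point $\eta\in A$, whence $T^+_A\le T^+_F<T^+_B$ (the strict inequality because $B\subseteq F\setminus\{\eta\}$); thus $\{T^+_B<T^+_A\}\subseteq\{T^+_F=T_{F\setminus\{\eta\}}\}$ $\prob_\eta$-a.s. Multiplying the conditional probability by $\langle h\rangle_\mu\,M^h(\eta)$, the factor $\prob_\eta[\,T^+_F=T_{F\setminus\{\eta\}}\,]$ cancels and this inclusion collapses the numerator, leaving $M(\eta)\,\prob_\eta[\,T^+_B<T^+_A\,]$. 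Summing over $\eta\in A$ and comparing with the two formulas above finishes the proof.

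The main obstacle — really the only delicate point — is this exact cancellation: the probability $\prob_\eta[\,T^+_F=T_{F\setminus\{\eta\}}\,]$ enters once through the holding rate buried in $M^h(\eta)$ and once as the denominator in the conditional probability coming from \eqref{h}, and it must be checked that the two match on the nose and that the accompanying event inclusion is exactly the one needed (which is why summing over $\eta\in A$, rather than over all of $F$, is essential). A minor side issue is the case $\prob_\eta[\,T^+_F=T_{F\setminus\{\eta\}}\,]=0$, where the conditional probability in \eqref{h} is not literally defined; but then both $M^h(\eta)$ and $\prob_\eta[\,T^+_B<T^+_A\,]$ vanish, so the term-by-term identity still holds. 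Beyond this, everything reduces to the trace-process identities \eqref{g01}, \eqref{lh}, \eqref{h} and the return-probability representation of capacity in Lemma \ref{s01}, so no new ingredient is required.
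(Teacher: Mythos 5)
Your proof is correct and follows essentially the same route as the paper's: apply Lemma \ref{s01} to the $h$-trace process, rewrite each summand via \eqref{g01}, \eqref{lh} and \eqref{h}, and use the inclusion $\{T^+_B<T^+_A\}\subseteq\{T^+_F=T_{F\setminus\{\eta\}}\}$ for $\eta\in A$ to cancel the conditioning before summing and invoking Lemma \ref{s01} again for the original process. The extra remarks on the degenerate cases are fine but not needed.
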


\begin{proof}
Fix a function $h:E\to \bb R_+$ with the properties required in the
statement of the lemma and two subsets $A$, $B$ of $F$ such that
$A\cap B = \varnothing$. By Lemma \ref{s01} applied to the process
$\{\eta^h_t : t\ge 0\}$ and by identities \eqref{h}, \eqref{g01} and
\eqref{lh},
\begin{eqnarray*}
\Cap_h(A,B) &=& \sum_{\eta\in A} \mu^h(\eta) \, \lambda^h(\eta) 
\, \prob_{\eta}\big[ \,\tau^+_{B}(\eta^h_{\cdot}) 
< \tau^+_{A}(\eta^h_{\cdot}) \, \big] \\
&=& \frac{1}{\<h\>_\mu} \,\sum_{\eta\in A} M(\eta) \, 
\prob_{\eta} \big[\, T^+_{B} < T^+_{A} \,\big|\,
T^+_F = T_{F\setminus\{\eta\}} \,\big] \, 
\prob_{\eta}\big[\,{T^+_F} = T^+_{F\setminus\{\eta\}}\,\big] \;.
\end{eqnarray*}
Since for $\eta\in A$, the event $\{T^+_B < T^+_{A}\}$ is contained in
the event $\{T^+_F = T_{F\setminus\{\eta\}}\}$ $\prob_{\eta}$-almost
surely, the previous expression is equal to
\begin{equation*}
\frac{1}{\langle h\rangle_{\mu}} \sum_{\eta\in A} M(\eta) \, 
\prob_{\eta}\big[ \, T^+_B < T^+_{A} \,\big]\,. 
\end{equation*}
By Lemma \ref{s01} this expression is equal to $\langle
h\rangle_{\mu}^{-1} \Cap(A,B)$, which proves the lemma.
\end{proof}

We conclude this subsection proving a relation between expectations of
time integrals of functions and capacities. Fix two disjoint subsets
$A$, $B$ of $E$. Define the probability measure $\nu_{AB}$ on $A$ as
$$
\nu_{AB}(\eta)\,=\, \frac{M(\eta) \,
\prob_{\eta} \big[ \, T^+_B <T^+_{A} \, \big] }{\Cap(A,B)}\,,
\quad \eta\in A\,.
$$
Denote by ${\bf E}_{\nu_{AB}}$ the expectation associated to the
Markov process $\{\eta_t : t\ge 0\}$ with initial distribution
$\nu_{AB}$. The proof of the following proposition is an adaptation of
the proof of identity (4.28) in \cite{g}.

\begin{proposition}
\label{bovier}
Fix two disjoint subsets $A$, $B$ of $E$.  Let $g:E \to\bb R$ be a
$\mu$-integrable function. Then,
$$
\E_{\nu_{AB}}\Big[ \int_0^{T_B} g(\eta_t)\,dt \Big] 
\;=\; \frac{\langle\, g \,,\, f_{AB}\rangle_{\mu}\, }
{\Cap(A,B)}\;\cdot
$$
\end{proposition}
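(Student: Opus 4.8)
The plan is to express the left-hand side as a sum over excursions and identify it with an expression involving $f_{AB}$ and the stationary measure. First I would note that $\nu_{AB}$ is, up to normalization by $\Cap(A,B)$, exactly the measure appearing in Lemma \ref{s01}: writing $e_A(\eta) := M(\eta)\,\prob_\eta[T^+_B < T^+_A]$ for $\eta \in A$, we have $\nu_{AB}(\eta) = e_A(\eta)/\Cap(A,B)$ and $\sum_{\eta \in A} e_A(\eta) = \Cap(A,B)$. So it suffices to prove the unnormalized identity
\begin{equation*}
\sum_{\eta \in A} e_A(\eta)\, \E_\eta\Big[ \int_0^{T_B} g(\eta_t)\, dt \Big] \;=\; \langle g, f_{AB} \rangle_\mu \;.
\end{equation*}
The key idea is to interpret the left-hand side as $\langle h \rangle_{\mu'}$-type quantity for a suitable excursion measure. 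Consider the process started from $\nu_{AB}$ and run until $T_B$; I would show that the occupation measure of this killed process, namely $\xi \mapsto \E_{\nu_{AB}}[\int_0^{T_B} \mb 1\{\eta_t = \xi\}\, dt]$, equals $f_{AB}(\xi)\mu(\xi)/\Cap(A,B)$. Granting this, multiplying by $g(\xi)$, summing over $\xi \in E$, and using Fubini gives the claim.

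To establish that the occupation density is $f_{AB}(\xi)\mu(\xi)/\Cap(A,B)$, I would use the renewal/cycle structure underlying \eqref{im}. The reference to "the proof of identity (4.28) in \cite{g}" suggests the following standard argument: define $\Phi(\xi) := \E_{\nu_{AB}}[\int_0^{T_B}\mb 1\{\eta_t=\xi\}\,dt]$ (this is finite by positive recurrence, since $\E_{\nu_{AB}}[T_B] < \infty$). One checks that $\Phi$ is harmonic for $L$ off $A \cup B$: by the strong Markov property at the first jump, $\Phi(\xi) = \lambda(\xi)^{-1} + \sum_\zeta p(\xi,\zeta)\Phi(\zeta)$... more precisely, $\Phi$ restricted to $E\setminus B$ solves $L\Phi(\xi) = -\nu_{AB}(\xi)/\mu(\xi)\cdot(\text{something})$; I would instead directly verify that $\psi(\xi) := \Phi(\xi)/\mu(\xi)$ satisfies $(I-P^*)\psi = $ a measure supported on $A \cup B$, where $P^*$ is the adjoint. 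By reversibility $P^* = P$ in the appropriate sense, and comparing boundary behavior: $\psi \equiv 0$ on $B$ (the process is killed there) and $L\psi = 0$ on $E \setminus (A\cup B)$. This pins down $\psi$ as a multiple of $f_{AB}$ on $E \setminus A$ by the uniqueness statement for \eqref{eqf2}, and a balance computation at $A$ fixes the constant via the fact that $\sum_{\eta\in A} e_A(\eta) = \Cap(A,B)$ and $e_A$ is precisely the boundary flux of $f_{AB}$ out of $A$.

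The main obstacle I anticipate is handling the boundary term at $A$ carefully — making precise the sense in which $\Phi/\mu$ "is" $f_{AB}$ up to a constant even though $\Phi$ need not be harmonic on $A$, and getting the normalization constant exactly right (this is where reversibility and the identity $\Cap(A,B) = \sum_{\eta\in A} M(\eta)\prob_\eta[T^+_B < T^+_A]$ from Lemma \ref{s01} enter). An alternative route that sidesteps some of this: use the formula \eqref{im} directly. Pick $\eta_0 \in A$, write $\mu(\xi) = \E_{\eta_0}[\int_0^{T^+_{\eta_0}} \mb 1\{\eta_s = \xi\}\, ds]$, decompose the return cycle to $\eta_0$ according to whether $B$ is visited before returning to $A$, and peel off excursions; the reversibility is then used only through $M(\eta)\prob_\eta[\cdot] = M(\xi)\prob_\xi[\cdot]$ type symmetries already exploited in the proof of Proposition \ref{s06}. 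Either way, once the occupation-density identity is in hand, the proposition follows by one application of Tonelli.
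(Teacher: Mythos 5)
You have correctly isolated the heart of the matter: the proposition reduces (by Tonelli, linearity and monotone convergence) to the occupation-density identity
\begin{equation*}
\E_{\nu_{AB}}\Big[\int_0^{T_B}\mb 1\{\eta_t=\xi\}\,dt\Big]\;=\;\frac{f_{AB}(\xi)\,\mu(\xi)}{\Cap(A,B)}\,,\qquad \xi\in E\,,
\end{equation*}
and this is exactly how the paper proceeds (it proves the identity for $g=\mb 1\{\xi\}$ and then extends). The gap is that neither of your two routes to this identity is actually carried out, and the first one has a genuine obstruction that you flag but do not close. The uniqueness statement for \eqref{eqf2} determines a function from its harmonicity off $A\cup B$ \emph{together with} its boundary values on both $A$ and $B$. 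Your $\psi=\Phi\,\Cap(A,B)/\mu$ is indeed harmonic off $A\cup B$ (using reversibility to handle the adjoint) and vanishes on $B$, but showing $\psi\equiv 1$ on $A$ --- i.e.\ that the occupation measure at a point $\eta\in A$ equals $\mu(\eta)/\Cap(A,B)$ --- is precisely the nontrivial content of the proposition restricted to $\xi\in A$; it cannot be extracted from harmonicity and the normalization $\sum_{\eta\in A}e_A(\eta)=\Cap(A,B)$ alone, because $\Phi$ is not harmonic at points of $A$ and nothing a priori forces $\psi$ to be constant there. So the ``balance computation at $A$'' you defer is not a boundary technicality: it is the whole proof.

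The mechanism the paper uses to close this is a last-exit decomposition, which you do not quite name (your second, vaguer route via \eqref{im} is in its spirit but decomposes return cycles to a single reference point rather than last exits from $A$). Concretely: with $t_B$ the discrete hitting time of $B$ by the jump chain and $L_{AB}$ the last visit to $A$ strictly before $t_B$, one writes $f_{AB}(\xi)=\prob_\xi[T_A<T_B]=\sum_{n\ge 0}\sum_{\eta\in A}\prob_\xi[Y_n=\eta;\,n<t_B]\,\prob_\eta[T^+_B<T^+_A]$, then applies reversibility of the jump chain with respect to $M$ in the form $M(\xi)\,\prob_\xi[Y_n=\eta;\,n<t_B]=M(\eta)\,\prob_\eta[Y_n=\xi;\,n<t_B]$, and finally restores the continuous-time occupation times through the independent exponential holding times. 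Summing the resulting Green-function expression against $e_A(\eta)=M(\eta)\prob_\eta[T^+_B<T^+_A]$ produces $f_{AB}(\xi)\mu(\xi)$ on the nose, with no need to discuss boundary values separately. I would ask you to either supply this decomposition explicitly or, if you insist on the harmonicity route, to prove directly that $\Phi(\eta)=\mu(\eta)/\Cap(A,B)$ for $\eta\in A$ --- at which point you will find yourself doing the last-exit computation anyway.
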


\begin{proof}
We first claim that the proposition holds for indicator functions of
states.  Fix an arbitrary state $\xi\in E$. Consider the random time $t_B
:= \inf \{ n\ge 0 : Y_n\in B \}$ and the last exit time
$$
L_{AB} \;:=\; \sup \{ n\ge 0 : Y_n\in A \textrm{ \;and \;} n< t_B \}\;
$$
with the convention that $\sup \varnothing = -\infty$. Then, 
\begin{eqnarray*}
{\bf P}_{\xi}\big[ \, T_A < T_B \,\big] 
&=& \sum_{n\ge 0} {\bf P}_{\xi} \big[\, L_{AB} = n \,\big] \\
&=& \sum_{n\ge 0} \sum_{\eta\in A} 
{\bf P}_{\xi} \big[\, Y_n=\eta \,;\, n<t_B\,\big] \, 
{\bf P}_{\eta} \big[\, T^+_B < T^+_A  \,\big] \\
&=&  \sum_{\eta\in A}  {\bf P}_{\eta} \big[\, T^+_B < T^+_A  \,\big]\, 
\sum_{n\ge 0}  {\bf P}_{\xi} \big[\, Y_n=\eta \,;\, n<t_B\,\big]\;.
\end{eqnarray*}
Since $Y$ is reversible with respect to $M$, the last expression
is equal to
\begin{equation*}
\sum_{\eta\in A}  {\bf P}_{\eta} \big[\, T^+_B < T^+_A  \,\big]\, 
\frac{M(\eta)}{M(\xi)} \, \sum_{n\ge 0}  
{\bf P}_{\eta} \big[\, Y_n=\xi \,;\, n<t_B\,\big]\;.
\end{equation*}
Recall from the beginning of this section that $\{e_n : n\ge 0\}$ is a
sequence of i.i.d.\! mean one exponential random variables independent
of the jump chain $\{Y_n : n\ge 0\}$.  By definition of the measure
$\nu_{AB}$, this sum can be rewritten as
\begin{eqnarray*}
&& \Cap(A,B) \, \sum_{\eta\in A} \nu_{AB}(\eta)\, 
\frac{\lambda(\xi)}{M(\xi)} \, {\bf E}_{\eta}\Big[\, 
\sum_{n=0}^{t_B -1} \frac{e_n}{\lambda(\xi)} \,{\bf 1}\{ Y_n=\xi \} \, \Big] \\
&& \quad =\; \frac{\Cap(A,B)}{\mu(\xi)} \, {\bf E}_{\nu_{AB}} 
\Big[\, \int_{0}^{T_B} {\bf 1}\{ \eta_s = \xi \} \,ds \, \Big]\;.
\end{eqnarray*}
This proves the assertion for $g={\bf 1}\{\xi\}$. By linearity and the
monotone convergence theorem we get the desired result for positive
and then $\mu$-integrable functions.
\end{proof}

In particular, taking $A=\{\eta\}$ and $B=\{\xi\}$ for $\eta\not =
\xi$ we have that
\begin{equation}
\label{g02}
\E_{\eta} \Big[\, \int_0^{T_\xi} g(\eta_s)\,ds \, \Big] \;=\; 
\frac{ \langle \,g\,,\, f_{\{\eta\} \{\xi\}} \, \rangle_{\mu}}
{\Cap(\{\eta\},\{\xi\})}
\end{equation}
for any $\mu$-integrable function $g$.
 
This formula provides a more accurate estimate in Corollary \ref{s09}
in the reversible context. For each $x\in S$, let
$$
\Cap(\xi_x)\,:=\,\inf_{\eta\in A^x\setminus\{\xi_x\}}
\Cap(\{\eta\}, \{\xi_x\})\,.
$$

\begin{lemma}
\label{s10}
Let $g:E\to\bb R$ be a function integrable with respect to $\mu$.  If
the measure $\mu$ is reversible then, for each $x\in S$,
\begin{equation*}
\sup_{\eta\in A^x} \E_{\eta} \Big[\, \int_0^{T_{\xi_x}}
|g^x(\eta_s)|\,ds \,\Big] 
\;\le\; \frac{2 \, \langle \,|g|\,\rangle_{\mu^x}}{\Cap(\xi_x)}
\, \mu(A^x) \;, 
\end{equation*}
where $|g|(\eta)=|g(\eta)|$ for all $\eta$ in $E$.
\end{lemma}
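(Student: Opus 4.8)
The plan is to deduce the lemma directly from identity \eqref{g02}, combined with the trivial bounds $0\le f_{\{\eta\}\{\xi_x\}}\le 1$ and the triangle inequality. Fix $x\in S$ and a state $\eta\in A^x$. If $\eta=\xi_x$ there is nothing to prove, since then $T_{\xi_x}=0$ and the left-hand side vanishes, so assume $\eta\neq\xi_x$. Since $g\in L^1(\mu)$, the conditional mean $\<g\>_{\mu^x}$ is finite, hence $|g^x|=|g-\<g\>_{\mu^x}|\,\mb 1\{A^x\}$ is again $\mu$-integrable. Applying \eqref{g02} with $g$ replaced by $|g^x|$ and with the two disjoint singletons $\{\eta\}$, $\{\xi_x\}$ gives
\[
\E_{\eta}\Big[\int_0^{T_{\xi_x}}|g^x(\eta_s)|\,ds\Big]
\;=\;\frac{\<\,|g^x|\,,\,f_{\{\eta\}\{\xi_x\}}\,\>_{\mu}}{\Cap(\{\eta\},\{\xi_x\})}\;.
\]

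Next I would estimate the numerator and denominator separately. Since $0\le f_{\{\eta\}\{\xi_x\}}\le 1$ and $|g^x|\ge 0$, the numerator is at most $\<|g^x|\>_{\mu}$. Because $|g^x|$ is supported on $A^x$, one has $\<|g^x|\>_{\mu}=\mu(A^x)\,\<\,|g-\<g\>_{\mu^x}|\,\>_{\mu^x}$, and by the triangle inequality together with $|\<g\>_{\mu^x}|\le\<|g|\>_{\mu^x}$ this is bounded by $2\,\mu(A^x)\,\<|g|\>_{\mu^x}$. For the denominator, the very definition $\Cap(\xi_x)=\inf\{\Cap(\{\zeta\},\{\xi_x\}):\zeta\in A^x\setminus\{\xi_x\}\}$ yields $\Cap(\{\eta\},\{\xi_x\})\ge\Cap(\xi_x)$. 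Combining these two bounds gives
\[
\E_{\eta}\Big[\int_0^{T_{\xi_x}}|g^x(\eta_s)|\,ds\Big]\;\le\;\frac{2\,\<|g|\>_{\mu^x}}{\Cap(\xi_x)}\,\mu(A^x)\,,
\]
and taking the supremum over $\eta\in A^x$ (the excluded case $\eta=\xi_x$ being trivially dominated by the right-hand side) concludes the proof.

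There is no genuine obstacle here: once \eqref{g02} is available, the argument is a one-line estimate. The only points that deserve a moment of care are checking that the hypotheses of \eqref{g02} are met — namely that $|g^x|\in L^1(\mu)$, which follows from $g\in L^1(\mu)$, and that $\eta\neq\xi_x$ so that the two singletons are indeed disjoint — and bookkeeping the factor $2$ coming from $|g-\<g\>_{\mu^x}|\le|g|+|\<g\>_{\mu^x}|$ and $|\<g\>_{\mu^x}|\le\<|g|\>_{\mu^x}$.
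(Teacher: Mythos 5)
Your proof is correct and follows essentially the same route as the paper's: both apply \eqref{g02} with $|g^x|$, bound $f_{\{\eta\}\{\xi_x\}}$ by $1$, use the triangle inequality $|g^x|\le (|g|+\<|g|\>_{\mu^x})\mb 1\{A^x\}$ to produce the factor $2$, and bound the capacity below by $\Cap(\xi_x)$. Your explicit handling of the trivial case $\eta=\xi_x$ and the integrability check are fine points the paper leaves implicit.
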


\begin{proof}
By \eqref{g02} and the fact that $0\le f_{\{\eta\}\{\xi_x\}}\le 1$,
the left hand side is bounded above by
$$
\sup_{\eta\in A^x\setminus\{\xi^x\}}  
\frac{\langle \,\vert g^x\vert \,,\,f_{\{\eta\} \{\xi_x\} }\,\rangle_{\mu}}
{\Cap ( \{\eta\} , \{\xi_x\} )} \;\le\;
\frac{\langle \,\vert g^x\vert \,\rangle_{\mu}}{\Cap ( \xi_x )}
\;\le\; \frac{2 \,\langle \,\vert g\vert \, 
{\bf 1}\{A^x\}\,\rangle_{\mu}}{\Cap ( \xi_x )} 
$$
for each $x\in S$. This completes the proof.
\end{proof}

\medskip
\noindent{\bf Acknowledgments}: The authors would like to thank
E. Olivieri for fruitful discussions on metastability and the two
anonymous referees for their suggestions.

\end{document}